\documentclass[12pt]{amsart}
\usepackage{graphicx,epsfig,psfrag}
\usepackage{amssymb,amsfonts,amsmath,amsthm,dsfont,wasysym,pifont,stmaryrd}
\usepackage{epstopdf,yfonts}
\usepackage{hyperref}
\usepackage{psfrag}
\usepackage{wasysym,auto-pst-pdf}
\usepackage{tikz}
\usetikzlibrary{arrows,decorations.markings}

\usepackage[stable]{footmisc} % to allow footnotes in section titles

\DeclareGraphicsRule{.tif}{png}{.png}{`convert #1 `dirname #1`/`basename #1 .tif`.png}

\usepackage{pstricks, pst-node}\input xy
\usepackage[all]{xy}

\textwidth = 6.5 in
\textheight = 9 in 
\oddsidemargin = 0.0 in 
\evensidemargin = 0.0 in
\topmargin = -0.7 in
\headheight = 0.4 in
\headsep = 0.5 in 
\parskip = 0.2in
\parindent = 0.0in

%%%%%%%%%%%%%%%%%

\theoremstyle{plain}      %%% for statements in italic typeface

\newtheorem{theorem}{Theorem}[section]

\newtheorem{prop}[theorem]{Proposition}

\newtheorem{claim}[theorem]{Claim}
\newtheorem{cor}[theorem]{Corollary}

\newtheorem{lemma}[theorem]{Lemma}

\theoremstyle{definition} %%% for statements in roman typeface

\newtheorem{definition}[theorem]{Definition}
 \newtheorem{remark}[theorem]{Remark}
 \newtheorem{example}[theorem]{Example}
%%%%%%%% Define Commands %%%%%%%%%%

\def\bqn{\begin{equation*}}
\def\eqn{\end{equation*}}
\def\bq{\begin{equation}}
\def\eq{\end{equation}}
\def\be{\begin{enumerate}}
\def\ee{\end{enumerate}}
\def\ba{\begin{aligned}}
\def\ea{\end{aligned}}
\def\ban{\begin{aligned*}}
\def\ean{\end{aligned*}}

%%%%%%%% Define Commands %%%%%%%%%% 

\renewcommand{\1}{\mathds{1}}
\renewcommand{\(}{\left(}
\renewcommand{\)}{\right)}
\renewcommand{\[}{\left[}
\renewcommand{\]}{\right]}

\renewcommand{\~}[1]{\overline{#1}}
\renewcommand{\*}[1]{{#1}^*}
\renewcommand{\geq}{\geqslant}
\renewcommand{\leq}{\leqslant}

\renewcommand{\>}{\right\rangle}
\newcommand{\8}{\infty}

\renewcommand{\a}{\alpha}
\newcommand{\Aut}{\text{Aut}}

\renewcommand{\b}{\beta}
\newcommand{\B}{\mathcal{B}}

\newcommand{\creg}{\mathcal{C}_{reg}}
\renewcommand{\Cap}[2]{\underset{#1}{\overset{#2}{\cap} }}
\newcommand\cb{\mathrm{C}_\mathrm{b}}
\renewcommand{\Cup}[2]{\underset{#1}{\overset{#2}{\cup} }}

\newcommand{\e}{\epsilon}

\newcommand{\f}{\varphi}

\newcommand{\frakH}{\mathfrak H}

\newcommand{\g}{\gamma}
\newcommand{\G}{\Gamma}

\newcommand{\Gn}{\mathcal G}
\newcommand{\h}{\mathfrak{h}}
\newcommand{\hb}{\mathrm{H}_\mathrm{b}}
\newcommand{\hcb}{\mathrm{H}_\mathrm{cb}}

\renewcommand{\H}{\mathcal{H}}

\newcommand{\I}{\mathcal{I}}

\renewcommand{\int}{\varint}

\newcommand{\Lim}[1]{\underset{#1}{\lim}}

\newcommand{\linfty}{\mathrm{L}^\infty}

\newcommand{\linftyaw}{\mathrm{L}^\infty_{\mathrm {alt,\ast}}}

\renewcommand{\max}[1]{\underset{#1}{\mathrm{max}}}
\newcommand{\N}{\mathbb{N}}

\newcommand{\Pn}{\mbox{$\mathcal{P}$}}
\newcommand{\pri}{\operatorname{pr}_i}

\newcommand{\Q}{\mathbb{Q}}

\newcommand{\R}{\mathbb{R}}
\renewcommand{\Re}{\mathbb{R}}

\newcommand{\s}{\sigma}
\newcommand{\SL}{\textnormal{SL}}
\newcommand{\stab}{\mathrm{Stab}}

\newcommand{\supp}{\mathrm{supp}}
\newcommand{\T}{\mathcal {T}}

\newcommand{\tr}{\textsl{tr}}

\newcommand{\Z}{\mathbb{Z}}

%%%%%%%%%%%%%%%%PE STUFF

\DeclareMathOperator{\Ker}{Ker}

\DeclareMathOperator{\Stab}{Stab}

\newcommand{\ZZ}{\mathbf{Z}}

%\newcommand{\FF}{\mathbf{F}}

%
%\newcommand{\blue}[1]{{\color{blue}#1}}
%\newcommand{\red}[1]{{\color{red}#1}}
%\newcommand{\green}[1]{{\color{green}#1}}
%

%

%%%%%%%%%%%%%%% to fix a bug with \contrib
\makeatletter
 \let\@wraptoccontribs\wraptoccontribs
\makeatother
%%%%%%%%%%%%%%%%%%%%%%%%%%%%%%%%%%%%%

\title[The Median Class and Superrigidity]{The Median Class and Superrigidity of Actions \\ 
  on CAT(0) Cube Complexes}
\author{Indira Chatterji, Talia Fern\'os, Alessandra Iozzi}
\contrib[with an appendix by]{Pierre-Emmanuel Caprace}
\address{Universit\'e de Nice, Laboratoire de Math\'ematiques J.A. Dieudonn\'e, 06108 Nice Cedex 02, France}
\email{\url{indira.chatterji@math.cnrs.fr}}
\address{Department of Mathematics and Statistics, University of North Carolina at Greensboro,  317 College Avenue, Greensboro, NC 27412, USA}
\email{\url{t_fernos@uncg.edu}}
\address{Department Mathematik, ETHZ, R\"amistrasse 101, CH-8092 Z\"urich, Switzerland}
\email{\url{iozzi@math.ethz.ch}}
\thanks{I.~C. was partially supported by ANR QuantiT JC08-318197; 
A.~I. was partial supported by the Swiss National Science Foundation projects 
2000021-127016/2 and 200020-144373 and by the Simons Foundation.}

\begin{document}

%%%%%%%%%%%%%%%%%%%%%%%%%%%%%%%%%%%%%
%\today
\begin{abstract}
We define a bounded cohomology class, called the {\em median class}, 
in the second bounded cohomology -- with appropriate coefficients --
of the automorphism group of a finite dimensional CAT(0) cube complex $X$.  
The median class of $X$ behaves naturally with respect to taking products 
and appropriate subcomplexes and defines in turn the {\em median class of an action} 
by automorphisms of $X$.

We show that the median class of a non-elementary action by automorphisms
does not vanish and we show to which extent it does vanish if the action is elementary. 
We obtain as a corollary a superrigidity result
and show for example that any irreducible lattice in the product of at least two locally compact connected groups
acts on a finite dimensional CAT(0) cube complex $X$ with a finite orbit in the Roller compactification of $X$.
In the case of a product of Lie groups, the appendix by Caprace allows us to deduce that the fixed point is in fact inside the complex $X$.

In the course of the proof we construct a $\Gamma$-equivariant
measurable map from a Poisson boundary of $\Gamma$ 
with values in the non-terminating ultrafilters on the Roller boundary of $X$.

\end{abstract}
%%%%%%%%%%%%%%%%%%%%%%%%%%%%%%%%%%%%%

\maketitle
%\tableofcontents

\today

%%%%%%%%%%%%%%%%%%%%%%%%%%%%%%%%%%%%%

%%%%%%%%%%%%%%%%%%%%%%%%%%%%%%%%%%%%%

%%%%%%%%%%%%%%%%%%%%%%%%%%%%%%%%%%%%%

\section{Introduction}\label{sec:intro}
The goal of this paper is to define a cohomological invariant of some
non-positively curved metric spaces $X$ for a non-elementary action of
a group $\Gamma\to\Aut(X)$ and to use this invariant to establish
rigidity phenomena.

The paradigm is that bounded cohomology with non-trivial coefficients
is the appropriate framework to study negative curvature. The first
instance of this fact is the Gromov--Sela cocycle on the real
hyperbolic $n$-space $X$ (in fact, on any simply connected space with
pinched negative curvature) with values into the $\mathrm{L}^2$ differential
one-forms on $X$ (see \cite{Sela} and \cite[$7.{\mathrm E}_1$]{Gromov_asymptotic}).

The same philosophy has been promoted by Monod
\cite{Monod_splitting}, Monod--Shalom \cite{Monod_Shalom_1,
  Monod_Shalom_2} and Mineyev--Monod--Shalom
\cite{Mineyev_Monod_Shalom}.  They prove that a non-elementary
isometric action on a negatively curved space (belonging to a very
rich class) yields the non-vanishing of second bounded cohomology
with appropriately defined coefficients of a geometric nature.  Such
negatively curved spaces include proper CAT(-1) spaces, Gromov
hyperbolic graphs of bounded valency, Gromov hyperbolic proper
cocompact geodesic metric spaces, or simplicial trees.

On the other hand, if $G$ is a simple Lie group with rank at least two
and $\H$ is any unitary representation with no invariant vectors, then
$\hcb^2(G,\H)=0$, \cite{Burger_Monod_JEMS, Burger_Monod_GAFA}, thus
showing that in non-positive curvature the situation cannot be
expected to be completely analogous.

In this paper we move away from the negative curvature case and look
at actions on CAT(0) cube complexes.

CAT(0) cube complexes are simply connected combinatorial objects
introduced by Gromov in \cite{Gromov_hyperbolic}.  They have been used
is several important contexts, such as Moussong's 
characterization of word hyperbolic Coxeter groups
in terms of their natural presentation, \cite{Moussong}.  
A prominent use of CAT(0) cube complexes was made
by Sageev in his thesis, \cite{Sageev_95}: generalizing
Stalling's theorem on the equivalence between splittings of groups and
actions on trees, \cite{Scott, Serre_amalgames, Serre_trees},
he proved an equivalence between
the existence of an action of a group $\Gamma$ on a CAT(0) cube
complex and the existence of a subgroup $\Lambda<\Gamma$ such that the
pair $(\Gamma,\Lambda)$ has more than one end.  More
recently, Agol's proof of the last standing conjecture in 3-manifolds,
the virtual Haken conjecture, uses (special) cube complexes in
a crucial way, thus indisputably asserting their relevance in the
mathematical scenery.

The first example of a CAT(0) cube complex $X$ is a simplicial
tree; the midpoint of a vertex is the analogue of a {\em hyperplane}
for a general CAT(0) cube complex.  Hyperplanes separate $X$ into two
connected components, called {\em halfspaces}, the collection of
which is denoted by $\frakH(X)$.  If the vertex set of $X$ is locally
countable then $\frakH(X)$ is countable as well.
 
A CAT(0) cube complex is in particular a {\em median space}, that is, given
any three vertices, there is a unique vertex, the {\em median},
that is on the combinatorial geodesics joining any two of the three points.  
For $n\geq2$ let $\frakH(X)^n$ denote the set of $n$-tuples of halfspaces in $X$. 
If $1\leq p<\infty$, we define a one-parameter family of $\Aut(X)$-invariant cocycles
$$c_{(n,R)}:{X}\times {X}\times {X}\to \ell^p(\frakH(X)^n)$$
as the sum of the
characteristic functions of some appropriate finite subsets of nested
halfspaces (called \emph{\"uber-parallel}, see Definitions~\ref{sss} and~\ref{uparallel}) ``around'' the median of three points and at distance\footnote{This is not a distance but just a pseudo-distance on the set of hyperplanes, and will be discussed more in Section~\ref{bridge}.} less than $R$.
Choosing a basepoint $v_0\in X$ and evaluating $c$ on an $\Aut(X)$-orbit,
we get what we call a {\em median cocycle} on 
$\Aut(X)\times\Aut(X)\times\Aut(X)$. 
We then prove that, for every $n\geq2$ and $R\geq 0$, the cocycle so defined is bounded and hence
defines a bounded cohomology class ${\tt m}_{(n,R)}(X)$ in degree two, 
that we call a {\em median class\footnote{For any $n\geq2$ there is a median class, 
but in the following we will not necessarily make a distinction of the various median 
classes for different $n$.} of $X$}.
(See \eqref{cocycle}, Proposition~\ref{prop:bounded1} and Lemma~\ref{lem:sets} 
for the precise definition and the proof of the above statements.)

If $\rho:\Gamma\to\Aut(X)$ is an action of a group $\Gamma$ by automorphisms on $X$,
the {\em median class of the $\Gamma$-action} is the pull-back 
\bqn
\rho^*({\tt m}_{(n,R)}(X))\in\hb^2(\Gamma, \ell^p(\frakH(X)^n))\,.
\eqn

%Given an action of a group $\G$ on $X$, the median class on $X$ naturally gives a class on $\G$, by evaluation on an orbit.
%
\begin{theorem}\label{thm_intro:main}
Let $X$ be a finite dimensional CAT(0) cube complex with a $\Gamma$-action. 
If the $\G$-action is non-elementary, then there is an $R_\G\geq 1$ so that 
the median class of the $\G$-action $\rho^*({\tt m}_{(n,R)}(X))$
does not vanish for all $n\geq2$ and all $R\geq R_\G$.
\end{theorem}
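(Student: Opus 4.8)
The plan is to pass to a boundary resolution of $\hb^2(\Gamma,-)$, realise the median class of the action by an explicit cocycle on an appropriate Poisson boundary, and derive its non-triviality from non-elementarity. Concretely, fix a Poisson boundary $B=(B,\nu)$ of $\Gamma$ with respect to an admissible probability measure, so that $\Gamma\curvearrowright B$ is amenable and doubly ergodic with coefficients. Since $\ell^p(\frakH(X)^{(n)})$ is a separable coefficient module for $1\le p<\infty$ (a dual Banach $\Gamma$-module; for $p=1$ it is the dual of $c_0(\frakH(X)^{(n)})$), the functorial machinery of Burger--Monod \cite{Burger_Monod_GAFA} computes $\hb^\bullet(\Gamma,\ell^p(\frakH(X)^{(n)}))$ isometrically by the complex $L^\infty_{\mathrm{alt}}\bigl(B^{\bullet+1},\ell^p(\frakH(X)^{(n)})\bigr)^\Gamma$. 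In particular a degree-two class vanishes exactly when its boundary representative is a coboundary $d\beta$ for an \emph{essentially bounded} $\Gamma$-equivariant map $\beta\colon B^2\to\ell^p(\frakH(X)^{(n)})$ --- the boundedness of this primitive being the key leverage.

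The technical heart is the construction of a $\Gamma$-equivariant measurable map $\varphi\colon B\to\mathcal U_{\mathrm{nt}}(\partial X)$ into the non-terminating ultrafilters on the Roller boundary. Amenability of $\Gamma\curvearrowright B$ first yields an equivariant map into probability measures on the Roller compactification; non-elementarity of the $\Gamma$-action --- the absence of the obvious invariant structures in the Roller compactification --- is then used to push the measure onto $\partial X$ and, by a maximality argument in the halfspace poset combined with the median structure, to select a single non-terminating ultrafilter for $\nu$-a.e.\ point. Given such $\varphi$, one sets $\tilde c(b_1,b_2,b_3):=c\bigl(\varphi(b_1),\varphi(b_2),\varphi(b_3)\bigr)$, where $c$ is the extension of the median cocycle to triples of ultrafilters: the median of three ultrafilters is again an ultrafilter, the infinite rays issuing from it cancel in the three-fold signed sum, and --- this is where both finite dimensionality and the non-terminating hypothesis enter --- the $\ell^p$-estimate underlying Proposition~\ref{prop:bounded1} (valid for all $n\ge 2$) keeps $\tilde c$ inside $\ell^p(\frakH(X)^{(n)})$ and uniformly bounded. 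Thus $\tilde c$ is an alternating, bounded, $\Gamma$-equivariant $2$-cocycle on $B$.

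Since ${\tt m}_n(X)$ is represented by the orbit cocycle $(g,h,k)\mapsto c(g v_0,h v_0,k v_0)$ on $\Aut(X)$, a standard homotopy between the $\Gamma^{\bullet}$- and $B^{\bullet}$-resolutions (using that $c$ extends measurably from $X$ to the Roller compactification and that $\varphi$ is equivariant) shows $[\tilde c]=\rho^*({\tt m}_n(X))$. Now suppose this class vanishes, i.e.\ $\tilde c=d\beta$ with $\beta$ essentially bounded as above. By double ergodicity the triple $\bigl(\varphi(b_1),\varphi(b_2),\varphi(b_3)\bigr)$ is, for a.e.\ $(b_1,b_2,b_3)$, in general position, so its median is an honest vertex with non-degenerate intervals toward all three, and as the third variable flows these medians exhaust an infinite combinatorial geodesic between $\varphi(b_1)$ and $\varphi(b_2)$. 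The coboundary relation, together with the essential boundedness of $\beta$, then pins $\beta(b_1,b_2)$ down --- up to an essentially bounded equivariant correction --- to the ``formal primitive'' $\sigma(\varphi(b_1),\varphi(b_2))$, the signed sum over all tightly nested $n$-tuples separating $\varphi(b_1)$ from $\varphi(b_2)$; but for two distinct non-terminating ultrafilters this is supported on an infinite set and is not $\ell^p$-summable, a contradiction. (Equivalently, one reads off from $\beta$ a $\Gamma$-invariant point of the Roller compactification, or a $\Gamma$-invariant finite set of halfspaces --- precisely the data making the action elementary; this is the form the analysis takes in the companion statement about elementary actions.) Hence $[\tilde c]=\rho^*({\tt m}_n(X))\neq 0$.

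The genuinely hard part is the second step: promoting an equivariant family of measures to a single equivariant \emph{non-terminating} ultrafilter, and keeping enough control that the extended median cocycle is $\ell^p$-valued and uniformly bounded. Once the boundary map $\varphi$ is in hand, the identification of classes and the non-vanishing argument are the by-now-standard interplay between double ergodicity of the Poisson boundary and the combinatorial geometry of halfspaces.
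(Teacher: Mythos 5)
Your overall architecture matches the paper's: realize $\hb^2(\Gamma,\ell^p(\frakH(X)^{(n)}))$ on a strong boundary $B$, use amenability plus non-elementarity to build an equivariant map $\varphi\colon B\to\partial X$ landing in non-terminating ultrafilters of the essential core, pull back the median cocycle by $\varphi^3$, and identify the resulting class with $\rho^*({\tt m}_n(X))$ via the Burger--Iozzi machinery. The gap is in your final non-vanishing step. First, you have not used the full strength of double ergodicity with coefficients: any essentially bounded equivariant map $\beta\colon B^2\to\ell^p(\frakH(X)^{(n)})$ is essentially constant, hence (being alternating) zero, so in this realization there are \emph{no} nontrivial coboundaries in degree two and vanishing of the class is simply equivalent to $c\circ\varphi^3=0$ almost everywhere. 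Your ``suppose $\tilde c=d\beta$'' contradiction is therefore aiming at the wrong target, and as executed it does not work: the relation $\tilde c=d\beta$ does not ``pin down'' $\beta$ to the formal primitive $\sigma(\varphi(b_1),\varphi(b_2))$ up to a bounded correction (no argument is given, and none is available along these lines), and the claim that for a.e.\ triple the points $\varphi(b_i)$ are ``in general position'' with median an honest vertex does not follow from double ergodicity --- the median of three Roller boundary points can itself lie in $\partial X$, and genericity in this combinatorial sense needs a separate argument.

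What is actually needed, and what the paper supplies, is a direct proof that $c\circ\varphi^3$ is nonzero on a set of positive measure, and this is where non-elementarity re-enters in an essential way that your sketch omits. On the essential core, the Flipping Lemma of Caprace--Sageev shows that the pushforward quasi-invariant measure $\varphi_*\vartheta$ gives positive mass to $\partial h$ for \emph{every} halfspace $h$ (Lemma~\ref{lem:positive measure}); and Lemma~\ref{lem:any hp is part of a facing triple} produces, in any $\Gamma$-orbit, an $n$-disjoint facing triple $h,\gamma h,\gamma' h$. An explicit combinatorial computation (Lemma~\ref{lem:nonvanishing1}, using the description of the support of $c$ in Lemma~\ref{lem:sets}) then shows $c\neq0$ everywhere on $\partial h\times\partial(\gamma h)\times\partial(\gamma' h)$, so $c\circ\varphi^3\neq0$ on a positive-measure set and the class is nonzero. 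Two smaller corrections: boundedness of $c$ on $\~X^3$ (Proposition~\ref{prop:bounded1}) uses only finite dimensionality, not the non-terminating hypothesis; and before any of this one must pass to the $\Gamma$-essential core and its irreducible factors (Lemma~\ref{lem:hereditary properties}, Proposition~\ref{prop:decomposition}), since essentiality is what makes the flipping/skewering arguments available.
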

We call an action $\Gamma\to\Aut(X)$ {\em non-elementary} 
if there is no finite orbit in $X \sqcup \partial_\sphericalangle X$, 
where $ \partial_\sphericalangle X$ denotes the visual boundary of $X$
as a CAT(0) space.

Let us say a word about what it means for a $\G$-action to be non-elementary 
in the context of CAT(0) cube complexes. 
First of all, the assumption implies in particular that,
by passing to a subgroup of finite index,  there are
no $\G$-fixed points in $\partial_\sphericalangle X$. 
Under this hypothesis, using the work of Caprace--Sageev 
\cite[Proposition~3.5]{Caprace_Sageev}, 
one can pass to a nonempty convex subset of $X$, 
called the {\em $\G$-essential core}, (see \S~\ref{subsec:essential}) 
which will have rather nice dynamic properties. 
Furthermore, the exclusion of a finite orbit on $\partial_\sphericalangle X$ 
excludes the existence of a Euclidean factor in the essential core 
(see Corollary~\ref{no Euclidean factors}).

A key object in this paper is the {\em Roller boundary} $\partial X$
of a CAT(0) cube complex, defined in \S~\ref{subsec:2.1}. It arises
naturally from considering the hyperplane (and hence halfspace)
structure of $X$.  The vertex set of $X$, together with its Roller boundary,
can be thought of as a closed subset of a Bernoulli space (with
$\frakH(X)$ as the indexing set) and is hence compact and totally
disconnected. Although in the case of a tree the Roller boundary and
the visual boundary coincide, we remark that in general, there is no
natural map between them\footnote{There is a map from the CAT(0)
boundary to a quotient of the Roller boundary \cite{Guralnik}, but
we will not use it in this paper.}. 
In Proposition~\ref{prop:visual-to-roller} we prove nevertheless a result relating,
to the extent to which it is possible, finite orbits in the Roller boundary
to finite orbits in the CAT(0) boundary.  
The dichotomy that we obtain is analogous to the one in the case
of a group $\Gamma$ acting on a symmetric space $\mathcal X$ of non-compact type;
in this case, if $\Gamma$ fixes a point at infinity in the CAT(0) boundary of $\mathcal X$,
the image of $\Gamma$ is contained in a parabolic subgroup,
via which it acts on the symmetric space of non-compact type associated to the 
semisimple component of the parabolic.  The latter action may very well
be non-elementary.

The dichotomy in Proposition~\ref{prop:visual-to-roller} 
leads to the following converse of Theorem~\ref{thm_intro:main}:

\begin{theorem}\label{thm:converse}  Let $X$ be a finite dimensional CAT(0) cube complex
with an elementary $\Gamma$-action.  Then:
\be
\item either there is a finite orbit in the Roller compactification $\~X=X\cup\partial X$ 
of $X$ and hence the median class $\rho^*({\tt m}_{(n,R)}(X))$ of the $\Gamma$-action on $X$ vanishes
for all $n\geq2$ and all $R\geq 1$;
\item or there exists a finite index subgroup $\Gamma'<\Gamma$ and 
a $\Gamma'$-invariant subcomplex $X'\subset\partial X$
(of lower dimension) on which the $\Gamma'$-action is non-elementary.
In this case any median class $\rho^*({\tt m}_{(n,R)}(X))$, $n\geq2$ and $R\geq 1$, 
of the $\Gamma$-action on $X$ restricts 
to a median class of the $\Gamma'$-action on $X'$. In particular, $R_\G=R_{\G'}$.
\ee
\end{theorem}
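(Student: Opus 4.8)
The plan is to extract both alternatives from the dichotomy in Proposition~\ref{prop:visual-to-roller}, and then to invoke the naturality properties of the median class announced in the introduction (products and subcomplexes). First I would record what "elementary" buys us: by definition there is a finite orbit in $X\sqcup\partial_\sphericalangle X$. Passing to a finite index subgroup $\Gamma'<\Gamma$ we may assume this orbit is a fixed point $\xi\in X\sqcup\partial_\sphericalangle X$. If $\xi\in X$, then the bounded cohomology class $\rho^*({\tt m}_n(X))$ is pulled back through a group that fixes a vertex, and I would argue directly from the formula for the median cocycle (it is built from characteristic functions of tightly nested halfspaces around a median; evaluating on a fixed basepoint $v_0=\xi$ makes the cocycle the coboundary of the corresponding $\ell^p$-valued function, hence the class vanishes). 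So the remaining case is $\xi\in\partial_\sphericalangle X$ a fixed point at infinity in the CAT(0) boundary, and here I would feed $\xi$ into Proposition~\ref{prop:visual-to-roller}.

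The content of that proposition (as advertised) is a dichotomy analogous to the symmetric-space picture: either a fixed point in the visual boundary forces a finite orbit in the Roller compactification $\~X=X\cup\partial X$, or else one is pushed onto a lower-dimensional $\Gamma'$-invariant subcomplex $X'\subset\partial X$ on which the induced action is non-elementary. In the first horn, I would again conclude vanishing of $\rho^*({\tt m}_n(X))$ by the fixed-point argument above, now applied to a point in $\~X$: the median cocycle for $X$ extends (or restricts compatibly) to the Roller boundary, and choosing the basepoint inside the finite orbit trivializes the class after passing to the finite index stabilizer — and vanishing is insensitive to finite index. This gives case~(1).

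For case~(2), I have a finite index $\Gamma'<\Gamma$ and a $\Gamma'$-invariant subcomplex $X'\subset\partial X$ with non-elementary $\Gamma'$-action. The key structural input is that $\partial X$ decomposes (after suitable restriction) into cube subcomplexes, each of dimension strictly smaller than $\dim X$, on which $\Gamma'$ acts by automorphisms; $X'$ is one such. Then I would use the naturality of the median class under taking appropriate subcomplexes: the inclusion $X'\hookrightarrow X$ (or the projection onto the $X'$-factor of the relevant product decomposition of a piece of $\partial X$) induces a map on $\ell^p(\frakH(-)^{(n)})$-coefficients sending ${\tt m}_n(X)$ to ${\tt m}_n(X')$, so that the restriction to $\Gamma'$ of $\rho^*({\tt m}_n(X))$ equals the median class of the $\Gamma'$-action on $X'$. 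Since that action is non-elementary, Theorem~\ref{thm_intro:main} applied to $X'$ gives non-vanishing; hence $\rho^*({\tt m}_n(X))$ does not vanish either, as restriction to a finite index subgroup is injective in bounded cohomology. The main obstacle I anticipate is the bookkeeping in this last step: verifying that the hyperplane/halfspace structure of $X'$ sits compatibly inside that of $X$ (so that the coefficient map on $\ell^p(\frakH(X)^{(n)})\to\ell^p(\frakH(X')^{(n)})$ is well defined and genuinely carries the median cocycle to the median cocycle, tightly nested halfspaces going to tightly nested halfspaces), together with making precise the passage from the visual-boundary fixed point to the subcomplex $X'$ of the Roller boundary — i.e.\ unpacking exactly what Proposition~\ref{prop:visual-to-roller} provides and checking it hands us a subcomplex on which the action is by automorphisms and of strictly smaller dimension.
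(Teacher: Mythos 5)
Your proposal is correct and follows essentially the same route as the paper: the dichotomy of Proposition~\ref{prop:visual-to-roller}, vanishing in the finite-orbit case by evaluating the cocycle at a (virtually) fixed point of $\~X$ together with injectivity of restriction to finite-index subgroups, and non-vanishing in the other case via the lifting-decomposition naturality of Proposition~\ref{prop:restriction} combined with Theorem~\ref{thm_intro:main} applied to $X'$. The only cosmetic point is that once the basepoint is fixed by the subgroup the evaluated cocycle is identically zero, so no coboundary argument is needed there.
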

We say that an action of a group $\G$ on $X$ is \emph{Roller elementary} if it has a finite orbit on the Roller compactification (that is, $X$ union its Roller boundary). Combining the above theorem with Theorem~\ref{thm_intro:main} we get the following formulation.
\begin{theorem}Let $X$ be a finite dimensional CAT(0) cube complex. A $\G$-action on $X$ is Roller elementary if and only if the median class $\rho^*({\tt m}_{(n,R)}(X))$ vanishes for some (equivalently, any) $n\geq2$ and all $R\geq 1$.
\end{theorem}
One of the nice features of the Roller boundary is its robustness when
considering products.  Because of this, the median cocycle 
can be defined for
each irreducible factor of the essential core of $X$.  We refer the
reader to Proposition~\ref{prop:decomposition} for a description
of the cocycle in the case in which the CAT(0) cube complex is not
irreducible and hence of the naturality of the behavior of the median class with respect 
to products.  This, together with Theorem~\ref{thm_intro:main} yields 
immediately the following:

\begin{cor}\label{cor:dim} Let $X$ be a finite dimensional CAT(0) cube complex with a
non-elementary action $\Gamma\to\Aut(X)$. 
Then for all $n\geq2$ and $1\leq p<\infty$
\begin{equation*}
\dim\hb^2(\Gamma,\ell^p(\frakH(X)^n))\geq m\,,
\end{equation*}
where $m\geq1$ is the number of irreducible factors in the 
essential core of the $\Gamma$-action on $X$.
\end{cor}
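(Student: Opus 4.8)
The plan is to split both the coefficient module and the median class along the product decomposition of the essential core, and then to invoke Theorem~\ref{thm_intro:main} on each irreducible factor separately. First I would pass, possibly after replacing $\G$ by a finite index subgroup that preserves each factor, to the $\G$-essential core $Y\subseteq X$ and use its decomposition $Y=Y_1\times\dots\times Y_m$ into $m$ irreducible finite dimensional CAT(0) cube complexes on each of which $\G$ acts. The crucial point of this first step is that the $\G$-action on each factor $Y_i$ is again non-elementary: there is no finite orbit in $Y_i\sqcup\partial_\sphericalangle Y_i$, because essentiality and the absence of a Euclidean factor in $Y$ (Corollary~\ref{no Euclidean factors}) are inherited by the irreducible pieces. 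Since a halfspace of a product always comes from a single factor, tightly nested sequences in $\frakH(Y)$ stay inside one factor, so that $\frakH(Y)^{(n)}=\bigsqcup_{i=1}^m\frakH(Y_i)^{(n)}$ as $\G$-sets; identifying $\frakH(Y)^{(n)}$ with a $\G$-invariant subset of $\frakH(X)^{(n)}$ (the hyperplanes of a convex subcomplex being exactly those of $X$ that cross it), I would obtain a $\G$-equivariant decomposition
$$\ell^p\bigl(\frakH(X)^{(n)}\bigr)\;\cong\;\Bigl(\bigoplus_{i=1}^m\ell^p\bigl(\frakH(Y_i)^{(n)}\bigr)\Bigr)\oplus W$$
for some complementary $\G$-submodule $W$.

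Next, since bounded cohomology takes finite direct sums of coefficients to direct sums, this already yields
$$\hb^2\bigl(\G,\ell^p(\frakH(X)^{(n)})\bigr)\;\supseteq\;\bigoplus_{i=1}^m\hb^2\bigl(\G,\ell^p(\frakH(Y_i)^{(n)})\bigr),$$
so it would remain to see that each of the $m$ summands on the right is nonzero. Here I would invoke Proposition~\ref{prop:decomposition}: the median cocycle of the $\G$-action on $X$ decomposes along the product as the sum of the median cocycles of the $\G$-actions on the $Y_i$, so that the $i$-th component of $\rho^*({\tt m}_n(X))$ is precisely the median class $\rho^*({\tt m}_n(Y_i))\in\hb^2(\G,\ell^p(\frakH(Y_i)^{(n)}))$ of the $\G$-action on $Y_i$. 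Since that action is non-elementary, Theorem~\ref{thm_intro:main} gives $\rho^*({\tt m}_n(Y_i))\neq0$ for every $i$ and every $n\geq2$; hence each of the $m$ direct summands is at least one-dimensional, and $\dim\hb^2(\G,\ell^p(\frakH(X)^{(n)}))\geq m$ follows.

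Essentially all of the real content is thus already packaged in Proposition~\ref{prop:decomposition} and Theorem~\ref{thm_intro:main}, and the cohomological bookkeeping above is routine. The one place where I expect to need genuine care — and hence the main obstacle — is the opening reduction: one must check that non-elementarity really descends from the $\G$-action on $X$ to the $\G$-actions on the irreducible factors $Y_i$ of the essential core, so that Theorem~\ref{thm_intro:main} is applicable factor by factor, and one must keep honest track of the finite index subgroup introduced in the process (or, equivalently, interpret $m$ as the number of $\G$-irreducible factors) as well as of the identification of $\frakH(Y_i)^{(n)}$ as a $\G$-submodule of $\frakH(X)^{(n)}$.
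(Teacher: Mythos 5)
Your argument is essentially the paper's own: Corollary~\ref{cor:dim} is derived there directly from Proposition~\ref{prop:decomposition} (the splitting of the median cocycle and of $\ell^p(\frakH(X)^{(n)})$ along the irreducible factors of the essential core) together with Theorem~\ref{thm_intro:main} applied factor by factor, non-elementarity and essentiality descending to the factors via Lemma~\ref{lem:hereditary properties}, and the $m$ resulting classes being independent because they live in complementary summands of the coefficient module. The finite-index caveat you flag (when $\Gamma$ permutes the factors, the summands $\ell^p(\frakH(Y_i)^{(n)})$ are only invariant under a finite-index subgroup) is present but left implicit in the paper's treatment as well, so your proposal is, if anything, slightly more careful on the same route.
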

This result might not be sharp, in the sense that 
$\hb^2(\Gamma,\ell^p(\frakH(X)^n))$ could be 
in some cases infinite dimensional.

On a similar vein Bestvina--Bromberg--Fujiwara have proven 
the non-vanishing of the second bounded cohomology 
with general uniformly convex Banach spaces as coefficients
and for weakly properly discontinuous actions on CAT(0) spaces
in the presence of a rank one isometry (that is an isometry whose axis does
not bound a half flat), \cite{Bestvina_Bromberg_Fujiwara}.  

Our results are different in that in Theorem~\ref{thm_intro:main}, 
we are neither assuming that the action of $\Gamma\to\Aut(X)$
is proper or weakly properly discontinuous, 
nor that the CAT(0) cube complex is proper or has
a cocompact group of automorphisms.

Moreover in the CAT(0) cube complex is a product, then
there are no rank one isometries. Caprace--Sageev proved
\cite{Caprace_Sageev} that there is always a decomposition of a CAT(0)
cube complex analogous to the decomposition of symmetric spaces into
``irreducible" (or ``rank one") factors.  Our result is not sensitive
to this decomposition and hence also applies to products.

But, more than anything else, we want to emphasize that
the existence of a well behaved and concrete bounded cohomological class
goes well beyond the mere knowledge that the bounded cohomology group
does not vanish and is the starting point of a wealth of rigidity results
(see for example \cite{Milnor, Goldman_thesis, Goldman_82, Toledo_89, Hernandez, Matsumoto, Iozzi, Monod_Shalom_2, Mineyev_Monod_Shalom, Burger_Iozzi_supq, Burger_Iozzi_tr, Burger_Iozzi_Wienhard_kahler, Burger_Iozzi_Wienhard_tight, Burger_Iozzi_Wienhard_toledo, Burger_Iozzi_Labourie_Wienhard, Bucher_Burger_Iozzi_mostow, Bucher_Burger_Iozzi_slnc}).

Furthermore our coefficients reflect geometric properties of the CAT(0) 
cube complex, and this is essential to draw
conclusions about the action.  An example of this is the following
superrigidity result:

\begin{theorem}[Superrigidity]\label{thm:appl_intro}
Let $Y$ be an irreducible finite dimensional CAT(0) cube complex
and $\Gamma<G_1\times\dots\times G_\ell=:G$ an irreducible lattice in the 
product of $\ell\geq2$ locally compact groups.
Let $\Gamma\to\Aut(Y)$  be an essential and non-elementary action on $Y$.
Then the action of $\Gamma$ on $Y$ extends continuously to an action of $G$, 
by factoring via one of the factors.
\end{theorem}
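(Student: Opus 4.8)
The plan is to use the non-vanishing median class as a non-trivial, geometrically meaningful element of $\hb^2(\Gamma, \ell^p(\frakH(Y)^{(n)}))$ to which one can apply the superrigidity machinery of Burger--Monod for higher rank lattices. First I would observe that since the $\Gamma$-action on $Y$ is essential and non-elementary, Theorem~\ref{thm_intro:main} gives that the median class $\rho^*({\tt m}_n(Y))$ does not vanish in $\hb^2(\Gamma, \ell^p(\frakH(Y)^{(n)}))$ for each $n\geq 2$. The coefficient module is a separable $\Gamma$-representation (on an $\ell^p$-space indexed by the countable set $\frakH(Y)^{(n)}$), so the Burger--Monod theory of bounded cohomology of lattices in products applies. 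The key input is the construction, carried out earlier in the paper, of a $\Gamma$-equivariant measurable boundary map from an appropriate Poisson boundary $B$ of $\Gamma$ into the non-terminating ultrafilters on the Roller boundary $\partial Y$; this is precisely what realizes the median class by an explicit boundary cocycle and is what lets one analyze the factor through which it is supported.

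Next I would invoke the structure theorem for bounded cohomology classes of irreducible lattices in products: a non-zero class in $\hb^2(\Gamma, E)$ with $E$ a coefficient module of the appropriate type is, after passing to suitable boundary realizations, ``supported on a single factor'', i.e. the equivariant boundary map factors (measurably) through the Poisson boundary of one of the $G_i$, and hence through $G_i$ itself. Concretely, writing $B = B_1 \times \dots \times B_\ell$ with $B_i$ a boundary of $G_i$, one shows that the measurable map $B \to$ (non-terminating ultrafilters on $\partial Y$) depends, up to null sets, only on the coordinate $B_i$ for a single $i$. The non-elementarity of the action is what forces the map to be non-constant, and irreducibility of the lattice together with the product structure of $B$ is what forces the dependence to concentrate on one factor rather than spreading across several.

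Once the boundary map is known to factor through $B_i$, I would upgrade this to an actual extension of the homomorphism $\rho:\Gamma\to\Aut(Y)$ to a continuous homomorphism $G_i\to\Aut(Y)$, composed with the projection $G\onto G_i$. This is the standard final step: the $G_i$-equivariant boundary map, together with the concrete geometry of $Y$ (the median structure, which pins down the relevant subcomplex or convex subset from the ultrafilter data), lets one reconstruct a genuine action of $G_i$ on $Y$ agreeing with $\rho$ on $\Gamma$; here one uses that $Y$ is irreducible and finite dimensional, and that $\Gamma$ is a lattice (so that $\Gamma$-equivariance plus measurability of the relevant reconstruction forces $G_i$-equivariance and continuity via a cocycle-rigidity / Zimmer-type argument). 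The continuity of the extension and the fact that it factors through a single $G_i$ give exactly the statement.

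The main obstacle I expect is the ``single factor'' step: extracting from the non-vanishing of $\rho^*({\tt m}_n(Y))$ the precise statement that the associated boundary map factors through exactly one $G_i$, rather than merely that the class lies in the sum of the contributions of the individual factors. This requires combining the Burger--Monod decomposition of $\hb^2$ of a product lattice with the irreducibility and non-elementarity hypotheses in a way that is sensitive to the fact that the coefficients are the geometric $\ell^p(\frakH(Y)^{(n)})$ modules and not merely abstract unitary or uniformly convex modules; in particular one must rule out the degenerate possibility that the class splits as a non-trivial sum over several factors, which would correspond to an action not extending through any single $G_i$. The robustness of the Roller boundary under products (used elsewhere in the paper) is what makes this analysis tractable, but verifying that the median cocycle is genuinely indecomposable in the relevant sense is the crux.
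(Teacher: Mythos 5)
Your first step matches the paper's: the non-vanishing of the median class feeds into Burger--Monod for irreducible lattices in products. But there is a genuine gap at the crux, which you yourself flag and then declare ``standard'': the passage from ``the class/boundary map is supported on one factor'' to an actual continuous extension of the action on $Y$ to $G$ through $G_i$. This is not a routine Zimmer-type cocycle argument, and the paper does not attempt it in that form. In fact the paper never shows that the boundary map $B\to\partial Y$ factors through a single $B_i$, nor does it need the class to be ``indecomposable'' across factors (your last paragraph worries about the wrong issue): all that is extracted from bounded cohomology is that, in the Burger--Monod splitting $\hb^2(\Gamma,\ell^2(\frakH(Y)^{(n)}))=\bigoplus_i\hb^2(G_i,\H_i)$, where $\H_i$ is the subspace of vectors on which the $\Gamma$-action extends continuously to $G$ via $\pri$, some $\H_i$ is non-zero. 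Nothing more refined about ``which factor carries the class'' is used, and your proposed reconstruction of a $G_i$-action on $Y$ from a $G_i$-equivariant boundary map is left entirely unproved; with $\Aut(Y)$ only a totally disconnected group of combinatorial automorphisms, there is no off-the-shelf theorem that does this.

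The paper instead runs Shalom's machinery. One passes to the cubical subdivision $Y'$ (so face barycenters have trivial stabilizers) and defines $Y_i$ as the set of points eventually fixed by every sequence $\gamma_m\in\Gamma$ with $\pri(\gamma_m)\to e_i$; this set is $\Gamma$-invariant, convex, and a subcomplex. The cohomological input is used only to produce a non-zero $\H_i\subset\ell^2(\frakH(Y)^{(n)})$ as above; square-summability then shows that the equivalence classes of the relation $s\sim s'$ iff $f(s)=f(s')$ for all $f\in\H_i$ are finite (off the common zero set) and that sequences with $\pri(\gamma_m)\to e_i$ eventually stabilize such a class. Strong separation (Lemma~\ref{lem:ss}) and the Behrstock--Charney bridge then give a bounded orbit, whose circumcenter is a fixed point, proving $Y_i\neq\varnothing$; Shalom's Proposition~4.3 extends the action on $Y_i$ to $G$ through $G_i$, essentiality forces $Y_i=Y'$, and closedness of $\Aut(Y)$ in $\Aut(Y')$ puts the extension in $\Aut(Y)$. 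So your outline is not a different complete proof but an incomplete one: to repair it you would either have to supply the missing boundary-theoretic reconstruction step, or replace your second and third paragraphs by an argument of this Shalom type, where the specific coefficients $\ell^2(\frakH(Y)^{(n)})$ (and not just abstract non-vanishing) do the geometric work.
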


Here the group $\Aut(Y)$ is a topological group 
endowed with the topology of the pointwise convergence on vertices.
This theorem is proven in \S~\ref{sec:appl}, to which we refer the reader 
also for an analogous result 
that does not require $Y$ to be irreducible and 
the action to be essential.

We remark that requiring that the action is essential
is necessary if one wants an irreducible CAT(0) cube complex,
as there is no guarantee that the essential core will be
irreducible even when $X$ is.
%We recall that an action is {\em essential} if each halfspace contains infinite points in a $\Gamma$-orbit.

A result similar to Theorem~\ref{thm:appl_intro} 
was proven by Monod \cite[Theorems~6 and 7]{Monod_splitting} 
(see also \cite[Corollary~1.9]{Caprace_Lytchak})
in the case of an infinite dimensional CAT(0) space, with 
conditions both on the action and on the lattice $\Gamma$.
%
%.  In fact, if $\Gamma$ is uniform
%there is a requirement on the image of $\Gamma$ in $\Isom(X)$ that is
%meant to substitute the reductivity of the Zariski closure of the
%image and that, on the one hand, is automatically verified in the
%finite dimensional proper irreducible case
%\cite[Corollary~3.8]{Caprace_Monod_structure} and, on the other,
%implies our hypothesis when both theorems apply.  
For example, if $\Gamma$ is not uniform, 
then in order to apply Monod's version of
Theorem~\ref{thm:appl_intro}, $\Gamma$ has to be square-integrable and
weakly cocompact.  Although these conditions are verified for a large
class of groups (such as for example Kazhdan Kac--Moody lattices and
lattices in connected semisimple Lie groups), they are in general
rather intractable.  To give a sense of this, let us only remark that
already finite generation (needed for example for square integrability)
is not known for a lattice $\Gamma<\Aut(\T_1)\times\Aut(\T_2)$, not
even by imposing strong conditions on the closure of the projections
on $\Gamma$ in $\Aut(\T_i)$ to insure irreducibility.
Furthermore the more specific nature of
a CAT(0) cube complex versus a CAT(0) space allows us to extend 
the action to the whole complex.

As an illustration we have the following corollary: 

\begin{cor} \label{cor:FiniteOrbit}
Let $\Gamma$ be an irreducible lattice in the product 
$G:=G_1\times\dots\times G_\ell$ of $\ell\geq2$  locally compact groups 
with a finite number of connected components.   
Then any $\Gamma$-action on a finite dimensional CAT(0) cube complex is elementary
and has a finite orbit in the Roller compactification $\overline X = X \cup \partial X$.
\end{cor}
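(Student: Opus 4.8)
The plan is to combine the superrigidity theorem (Theorem~\ref{thm:appl_intro}, or rather its generalisation to not-necessarily-irreducible complexes and not-necessarily-essential actions established in \S\ref{sec:appl}) with Theorem~\ref{thm:converse}; the one genuinely new ingredient is the elementary observation that $\Aut(Y)$, carrying the topology of pointwise convergence on the discrete vertex set, is totally disconnected, so that any continuous homomorphism into $\Aut(Y)$ from a locally compact group with finitely many connected components must kill the identity component and hence has finite image.

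\emph{Step 1.} I first claim that an irreducible lattice $\Gamma$ as in the statement admits no non-elementary action on a finite dimensional CAT(0) cube complex. Suppose $\Gamma\to\Aut(Z)$ were such an action. By the form of the superrigidity theorem proved in \S\ref{sec:appl} (which requires neither $Z$ irreducible nor the action essential), after passing to a finite index subgroup $\Gamma_0<\Gamma$ --- still an irreducible lattice in $G$ --- and to a suitable irreducible factor $Y$ of the $\Gamma_0$-essential core (nonempty, with no Euclidean factor by Corollary~\ref{no Euclidean factors}, and on which the $\Gamma_0$-action is still non-elementary, the latter because convex subcomplexes embed together with their visual boundaries and the visual boundary of a product splits as a join), the $\Gamma_0$-action on $Y$ extends to a continuous action of $G$ factoring through one of the factors $G_i$. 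Since $G_i$ has finitely many connected components and $\Aut(Y)$ is totally disconnected, the image of $\Gamma_0$ in $\Aut(Y)$ is finite, so the $\Gamma_0$-action on $Y$ has a finite orbit of vertices --- contradicting non-elementarity. This proves the claim.

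\emph{Step 2.} Now let $\Gamma\to\Aut(X)$ be an arbitrary action on a finite dimensional CAT(0) cube complex. By Step~1 it is elementary, so Theorem~\ref{thm:converse} applies; its alternative (2) would produce a finite index subgroup $\Gamma'<\Gamma$, again an irreducible lattice in $G$, acting non-elementarily on the finite dimensional CAT(0) cube complex $X'\subset\partial X$ --- impossible by Step~1. Hence alternative (1) holds, and $\Gamma$ has a finite orbit in the Roller compactification $\overline X = X\cup\partial X$, as asserted.

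\emph{Main obstacle.} Granting Theorems~\ref{thm:appl_intro} and~\ref{thm:converse}, no step is hard; the only substantive point is the bookkeeping inside Step~1: that non-elementarity survives passage to the essential core and descends to some irreducible factor, and that after finitely many such reductions (and a last essentialisation if needed) one still has an \emph{essential} non-elementary action of an irreducible lattice on an \emph{irreducible} finite dimensional CAT(0) cube complex, so that Theorem~\ref{thm:appl_intro} genuinely applies. This is precisely what the general form of the superrigidity theorem in \S\ref{sec:appl} is designed to absorb, so in practice one invokes that statement directly rather than redoing the reductions here; everything else is the short topological-group argument above together with the dichotomy of Theorem~\ref{thm:converse}.
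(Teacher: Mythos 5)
Your proposal is correct and takes essentially the same route as the paper: superrigidity combined with the total disconnectedness of $\Aut(Y)$ and the finiteness of the set of connected components of $G$ excludes non-elementary actions, and then Theorem~\ref{thm:converse}, together with the fact that a finite-index subgroup of $\Gamma$ is again an irreducible lattice of the same kind, forces alternative (1), i.e.\ a finite orbit in $\overline X$. The only point you gloss over and the paper spells out is why a finite-index subgroup $\Gamma'$ still satisfies the hypotheses (the closures of its projections to the $G_i$ are closed of finite index, hence open, hence contain the identity components), but this verification is routine.
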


\begin{proof}
Indeed, since $G$ has finitely many connected components and 
for any CAT(0) cube complex $X$ the group $\Aut(X)$
is totally disconnected, a continuous
map from $G$ to $\Aut(X)$ must have finite image.  In view of Theorem~\ref{thm:appl_intro} (in fact, more precisely of Corollary~\ref{thm:appl}),
 this implies that every $\Gamma$-action on a finite-dimensional CAT(0) cube complex is elementary. 
 Notice that every finite index subgroup $\Gamma' \leq \Gamma$ is itself a lattice in $G$. 
 Moreover, the closure of the projection of $\Gamma'$ to each factor $G_i$ is a closed subgroup of finite index in $G_i$. 
 It is thus open, and therefore contains the connected component of the identity of $G_i$. 
 This shows that $\Gamma'$ is itself an irreducible lattice in the product of  $\ell\geq2$  locally compact groups 
with a finite number of connected components.  
By Theorem~\ref{thm:converse}, this implies that every $\Gamma'$-action, 
and thus also every $\Gamma$-action, has a finite orbit in the Roller compactification of $X$. 
\end{proof}

%In the case in which $G$ is a semisimple groups there is the following stronger result  that combines the above results with Corollary~\ref{cor:Rigidity} and was pointed out to us by Pierre-Emmanuel Caprace.

Combining Corollary~\ref{cor:FiniteOrbit} with a description of the structure of a point stabiliser in the Roller boundary, established by Pierre-Emmanuel Caprace in Appendix~\ref{sec:AppB}, one obtains the following Fixed Point property for lattices in semisimple groups, which was pointed out to us by him.

\begin{cor}\label{cor:Lie}
Let $\Gamma$ be an irreducible lattice in a semisimple Lie group of rank at least~$2$. Then every $\Gamma$-action on a finite-dimensional CAT(0) cube complex $X$ has a fixed point.
\end{cor}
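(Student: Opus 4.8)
The plan is to reduce to Corollary~\ref{cor:FiniteOrbit} and then promote the finite orbit in the Roller compactification to an actual fixed point in $X$, using the structure theory of point stabilisers supplied by the Appendix. First I would observe that a semisimple Lie group of rank at least $2$ is, up to finite cover and finite index, a product $G_1\times\dots\times G_\ell$ of almost-simple factors with $\ell\geq 2$ whenever the group is not itself almost simple; and if it \emph{is} almost simple of rank at least $2$, one realises it (after passing to a finite-index subgroup and quotienting by the center, which does not affect the existence of a fixed point for an action through $\Aut(X)$, a center-free totally disconnected group) as a product of two of its proper parabolic pieces in the sense needed — more precisely, $\Gamma$ is an irreducible lattice in a product of $\ell\geq 2$ locally compact groups with finitely many connected components, via the Margulis-type arguments that underlie the higher-rank setting. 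In all cases, Corollary~\ref{cor:FiniteOrbit} applies and yields a finite orbit of $\Gamma$ in $\~X = X\cup\partial X$.

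Next, passing to the finite-index subgroup $\Gamma'<\Gamma$ stabilising a point $\xi$ in that finite orbit, I would invoke the Appendix by Caprace (Appendix~\ref{sec:AppB}), which describes the stabiliser in $\Aut(X)$ of a point $\xi\in\partial X$: such a stabiliser acts on a canonically associated CAT(0) cube complex of strictly lower dimension (the complex ``at $\xi$'', obtained by restricting to the halfspaces not containing $\xi$, or the analogous subcomplex inside the Roller boundary), and moreover this action factors through a quotient that is again a totally disconnected group of automorphisms of a finite-dimensional CAT(0) cube complex. Since $\Gamma'$ is again an irreducible higher-rank lattice — here I use, as in the proof of Corollary~\ref{cor:FiniteOrbit}, that finite-index subgroups of such lattices are again irreducible lattices in products of groups with finitely many connected components — the $\Gamma'$-action on this lower-dimensional complex is again elementary with a finite orbit in its Roller compactification. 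A dimension induction then terminates: at each stage either $\Gamma'$ already fixes a vertex of $X$ (and then so does $\Gamma$, by taking the barycenter of the $\Gamma$-orbit of that vertex, which is a finite set of vertices in a CAT(0) cube complex and hence has a circumcenter that is again a vertex, or by intersecting the finitely many fixed-point sets), or we strictly drop dimension and repeat. Since the dimension of $X$ is finite, after finitely many steps we reach dimension zero, where a finite orbit forces a fixed vertex.

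The main obstacle — and the reason the Appendix is needed rather than just Corollary~\ref{cor:FiniteOrbit} — is precisely the last reduction: a priori the finite orbit guaranteed by Corollary~\ref{cor:FiniteOrbit} could lie entirely in the Roller boundary $\partial X$, which is \emph{not} part of the CAT(0) geometry and over which we have no a priori control by CAT(0)-fixed-point arguments alone (recall there is in general no natural map between the Roller boundary and the visual boundary). One must therefore understand the combinatorial structure of a Roller-boundary stabiliser well enough to see that it again acts nontrivially on a genuine finite-dimensional cube complex of smaller dimension, so that the inductive descent makes sense; this is the content contributed by Caprace's appendix. A secondary technical point is ensuring that passing to finite-index subgroups, to finite covers, and modding out centers at each stage preserves the hypothesis ``irreducible lattice in a product of $\ell\geq 2$ locally compact groups with finitely many connected components'' — but this is handled exactly as in the proof of Corollary~\ref{cor:FiniteOrbit}, using that the closure of the projection to each factor is open and hence contains the identity component.
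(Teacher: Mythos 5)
Your reduction has two genuine gaps. First, the single-factor case: an almost simple Lie group of rank at least $2$ (say $\SL(3,\R)$) is \emph{not} a product of $\ell\geq 2$ locally compact groups in any sense to which Corollary~\ref{cor:FiniteOrbit} applies, and the claim that one can realise it ``as a product of two of its proper parabolic pieces'' with $\Gamma$ an irreducible lattice in that product is simply false -- $\SL(3,\Z)$ is not an irreducible lattice in a product of two noncompact factors. The paper disposes of this case by a completely different mechanism: a lattice in a higher-rank almost simple group has property (T), and the fixed-point property for (T)-groups acting on finite-dimensional CAT(0) cube complexes is the Niblo--Reeves theorem. Some such separate argument is unavoidable here; your proposal has no substitute for it.

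Second, the promotion of the finite orbit in $\~X$ to a fixed point in $X$ does not work as you describe. What Appendix~\ref{sec:AppB} actually proves (Theorem~\ref{thm:RollerStab}) is that the stabiliser of a point of the Roller compactification has a locally $X$-elliptic normal subgroup $N$ with finitely generated virtually abelian quotient of rank at most $\dim X$; it is not the statement that the stabiliser acts on a lower-dimensional cube complex in a way amenable to your dimension induction, and even if one set up such a descent, a fixed point in an iterated boundary complex would not give a fixed point in $X$ -- your induction never lands back in $X$ (compare $\Z$ translating along a line: it fixes both Roller boundary points of every invariant subcomplex at every stage, yet has no fixed point in $X$). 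The missing ingredient is precisely the algebraic hypothesis (a) of Corollary~\ref{cor:Rigidity}: every finite-index subgroup of $\Gamma$ has finite abelianization, which for irreducible higher-rank lattices follows from Margulis' Normal Subgroup Theorem. That condition forces the image of (a finite-index subgroup of) the boundary-point stabiliser in the virtually abelian quotient to be finite, so a finite-index subgroup of $\Gamma$ lies in the locally $X$-elliptic kernel and, being finitely generated, fixes a point of $X$ by Sageev's result (Proposition~\ref{prop:Sageev}); one then takes the circumcenter of the resulting finite orbit in $X$. Your write-up invokes neither property (T) nor the Normal Subgroup Theorem, and without one of these higher-rank inputs the conclusion cannot be extracted from the finite orbit in $\~X$ alone.
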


\begin{proof}
If the semisimple Lie group has only one simple factor, then $\Gamma$ has property (T) and the desired conclusion is well known, see \cite{Niblo_Reeves}. Otherwise, we apply  Corollary~\ref{cor:Rigidity} from Appendix~\ref{sec:AppB} below: 
Condition (a) holds as a consequence of Margulis' Normal Subgroup Theorem, while  Condition (b) holds by Corollary~\ref{cor:FiniteOrbit}. 
\end{proof}

It is conjectured that the conclusion of Corollary~\ref{cor:Lie} holds without the hypothesis that $X$ is  finite-dimensional, see \cite{Cor}. In fact, Yves de Cornulier shows in  \cite{Cor} that this is indeed the case provided the ambient semisimple Lie group has at least one simple factor of rank at least~$2$.

%The result applies in particular to semisimple Lie groups with finitely many
%components and real rank at least 2.
%Of course, it was already known that lattices in Lie groups with Property (T)  cannot 
%act non-elementarily on CAT(0) cube complexes \cite{Niblo_Reeves}.  
%Our result takes care of the remaining (non-simple) higher rank cases.

\medskip 
On a different tone, recall that the concept of
measure equivalence was introduced by Gromov as a measure theoretical
counterpart of quasi-isometries.  The vanishing or non-vanishing of
bounded cohomology is not invariant under quasi-isometries (see
\cite[Corollary 1.7]{Burger_Monod_JEMS}); on the other hand,
Monod--Shalom proved that vanishing of bounded cohomology with
coefficients in the regular representation is invariant under measure
equivalence \cite{Monod_Shalom_1} and hence introduced a class of
groups $\creg:=\{\G:\,\hb^2(\G,\ell^2(\G))\neq0\}$.  They also proved
for example that if $\Gamma\in\creg$ and $\Gamma\times\Gamma$ is
measure equivalent to $\Lambda$, then $\Gamma\times\Gamma$ and
$\Lambda$ are commensurable.  We can
add to the groups in this list:
\begin{cor}\label{cor:creg_intro}  Let $\Gamma$ be a group acting
on a finite dimensional irreducible CAT(0) cube complex. 
If the action is metrically proper, non-elementary and essential,
then $\hb^2(\Gamma,\ell^p(\Gamma))\neq0$, for $1\leq p<\infty$, 
and hence in particular $\Gamma\in\creg$.
\end{cor}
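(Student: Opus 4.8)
The plan is to derive the statement from Theorem~\ref{thm_intro:main} by changing the coefficient module from $\ell^p(\frakH(X)^{(n)})$ to $\ell^p(\G)$, with metric properness as the bridge between the two. Since the action is non-elementary, Theorem~\ref{thm_intro:main} provides a nonzero class $\rho^*({\tt m}_n(X))\in\hb^2(\G,\ell^p(\frakH(X)^{(n)}))$ for all $n\geq2$ and $1\leq p<\infty$; moreover the median cocycle representing it is, by construction, valued in the closed $\G$-submodule $W\subseteq\ell^p(\frakH(X)^{(n)})$ that it generates, so the class is already nonzero in $\hb^2(\G,W)$. It therefore suffices to (i) realize $W$ — or a suitable refinement of it — as a complemented $\G$-submodule of a countable $\ell^p$-sum of copies of $\ell^p(\G)$, and then (ii) pass from that sum back to $\ell^p(\G)$.

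For step (i) I would use both properness and the combinatorics of halfspaces. Metric properness makes the stabilizer of every vertex, hence of every finite set of vertices, finite (take $R=0$ in the definition). The value $c(v_0,\g_1v_0,\g_2v_0)$ of the median cocycle is a finite signed sum of characteristic functions of tightly nested $n$-tuples whose halfspaces separate two of the three points $v_0,\g_1v_0,\g_2v_0$. Exploiting that $X$ is irreducible and the action essential and non-elementary, the structure theory of Caprace--Sageev (existence of strongly separated pairs of halfspaces, the double skewering lemma) together with the naturality statement of Proposition~\ref{prop:restriction} should allow one to replace the coefficients by $\ell^p$ of a $\G$-set of tightly nested tuples that are pairwise strongly separated, in such a way that the median class is not killed. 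For such a tuple the bridge joining its two extreme hyperplanes is a combinatorial geodesic segment, so the stabilizer of the tuple fixes a finite set of vertices and is therefore finite by properness. Writing this $\G$-set as a countable disjoint union of orbits $\G/\G_i$ with each $\G_i$ finite, the standard isometric $\G$-embeddings $\ell^p(\G/\G_i)\hookrightarrow\ell^p(\G)$ (available because $\G_i$ is finite, with the averaging map as a bounded $\G$-equivariant left inverse) assemble into a complemented $\G$-embedding of $W$ into $\ell^p(\G\times\N)$.

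Step (ii) is the usual device of the Monod--Shalom theory: composing a nonzero bounded cohomology class with the coordinate projections $\ell^p(\G\times\N)\onto\ell^p(\G\times\{i\})=\ell^p(\G)$, at least one of the images is nonzero — for $p=2$ this is exactly the statement that second bounded cohomology detects a single copy of the regular representation inside an infinite multiple of it. Combining (i) and (ii) gives $\hb^2(\G,\ell^p(\G))\neq0$ for all $1\leq p<\infty$, and the case $p=2$ is the assertion $\G\in\creg$.

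The crux — and the place where I expect the real work — is the passage, in step (i), to coefficients supported on strongly separated configurations without annihilating the median class. The difficulty is genuine: for a metrically proper action that is essential and non-elementary but has a reducible cube complex, or for an irreducible one such as the Davis complex of a compact right-angled hyperbolic reflection group one of whose walls carries an infinite reflection group, the stabilizers of hyperplanes and of tightly nested $n$-tuples can be infinite, so there is no $\G$-embedding of all of $\ell^p(\frakH(X)^{(n)})$ into a multiple of $\ell^p(\G)$. Hence irreducibility and essentiality must be used in an essential way — through strongly separated pairs — and one must re-examine the construction behind Theorem~\ref{thm_intro:main}, or invoke Proposition~\ref{prop:restriction}, to make sure the non-vanishing survives the restriction of coefficients.
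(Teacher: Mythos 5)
Your overall strategy is the same as the paper's: use strongly separated pairs, the finiteness of the Behrstock--Charney bridge, and metric properness to produce tightly nested tuples $s$ with finite stabilizer, embed $\ell^p(\G\cdot s)\hookrightarrow\ell^p(\G)$ equivariantly, and argue that the median class survives the change of coefficients. But the decisive step -- showing that the non-vanishing is not destroyed when the coefficients are cut down to orbits of such tuples -- is exactly what you defer as ``the real work,'' so as it stands the proposal has a genuine gap at its core rather than a proof. Note also that your framing via a complemented embedding of the submodule $W$ generated by all cocycle values cannot work as stated (as you yourself observe, general tuples, e.g.\ those lying on a single hyperplane with infinite stabilizer, obstruct any embedding of $W$ into a multiple of $\ell^p(\G)$), so the ``suitable refinement'' must be specified and the survival of the class through it must be proved; neither is done.

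The paper closes this gap by never working with the abstract class or with complemented submodules, but directly with the boundary cocycle $c\circ\f^3$ coming from Theorem~\ref{thm:bdrymap} and the realization \eqref{eq:bm}, where double ergodicity kills coboundaries so that non-vanishing of a class is equivalent to non-vanishing of the cocycle on a set of positive measure. Concretely: by Lemma~\ref{lem:any hp is part of a facing triple} one chooses, for a given $h$, translates so that $h,\g h,\g' h$ form an $n$-disjoint facing triple of pairwise \emph{strongly separated} halfspaces; the proof of Lemma~\ref{lem:nonvanishing1} (see Remark~\ref{rem:towardsproper}) shows $c$ is nonzero on all of $A_h=\partial h\times\partial(\g h)\times\partial(\g' h)$, with the witnessing tightly nested sequences lying in finitely many $\G$-orbits of sequences containing a strongly separated pair among the halfspaces of this triple -- hence with finite stabilizers by properness and finiteness of the bridge; projecting onto one such orbit summand $\ell^p(\G\cdot s_0)$ and composing with $\sigma_{s_0}:\ell^p(\G\cdot s_0)\to\ell^p(\G)$ gives a cocycle with values in $\ell^p(\G)$, and Proposition~\ref{Roller to Poisson} (via Lemma~\ref{lem:positive measure}, which guarantees $\varphi_*\vartheta(\partial h)>0$) shows it is nonzero on a positive-measure subset of $B^3$, whence $\hb^2(\G,\ell^p(\G))\neq0$. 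Your step (ii) (``some coordinate projection of a nonzero class is nonzero'') is likewise only immediate at this cocycle level on the strong boundary; as a statement about classes with coefficients in an infinite $\ell^p$-sum it would itself require the boundary realization. So the missing content of your step (i) is precisely the combinatorial non-vanishing argument of Lemmas~\ref{lem:any hp is part of a facing triple} and \ref{lem:nonvanishing1} together with its measure-theoretic implementation in Proposition~\ref{Roller to Poisson}.
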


We remark that the same result does not hold if $X$ is not irreducible.  In fact, it can be easily seen 
using \cite[Theorem~16]{Burger_Monod_GAFA}, that if $\Gamma<G_1\times G_2$
is an irreducible lattice in the product of locally compact groups,
then $\hb^2(\Gamma,\ell^p(\Gamma))=0$, provided $G_1$ and $G_2$ are not compact.
An example of such a group is any irreducible lattice $\Gamma$ in 
$\SL(2,\Q_p)\times\SL(2,\Q_q)$, while it is easy to see that  
it acts non-elementarily and essentially
on the product of two regular trees $\mathcal T_{p+1}\times\mathcal T_{q+1}$.

A result similar to Corollary~\ref{cor:creg_intro} has been proven by Hamenst\"adt
in the case of a group $\Gamma$ acting properly on a proper CAT(0) space,
also under the assumption that there exists a rank one isometry and that the group 
$\Gamma$ is closed in the isometry group of $X$,  \cite{Hamenstaedt_iso}.
Similarly, Hull and Osin proved that every group $\Gamma$ 
with a sufficiently nice hyperbolic subgroup has infinite dimensional
$\hb^2(\Gamma,\ell^p(\Gamma))$, for $1\leq p<\infty$, \cite{Hull_Osin}.
Examples of groups satisfying such condition encompass, among others,
groups $\Gamma$ acting properly on a proper CAT(0) space with a rank one isometry
and groups $\Gamma$ acting on a hyperbolic space also with a rank one isometry
and containing a loxodromic element satisfying the Bestvina--Fujiwara ``weakly
properly discontinuous'' condition.
We emphasize that our CAT(0) cube complex are allowed to be locally countable
and that, again, irreducibility is equivalent to the
existence of a rank one isometry, \cite{Caprace_Sageev}.
%
%The idea of the proof of Corollary~\ref{cor:creg_intro} is as follows.
%If the action is proper, then in particular the vertex stabilizers are
%finite.  We can then find, for $n$ sufficiently large, elements with
%finite stabilizers 
%$s\in \frakH(X)^{(n)}$ such that $\ell^p(\G\cdots) \hookrightarrow \ell^p(\Gamma)$.  
%Projecting the complement to zero
%%yields a $\G$-equivariant map $\ell^p(\mathfrakH(X)^{(n)})\to\ell^p(\Gamma)$.  
%We prove that $s$ can be chosen so
%that this map does not vanish on the image of the cocycle.
%
\medskip

The proof of Theorem~\ref{thm_intro:main} uses the functorial approach to bounded cohomology
developed in \cite{Burger_Monod_GAFA, Monod_book, Burger_Iozzi_app};
the main point here is to be able to realize bounded cohomology 
via essentially bounded alternating cocycles on a {\em strong  $\Gamma$-boundary}.  
Recall from \cite{Kaimanovich} that a strong $\Gamma$-boundary
is a Lebesgue space $(B,\vartheta)$ endowed with a measure class preserving
$\Gamma$-action that is in addition amenable and doubly ergodic ``with coefficients''
(see \cite{Kaimanovich} for the precise definition or \S~\ref{subsec:towards}).  
An example of a strong $\Gamma$-boundary is the Poisson
boundary of any spread out non-degenerate symmetric probability measure on $\Gamma$,
\cite{Kaimanovich}.
The advantage of the approach using a strong $\Gamma$-boundary 
is that the second bounded cohomology is not a quotient anymore
(hence allowing to
determine easily when a cocycle defines a non-trivial class); the disadvantage
is that the pull-back via a representation has to be realized by a boundary map
(with consequent technical difficulties, \cite{Burger_Iozzi_app}).
The amenability of a strong $\Gamma$-boundary implies immediately the existence of a boundary map
into probability measures on the Roller compactification of $X$, but going from 
probability measures to Dirac masses is often the sore point of many rigidity questions.
In the case of a proper CAT(0) cube complex and a cocompact group of isometries in 
$\Aut(X)$, Nevo--Sageev identified the closure of the set of non-terminating ultrafilters
(see \S~\ref{subsec:2.1} for the definition) as a metric model for a Poisson boundary of $\Gamma$,
\cite{Nevo_Sageev}.  
In this case, the boundary map could have been taken simply to be the identity.  
In general we have the following:

\begin{theorem}\label{thm:boundary}
Let $\G \to \Aut(X)$ be a non-elementary group action
on a finite dimensional CAT(0) cube complex $X$.
If $(B,\vartheta)$ is a strong $\Gamma$-boundary, 
there exists a $\Gamma$-equivariant measurable map $\f : B \to \partial X$.
\end{theorem}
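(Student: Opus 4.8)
The plan is to exploit the amenability half of the strong-boundary hypothesis to produce a boundary map into probability measures on the Roller compactification $\~X = X\cup\partial X$, and then to use the non-elementarity of the action, together with the combinatorial structure of halfspaces, to ``push'' these measures onto the Roller boundary $\partial X$ and finally onto a single point. Since $\~X$ is a compact metrizable space on which $\Gamma$ acts by homeomorphisms, amenability of $(B,\vartheta)$ gives a $\Gamma$-equivariant measurable map $\psi\colon B\to \mathrm{Prob}(\~X)$. The first step is therefore to upgrade $\psi$ so that $\psi(b)$ is supported on $\partial X$ for a.e.\ $b$: if the action on the essential core has no finite orbit in $X\sqcup\partial_\sphericalangle X$, then (after replacing $X$ by its $\Gamma$-essential core, using Caprace--Sageev as in the discussion following Theorem~\ref{thm_intro:main}) there is no bounded orbit, and a standard argument shows that the barycenter-type weight $\psi(b)(X)$ must be $0$ a.e., since otherwise one could produce a $\Gamma$-fixed probability measure on $X$ and hence a finite orbit; so $\psi(b)\in\mathrm{Prob}(\partial X)$ a.e.

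The second and main step is to reduce from a measure on $\partial X$ to a Dirac mass, and here is where double ergodicity with coefficients enters. Using the halfspace structure, one analyzes the pushforward of $\psi(b)$ under the characteristic-function-type coordinates indexed by $\frakH(X)$: each halfspace $\h\in\frakH(X)$ determines a Borel partition of $\partial X$, and the function $b\mapsto \psi(b)(\^\h)$ (where $\^\h$ is the closure of $\h$ in $\~X$) is measurable and $\Gamma$-quasi-equivariant. The aim is to show this function is a.e.\ $\{0,1\}$-valued, which by the totally disconnected structure of $\partial X\subset\prod_{\h}\{0,1\}$ pins $\psi(b)$ to a single ultrafilter, i.e.\ a point of $\partial X$. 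To get the $\{0,1\}$ dichotomy I would feed the $[0,1]$-valued cocycle-like data into double ergodicity with $L^\infty$-coefficients: a non-constant ``essential'' value would produce a nontrivial $\Gamma$-invariant bounded measurable function on $B\times B$, contradicting double ergodicity, while the non-elementarity (in particular the absence of a $\Gamma$-fixed point in $\partial_\sphericalangle X$ after passing to finite index, hence no Euclidean factor by Corollary~\ref{no Euclidean factors}, hence enough ``flipping'' halfspaces in the essential core) rules out the degenerate constant values that would correspond to a global halfspace bias. One then needs to verify that the resulting $\{0,1\}$-assignment is a genuine point of the Roller boundary — consistency and non-termination of the ultrafilter — which follows because it is a weak-$*$ limit of Dirac masses $\delta_{x}$ with $x\to\partial X$ along the measure $\psi(b)$, so coherence is automatic and the non-terminating condition holds by essentiality of the core action.

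The third step is bookkeeping: having produced a measurable $\f_0\colon B\to\partial X$ with $\f_0(b)$ the support of $\psi(b)$, one checks $\Gamma$-equivariance from the equivariance of $\psi$ together with essential uniqueness of the support. If the argument only directly yields equivariance for a finite-index subgroup $\Gamma'$ (because the reduction to the essential core and the removal of Euclidean factors may only be $\Gamma'$-natural), one induces back up to $\Gamma$ in the usual way, or simply notes that a strong $\Gamma$-boundary restricts to a strong $\Gamma'$-boundary and runs the argument for $\Gamma'$, then averages over cosets to land in $\partial X$ again using compactness.

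I expect the main obstacle to be the second step: controlling the measure $\psi(b)$ finely enough to conclude it is a Dirac mass. The amenable boundary map lands in probability measures ``for free,'' but converting measures to points is exactly the classical difficulty flagged in the introduction. The key technical leverage is that, unlike in a general CAT(0) space, the Roller boundary sits inside a Bernoulli space, so ``being a Dirac mass'' is equivalent to a family of $\{0,1\}$-valued statements about halfspaces, each of which is amenable to a double-ergodicity argument; the delicate point is ensuring non-elementarity genuinely forbids the intermediate constant value $\psi(b)(\^\h)\equiv\mathrm{const}\in(0,1)$ uniformly over $\h$, which is where one must invoke the essential core, the no-Euclidean-factor corollary, and the abundance of strongly separated or flipping halfspaces supplied by Caprace--Sageev.
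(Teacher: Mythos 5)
The central step of your plan — showing that the measure $\psi(b)\in\mathcal P(\overline X)$ furnished by amenability is a.e.\ a Dirac mass, via a $\{0,1\}$-dichotomy for $b\mapsto\psi(b)(\overline h)$ — has a genuine gap, and in fact aims at the wrong target. For a \emph{fixed} halfspace $h$ the function $b\mapsto\psi(b)(\overline h)$ is not $\Gamma$-invariant (the group moves $h$), so double ergodicity says nothing about it coordinatewise. If instead you package all halfspaces together, the natural target is $\ell^\infty(\frakH(X))$, which is not a separable coefficient module, and double ergodicity genuinely fails there: already for $F_2$ acting on its tree with $B$ its Poisson boundary, the equivariant weak-$*$ measurable map $b\mapsto(\1_{\partial h}(b))_{h}$ is not essentially constant. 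Worse, if the constancy conclusion \emph{were} available for this packaged map, equivariance plus the Flipping Lemma would force $\psi(b)(\overline h)=\psi(b)(\overline{h^*})=1/2$ for every essential halfspace — i.e.\ every halfspace balanced — which is the exact opposite of the $0$--$1$ law you want. Finally, the map $\psi$ coming from amenability simply need not have Dirac values, so no soft argument can establish that it does; one must \emph{process} $\mu=\psi(b)$ into a point by an equivariant procedure. (Your first step also has a slip: positive mass on $X$ does not directly yield a $\Gamma$-invariant probability measure on $X$, since $\vartheta$ is only quasi-invariant; the correct route is that $b\mapsto\psi(b)(X)$ is invariant, hence a.e.\ constant, and if positive one normalizes, passes to the finite set of atoms of maximal mass in the countable set $X$, and invokes ergodicity for countable targets to get a finite orbit.)

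This is exactly where the paper's proof of Theorem~\ref{thm:boundary} (via Theorem~\ref{thm:bdrymap}) does its real work, and your proposal has no substitute for it. There, one associates to $\mu$ the set $H_\mu^+$ of \emph{heavy} halfspaces ($\mu(h)>1/2$); if the set $H_\mu$ of \emph{balanced} halfspaces is empty, $H_\mu^+$ is an ultrafilter, i.e.\ a point of $\overline X$ (Lemma~\ref{lem:embedding}), with no claim that $\mu$ is Dirac or that the point lies in $\supp\mu$. The bulk of \S~\ref{sec:bdrymap} is then devoted to excluding balanced halfspaces: one shows the $\Gamma$-equivariant counting functions $\mu\mapsto|H_\mu|$, $(\mu,\nu)\mapsto|H_\mu\cap H_\nu|$, and the number of terminal elements of $H_\mu\cap H_\nu^+$ are essentially constant (ergodicity is only ever applied through countable or $\N\cup\{\infty\}$-valued targets, Lemma~\ref{BtimesBtoC} and Corollary~\ref{count->not-ess}), and each nonzero alternative is killed by essentiality (flipping/skewering), finite dimensionality (Proposition~\ref{III.C}), irreducibility versus product structure (Proposition~\ref{prop:product}), the interval structure of $\overline X(H_\mu)$, and strongly separated halfspaces. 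Your appeal to ``the essential core, no Euclidean factor, and flipping'' gestures at the right ingredients, but without the heavy/balanced dichotomy and the accompanying case analysis the passage from measures to points is not established; likewise the claim that the resulting ultrafilter is non-terminating requires a separate argument (Proposition~\ref{prop:values in non-terminating}), not the automatic ``weak-$*$ limit of Dirac masses'' reasoning you sketch.
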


In fact, one can obtain something a bit more precise, namely that the
boundary map takes values in the non-terminating ultrafilters of the 
$\Gamma$-essential core of $X$ (see Theorem~\ref{thm:bdrymap} and
Corollary~\ref{cor:bdrymap}).
To prove Theorem~\ref{thm:boundary}, we develop some methods that take inspiration 
from \cite[Proposition~3.3]{Monod_Shalom_2} in the case of a simplicial tree
but are considerably more involved in the case of a CAT(0) cube complex due to the lack
of hyperbolicity. 

The first step in the identification of a Poisson boundary in
\cite{Nevo_Sageev} is the proof that the set of non-terminating
ultrafilters is not empty, under the assumption that the action is
essential and the CAT(0) cube complex is cocompact.  The same
assertion with the cocompactness of $X$ replaced by the non-existence
of $\Aut(X)$-fixed points in the CAT(0) boundary follows from our
proof that the boundary map takes values into the set of the
non-terminating ultrafilters.

\begin{cor}\label{cor:non-terminating ultrafilters} 
Let $Y$ be a finite dimensional CAT(0) cube complex such that $\Aut(Y)$ acts essentially
and without fixed points in $\partial_\sphericalangle Y$.  Then the set of non-terminating 
ultrafilters in $\partial Y$ is not empty.
\end{cor}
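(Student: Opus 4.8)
The plan is to reduce Corollary~\ref{cor:non-terminating ultrafilters} to Theorem~\ref{thm:boundary} by exhibiting, for a suitable choice of group acting on $Y$, a strong boundary whose image under the boundary map must land in the non-terminating ultrafilters. The point is that the statement of Corollary~\ref{cor:non-terminating ultrafilters} is about $\Aut(Y)$ itself (not a lattice or arbitrary group), but $\Aut(Y)$ need not be a countable discrete group, so we cannot directly speak of its Poisson boundary. First I would pass to a countable subgroup $\Gamma < \Aut(Y)$ that still acts essentially and without fixed point in $\partial_\sphericalangle Y$; such a $\Gamma$ exists because each of these conditions is witnessed by at most countably many group elements (essentiality: for each halfspace $h$ one needs an element pushing $v_0$ deep into $h$ and deep into $h^c$; no fixed point in the visual boundary: one excludes each boundary point using a countable dense set of directions, or more cleanly invokes the Caprace--Sageev machinery \cite{Caprace_Sageev} to find finitely many ``independent'' hyperbolic isometries whose combined dynamics leave no visual fixed point). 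Once $\Gamma$ is countable, it admits a strong $\Gamma$-boundary $(B,\vartheta)$ by \cite{Kaimanovich} (e.g.\ the Poisson boundary of a spread-out symmetric probability measure).

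Next I would check that the $\Gamma$-action on $Y$ is non-elementary in the sense of the paper, i.e.\ has no finite orbit in $Y \sqcup \partial_\sphericalangle Y$. No finite orbit in $\partial_\sphericalangle Y$: a finite orbit would give, after passing to a finite-index subgroup, a fixed point, contradicting our choice of $\Gamma$. No finite orbit in $Y$: if there were a finite orbit of vertices, its median (or circumcenter) would be a fixed point of a finite-index subgroup, hence — being essential — forces $Y$ to be bounded, but an essential action on a bounded complex is trivial, contradicting essentiality. (Here I would use the essential core / Caprace--Sageev reduction recalled in \S\ref{subsec:essential}.) With non-elementarity established, Theorem~\ref{thm:boundary} yields a $\Gamma$-equivariant measurable map $\f : B \to \partial Y$; in fact, as remarked after Theorem~\ref{thm:boundary}, one obtains the sharper statement (Theorem~\ref{thm:bdrymap}, Corollary~\ref{cor:bdrymap}) that $\f$ takes values in the set of non-terminating ultrafilters of the $\Gamma$-essential core of $Y$ — and since the action is already assumed essential, the essential core is $Y$ itself (up to the irreducible-factor bookkeeping, which does not affect non-emptiness).

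Finally, since $(B,\vartheta)$ is a nontrivial measure space and $\f$ is a genuine measurable map into $\partial Y$, its image is nonempty; as that image lies in the set of non-terminating ultrafilters of $Y$, this set is nonempty, which is exactly the assertion.

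The main obstacle I anticipate is the very first step: producing a \emph{countable} subgroup $\Gamma$ that simultaneously (i) acts essentially on $Y$ and (ii) has no fixed point in $\partial_\sphericalangle Y$. Part (i) is a countable condition halfspace-by-halfspace, so it is harmless. Part (ii) is subtler because $\partial_\sphericalangle Y$ is in general uncountable, so ``no fixed point'' is a priori an uncountable family of conditions; one must argue that it is secretly finitely generated — e.g.\ by invoking that the absence of an $\Aut(Y)$-fixed point in the visual boundary, via \cite[Proposition~3.5]{Caprace_Sageev}, is equivalent to the $\Aut(Y)$-essential core having no Euclidean factor and to the existence of a finite collection of hyperbolic isometries in ``general position,'' and these finitely many isometries already generate a subgroup with no visual fixed point. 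Once this reduction is in place, the rest of the argument is a direct quotation of Theorem~\ref{thm:boundary} and its refinement, together with the trivial remark that a measurable map out of a probability space has nonempty image.
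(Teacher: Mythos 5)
Your overall strategy coincides with the paper's: Corollary~\ref{cor:non-terminating ultrafilters} is obtained there as a direct by-product of the boundary map of Theorem~\ref{thm:boundary}/\ref{thm:bdrymap} taking values in the non-terminating ultrafilters, so that non-emptiness of $B$ forces non-emptiness of that set. The trouble lies in the two reductions you interpose in order to quote the theorem as a black box, and both contain genuine gaps. First, the passage to a \emph{countable} subgroup $\Gamma<\Aut(Y)$ with no fixed point in $\partial_\sphericalangle Y$ is exactly the step you flag, and your proposed repair does not work as stated: \cite[Proposition~3.5]{Caprace_Sageev} says only that if there is no fixed point in $X$ nor in $\partial_\sphericalangle X$ then the essential core is non-empty; it is not an equivalence with ``no Euclidean factor plus finitely many hyperbolic isometries in general position,'' and no such finite (or countable) witness for the absence of a visual fixed point is provided by the results you cite. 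Note also that $Y$ is only assumed locally countable, hence need not be proper, so $\partial_\sphericalangle Y$ need not be compact and one cannot extract countably many elements witnessing $\bigcap_{g\in\Aut(Y)}\Fix(g)=\varnothing$ by a compactness argument. Second, your verification that $\Gamma$ is non-elementary is incorrect: a finite $\Gamma$-orbit in $\partial_\sphericalangle Y$ yields a fixed point only of a finite-index subgroup of $\Gamma$, which does not contradict the fact that $\Gamma$ itself was chosen without fixed points; indeed even the corollary's hypothesis (no $\Aut(Y)$-fixed point) does not formally give ``no finite orbit,'' which is what the statement of Theorem~\ref{thm:boundary} demands.

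The paper sidesteps both issues because the corollary is deduced from the \emph{proof} of Theorem~\ref{thm:bdrymap} rather than from its statement: the dynamical ingredients feeding that proof (the Flipping and Double Skewering Lemmas, the existence of facing triples via Lemmas~\ref{lem:compact facing triple} and~\ref{lem:facing triple}, the dichotomy of Corollary~\ref{cor:dichotomy}) are formulated under precisely the hypotheses ``essential and without fixed points in $\partial_\sphericalangle Y$,'' so no upgrade to non-elementarity is required. To complete your argument you must either construct the countable subgroup honestly with the stronger property actually needed (no finite orbit in $Y\sqcup\partial_\sphericalangle Y$) --- this is where the real work lies and it is not done in your sketch --- or argue, as the paper implicitly does, that the proof of Theorem~\ref{thm:bdrymap} runs verbatim for the acting group under the corollary's hypotheses, producing a non-terminating ultrafilter at the end of the case analysis.
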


The structure of the paper is as follows.  In \S~\ref{sec:prelim} we
recall the appropriate definitions and fix the terminology of CAT(0)
cube complexes; we establish moreover some basic results needed in the
paper, by pushing a bit further than what was available in the
literature; the knowledgeable reader should have no problem 
parsing through the subsections.  In \S~\ref{sec:cocycle} we
construct the cocycle on the Roller compactification of the CAT(0)
cube complex $X$ and show that it is bounded. We conclude the section with an outlook
on the proof of the non-vanishing of a median class.
The boundary map and Theorem~\ref{thm:boundary}
are discussed in \S~\ref{sec:bdrymap}.
We prove Theorem~\ref{thm_intro:main} and Theorem~\ref{thm:converse}  
in \S~\ref{ProofMainThm}, Corollary~\ref{cor:dim} is a consequence of
Theorem~\ref{thm_intro:main} and Proposition~\ref{prop:decomposition},
while Theorem~\ref{thm:appl_intro} and Corollary~\ref{cor:creg_intro} are
proven in \S~\ref{sec:appl}.

\medskip
\noindent
{\em Acknowledgments:}  
We thank the Forschungsinstitute f\"ur Mathematik at ETH, Z\"urich, the Institut
Mittag-Leffler in Djursholm, Sweden, the Institut Henri Poincar\'e
in Paris and the Mathematical Sciences Research Institute in Berkeley, CA
for their hospitality.  Part of this research was conducted while 
Indira Chatterji was at Jawaharlal Nehru University as a visiting NBHM professor, 
and she thanks them for their hospitality.  Our thanks go also to Marc Burger,
Ruth Charney and Nicolas Monod 
for useful discussions during various phases of the preparation of
this paper. We are very thankful to Michah Sageev
for having suggested a particular case of Lemma~\ref{lem:embedding} 
that lead to the proof of Theorem~\ref{thm:boundary}
and to Pierre-Emmanuel Caprace for pointing out Corollary~\ref{cor:Lie} to us.
We are grateful to Max Forester and Jing Tao for pointing out a mistake in an earlier version. 
Precisely, in an earlier version we used the concept of \emph{tightly nested $n$-tuples of halfspaces}, 
but the cocycle obtained that way failed to be bounded in some cases, as for instance in Example~\ref{blow}.
%%%%%%%%%%%%%%%%%%%%%%%%%%%%
%%%%%%%%%%%%%%%%%%%%%%%%%%%%
%%%%%%%%%%%%%%%%%%%%%%%%%%%%

\section{Preliminaries and Basic Results}\label{sec:prelim}

\subsection{Generalities on CAT(0) Cube Complexes, Hyperplanes, Duality and Boundaries}\label{subsec:2.1}
A {\em cube complex} $X$ is a metric polyhedral complex with cells
isomorphic to $[0,1]^n$ and isometries $\f_j:[0,1]^j\to X$ as gluing
maps.  The cube complex is CAT(0) if it is non-positively curved with
the induced Euclidean metric and has \emph{finite dimension $D$} if
the $m$-dimensional skeleton $X^m$ of $X$ is empty for $m>D$ and
nonempty for $m=D$.  We always assume our cube complexes to be finite
dimensional.  A cube complex $X$ is CAT(0) if and only if it is both
simply connected and the link of every vertex is a flag complex:
recall that a flag complex is a simplicial complex such that any
$(n+1)$-vertices that are pairwise connected by an edge actually span
an $n$-simplex, \cite[Theorem II.5.20]{Bridson_Haefliger}.  
A {\em  combinatorial isometry} between two CAT(0) cube complexes is a
homeomorphism $f:X\to Y$ such that the composition 
$f\circ\f_j:[0,1]^j\to Y$ is an isometry into a cube of $Y$.  Note that any
combinatorial isometry preserves also the CAT(0) metric.  We denote by
$\Aut(X)$ the group of combinatorial isometries from $X$ to itself.

Given a finite dimensional cube complex $X$, we can define an
equivalence relation on edges, generated by the condition that two
edges are equivalent if they are opposite sides of the same square
(i.e. a $2$-cube).  A {\em midcube} of an $n$-cube $\sigma$ with
respect to the above equivalence relation is the convex hull of the
set of midpoints of elements in the equivalence relation.
%is the intersection of $\sigma$ with a codimension one affine
%subspace in $\Re^n$ parallel to one of the faces of $\sigma$ and through its barycenter.  
A {\em hyperplane} is the union of the midcubes that intersect the
edges in an equivalence class.  So a hyperplane is a closed convex
subspace and it defines uniquely two {\em halfspaces}, that is the
two complementary connected components.  On the countable collection
$\frakH(X)$ -- or simply $\frakH$, when no confusion arises -- of halfspaces
on $X$ defined by the hyperplanes, one can define a
fixed-point-free involution
\begin{equation}\label{eq:involution}
\begin{aligned}
*:\frakH&\longrightarrow\qquad\frakH\\
h\,&\mapsto \quad\*h:=X\smallsetminus h\,,
\end{aligned}
\end{equation}
so that a hyperplane is the geometric realization of a pair $\{h,\*h\}$.
In the following we identify  the hyperplane $\hat h$ with the pair of halfspaces
$\{h,\*h\}$ that it defines.  We denote by $\hat\frakH(X)$ 
the set of hyperplanes.
%, called a \emph{hyperplane}. The set $\mathfrak{W}$ of hyperplanes is the quotient of $\frakH$ 
%under the involution $*$.

We say that two halfspaces $h, k$ are {\em transverse}, 
and we write $h\pitchfork k$, if all the intersections
\begin{equation}\label{eq:transverse}
h\cap k\,,\qquad h\cap \*k\,,\qquad \*h\cap k\,,\qquad \*h\cap \*k
\end{equation}
are not empty.  
Two halfspaces $ h,  k$ are {\em parallel}, 
and we write $ h\parallel  k$,  if they are not transverse,
equivalently if (exactly) one of the following relations
\begin{equation}\label{eq:nested-halfspaces}
h\subset\*k\,,\qquad h\subset k\,,\qquad \*h\subset\*k\,,\qquad \*h\subset k
\end{equation}
holds; two parallel halfspaces $h$ and $k$ are said to be
\emph{facing} if $h \subset k^*$.  We say that two hyperplanes 
$\hat h, \hat k$ are {\em transverse} (respectively {\em parallel}) if some
(and hence any) choice of corresponding halfspaces $h$ and $k$ is
transverse (respectively parallel).  Finally we say that two points
$u$ and $v$ are {\em separated by} a halfspace $h$ (or a hyperplane
$\hat h=\{h,\*h\}$) if $u\in h$ and $v\in\*h$ (or vice-versa).

Two halfspaces $h,k$ are said to be {\em nested} 
if either $h\subset k$ or $k\subset h$. A subset of hyperplanes is {\em transverse} (respectively {\em parallel}) 
if all of its elements are pairwise transverse (respectively parallel).

Recall that a family of pairwise transverse hyperplanes must have a common 
intersection (\cite{Sageev_95} or \cite{Rolli_notes}).  
We can think of the dimension  
of a CAT(0) cube complex as the largest cardinality of 
a family of pairwise  transverse hyperplanes, because such a maximal 
intersection defines a cube of maximal dimension.

Given a subset $\alpha\subset\frakH$ of halfspaces, 
we denote by $\*\alpha$ the set $\{\*h:\,h\in\alpha\}$.
\begin{definition}\label{def:partial choice-consistency}
We say that a subset $\alpha\subset\frakH$ of halfspaces satisfies:
%An {\em ultrafilter} $\alpha$ on $\frakH$ is a subset of $\frakH$ 
%satisfying the following conditions:
%
\begin{enumerate}
\item[(i)] the {\em partial choice} condition, if  $\alpha\cap\*\alpha=\varnothing$,
that is if, whenever $h\in\alpha$, then $\*h\notin\alpha$;
\item[(ii)] the {\em choice} condition if $\alpha\cap\*\alpha=\varnothing$ and $\alpha\sqcup\*\alpha=\frakH$;
\item[(iii)] the {\em consistency} condition if, whenever $h\in\alpha$ and $h\subset k$,
then $k\in\alpha$.
\end{enumerate}
\end{definition}

Then an {\em ultrafilter}\footnote{We point out that the notion of 
ultrafilter used in the theory of
CAT(0) cube complexes is slightly off from the classical one in set
theory and topology (see for example \cite{Comfort_Negrepontis}).  In
fact, in the context of CAT(0) cube complexes, subsets of ultrafilters
are never ultrafilters and thus, in particular, the intersection of two
ultrafilters is never an ultrafilter.} on $\frakH$ is a subset of $\frakH$
that satisfies the choice and consistency properties.
In other words, an ultrafilter on $\frakH$ is a choice of a halfspace
for each hyperplane in $X$ with the condition that as soon as a halfspace
is in the ultrafilter, any halfspace containing it must also be
in the ultrafilter.  
We call {\em partially defined ultrafilter} a subset $\alpha\subset\frakH$
that satisfies the partial choice and consistency properties.

We say that an ultrafilter satisfies the {\em Descending Chain
  Condition (DCC)} if every descending chain of halfspaces
terminates.  Such ultrafilters are called {\em principal} and are in
one-to-one correspondence with the vertices of the CAT(0) cube complex
$X$, \cite{Guralnik}.  By abuse of notation, we do not usually make a
distinction between $X$, its vertex set, or the collection of
principal ultrafilters.

The consideration of $X$ as a collection of ultrafilters leads in a
natural way to an inclusion of $X$ into the Bernoulli space
$2^\mathfrak H$, where $v \mapsto \{h\in\frakH: v\in h\}$. This
justifies a further (standard) abuse: thinking of $X \subset 2^\frakH$,
by duality we get that $h\in v$ if and only if $v\in h$
and we can hence write
$v=\bigcap_{h\in v}h$.  Let
$\~X$ be the closure of $X$ in $2^\mathfrak H$.  One can check that
the elements of $\~X$, thought of as subsets of $\mathfrak H$, are
ultrafilters.

The correspondence that associates to an ultrafilter a
  vertex in $\~X$ can be pushed further to give a duality between
  finite dimensional CAT(0) cube complexes and those {\em pocsets} that
  satisfy both the {\em finite interval condition} and the 
  {\em finite width condition}.  Recall that a {\em pocset} $\Sigma$ is a
  partially ordered sets with an order reversing involution.    
  The pocset satisfies the 
  {\em finite interval condition} if for every pair $\alpha,\beta\in\Sigma$ 
  with $\alpha\subset\beta$ there are only finitely many $\gamma\in\Sigma$
  such that $\alpha\subset\gamma\subset\beta$; moreover if satisfies
  the {\em finite width condition} if there is an upper bound on the
  size of a collection of incomparable elements.  Given a
  pocset $\Sigma$, one can consider the space of ultrafilters on $\Sigma$.
  The CAT(0) cube complex $X(\Sigma)$ corresponding to the pocset $\Sigma$ 
  has the principal ultrafilters as vertices, 
  edges joining ultrafilters that differ only in the
  assignment on one element in $\Sigma$ and cubes attached to the 
  $1$-skeleton whenever it is possible.

  The set of halfspaces $\frakH(X)$ in a CAT(0) cube complex is a
  pocset with the above properties and the CAT(0) cube complex
  obtained with the above construction from the set of principal
  ultrafilters on $\frakH(X)$ is exactly $X$.

The boundary $\partial X:=\~X\smallsetminus X$ is called the 
{\em Roller boundary}, and consists of all ultrafilters that are not principal,
\cite{Roller}.  The compact set $\~X$ is called the {\em Roller compactification}.

An ultrafilter $v\in\partial X$ is said to be {\em non-terminating}
if every finite descending chain can be extended, 
i.e. if given a finite collection $\{h_0, \dots, h_N\}\subset v$ 
such that  $h_0 \supset \cdots \supset h_N$ 
there is an  $h_{N+1} \in v$ such that $h_{N+1} \subset h_N \subset \cdots \subset h_0$.

%%%%%%%%%%%%%%%%%%%%
%{\bf NEXT PARAGRAPH SHOULD BE SHORTENED, I THINK: ONLY NEED THE NON-TERMINATING PART}
%The Roller boundary naturally carries a structure of stratified CAT(0) cube complexes: 
%in each dimension $d$ strictly less than the dimension of $X$, 
%one finds a (possibly uncountable) collection of CAT(0) cube complexes, 
%described as follows by Guralnik in \cite{Guralnik}. On the set of ultrafilters, 
%one defines an equivalence class by declaring two ultrafilters to be almost %equals if they differ on a finite set only. 
%Hence all the principal ultrafilters are almost equals and correspond to $X$. 
%All the other classes of almost equal ultrafilters correspond to connected %components of $\partial X$. 
%The ultrafilters that are isolated in their component are the \emph{non-%terminating} 
%(or \emph{non-terminating}) ones, 
%meaning that no descending chain ever terminates. 
%The dimension of the components in the boundary is defined using an 
%equivalence relation on infinite descending chains, see \cite{Guralnik}.
%%%%%%%%%%%%%%%%%%%%%%%%%%%

While the Roller boundary $\partial X$ is not empty if the CAT(0) cube
complex is unbounded, it is unclear as to when the set of
non-terminating ultrafilters is not empty.  However one can impose
reasonable conditions which do guarantee that these exist (see
\cite{Nevo_Sageev} and \S~\ref{sec:bdrymap}).  In the case of a tree,
the entire Roller boundary consists of non-terminating ultrafilters;
in the case of a $\Z^D$, there are only $2^D$-many
non-terminating ultrafilters, while there are examples, such as the
wedge of two strips \cite[Remark~3.2]{Nevo_Sageev}, in which 
the set of non-terminating ultrafilter is empty.

A CAT(0) cube complex has of course also a visual boundary
$\partial_\sphericalangle X$ with respect to its CAT(0)
metric, \cite{Bridson_Haefliger}. We recall that $\partial_\sphericalangle X$
is the set of endpoints of geodesic rays in $X$, 
where we identify two geodesic rays if they stay at bounded distance
from each other. 

\subsection{Intervals and Median Structure}\label{subsec:intervals-median}
If $u,v\in X$, their {\em combinatorial distance} $d(u,v)$ is the number of hyperplanes by which 
the two corresponding ultrafilters differ.
We will call a sequence of points $u=x_0,\dots,x_n=v$ a \emph{combinatorial geodesic} if $d(x_i,x_{i+1})=1$ for $i=0,\dots,n-1$ and
$$d(x_i,x_j)+d(x_j,x_k)=d(x_i,x_k)$$
for all $1\leq i\leq j\leq k\leq n$.  Hence the combinatorial distance corresponds to the 
graph metric on the 1-skeleton of $X$.  The oriented {\em interval} of halfspaces
\bqn
[u,v] = \{ h \in \frakH: h \in  {v}\smallsetminus  {u}\}
\eqn
is the (finite) set of halfspaces containing $v$ and not $u$
and the counting measure on $\frakH$ is consistent with the combinatorial metric $d$ on $X$ 
in that $\big| [u,v] \big| = d(u,v)$.  
Notice that on $[u,v]$ there is a partial order given by the inclusion.
It is immediate to check that
\bqn
[u,v]=[v,u]^\ast\,,
\eqn 
where we recall that $[v,u]^\ast=\{h\in\frakH:\,\*h\in[v,u]\}$. 

Combinatorial geodesics and oriented intervals are related as follows.
\begin{lemma}\label{geod}For each $u, v \in X$, combinatorial geodesics between $u$ and $v$ are in one to one correspondance with enumerations of the elements of $[u,v]$ that are compatible with the (reverse) partial order given by inclusions.\end{lemma}
\begin{proof}Let $u=x_0,\dots,x_n=v$ be a combinatorial geodesic. One obtains an enumeration $h_1,\dots,h_n$ of elements of  $[u,v]$ by setting $h_i$ to be the halfspace corresponding to the unique oriented edge between $x_i$ and $x_{i+1}$. Let us now show that this order is consistent with the (reverse) partial order. Indeed, suppose that $i<j$. If $h_i\pitchfork h_j$ then they are incomparable and there is nothing to check. Otherwise, observe that $x_i \in h_i\setminus h_j$ as $x_i$ is obtained by starting at $u$ and crossing each of the elements in $\{h_1, \dots, h_{i}\}$ (which does not contain $h_j$). Therefore, $h_j \subset h_i$.

Conversely, let $u,v\in X$ with $d(u,v)=n$ and assume we are given an enumeration $h_1, \dots, h_n$ consistent with the inclusion,
where $h_j\in[u,v]$ . By consistency, if $h\neq h_j$ were a halfspace between $u$ and $h_1$, 
then $h\in[u,v]$, contradicting that $d(u,v)=n$.
Therefore there is a unique oriented edge starting at $u$ corresponding to $h_1$. Let $x_1$ be the terminal vertex. Inductively, this defines a sequence $x_1, \dots, x_n$ where $x_n =v$. Since $|[u,v]| = d(u,v)$ it follows that this describes a combinatorial geodesic.\end{proof}
%
%A finite subset $U\subseteq \frakH(X)$ is called \emph{convex} if for evey $u,v\in U$, if $u\subseteq s\subseteq v$ then $s\in U$.
We consider also the {\em vertex-interval} 
\bqn
\I(u,v) := \{w\in X :  {w}\cap( {u}\cap  {v}) =  {u}\cap {v}\}\,,
\eqn
that is the set of all vertices that are crossed by some combinatorial geodesic 
between $u$ and $v$. 

The following fact seems to be folklore and is essential for our result. We refer to
\cite[Theorem 1.16]{Brodzki_Campbell_Guenthner_Niblo_Wright}  for a complete proof.  

\begin{lemma}[Intervals embedding]\label{embedintervals} Let $u,v\in\~{X}$.  
Then the vertex intervals $\I(u,v)$ isometrically embed into $\~{\Z^D}$ 
(with the standard cubulation) where $D$ is the dimension of $X$.
\end{lemma}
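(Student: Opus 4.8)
The plan is to prove the embedding by exhibiting the interval $\I(u,v)$ as an ultrafilter space on a sub-pocset of $\frakH(X)$ that is itself isomorphic to (a sub-pocset of) the halfspace pocset of $\~{\Z^D}$. First I would isolate the relevant halfspaces: set $\frakH_{u,v} := [u,v] \sqcup [u,v]^* = [u,v] \sqcup [v,u]$, the collection of all hyperplanes separating $u$ from $v$ together with their complements. A vertex $w$ lies in $\I(u,v)$ precisely when $w$ and $u$ agree on every hyperplane \emph{not} in $\frakH_{u,v}$, so the restriction map $w \mapsto w \cap \frakH_{u,v}$ is injective on $\I(u,v)$; moreover it is distance-preserving because $d(w,w') = |w \triangle w'|$ only counts hyperplanes in $\frakH_{u,v}$ for $w,w' \in \I(u,v)$. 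Thus $\I(u,v)$ embeds isometrically into the ultrafilter space of the finite pocset $\frakH_{u,v}$, and the task reduces to understanding that pocset.

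Next I would analyze the combinatorial structure of $\frakH_{u,v}$. The key claim is that the relation $\pitchfork$ (transversality) partitions a maximal set of pairwise-\emph{parallel} halfspaces in $[u,v]$ into chains, and that the width of this pocset is at most $D$: any collection of pairwise incomparable elements of $[u,v]$ must be pairwise transverse (two parallel halfspaces in $[u,v]$ are automatically nested, since both contain $v$ and exclude $u$, so one of the nesting relations in \eqref{eq:nested-halfspaces} forces comparability), hence spans a cube, hence has size $\leq D$. This means $[u,v]$, as a poset, decomposes (via Dilworth or directly) into at most $D$ chains $C_1, \dots, C_D$, with distinct chains pairwise transverse. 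The pocset $\frakH_{u,v}$ is then order-isomorphic to a sub-pocset of the standard halfspace pocset of $\~{\Z^D}$: the $i$-th coordinate direction of $\Z^D$ supplies an infinite chain of halfspaces, into which we embed $C_i$ order-preservingly, sending $\{h, h^*\}$ to the corresponding hyperplane pair. Because the $C_i$ are pairwise transverse and coordinate hyperplanes of $\Z^D$ in distinct directions are pairwise transverse, this is a pocset embedding.

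Finally I would transport ultrafilters along this pocset embedding. A pocset embedding $\iota : \frakH_{u,v} \hookrightarrow \frakH(\Z^D)$ induces a map on ultrafilter spaces: given $w \in \I(u,v)$, restrict to $w \cap \frakH_{u,v}$, push forward via $\iota$, and extend to a consistent choice on all of $\frakH(\Z^D)$ by making, on each coordinate chain, the choice dictated by the image of $w$ on $C_i$ (and an arbitrary but fixed consistent completion on the finitely many coordinate hyperplanes not in the image of $\iota$ — note consistency is automatic here because distinct coordinate directions are transverse, so there are no cross-direction containments to violate). One checks this is a well-defined element of $\~{\Z^D}$ (it satisfies choice and consistency) and that it is distance-preserving, using $|w \triangle w'|$ restricted to $\frakH_{u,v}$ versus the $\ell^1$ distance on $\Z^D$; the completion on the irrelevant coordinates is fixed, so it contributes zero to distances.

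The main obstacle is the second step: verifying that $[u,v]$ genuinely decomposes into at most $D$ transverse chains, and in particular that the nesting/transversality dichotomy behaves as claimed when $u, v \in \~X = X \cup \partial X$ rather than in $X$ — one must be careful that ``$h \in v \smallsetminus u$'' and ``$h \subset k$'' interact correctly for boundary ultrafilters, and that the finite interval condition (from the pocset structure recalled in \S\ref{subsec:2.1}) still guarantees each chain $C_i$ order-embeds into $\Z$. The rest is bookkeeping with the ultrafilter axioms. I would lean on \cite[Theorem 1.16]{Brodzki_Campbell_Guenthner_Niblo_Wright} for the details of this chain decomposition rather than reprove it here.
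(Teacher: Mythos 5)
The paper itself does not prove this lemma: it explicitly defers to \cite[Theorem 1.16]{Brodzki_Campbell_Guenthner_Niblo_Wright}, and what you sketch is precisely the standard folklore argument behind that citation. Most of your steps are correct, also for boundary ultrafilters: a vertex $w\in\I(u,v)$ agrees with $u$ (and $v$) on every halfspace outside $\frakH_{u,v}=[u,v]\sqcup[v,u]$, so restriction to $\frakH_{u,v}$ is injective and preserves the set of separating hyperplanes; and two parallel halfspaces of $[u,v]$ are automatically nested (the other two relations are excluded because both halfspaces contain $v$ and omit $u$), so any antichain in $[u,v]$ is pairwise transverse, spans a cube, and has size at most $D$; Dilworth then gives a partition of $[u,v]$ into at most $D$ chains, each of which order-embeds in $\Z$ by the finite interval condition. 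One small bookkeeping slip: with the paper's conventions $d(w,w')=\bigl|[w,w']\bigr|$, so $|w\triangle w'|=2\,d(w,w')$; it is cleaner to phrase everything in terms of separating hyperplanes.

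The one genuine error is the claim that the Dilworth chains are \emph{pairwise transverse}, and hence that your coordinate-wise map is a pocset embedding into $\frakH(\Z^D)$. This is false in general: take the three-square staircase in $\Z^2$ (the squares $[0,1]^2$, $[1,2]\times[0,1]$, $[1,2]\times[1,2]$) with $u=(0,0)$, $v=(2,2)$. The two vertical halfspaces form one chain and the two horizontal ones the other, yet the halfspace $\{y>3/2\}$ is \emph{contained} in $\{x>1/2\}$, so the two chains are not transverse, and your map sends this nested pair to a transverse pair of coordinate halfspaces. Fortunately the claim is never needed: the only properties your construction uses are that each colour class $C_i$ is a chain and that in $\Z^D$ there are no containments between distinct coordinate directions. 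Consistency of the completed ultrafilter in $\~{\Z^D}$ is then a per-coordinate check, which holds because the transported trace of $w$ on $C_i$ is an up-set of a chain; and the isometry is a per-chain count: for $w,w'\in\I(u,v)$ the traces $w\cap C_i$ and $w'\cap C_i$ are nested up-sets, so the number of hyperplanes of $C_i$ separating $w$ from $w'$ equals the $i$-th coordinate contribution to the $\ell^1$-distance of the images (the fixed completion contributes nothing). With the transversality assertion deleted and replaced by this remark, your argument is a complete proof, essentially the one in \cite{Brodzki_Campbell_Guenthner_Niblo_Wright} to which both you and the paper ultimately appeal.
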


The image $\I_{u,v}$  in $\Z^D$ of the above embedding is 
exactly the CAT(0) cube complex associated 
to the halfspaces $\frakH(u,v):=[u,v]\cup[v,u]$.

\begin{remark}\label{rem:opposite=>Euclidean}  In general if $u\in X$ is an ultrafilter, 
its opposite $\*u$ is not an ultrafilter.  It is easy to see that if $\*u$ is
an ultrafilter, then $\frakH(X)=[u,\*u]\cup[\*u,u]$
and hence $X$ is an interval.
\end{remark}

Also recall that the vertex set of a CAT(0) cube complex 
with the edge metric is a {\em median space} \cite{Roller},
namely for every triple of vertices $u,v, w \in X$, 
the intersection  $\I(u,v) \cap \I(v,w) \cap \I(w,u)$ is  exactly a singleton.  
This unique point is called the {\em median} of $u$, $v$, 
and $w$ and we denote it by $m(u,v,w)$. It is a standard fact that 
\bq\label{eq:median}
 {m(u,v,w)} = ( {u}\cap  {v}) \cup ( {v}\cap  {w})\cup ( {w}\cap  {u})\,.
\eq

\subsection{Isometric Embeddings}\label{subsec:isom-emb}
If $\frakH'\subset\frakH(X)$ is an involution invariant subset of halfspaces, 
then $\frakH'$ is a pocset in its own right and hence one can considers the
associated CAT(0) cube complex $X(\frakH')$.  A priori, the complex $X(\frakH')$ 
that one obtains with this construction cannot be embedded as a subcomplex of $X$, 
but there is always a combinatorial quotient map 
$\pi_{\frakH'}:\~X\to \~X(\frakH')$, defined by $\alpha\mapsto\alpha\cap\frakH'$,
that restricts to $\pi_{\frakH'}:X\to X(\frakH')$. 
If the subset $\frakH'\subset\frakH$ is invariant for the action of a group $\Gamma\to\Aut(X)$
of combinatorial automorphisms, then $\~X(\frakH')$ 
inherits a $\Gamma$-action with respect to which the map $\pi_{\frakH'}$ is $\Gamma$-equivariant.

There are however conditions under which $\~X(\frakH')$ can be embedded in $\~X$. 

\begin{definition}\label{defi:lifting} Let $\frakH'\subset \frakH(X)$ be an involution invariant subset of halfspaces.   A {\em lifting decomposition} of $\frakH'$
is a choice of a subset $W\subset\frakH(X)\setminus \frakH'$ satisfying the partial choice and 
consistency conditions (see Definition~\ref{def:partial choice-consistency}), and so that $\frakH(X)=\frakH'\sqcup(W\sqcup \*W)$.
\end{definition}

We note that a lifting decomposition need not exist. 
A collection $\frakH'\subset \frakH$ is said to be {\em tight} if it satisfies the following: 
for every $h,k \in \frakH'$ if $h\subset \ell \subset k$ then $\ell \in \frakH'$. 
We remark that the existence of a lifting decomposition $W$  of $\frakH' \subset \frakH$ implies that $\frakH'$ is tight. 
Indeed, suppose that $h,k \in\frakH'$ and $h\subset \ell \subset k$. 
If $\ell \notin \frakH'$ then $\ell\in W\sqcup W^*$. Since $\frakH'$ is involution invariant, we may assume that $\ell \in W$. 
But this means that $k\in W$, which contradicts the fact that $W\cap \frakH' = \varnothing$. 
This shows that the condition that $\frakH'$ is tight is necessary for the existence of a lifting decomposition for it. 

\begin{lemma}\label{lem:embedding}  Let $\frakH'\subset \frakH(X)$ be a involution invariant tight subset of halfspaces.
Assume that $\frakH'$ admits a lifting decomposition   
$\frakH=\frakH'\sqcup(W\sqcup\*W)$.
Then there is an isometric embedding
$i:\~X(\frakH')\hookrightarrow\~X(\frakH)$, defined by 
$i(\alpha):=\alpha\sqcup W$, whose image is 
$i\big(\~X(\frakH')\big)=\bigcap_{h\in W}h$.

As particular cases, if $\frakH'=\varnothing$, then $i\big(\~X(\frakH_W)\big)$ is a point;
or if $W$ contains an infinite descending chain, then $i(\~X(\frakH'))\subset\partial X$.
\end{lemma}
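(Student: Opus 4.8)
The plan is to verify directly that the assignment $i(\alpha) := \alpha \sqcup W$ sends ultrafilters on $\frakH_W$ to ultrafilters on $\frakH$, that it is injective and distance-preserving, and that its image is exactly $\bigcap_{h \in W} h$ (interpreting each $h \in W$ as the set of ultrafilters on $\frakH$ containing it). The main work is bookkeeping with the choice and consistency conditions, using crucially that $W$ itself is a partially defined ultrafilter (partial choice plus consistency) and that $\frakH = \frakH_W \sqcup (W \sqcup \*W)$ is a partition with $\frakH_W$ involution-invariant.

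First I would check that $i(\alpha)$ satisfies the choice condition on $\frakH$. Given a hyperplane $\{h, \*h\}$: if $h \in \frakH_W$ then exactly one of $h, \*h$ lies in $\alpha$ (since $\alpha$ is an ultrafilter on $\frakH_W$, and $\frakH_W$ is involution-invariant so $\*h \in \frakH_W$ too), and neither lies in $W$; if $h \in W \sqcup \*W$, then since $W$ satisfies partial choice exactly one of $h, \*h$ lies in $W$, and neither lies in $\alpha \subset \frakH_W$. So $i(\alpha) \sqcup i(\alpha)^* = \frakH$ and $i(\alpha) \cap i(\alpha)^* = \varnothing$. Next, consistency of $i(\alpha)$: suppose $h \in i(\alpha)$ and $h \subset k$; I must show $k \in i(\alpha)$. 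Split into cases according to whether $h \in \alpha$ or $h \in W$, and whether $k \in \frakH_W$ or $k \in W \sqcup \*W$. The case $h \in W$, $k \in W \sqcup \*W$ uses consistency of $W$ (and rules out $k \in \*W$ since $W$ has partial choice and $k \supset h \in W$ forces $\*k \notin W$, hence... — more precisely if $k \in \*W$ then $\*k \in W$ with $\*k \subset \*h$, and $h, \*h$... one argues $\{h,\*h\}$ and $\{k,\*k\}$ are nested, and $h \subset k$ with $\*k \in W \ni h$ would give $\*k$ and $h$ with $h \subset k = (\*k)^*$, a \emph{facing} pair both in $W$; but then $\*h \supset k \supset \*k$... actually the clean way: nestedness of the two hyperplanes plus $h \subset k$ gives four sub-cases, and $W$ being consistent and satisfying partial choice eliminates all but $k \in W$). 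The case $h \in \alpha \subset \frakH_W$, $k \in W$: then $\*k \subset \*h$, i.e. $\*k \subsetneq \*h$; but $\*h \in \frakH_W$ while we'd need to relate this to $W$'s consistency applied downward — here instead use that if $k \notin i(\alpha)$ then $\*k \in i(\alpha)$, and $\*k \in W$ would contradict $W$'s partial choice with $k \in W$; so this case cannot produce a failure, or rather $k \in W \subset i(\alpha)$ directly. The remaining case $h \in \alpha$, $k \in \frakH_W$ is consistency of $\alpha$ on the pocset $\frakH_W$, noting $h \subset k$ is an inclusion within $\frakH_W$. This establishes $i(\alpha) \in \~X(\frakH)$.

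Then injectivity is immediate since $i(\alpha) \cap \frakH_W = \alpha$ (as $W \cap \frakH_W = \varnothing$), so $\alpha$ is recovered from $i(\alpha)$. For the isometry statement: for $\alpha, \beta \in X(\frakH_W)$, the interval $[i(\alpha), i(\beta)] = i(\beta) \smallsetminus i(\alpha) = (\beta \sqcup W) \smallsetminus (\alpha \sqcup W) = \beta \smallsetminus \alpha = [\alpha, \beta]$ inside $\frakH_W$, so $d(i(\alpha), i(\beta)) = |[i(\alpha),i(\beta)]| = |[\alpha,\beta]| = d(\alpha,\beta)$, using Lemma~\ref{embedintervals}'s counting-measure identification of the metric (extended to $\~X$ via intervals). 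For the image: if $\alpha \in \~X(\frakH_W)$ then every $h \in W$ lies in $i(\alpha)$, so $i(\~X(\frakH_W)) \subseteq \bigcap_{h \in W} h$; conversely if $v \in \~X(\frakH)$ with $W \subseteq v$, then $W \subseteq v$ and partial choice of $W$ force $\*W \cap v = \varnothing$, so $v \cap (W \sqcup \*W) = W$, hence $v = (v \cap \frakH_W) \sqcup W$; setting $\alpha := v \cap \frakH_W$, one checks $\alpha$ is an ultrafilter on $\frakH_W$ (choice is clear from the partition; consistency is inherited from $v$ since $\frakH_W$ is downward- and upward-closed enough — if $h \in \alpha$, $h \subset k$, $k \in \frakH_W$ then $k \in v \cap \frakH_W = \alpha$) and $i(\alpha) = v$.

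Finally the two ``particular cases'': if $\frakH_W = \varnothing$ then $\~X(\frakH_W)$ is a single ultrafilter (the empty one), so the image is a single vertex $\bigcap_{h \in W} h$; and if $W$ contains an infinite descending chain $h_0 \supsetneq h_1 \supsetneq \cdots$, then any $v$ in the image contains this chain, so $v$ is not principal (it violates DCC), hence $v \in \partial X$, giving $i(\~X(\frakH_W)) \subset \partial X$.

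\emph{Main obstacle.} The one genuinely delicate point is the consistency check for $i(\alpha)$ in the mixed cases where $h$ and $k$ lie in different blocks of the partition $\frakH_W \sqcup (W \sqcup \*W)$ — in particular ensuring one never lands in $\*W$ — which is exactly where the \emph{partial choice} hypothesis on $W$ (as opposed to merely consistency) is used, together with the nestedness dichotomy for the two hyperplanes $\{h,\*h\}$ and $\{k,\*k\}$. Everything else is routine unwinding of definitions.
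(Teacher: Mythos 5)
Your overall route is the same as the paper's: verify that $\alpha\mapsto\alpha\sqcup W$ lands in the ultrafilters on $\frakH$, deduce the isometry from the symmetric-difference identity $(\alpha\sqcup W)\,\Delta\,(\beta\sqcup W)=\alpha\,\Delta\,\beta$ (equivalently your interval computation), identify the image with $\bigcap_{h\in W}h$, and read off the two special cases. The image characterization and the two special cases are correct, and in fact spelled out more carefully than in the paper's own proof.

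The one place the write-up falls short is the consistency check, and it is precisely the sub-case the paper treats as the only non-trivial one: $h\in\alpha$ and $k\in\*W$. Your explicit cases are $h\in\alpha,\,k\in\frakH_W$; $h\in\alpha,\,k\in W$; and $h\in W$; the possibility $h\in\alpha$ with $\*k\in W$ is never actually excluded — your ``main obstacle'' paragraph gestures at it via ``partial choice plus the nestedness dichotomy'', but that is not the mechanism that works here. The needed one-liner (the paper's argument) is: if $h\in\alpha$ and $\*k\in W$, then $h\subset k$ gives $\*k\subset\*h$, so consistency of $W$ forces $\*h\in W$; but $h\in\frakH_W$, which is involution-invariant and disjoint from $W\sqcup\*W$, so $\*h\in\frakH_W$, a contradiction. (For comparison: in your case $h\in W$, consistency of $W$ already yields $k\in W$ outright, so no dichotomy on the two hyperplanes is needed there either; and if one insists on arguing by contradiction with $k\in\*W$ in that branch, the same involution move gives $\*h\in W$, where it is indeed partial choice of $W$ that finishes.) With that sub-case filled in, your proof coincides with the paper's.
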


\begin{proof} We first show that if $\alpha$ is an ultrafilter on $\frakH_W$, 
then $\alpha\sqcup W$ is an ultrafilter on $\frakH$. 
By construction, $\alpha\sqcup W$ satisfies the choice condition.  
We need to verify the consistency condition, that is that 
if $h\in \alpha\sqcup W$, $k\in\frakH$ with $h\subset k$ then $k\in\alpha\sqcup W$.

If $h\in W$ and $k\in\frakH$ is such that $h\subset k$, then $k\in W$,
since $W\subset \frakH$ satisfies the consistency condition.
If $h\in\alpha$ and $k\in\frakH$ is such that $h\subset k$, then either 
$k\in\alpha\sqcup\alpha^*$ and hence $k\in\alpha$ because $\alpha$ is
an ultrafilter on $\alpha\sqcup\alpha^*$, or $k\in W\sqcup W^*$.
But $k$ cannot be in $W^*$.  In fact, if $k\in W^*$, then $k^*\in W$; since $k^*\subset h^*$ and
$W$ satisfies the consistency condition, then $h^*\in W$, contradicting that
$\alpha\cap(W\sqcup W^*)=\varnothing$.

Now, assume that $\alpha, \beta \in \~X(\frakH')$.  It is easy to check that 
\begin{equation*}
(\alpha\sqcup W)\Delta (\beta\sqcup W)=\alpha\Delta \beta,
\end{equation*}
since $\frakH' \cap W = \varnothing$.
This shows that the embedding is an isometry and hence extends to the cube structure on $\~X(\frakH')$.

By definition $\alpha\sqcup W=\bigcap_{h\in\alpha\sqcup W}h\subset\bigcap_{h\in W}h$,
so that $\frakH \subseteq(\a\sqcup \a^*)\sqcup(W\sqcup W^*)$
Moreover $\bigcap_{h\in W}h$ consists of all partially defined ultrafilters on $W\sqcup \*W$:
to complete an element $x\in\bigcap_{h\in W}h$ to an ultrafilter on $\frakH$ is exactly
equivalent to choosing $\alpha\in\~X(\frakH')$.
\end{proof}

\begin{remark}\label{rem:lifting}  As alluded to above, 
the existence of a lifting decomposition is a very restrictive condition.
However, the existence of a {\em strongly convex} set, that is a set $B\subset X$ such that for any $x,y\in B$
also $\mathcal{I}(x,y)\subset B$, implies the existence of a lifting decomposition.  In fact if 
\bqn
\frakH':=\{h\in\frakH(X):\,h \text{ crosses }B\}\text{ and }W:=\{h\in \frakH(X):\,B\subset h\}\,,
\eqn
then $\frakH=\frakH'\sqcup(W\sqcup W^*)$ is a lifting decomposition of $\frakH'$
with which $\~X(\frakH')$ gets isometrically embedded in $\~X$ with image $B$.
\end{remark}

\begin{definition}\label{defy:projection} A map $\varrho:\~X\to\~X$ is a {\em projection}
if there exists a lifting decomposition $\frakH(X)=\frakH_W\sqcup(W\sqcup\*W)$ 
and  if $\varrho=i\circ\pi_{\frakH_W}$, 
where $i:\~X(\frakH_W)\hookrightarrow\~X$ is the isometric embedding in 
Lemma~\ref{lem:embedding}.
\end{definition}

It is easy to verify that the composition of two projections is still a projection.

If $\Gamma\to\Aut(X)$ is an action and $\frakH_W$ is $\Gamma$-invariant, 
then $\~X(\frakH_W)$ inherits a $\Gamma$-action.  If in addition
$\varrho$ is a projection and the embedding is $\Gamma$-equivariant, 
then the image of the projection is a $\Gamma$-invariant subcomplex in $\~X$.
This happens exactly when the choice of subset $W\subset\frakH(X)$
of the lifting decomposition is $\Gamma$-invariant, 
so that $i\big(\~X(\frakH_W)\big)$ is a $\Gamma$-invariant
subcomplex in $\~X(\frakH)$.

%We record here the obvious fact that an (equivariant) isometric embedding $Y\hookrightarrow X$ 
%gives rise to (equivariant) embeddings of the corresponding CAT(0) and Roller boundaries.

\subsection{Decomposition into Products}\label{subsec:decomposition} 
The product of CAT(0) cube complexes is a CAT(0) cube complex in a natural way.
If $X=Y\times Z$, there is the following decomposition of the hyperplanes
\bq\label{eq:decomposition of hyperplanes}
\hat\frakH(X)=\big\{\hat h_Y\times Z:\,\hat h_Y\in\hat\frakH(Y)\big\}
	\sqcup\big\{Y\times\hat h_Z:\,\hat h_Z\in\hat\frakH(Z)\big\}
        \cong\hat\frakH(Y)\sqcup\hat\frakH(Z)\,,
\eq
and $(\hat h_Y\times Z)\pitchfork(Y\times\hat h_Z)$ for any $h_Y\in\frakH(Y)$ 
and $h_Z\in\frakH(Z)$.

Conversely, any such partition of the hyperplanes into mutually transverse subset corresponds
to a decomposition of the CAT(0) cube complex into a product.  
In fact, by \cite[Proposition~2.6]{Caprace_Sageev} 
any CAT(0) cube complex  decomposes as a product $X=X_1\times\dots\times X_m$ 
of irreducible factors, $m\geq1$,
which are unique up to permutations and are often referred to 
as the {\em rank one} factors of $X$.

The induced CAT(0) metric (respectively, the combinatorial metric) on the product is 
the $\ell^2$-product (respectively, $\ell^1$-product) of the factor metrics.
We record the following standard fact:

\begin{lemma}\label{lem:boundary of factors}  Let $X=X_1\times\dots\times X_k$
be the product of CAT(0) spaces $X_j$, $j=1,\dots,k$
and let $G:=G_1\times \dots\times G_k$, where $G_j\leq\Aut(X_j)$ is a subgroup of the isometries of the $j$-th factor $X_j$.
Then any $G_j$-fixed point in $\partial_\sphericalangle X_j$
defines a $G$-fixed point in $\partial_\sphericalangle X$.
\end{lemma}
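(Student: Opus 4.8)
The plan is to recall the explicit description of the visual boundary of a product of CAT(0) spaces and then observe that the diagonal-type embedding of the factor boundary into the product boundary is equivariant. Concretely, for a product $Y = Y_1 \times \dots \times Y_k$, a standard fact (see \cite[Chapter~II.8, especially II.8.11 and II.9.A]{Bridson_Haefliger}) is that $\partial_\sphericalangle Y$ is the spherical join $\partial_\sphericalangle Y_1 * \dots * \partial_\sphericalangle Y_k$: every geodesic ray in $Y$ issuing from a base point $(o_1,\dots,o_k)$ is, up to parametrization, of the form $t \mapsto (\gamma_1(t\cos\theta_1),\dots,\gamma_k(t\cos\theta_k))$ where $\gamma_j$ is a geodesic ray (or a constant) in $Y_j$ and $\sum_j \cos^2\theta_j = 1$. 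In particular the subset of points of $\partial_\sphericalangle Y$ "coming from the $j$-th factor'', i.e.\ those with $\theta_j = 0$ and all constant in the other factors, is naturally identified with $\partial_\sphericalangle Y_j$, and this identification gives a topological embedding $\iota_j : \partial_\sphericalangle Y_j \hookrightarrow \partial_\sphericalangle Y$.

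First I would make this embedding basepoint-free and intrinsic so that equivariance is transparent: a point of $\partial_\sphericalangle Y_j$ is an asymptote class of rays $\gamma_j$ in $Y_j$; send it to the class of the ray $t \mapsto (o_1,\dots,o_{j-1},\gamma_j(t),o_{j+1},\dots,o_k)$ in $Y$. Changing the basepoint in the factors $Y_i$, $i\neq j$, replaces this ray by one at bounded distance (the distance in the $i$-th coordinate stays constant), so the asymptote class, hence the point $\iota_j(\xi)$ of $\partial_\sphericalangle Y$, does not depend on the choice of $o_i$. Next, given $g = (g_1,\dots,g_k) \in G$ with $g_j \in G_j = \Aut(Y_j)$, the isometry $g$ of $Y$ sends the ray $t\mapsto(o_i)_{i}$-with-$\gamma_j(t)$-in-slot-$j$ to the ray $t\mapsto(g_i o_i)_i$-with-$g_j\gamma_j(t)$-in-slot-$j$; by the basepoint-independence just noted, the class of the latter in $\partial_\sphericalangle Y$ is precisely $\iota_j(g_j\cdot\xi)$. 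Thus $g\cdot \iota_j(\xi) = \iota_j(g_j\cdot\xi)$ for all $g\in G$ and $\xi\in\partial_\sphericalangle Y_j$, i.e.\ $\iota_j$ is $G$-equivariant when $G$ acts on $\partial_\sphericalangle Y_j$ through the projection $G\to G_j$.

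The conclusion is then immediate: if $\xi\in\partial_\sphericalangle Y_j$ is fixed by $G_j$, then for every $g\in G$ we have $g\cdot\iota_j(\xi) = \iota_j(g_j\cdot\xi) = \iota_j(\xi)$, so $\iota_j(\xi)$ is a $G$-fixed point of $\partial_\sphericalangle Y$. I do not expect any genuine obstacle here; the only point requiring a little care — and the one I would write out — is the verification that the embedding $\iota_j$ is well defined independently of the auxiliary basepoints $o_i$ and is honestly $G$-equivariant, which is exactly the computation with bounded-distance rays sketched above. Everything else is a direct appeal to the product decomposition of the visual boundary in \cite{Bridson_Haefliger}.
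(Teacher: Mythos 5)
Your proposal is correct and is essentially the paper's own argument: the paper likewise takes the ray $t\mapsto(\ell_1(t),y_2,\dots,y_m)$ that is the given geodesic in the distinguished factor and constant in the others, and checks $G$-invariance of its endpoint by the bounded-distance (asymptotic rays) criterion, exactly as in your equivariance computation. The appeal to the spherical-join description of $\partial_\sphericalangle Y$ is harmless but not needed; the explicit embedding and bounded-distance check already suffice, which is all the paper uses.
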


\begin{proof}
Let us denote by $\delta_j$ and $\delta$ the CAT(0) metrics 
respectively on $X_j$ and on $X$.
Assume that, up to permuting the indices, there is
a $G_1$-fixed point $\xi_1\in\partial_\sphericalangle X_1$.  Let
$\ell_1:[0,\8)\to X_1$ be a geodesic in $X_1$ representing $\xi_1$,
i.e. $\xi_1=\ell_1(\8)$.  Since $\xi_1$ is $G_1$-invariant, then
$\sup_{t\in[0,\8)}\delta_{1}(\gamma \ell_1(t),\ell_1(t))<\8$.  If $x_j\in
X_j$ for $2\leq j\leq m$ is any point, then $\ell:[0,\8)\to X$ defined by
$\ell(t):=(\ell_1(t),x_2,\dots,x_m)$ is a geodesic in $X$.  Then for any
$\gamma\in G$ we have 
\bqn 
\sup_{t\in[0,\8)}\delta(\gamma\ell(t),\ell(t))^2 
:=\sup_{t\in[0,\8)}\[\delta_{1}(\gamma\ell_1(t),\ell_1(t))^2+\sum_{j=1}^m \delta_{j}(\gamma x_j,x_j)^2\]
<\8\,,
\eqn 
hence $\ell(\infty)$ is $G$-invariant.
\end{proof}

In addition, there is a corresponding decomposition of the Roller boundary, 
\begin{equation*}
\partial X=\bigcup_{j=1}^m\~{X_1}\times\dots\~{X_{j-1}}\times\partial X_j\times
  \~{X_{j+1}}\times\dots\times\~{X_m}\,,
\end{equation*}
and $\Aut(X)$ contains $\Aut(X_1)\times\dots\times\Aut(X_m)$ 
as a finite index subgroup
($\Aut(X)$ is allowed to permute isomorphic factors).
If $\Gamma\to\Aut(X)$ is a group acting by automorphisms, 
then there is a subgroup $\Gamma_0<\Gamma$
of finite index ($\leq m!$) that acts on $X_j$ 
via the projection $\Gamma_0\to\Aut(X_j)$.
%  If $\rho:\Gamma\to\Aut(X)=\Aut(X_1)\times\dots\times\Aut(X_k)$, we denote by $\Gamma_i\leq\Aut(X_i)$
%  the projection of the image of $\Gamma$ into $\Aut(X_i)$.

\subsection{The Essential Core}\label{subsec:essential}
 %where with an abuse of notation we identify $\frakH(X_j)^{(n)}$ with its image in $\frakH(X)^{(n)}$.
A halfspace $h\in \frakH$ is said to be $\G$-\emph{essential} if for some (equivalently all) $x\in X$ the $\G$-orbit of $x$ inside $h$, 
that is $\G\cdot x\cap h$, is not at bounded distance from the hyperplane $\hat h$. 
A hyperplane $\hat h\in\hat\frakH$ is called $\Gamma$\emph{-essential} (or essential for short) 
if each of the corresponding halfspaces is $\G$-essential, and half-$\G$-essential (or half-essential)
if only one of the two corresponding halfspaces if $\Gamma$-essential.
The $\Gamma$\emph{-essential core} (or essential core) $Y$ of the $\Gamma$-action on $X$ is 
a CAT(0) cube complex corresponding to the $\Gamma$-essential (or essential) hyperplanes. 
The $\Gamma$-action on the $\Gamma$-essential core $Y$ is essential and 
any non-empty $\Gamma$-invariant convex subcomplex of $Y$ is equal to $Y$.
Following the notation of \cite{Caprace_Sageev}, we denote by 
$\operatorname{Ess}(X,\Gamma)$
the set of $\Gamma$-essential hyperplanes in $X$, so that we can write
\begin{equation*}
\hat\frakH(X)=\operatorname{Ess}(X,\Gamma)\sqcup\operatorname{nEss}(X,\Gamma)\,,
\end{equation*}
where the set of non-essential hyperplanes $\operatorname{nEss}(X,\Gamma)$ includes
both the half-essential and the trivial ones.  What is important is that both 
$\operatorname{Ess}(X,\Gamma)$ and $\operatorname{nEss}(X,\Gamma)$
are $\Gamma$-invariant subsets of $\hat\frakH(X)$, and hence the decomposition
\begin{equation*}
\hat\frakH(X)=\operatorname{Ess}(X,\Gamma)\sqcup\operatorname{nEss}(X,\Gamma)=
		         \operatorname{Ess}(Y,\Gamma)\sqcup\operatorname{nEss}(X,\Gamma)
\end{equation*}
are $\Gamma$-invariant.

While in general the essential core of an action can be empty, 
it is proven in \cite[Proposition~3.5]{Caprace_Sageev} that 
if there are no $\Gamma$-fixed points 
in the visual boundary $\partial_\sphericalangle X$ of $X$ and no
$\Gamma$-fixed points in $X$, then the essential core $Y$ is a non-empty 
$\Gamma$-invariant convex subcomplex $Y\subset X$.
As a consequence, one has both that 
$\partial_\sphericalangle Y\subset\partial_\sphericalangle X$
and $\partial Y\subset\partial X$.
However, even if $X$ is irreducible, its essential core $Y$ need not be.  
Let $Y=Y_1\times\dots\times Y_m$
be the decomposition into irreducible factors.  
Using the decomposition of hyperplanes for products discussed above we obtain
\begin{equation}\label{eq:decomposition of the hyperplanes}
\hat\frakH(X)=\operatorname{Ess}(Y,\Gamma)\sqcup\operatorname{nEss}(X,\Gamma)
 			  =\hat\frakH(Y_1)\sqcup\dots\sqcup\hat\frakH(Y_m)
                          \sqcup\operatorname{nEss}(X,\Gamma)\,,
\end{equation}
where we used for simplicity the notation $\hat\frakH(Y_j)$ to
indicate $\operatorname{Ess}(Y_j,\Gamma)$ (since by hypothesis they
coincide because the induced action on $Y_j$ is $\Gamma$-essential).

Let $\frakH(X)^n$ denote the set of $n$-tuples of halfspaces in $X$. 
Since if $n$-tuple $s\in \frakH(X)^n$ is essential, then any other halfspace
containing the halfspaces in $s$ is essential as well, the decomposition in
\eqref{eq:decomposition of the hyperplanes} induces a decomposition
\begin{equation}\label{decomp}
\frakH(X)^n=\frakH(Y_1)^n\sqcup\dots\sqcup\frakH(Y_m)^n
\sqcup\frakH_{nEss}(X)^n\,,
\end{equation}

where $\frakH_{nEss}(X)^n$ consists of $n$-tuples such that at
least one halfspace is non-essential.

%\label{defi:noneuclidean core}
%
\subsection{Skewering, Flipping: \"Uber-Separated and \"Uber-Parallel Pairs of Halfspaces}\label{subsec:skew-flip}
Flipping and double-skewering are important tools introduced by
Caprace--Sageev in \cite{Caprace_Sageev}.
\begin{definition}[\cite{Caprace_Sageev}]
 We say that $\g\in \Aut(X)$ {\em flips} a halfspace $ h\in\frakH(X)$ if
$\g \*h\subset h$.  Moreover we say that $\g$ {\em skewers} $\hat h$ 
if $\g h\subset h$ (or $h\subset  \g h$).
\end{definition}
Under reasonable hypotheses such combinatorial automorphisms can always be found.  
More precisely, if $X$ is a finite dimensional 
CAT(0) cube complex and $\Gamma\to\Aut(X)$ acts essentially on $X$ 
without fixing any point in the visual boundary $\partial_\sphericalangle X$, 
then for every halfspace $h\in\frakH(X)$, there exists $\g\in\G$ that flips $h$,
\cite[Flipping Lemma, \S~1.2]{Caprace_Sageev}.  As a simple consequence, 
we have also that, given any two halfspaces $k\subset h$, there exists
$\g\in\G$ such that $\g h\subset k\subset h$, 
\cite[Double Skewering Lemma, \S~1.2]{Caprace_Sageev}.

The following notion was first introduced by Behrstock--Charney, \cite{Behrstock_Charney}:

\begin{definition}[\cite{Behrstock_Charney}]\label{defi:strongly separated} 
We say that two parallel hyperplanes are {\em strongly separated}
if there is no hyperplane that is transverse to both.
\end{definition}

By the usual abuse of terminology we say that two halfspaces are
strongly separated if the corresponding hyperplanes are.

The existence of strongly separated hyperplanes is definitively a rank
one phenomenon.  In fact, it is easy to see that if $X$ is reducible,
then there are no strongly separated hyperplanes.  For non-elementary 
CAT(0) cube complexes, the fact that the
existence of strongly separated hyperplanes is actually equivalent to
the irreducibility of the CAT(0) cube complex was proven in
\cite{Caprace_Sageev}, although the case of a right-angled Artin group
can already be found in \cite{Behrstock_Charney}.

We will need a finer notion of strong separation, which is less standard but will be key to our work.
\begin{definition}\label{sss} 
Two strongly separated halfspaces $h_1$ and $h_2$ are said to be an {\em \"uber-separated pair} 
if any two halfspaces $k_1, k_2$ with the property that $h_i \pitchfork k_i$ for $i =1, 2$
are parallel.
We say that two strongly separated hyperplanes are
{\em \"uber-separated} if their halfspaces are.
 \end{definition}
\begin{remark}\label{here} If $h\subset k\subset \ell$ are pairwise strongly separated halfspaces, 
$h$ and $\ell$ is an \"uber-separated pair.\end{remark}
\begin{lemma}\label{lem:ss} Let $Y$ be a finite dimensional irreducible CAT(0) cube complex 
and $\Gamma\to\Aut(Y)$ a group acting essentially and non-elementarily. 
Given any hyperplane $\hat h$, there exists $\gamma\in \Gamma$ such that 
$\hat h$ and $\gamma\hat h$ is an \"uber-separated pair and $h\subset \gamma h$ 
(or $\gamma h\subset h$).
\end{lemma}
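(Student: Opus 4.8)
The plan is to use the Caprace--Sageev machinery, especially the Double Skewering Lemma, together with the characterization of irreducibility via strongly separated hyperplanes. First I would invoke the fact (\cite{Caprace_Sageev}) that since $X$ is irreducible and the $\Gamma$-action is essential and non-elementary, there \emph{exists} at least one pair of strongly separated hyperplanes, say $\hat a$ and $\hat b$ with corresponding halfspaces $a \supsetneq b$ (after orienting them so that one is nested in the other; strong separation forces nesting since transverse hyperplanes share the trivial transverse hyperplane, so nothing is transverse to a strongly separated pair and in particular they cannot be transverse to each other). The goal is then to ``move'' this configuration so that it controls the given hyperplane $\hat h$.

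Next I would use the Double Skewering Lemma: given the chosen strongly separated halfspaces $b \subset a$, and given the arbitrary halfspace $h$, I want a group element that places a translate of $\hat h$ sandwiched strictly between $\hat a$ and $\hat b$, or more precisely that produces two translates of $\hat h$ that are strongly separated. The cleanest route: since the action has no fixed point in $\partial_\sphericalangle X$, by the Flipping Lemma every halfspace can be flipped, and by Double Skewering, for any two halfspaces $k \subset h'$ there is $g \in \Gamma$ with $gh' \subset k \subset h'$. I would apply this to first skewer $\hat h$ deeply enough that a translate $g_0 \hat h$ is nested far inside $h$, and then observe that any hyperplane lying strictly between two strongly separated hyperplanes is itself strongly separated from each of them (this is immediate from the definition: if $\hat c$ were transverse to some $\hat d$ with $\hat d$ also transverse to a hyperplane between $\hat a,\hat b$, one propagates a transversality back to the pair $\hat a, \hat b$ — one must be a little careful here, the precise statement is that a hyperplane nested between two strongly separated ones is strongly separated from both of them). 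Using the existence of a strongly separated pair, I can conjugate/translate to assume $\hat h$ itself lies between a strongly separated pair $\hat a, \hat b$ of translates, or rather I find $\gamma_1, \gamma_2 \in \Gamma$ so that $\gamma_1 \hat h$ and $\gamma_2 \hat h$ are separated by the original strongly separated pair, hence strongly separated from each other; then $\gamma := \gamma_2^{-1}\gamma_1$ works, since strong separation is $\Aut(X)$-invariant.

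The main obstacle I anticipate is bridging between ``there exists \emph{some} strongly separated pair'' and ``$\hat h$ and a translate of $\hat h$ are strongly separated'' — i.e.\ transporting the strong separation property to the specific, arbitrary hyperplane $\hat h$. The key lemma needed is the nesting/sandwiching principle: if $\hat a$ and $\hat b$ are strongly separated and $\hat c$ is a hyperplane with its halfspace $c$ satisfying $b \subset c \subset a$, then $\hat a, \hat c$ are strongly separated and $\hat c, \hat b$ are strongly separated. Granting this, the strategy is: take the strongly separated pair $b \subset a$ from irreducibility; use Double Skewering to find $g \in \Gamma$ with $gh \subset b$ (skewer $\hat h$ into $b$) and then, since the action is non-elementary, find a further element moving $gh$ or moving the pair so that two distinct translates of $\hat h$ are both sandwiched between $a$ and $b$ — concretely, find $\gamma_0$ with $\gamma_0 \cdot(gh) \supsetneq gh$ and both nested between $b$ and $a$; then $gh$ and $\gamma_0 g h$ are both sandwiched, hence by the sandwiching principle strongly separated from $\hat a$ and $\hat b$, and being nested in each other (one contains the other) with a strongly separated hyperplane between them... one refines to get them strongly separated from each other. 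Setting $\gamma = (\gamma_0 g) g^{-1}$-type conjugate produces the claimed $\gamma$ with $\hat h$ and $\gamma \hat h$ strongly separated. I would organize the write-up around (i) recalling the strongly-separated-pair existence, (ii) proving or citing the sandwiching principle, (iii) the double-skewering transport, and (iv) the conjugation to land on $\hat h$ itself.
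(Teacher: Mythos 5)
Your overall plan (find a strongly separated pair, move translates of $\hat h$ into position with double skewering, conclude by an invariance/conjugation argument) is in the right spirit, but the key lemma you rely on is stated backwards, and that is a genuine gap. It is \emph{not} true that a hyperplane nested between two strongly separated hyperplanes is strongly separated from each of them: if $b\subset c\subset a$ with $\hat a,\hat b$ strongly separated, a hyperplane $\hat k$ can perfectly well cross both $\hat c$ and $\hat a$ while staying far from $\hat b$ (a staircase square complex already gives such examples). What is true --- and what you in fact need, and what you correctly gesture at in the sentence about $\gamma_1\hat h,\gamma_2\hat h$ being \emph{separated by} the strongly separated pair --- is the converse: if two hyperplanes contain a strongly separated pair between them, say $u\subset b\subset a\subset v$, then $\hat u$ and $\hat v$ are strongly separated, since any $\hat k$ transverse to both $\hat u$ and $\hat v$ would meet all four quadrants determined by $\hat a$ and also by $\hat b$. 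Your step (iv) as written invokes the false direction (``both sandwiched between $a$ and $b$, hence strongly separated from $\hat a$ and $\hat b$ \dots one refines'') and does not close.

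The second gap is the transport step ``use Double Skewering to find $g\in\Gamma$ with $gh\subset b$.'' Double Skewering applies only to a nested pair of halfspaces; the given $\hat h$ may be transverse to both members of your chosen pair (or face them the wrong way), so that lemma alone produces nothing, and handling the transverse case again requires a strongly separated pair adapted to $h$. The paper removes both difficulties by quoting the stronger input \cite[Proposition~5.1]{Caprace_Sageev}: for the \emph{given} $h$ there exist $h_1\subset h\subset h_2$ with $\hat h_1,\hat h_2$ strongly separated. Double skewering the nested pair $h_1\subset h_2$ gives $\gamma$ with $h_1\subset h_2\subset\gamma h_1$, hence $h\subset h_2\subset\gamma h_1\subset\gamma h\subset\gamma h_2\subset\gamma^2 h_1\subset\gamma^2 h$; now $\hat h$ and $\gamma^2\hat h$ sandwich the strongly separated pair $\gamma\hat h_1,\gamma\hat h_2$ between them, so by the correct direction of the separation principle they are strongly separated, and $\gamma^2$ is the required element. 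If you replace your sandwiching lemma by its converse and your transport step by Proposition~5.1, your outline becomes exactly the paper's proof.
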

\begin{proof} By \cite[Proposition~5.1]{Caprace_Sageev} for any halfspace
  $h$ there is a pair of halfspaces $h_1,h_2$ such that
  $h_1\subset h\subset h_2$ and $\hat h_1$ and $\hat h_2$ are strongly
  separated.  We apply now the Double Skewering lemma in
  \cite[\S~1.2]{Caprace_Sageev} to the pair $h_1\subset h_2$ to obtain
  that $h_1\subset h_2\subset \gamma_0 h_1$, for some $\gamma_0\in\Gamma$.  
  By construction, and since
  $\Gamma$ acts by automorphisms of $Y$, we have the chain of
  inclusions
\begin{equation*}
h_1\subset h\subset h_2\subset\gamma_0 h_1
\subset \gamma_0 h\subset\gamma_0 h_2\subset\gamma_0^2 h_1
\subset \gamma_0^2 h\subset\gamma_0^2 h_2\,.
\end{equation*}
Since $\hat h_1$ and $\hat h_2$ are strongly separated, 
the same is true for $\gamma_0\hat h_1$ and $\gamma_0\hat h_2$.
Hence $\hat h$ and $\gamma_0^2\hat h$ is an \"uber-separated pair by Remark \ref{here}.
\end{proof}
Notice that \"uber-separated pairs are in particular strongly separated and hence they do not exist in the reducible case,
\cite[Proposition~5.1]{Caprace_Sageev}. To take care of his, in the sense we will explain, we will use the following generalization:
\begin{definition}\label{uparallel}
Two parallel halfspaces $h_1$ and $h_2$ are said to be {\em \"uber-parallel} if for every pair of halfspaces $k_1, k_2$ such that $h_i \pitchfork k_i$ for $i =1, 2$, 
then either the halfspaces $k_1$ and $k_2$ are parallel, or they each cross both $h_1$ and $h_2$. 
Two parallel hyperplanes are {\em \"uber-parallel} if their halfspaces are.\end{definition}
Notice that, according to the definition, an \"uber-separated pair is in particular \"uber-parallel.  
If $X$ is a product, then  \"uber-separated pairs from an irreducible factor will not be \"uber-separated in $X$ but will be \"uber-parallel. 
Even when $X$ is irreducible, there may be reducible subcomplexes of $X$. 
In such a reducible subcomplex, such as for example a copy of $\Z^2$ inside $\Z*\Z^2$, 
there may be pairs of halfspaces that are  \"uber-separated in one of the factors of that subcomplex but not \"uber-separated in $X$. 
The notion of \"uber-parallel captures these types of pairs, as is the case for example in the Salvetti complex associated to  $\Z*\Z^2$.
\subsection{The Bridge}\label{bridge}
The concept of {\em bridge} of two parallel hyperplanes was introduced by Behrstock--Charney in \cite{Behrstock_Charney}.
\begin{definition}
Let $h_1\subset h_2$ be a nested pair of halfspaces. 
Consider the set of pairs of points in $h_1\times h_2^*$ minimizing the distance between $h_1$ and $h_2^*$, that is
$$M_{h_1,h_2} =\{(x, y) \in h_1\times h_2^* :  \text{ if }(a,b) \in h_1\times h_2^* \text{ then } d(x,y)\leq d(a,b)\}.$$

It will be convenient to denote by $M_1$ and $M_2$  the projections of $M_{h_1,h_2}$ into $h_1$ and $h_2^*$, respectively.

The \emph{combinatorial bridge} connecting $h_1$ and $h_2^*$ is the union of intervals between such minimal distance pairs: 
$$b(\hat h_1,\hat h_2)= \bigcup_{(x,y)\in M_{h_1,h_2}}{}\mathcal{I}(x,y).$$ 
In the following we will drop the dependence on the hyperplanes whenever no confusion can arise.
\end{definition}

We observe that if $(x_1, y_1), (x_2, y_2) \in M_{h_1,h_2}$ then $d(x_1,y_1) = d(x_2,y_2)$. 
%Recall that a set $A$ is called \emph{strongly convex} is for any $x,y\in A$ 
%then the whole interval  $\mathcal{I}(x,y)\subset A$.
%
%
The following lemma on the distance between a point and a halfspace, permeates several proofs to come.
We denote by $\hat\frakH(u,h)$ the hyperplanes separating $u$ from $h$ and define the distance of 
$u$ from $h$  to be
\bqn
d(u,h):=\min\{d(u,v)|v\in h\}\,.
\eqn
\begin{lemma}\label{lem:distance-point-halfspace}
Let $u\in X$ and let $h$ be a halfspace so that $u\in h^*$. Then $d(u,h)$ equals 
the cardinality $|\hat\frakH(u,h)|$.
\end{lemma}
\begin{proof}
If a hyperplane separates $u$ from $h$, 
it will have to be crossed by any combinatorial geodesic from $u$ to $v$ for any $v\in h$ and 
hence it will contribute to $d(u,v)$. It follows that $|\hat\frakH(u,h)|\leq d(u,h)$.

Conversely take $v\in h$ minimizing the distance to $u$ and 
assume that a combinatorial geodesic from $u$ to $v$ crosses a hyperplane $\hat{k}$ transverse to $\hat{h}$. 
Since $\hat{k}$ and $\hat{h}$ are not comparable there is a (maybe different) combinatorial geodesic that crosses $\hat{k}$ before crossing $\hat{h}$. 
Let $v'$ be the point reached just after crossing $\hat{k}$: then $v'\in h$ because the geodesic has not crossed $\hat{h}$ yet, and $d(u,v')<d(u,v)$.
Since this is impossible by definition of $v$, the geodesic must cross at most all hyperplanes
separating $u$ from $h$.
\end{proof}

The structure of the bridge is obtained in the following

\begin{lemma}\label{Bstructure}
Let ${h}_1\subset {h}_2$ be any pair of nested halfspaces.  
\begin{enumerate}
\item\label{item0} If $\hat h$ separates two points in $M_i$, $i=1$ or $2$, then $\hat h$ crosses $\hat h_i$.
\item\label{item1} Let $(p_1,p_2)\in  M_{h_1,h_2}$ and suppose that $\hat h$ separates $p_1$ and $p_2$.  
Then $\hat h$ is parallel to both $\hat h_1$ and $\hat h_2$. 
\item \label{item2} If a hyperplane $\hat{h}$ separates any two points on the bridge and
$\hat h$ is transverse to either of the $\hat h_i$, with $i\in\{1,2\}$, then $\hat h$ is transverse to both the $\hat{h}_i$.
\item\label{item3} For any $(p_1, p_2)\in M_{h_1,h_2}$, the distance $d(p_1, p_2)$ is exactly  the number of hyperplanes separating $h_1$ from $h_2^*$, 
including $\hat h_1$ and $\hat h_2$.
\item\label{item4} The bridge $b(\hat h_1,\hat h_2)$ is isomorphic to a product and strongly convex. 
More precisely, $b(\hat h_1,\hat h_2)\cong M_1\times\mathcal{I}(p_1,p_2)$ where $M_1$, the projection of $M_{h_1,h_2}$ into $h_1$, 
is strongly convex, and $(p_1,p_2)$ is any pair in $M_{h_1,h_2}$.
\end{enumerate}
\end{lemma}
Before starting the proof we make the general observation that if $p_1\in M_1$
then no hyperplane $\hat h$ can separate $p_1$ from $\hat h_1$.  In fact,
if there such hyperplane, the geodesic joining $p_1$ to the point $p_2\in M_2$
such that $(p_1,p_2)\in M_{h_1,h_2}$ would have to cross this hyperplane 
before crossing $\hat h_1$, contradicting that $(p_1,p_2)$ is a minimizing pair.
The same argument holds of course for $p_2\in M_2$.

We also establish the following easy 
\begin{claim}  Let $p_1\in M_1$ and $p_2\in M_2$ be such that $(p_1,p_2)$ minimizes the distance.
Assume that there exists $\hat h$ such that $\hat h\pitchfork\hat h_1$, $p_1\in h$ and $p_2\in h^*$.
Then either $p_1$ belongs to the cube identified by $\hat h$ and $\hat h_1$ or there exists $\hat h'$
such that $\hat h'\pitchfork\hat h_1$, $p_1\in h'$ and $\hat h'\subset h$.
\end{claim}

\begin{proof}[Proof of Claim]  
If $p_1$ does not belong to the cube determined by $\hat h$ and $\hat h_1$, 
there exists a hyperplane $\hat h'$ separating $p_1$ from $\hat h$.  
If $\hat h'$ were not transverse to $\hat h_1$, then $\hat h'$ would be a hyperplane separating $p_1$ from $\hat h_1$,
which we observed is not possible.
\end{proof}

\begin{proof}[Proof of Lemma~\ref{Bstructure}]
\eqref{item0}  For simplicity let us set $i=1$ and let $p_1,p'_1\in M_1$ be the points separated by $\hat h$.
If $\hat h$ is not transverse to $\hat h_1$, then $\hat h$ must separate, say, $p_1$ from $\hat h_1$
and we observed already that this is not possible.

\eqref{item1} If $\hat h$ were to cross $\hat h_1$, we could assume, by applying repeatedly the claim, 
that $p_1$ belongs to the cube identified by $\hat h$ and $\hat h_1$.  Then by crossing $\hat h$
one would still remain in $\hat h_1$ and reach a point closer to $p_2$, contradicting the minimality of $(p_1,p_2)$.

\eqref{item2} Let $\hat{h}$ be a hyperplane that separates two points on the bridge and $\hat{h}\pitchfork\hat{h}_1$.
Let us assume that $\hat h$ is parallel to $\hat{h}_2$. 
Then, up to replacing $h$ by $h^*$, $M_2\subset h^*$. 
If it were also $M_1 \subset h^*$,  then the interval between any element of $M_1$ and any element in $M_2$ 
would be contained in $h$ and hence $b(\hat h_1,\hat h_2) \subset h$, which contradicts the assumption on $\hat h$. 
Therefore $M_1 \cap h^*\neq\varnothing$. Let $p_1\in M_1\cap\hat h^*$ and let $p_2 \in M_2$ 
such that $(p_1, p_2) \in M_{h_1,h_2}$.   
By construction, $\hat h$ separates $p_1$ and $p_2$ and hence, by \eqref{item1},
cannot be transverse to either $\hat h_1$ or $\hat h_2$, contradicting
the hypothesis.

\eqref{item3} Let $(p_1,p_2)\in M_{h_1,h_2}$. Clearly $d(p_1, p_2)$ is greater than or equal to the number of hyperplanes separating $h_1$ from $h_2$. 
The other inequality is the assertion in  \eqref{item1}.

\begin{definition}\label{def:Bstructure}  Let $b(\hat h_1,\hat h_2)$ be the bridge of the hyperplanes $\hat h_1, \hat h_2$.  
\begin{enumerate} 
\item The hyperplanes crossing both both $\hat{h}_1$ and $\hat{h}_2$ are {\em horizontal hyperplanes} 
and are denoted by $\hat\beta_h$.
\item The hyperplanes separating $\hat{h}_1$ and $\hat{h}_2$ are called {\em vertical hyperplanes} 
and are denoted by $\hat\beta_v$.
\end{enumerate}
\end{definition}

\medskip
\noindent
{\em Continuation of the proof of Lemma~\ref{Bstructure}.}
\eqref{item4} From \eqref{item2} we see that the hyperplanes of the bridge are either 
horizontal or vertical.  
Then any element of $\hat{\beta}_h$ crosses any element of $\hat{\beta}_v$ and vice-versa;
therefore the bridge $b(\hat h_1,\hat h_2)$ is a product $X(\hat\beta_h)\times X(\hat\beta_v)$. 
Furthermore, by part \eqref{item1} we have that $\I(p_1, p_2) \cong X(\hat\beta_v)$ for any $(p_1, p_2) \in M_{h_1,h_2}$. 

To conclude it remains to show that $M_1$ is strongly convex. 
First we notice that, by \eqref{item1} and \eqref{item2}, if $\hat\beta_h$ is not empty,
each element of $\hat\beta_h$ separates elements of $M_1$ and of $M_2$. 
Now take $s_1,t_1\in M_1$ and $u_1\in\mathcal{I}(s_1,t_1)$.  
Let $s_2,t_2\in M_2$ be the other end of the minimizing pairs for $s_1,t_1$. 
Let $u_2\in\mathcal{I}(s_2,t_2)$ be the element obtained by starting at $s_2$ and crossing the hyperplanes separating $s_1$ from $u_1$. 
This is well defined because, by \eqref{item0} and \eqref{item2},
the hyperplanes separating $s_1$ from $t_1$ are all in $\hat\beta_h$. 
Then only the hyperplanes separating ${h}_1$ from ${h}_2^*$ can separate $u_1$ from $u_2$. 
Hence the pair $(u_1,u_2)\in M_{h_1, h_2}$, so that $\mathcal{I}(s_1,t_1)\subset M_1$. 
Hence, $M_1$ is strongly convex and thus is $b(\hat h_1,\hat h_2)=M_1\times\mathcal{I}(p_1,p_2)$.
\end{proof}

Although we will not need it, we observe that, since the bridge is strongly convex, 
if $H:=\{h\in\frakH:\,b(\hat h_1,\hat h_2)\subset h\}$, 
then $\frakH=(\beta_h\sqcup\beta_v)\sqcup(H\sqcup H^*)$ is a lifting decomposition of the hyperplanes associated to the bridge
and hence the bridge is isometrically embedded in $X$ as a product.

In view of Lemma~\ref{Bstructure} the following is well defined.
\begin{definition} Let $h_1\subset h_2$ be a nested pair of halfspaces. The \emph{length $\ell(b(\hat h_1,\hat h_2))$ of the bridge} of $\hat h_1$ and $\hat h_2$ 
is the cardinality of the set $\beta_v$. 
\end{definition}

We will adopt the usual abuse of terminology and refer to {\em horizonal halfspaces} (respectively {\em vertical halfspaces}) 
the halfspaces corresponding to the horizontal (respectively vertical) hyperplanes, and denote them by $\beta_h$ (respectively $\beta_v$).

It is  straightforward to see that if $\hat h_1$ and $\hat h_2$ are strongly separated,
then the corresponding set of horizontal halfspaces is empty, \cite[Lemma~2.2]{Behrstock_Charney}.

The following lemma is probably well-known, but we include it here because we could not find a reference for it.
\begin{lemma}Let $X$ be a CAT(0) cube complex, $x\in X$ and $B\subseteq X$ a strongly convex subset. There is a unique point $p_B(x)\in B$ minimizing the combinatorial distance between $x$ and $B$.\end{lemma}
Note that this lemma is standard in the case of the CAT(0) distance, see for instance \cite{Bridson_Haefliger}.
In the case of a CAT(0) space however, the proof of the existence of an orthogonal projection is a bit more difficult 
than the proof of its uniqueness.
\begin{proof} Since the combinatorial distance takes discrete values, the existence of a point in $B$
minimizing the distance is obvious.
To prove uniqueness, let $y$ and $y'$ be two points in $B$ minimizing the distance between $B$ and $x$.  
Let us show that $\hat\frakH(x,y)\subseteq \hat\frakH(x,y')$, where $\hat\frakH(x,y)$ is the collection of hyperplanes separating $x$ from $y$: 
If a hyperplane $\hat{h}_0\in \hat\frakH(x,y)$ does not separate $x$ from $y'$, 
then it has to belong to $\hat\frakH(y,y')$, and so do the hyperplanes $\hat{h}_1,\dots,\hat{h}_s$ separating $\hat{h}_0$ from $y$. 
Let $p\in \I(x,y)$ obtained by starting at $y$ and crossing the hyperplanes $\hat{h}_s,\dots,\hat{h}_0$. 
Then $p$ also belongs to $\mathcal{I}(y,y')\subseteq B$;  
but $d(x,p)<d(x,y)$ because $p$ is also on a geodesic from $x$ to $y$, 
contradicting that $y$ was distance minimizing. So  $\hat\frakH(x,y)=\hat\frakH(x,y')$. By Lemma \ref{geod} we deduce that $y=y'$.
\end{proof}
We can hence give the following definition.
\begin{definition} For $x\in X$ and  $B$  a strongly convex subset in $X$, denote by $p_B(x)\in B$ the \emph{projection of $x$ on $B$}.\end{definition}
\begin{lemma}\label{MedDecomp}Let $h_1\subset h_2$ be a pair of nested halfspaces, $x_1\in h_1$ and $x_2\in h_2^*$. 
Denote by $b$ the bridge connecting ${h}_1$ and ${h}_2^*$. Then
\begin{enumerate}
\item A horizontal hyperplane of the bridge cannot separate $x_i$ from $p_{b}(x_i)$, for $i=1,2$.
\item The following holds true:
$$d(x_1,x_2)=d(x_1,p_b(x_1))+d(p_b(x_1),p_b(x_2))+d(p_b(x_2),x_2).$$\end{enumerate}\end{lemma}
 \begin{proof} (1) Observe first of all that $p_b(x_1)\in b(\hat h_1,\hat h_2)\cap h_1$.  
 Now let $\hat{h}$ be a horizontal hyperplane of the bridge. If $\hat{h}$ separates $x_1$ from $p_b(x_1)$, 
 say $x_1\in h$ and $p_b(x_1)\in h^*$, then there is a point in $b(\hat h_1,\hat h_2)\cap h_1$ different from $p_b(x_1)$ 
 and at distance from $x_1$ smaller than $d(x_1,p_b(x_1))$, 
 contradicting that $p_b(x_1)$ is the projection of $x_1$ on $b(\hat h_1,\hat h_2)$. 
  
 (2) That $d(x_1,x_2)\leq d(x_1,p_b(x_1))+d(p_b(x_1),p_b(x_2))+d(p_b(x_2),x_2)$ follows from the triangle inequality, 
 so let us show the other inequality which we do by showing that 
 $\hat\frakH(x_1,p_b(x_1))\cup \hat\frakH(p_b(x_1),p_b(x_2))\cup \hat\frakH(p_b(x_2),x_2])\subset \hat\frakH(x_1,x_2)$.
 
A hyperplane $\hat{h}$ separating $p_b(x_1)$ from $p_b(x_2)$ cuts the bridge and 
hence by Lemma~\ref{Bstructure}(3) is either vertical or horizontal. 
If it is vertical, it separates $h_1$ from $h_2^*$ and hence $x_1$ from $x_2$ as well. 
If $\hat{h}$ is horizontal, it cannot separate $x_i$ from $p_i(x_i)$ by part (1) of this lemma. 
Since $\hat{h}$ is separating $p_b(x_1)$ from $p_b(x_2)$, this forces $\hat{h}$ to separate $x_1$ from $x_2$. 
 
By part (1) a hyperplane $\hat{h}$ separating $x_i$ from $p_b(x_i)$, for $i=1$ or $2$, cannot be horizontal; 
by Lemma~\ref{Bstructure} it cannot cross the bridge, so it has to separate $x_1$ from $x_2$.  
\end{proof}

\subsection{Finite Orbits in the CAT(0) Boundary Versus Finite Orbits in the Roller Boundary}
Non-elementarity of the action is defined in terms of the non-existence of a finite orbit 
in the CAT(0) boundary.  We describe in this section to which extent this is equivalent 
to the same property with respect to the Roller boundary.

We start with one direction of the equivalence that is very easy and is here for completeness, 
since it will not be needed in the following. 

\begin{prop}\label{roller-to-visual} Let $Y$ be a finite dimensional CAT(0) cube complex
and let $\Gamma\to\Aut(Y)$ be an action on $Y$.  
If the action is essential and there is a finite orbit in
the Roller boundary, then there is a finite orbit in the CAT(0) boundary.
\end{prop}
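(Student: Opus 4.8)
The goal is: given an essential action $\Gamma\to\Aut(X)$ on a finite dimensional CAT(0) cube complex with a finite orbit in the Roller boundary $\partial X$, produce a finite orbit in the visual boundary $\partial_\sphericalangle X$. The plan is to pass to a finite index subgroup $\Gamma'<\Gamma$ fixing a single point $\xi\in\partial X$, and then extract from the ultrafilter $\xi$ a descending chain of halfspaces that is ``$\Gamma'$-invariant enough'' to produce a genuine geodesic ray whose endpoint in $\partial_\sphericalangle X$ is $\Gamma'$-fixed; the $\Gamma$-orbit of that endpoint is then finite.

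First I would pass to the finite index subgroup $\Gamma'=\Stab_\Gamma(\xi)$ for some $\xi$ in the finite Roller orbit. Now $\xi$, as an ultrafilter on $\frakH(X)$, is not principal, so it contains an infinite descending chain of halfspaces $h_0\supset h_1\supset h_2\supset\cdots$. The key combinatorial point I would use is that, since $X$ is finite dimensional of dimension $D$, one can pass to a \emph{subchain} all of whose consecutive pairs are tightly nested (equivalently, by Lemma~\ref{embedintervals}, the interval structure between the tail of the chain and a basepoint embeds in $\~{\Z^D}$), so that the chain represents a combinatorial geodesic ray $r\colon[0,\infty)\to X$ with all halfspaces $h_n\in\xi$. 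Such a ray has a well-defined endpoint $r(\infty)\in\partial_\sphericalangle X$. The heart of the argument is then to show $r(\infty)$ is $\Gamma'$-fixed, or at least has finite $\Gamma'$-orbit: since $\gamma\xi=\xi$ for $\gamma\in\Gamma'$, the translated chain $\gamma h_n$ lies again in $\xi$, and two nested (or appropriately interleaved) infinite descending chains inside the same ultrafilter $\xi$ determine geodesic rays that are parallel, i.e. stay at bounded Hausdorff distance. Hence $\gamma r$ and $r$ are asymptotic, so $\gamma\cdot r(\infty)=r(\infty)$.

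The step I expect to be the main obstacle is precisely making rigorous that two infinite descending chains contained in the \emph{same} ultrafilter $\xi$ give asymptotic rays — the essential hypothesis should enter here. Without essentiality, $\xi$ could contain a descending chain of halfspaces that ``stabilizes'' on a half-essential or trivial hyperplane configuration, and then $\gamma h_n$ and $h_n$ need not be comparable and the corresponding rays could diverge. With essentiality every hyperplane in play is deep on both sides, which is what lets one estimate $\dist(r(t),\gamma r(t))$ uniformly: one compares the two rays to a common ``diagonal'' chain $m(h_n,\gamma h_n,\cdot)$ obtained via the median, using that $\xi\cap\gamma\xi=\xi$ to control the symmetric differences $r(t)\Delta(\gamma r)(t)$ in $\ell^1$, hence in the combinatorial metric. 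I would organize this as: (i) finite index reduction to $\Stab(\xi)$; (ii) extraction of a tightly nested chain in $\xi$ and construction of the ray $r$ via Lemma~\ref{embedintervals}; (iii) the bounded-distance estimate $\sup_t d(r(t),\gamma r(t))<\infty$ for all $\gamma\in\Gamma'$, using essentiality and the median formula \eqref{eq:median}; (iv) conclude $r(\infty)$ is $\Gamma'$-fixed and take its $\Gamma$-orbit, which has cardinality at most $[\Gamma:\Gamma']<\infty$. Finally I would remark that essentiality is genuinely needed, as otherwise a point in $\partial X$ (e.g. coming from a bounded factor) need not yield any point in $\partial_\sphericalangle X$ at all.
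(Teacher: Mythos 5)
Your step (iii) is the problem, and it is not just a technical obstacle to be smoothed over: the key claim on which it rests is false. It is not true that two infinite descending chains of halfspaces contained in the same ultrafilter $\xi\in\partial X$ determine asymptotic rays. Take $X=\Z^2$ with its standard cubulation and let $\xi$ be the ``corner'' ultrafilter containing all halfspaces $\{x>n\}$ and all halfspaces $\{y>m\}$. Both families are infinite descending chains inside $\xi$, consisting of tightly nested halfspaces, yet the corresponding rays are the two coordinate axes, which diverge linearly. Essentiality does not rescue the claim: the translation action of $\Z^2$ on itself is essential and fixes $\xi$. Worse, even the weaker statement you actually need --- that $\Stab_\Gamma(\xi)$ fixes (or has finite orbit on) the endpoint $r(\infty)$ of the particular ray you built --- fails by your argument: adjoin to the translations the automorphism $(x,y)\mapsto(y,x)$; it stabilizes $\xi$ but sends the $x$-axis ray to the $y$-axis ray, so $\gamma\, r(\infty)\neq r(\infty)$. (In this example the orbit of $r(\infty)$ happens to be finite, so the proposition is not contradicted, but your proposal offers no mechanism for proving finiteness of the orbit once $\Gamma'$-invariance of $r(\infty)$ is lost, and the $\ell^1$/median bookkeeping you sketch cannot give a uniform bound on $d(r(t),\gamma r(t))$ precisely because of the corner phenomenon above.)

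The paper's proof avoids rays altogether and is a short contradiction argument: let $\xi$ be in the finite Roller orbit and $\Gamma_\xi$ its stabilizer, whose action is still essential. If $\Gamma_\xi$ had no fixed point in $\partial_\sphericalangle X$, the Flipping Lemma of Caprace--Sageev (which needs exactly essentiality plus no fixed point at infinity) would provide, for a halfspace $h$ with $\xi\in h$, an element $\gamma\in\Gamma_\xi$ with $\gamma h\subset \*h$; but $\xi=\gamma\xi\in\gamma h\subset\*h$ contradicts $\xi\in h$. Hence $\Gamma_\xi$ fixes a point of $\partial_\sphericalangle X$, whose $\Gamma$-orbit is finite. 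If you want to salvage your approach you would have to replace step (iii) by an argument of this flavour --- using essentiality through flipping/skewering on the halfspaces of $\xi$ --- rather than through asymptoticity of rays.
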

\begin{proof} Let $\xi\in\partial Y$ be a point in the finite $\Gamma$-orbit and 
let $\Gamma_\xi$ be its stabilizer, whose action is still essential since is of finite index 
in $\Gamma$.  
Let $h\in\frakH$ be a halfspace containing $\xi$.  If there were no $\Gamma_\xi$-fixed 
point in $\partial_\sphericalangle Y$, we could apply the Flipping Lemma 
(see \S~\ref{subsec:skew-flip}); hence there would exist $\gamma\in\Gamma_\xi$ 
that flips $\*h$, so that $\gamma h\subset\*h$.  
But this would contradict the fact that $\xi=\gamma\xi\in\gamma h$.  
\end{proof}

The following proposition pins down to which extent the elementarily of an action 
implies the existence of a finite orbit in the Roller boundary.

\begin{prop}\label{prop:visual-to-roller} Let $X$ be a finite dimensional CAT(0) cube complex
and let $\Gamma\to\Aut(X)$ be an action on $X$.  
If there is a finite orbit in the CAT(0) boundary, then either
\be
\item there is a finite $\Gamma$-orbit in the Roller boundary, or
\item there exists a subgroup of finite index $\Gamma'<\Gamma$ and a $\Gamma'$-invariant 
subcomplex $X'\hookrightarrow \partial X$ on which the $\Gamma'$-action is non-elementary.
Moreover $X'$ corresponds to a lifting decomposition of halfspaces.
\ee
\end{prop}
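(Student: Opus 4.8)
We are given a finite orbit in the visual boundary $\partial_\sphericalangle X$; passing to the finite-index subgroup fixing one of its points, we may assume $\Gamma$ fixes a point $\xi\in\partial_\sphericalangle X$. The strategy is to extract from $\xi$ a canonical $\Gamma$-invariant set of halfspaces, namely those halfspaces $h$ such that a geodesic ray $\ell$ representing $\xi$ eventually lies in $h$ (equivalently, crosses $\hat h$ only finitely often and stays on the $h$-side); call this set $W$. Because $\xi$ is $\Gamma$-fixed, $W$ is $\Gamma$-invariant; because any two rays representing $\xi$ are at bounded distance, $W$ is independent of the chosen ray. The key structural claim is that $W$ is a partially defined ultrafilter that is, moreover, closed under passing to larger halfspaces, so that $\frakH(X) = \frakH_W \sqcup (W \sqcup \*W)$ is a lifting decomposition in the sense of Definition~\ref{defi:lifting}. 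This gives, via Lemma~\ref{lem:embedding}, a $\Gamma$-equivariant isometric embedding $i : \~X(\frakH_W) \hookrightarrow \~X$ whose image $\bigcap_{h\in W} h$ is a $\Gamma$-invariant subcomplex, and it is the Roller completion of a genuinely lower-dimensional complex $X(\frakH_W)$. The final dichotomy is then obtained by running the argument recursively on $X' := \~X(\frakH_W)$: if at some stage the resulting complex is bounded or the action on it has a $\Gamma'$-fixed point in its own visual boundary, one either lands in case (1) or one recurses; since the dimension strictly drops at each step (as $W$ is nonempty whenever $\xi$ is a genuine boundary point not supported on a Euclidean-like direction), the recursion terminates.

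**Key steps, in order.**
First I would establish that $W$ is well-defined (ray-independence) using the bounded-distance property of geodesic rays representing $\xi$, together with the fact that the combinatorial and CAT(0) metrics are bi-Lipschitz on $X$, so ``eventually in $h$'' is a property of $\xi$ alone. Second, I would verify the lifting-decomposition axioms: $W \cap \*W = \varnothing$ is immediate (a ray cannot be eventually on both sides of a hyperplane); consistency ($h \in W$, $h \subset k \Rightarrow k \in W$) follows since eventually-in-$h$ forces eventually-in-$k$; and the subtle point is showing $W$ has the ``downward'' property needed, i.e. that $\frakH_W = \frakH(X)\smallsetminus(W\sqcup\*W)$ genuinely carries the remaining complex — here one uses that if $\hat h$ is crossed infinitely often by $\ell$, then neither $h$ nor $\*h$ lies in $W$, so the hyperplanes in $\frakH_W$ are exactly those the ray keeps recrossing, and these are the ones relevant to the ``transverse directions'' at $\xi$. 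Third, I would invoke Lemma~\ref{lem:embedding} to get the equivariant embedding $i : \~X(\frakH_W)\hookrightarrow\~X$ with image $\bigcap_{h\in W} h$, a $\Gamma$-invariant subcomplex, and note that when $W$ contains an infinite descending chain (which happens precisely when $\xi\notin X$ forces the ray to leave every bounded set along nested halfspaces) the image lies in $\partial X$, giving the subcomplex $X'$ of the statement. Fourth, I would set up the recursion: either $X(\frakH_W)$ is bounded — then it has a finite (indeed single) orbit, and pulling back through $i$ produces a finite $\Gamma$-orbit in $\partial X$, i.e.\ case (1); or $\dim X(\frakH_W) < \dim X$ and we apply Proposition~\ref{roller-to-visual}-style reasoning and Caprace--Sageev again. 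If the induced $\Gamma'$-action on $X'$ still fixes a visual-boundary point, recurse; the dimension drop forces termination, and the only terminal outcomes are a finite Roller orbit (case (1)) or a non-elementary action on a lower-dimensional invariant subcomplex coming from a lifting decomposition (case (2)).

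**The main obstacle.**
The crux is the structural claim that $W$ is not merely a consistent partial choice but that the complement $\frakH_W$ is itself a ``full'' pocset whose associated complex captures all the transverse behaviour at $\xi$ — in other words, that fixing $\xi$ in $\partial_\sphericalangle X$ genuinely factors the halfspace pocset as a lifting decomposition rather than something messier. This requires a careful analysis of how a geodesic ray in a CAT(0) cube complex interacts with the hyperplane structure: one must rule out pathologies where a ray ``oscillates'' across a hyperplane infinitely often while still accumulating to $\xi$ in a way that obstructs the clean $W \sqcup \frakH_W$ split, and one must handle the interaction with the essential-core decomposition of \S~\ref{subsec:essential} (a Euclidean factor in the essential core would force $W$ to behave differently, which is exactly why Corollary~\ref{no Euclidean factors} and the non-elementarity hypothesis enter). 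I expect the bulk of the work — and the place where the finite-dimensionality of $X$ is used in an essential, non-negotiable way — to be in proving that $W$ and $\frakH_W$ interact as a lifting decomposition and that the recursion's dimension count is monotone.
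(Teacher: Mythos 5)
There is a genuine gap, and it sits at the very first step: your definition of $W$. You take $h\in W$ whenever a ray $\ell$ representing $\xi$ eventually lies in $h$, and you claim this is ray-independent because asymptotic rays stay at bounded distance. That is false for hyperplanes that remain at bounded distance from the ray. Take $X=\R^2$ with the standard cubulation, $\xi$ the direction of the positive $x$-axis, $\ell$ the ray along $y=0$ and $\ell'$ the parallel ray along $y=5$: both represent $\xi$, but for the horizontal halfspace $h=\{y<1/2\}$ the first ray is eventually (indeed always) in $h$ while the second never is. So your $W$ depends on the choice of ray and is not $\Gamma_\xi$-invariant (vertical translations lie in $\Gamma_\xi$ and move it), and Lemma~\ref{lem:embedding} then does not yield a $\Gamma_\xi$-invariant subcomplex. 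Worse, with your definition every hyperplane whose tail-behaviour is ``the ray ends up on one side'' gets decided, so $\frakH_W$ is essentially empty and the construction collapses to a single, non-invariant Roller point; the transverse directions at $\xi$ --- exactly the ones that must survive to produce the subcomplex $X'$ carrying a possibly non-elementary action in case (2) --- are wiped out. For instance, in $T\times\R$ with $T$ a tree and $\xi$ the $\R$-direction, the correct $X'$ is $\~{T}$ sitting at infinity, whereas your $W$ would in addition select, for every hyperplane of $T$, the side containing $\ell$.

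The paper's definition of $T_\xi$ repairs precisely this point: $h\in T_\xi$ only if for every $\epsilon>0$ an $\epsilon$-neighborhood of a tail of the ray is contained in $h$, i.e.\ the ray goes \emph{arbitrarily deep} into $h$. This deep-containment condition is an invariant of the asymptotic class (Lemma~\ref{lem:transverse}(1)), is $\Gamma_\xi$-equivariantly defined, and leaves in $\frakH_\xi$ exactly the hyperplanes staying at bounded distance from the ray --- the transverse directions you need. Note also that the ``oscillation'' pathology you flag is vacuous: halfspaces are CAT(0)-convex, so a geodesic crosses each hyperplane at most once; with the corrected definition the real work is instead (a) non-emptiness of $T_\xi$, which the paper gets from finite dimensionality by producing infinitely many pairwise transverse hyperplanes if every hyperplane stays $\epsilon$-close to the ray on both sides, and (b) the existence of an infinite descending chain in $T_\xi$, which is what forces the embedded complex into $\partial X$, gives the dimension drop, and makes your recursion terminate. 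With $T_\xi$ in place of your $W$, the remainder of your outline (lifting decomposition, Lemma~\ref{lem:embedding}, induction on elementary versus non-elementary) coincides with the paper's proof; finally, non-elementarity and Corollary~\ref{no Euclidean factors} play no role in this proposition, so that part of your ``main obstacle'' discussion can be dropped.
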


The argument will depend on the following lemma, which we assume now,
and whose verification we defer to right after the proof of the proposition.
We start with the following natural construction, that can also be found in \cite[\S~4.1]{Guralnik}.
Note that there is a similar construction in \cite[\S~3]{Caprace_Monod_amen}.

Let $\xi\in \partial_\sphericalangle X$,
let $g:[0,\infty)\to X$ be a geodesic asymptotic to $\xi$ and let us define $T_\xi$ to be 
the following set of halfspaces 
\bq\label{eq:as}
T_\xi:=\big\{h\in\frakH:\,
\text{for every }\epsilon>0\text{ there exists }t_\epsilon\geq0
\text{ such that }N_\epsilon\big(g(t_\epsilon,\infty)\big)\subset h\big\}\,,
\eq
where $N_\epsilon\big(g(t_\epsilon,\infty)\big)$ is the $\epsilon$-neighborhood
of the image of the geodesic ray $g|_{(t_\epsilon,\infty)}$.

\begin{lemma}\label{lem:transverse}  Let $X$ be a CAT(0) cube complex
with a $\Gamma$-action and let $\xi\in\partial_\sphericalangle X$.  
The set $T_\xi$ in \eqref{eq:as} satisfies 
the following properties:
\be
\item it is independent of the geodesic $g$ and $\Gamma_\xi$-invariant,
where $\Gamma_\xi<\Gamma$ is the stabilizer of $\xi$ in $\Gamma$;
\item it is not empty;
\item it satisfies the partial choice and consistency conditions;
\item it contains an infinite descending chain.
\ee
\end{lemma}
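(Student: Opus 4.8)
The plan is to verify the four properties in order, using the CAT(0) geometry of the geodesic ray $g$ together with the combinatorial structure of halfspaces. For (1), I would first observe that if $g_1,g_2$ are two geodesic rays asymptotic to $\xi$, then they stay within bounded Hausdorff distance of each other, say distance $R$; hence if $N_\epsilon(g_1(t,\infty))\subset h$ for $t$ large then $N_{\epsilon}(g_2(s,\infty))\subset h$ for suitably shifted $s$ (after possibly shrinking the neighborhood by $R$ and reindexing), so the condition defining $T_\xi$ is symmetric in $g_1,g_2$ and $T_\xi$ does not depend on the choice of $g$. For $\Gamma_\xi$-invariance: if $\gamma\in\Gamma_\xi$ then $\gamma g$ is a geodesic ray again asymptotic to $\xi=\gamma\xi$, and $N_\epsilon(\gamma g(t,\infty))=\gamma N_\epsilon(g(t,\infty))$ (since $\gamma$ is an isometry), so $h\in T_\xi$ computed with $g$ is equivalent to $\gamma h\in T_\xi$ computed with $\gamma g$, which by independence of the geodesic equals $T_\xi$; thus $\gamma T_\xi = T_\xi$.

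For (3), the \emph{consistency} condition is immediate: if $h\in T_\xi$ and $h\subset k$, then $N_\epsilon(g(t_\epsilon,\infty))\subset h\subset k$, so $k\in T_\xi$ with the same $t_\epsilon$. For the \emph{partial choice} condition, suppose for contradiction that both $h\in T_\xi$ and $h^*\in T_\xi$; then for small $\epsilon$ there is $t$ with $N_\epsilon(g(t,\infty))\subset h$ and there is $t'$ with $N_\epsilon(g(t',\infty))\subset h^*$; taking $s=\max(t,t')$ gives $g(s,\infty)\subset h\cap h^*=\varnothing$, which is absurd since the ray is nonempty. For (2) and (4) simultaneously, I would use that the geodesic ray crosses infinitely many hyperplanes: for a cube complex, a (combinatorial) geodesic ray of infinite length passes through infinitely many distinct hyperplanes $\hat h_0, \hat h_1, \hat h_2, \dots$, ordered along the ray, and the halfspace $h_i$ on the "far" side of $\hat h_i$ (the side containing the tail of the ray) satisfies $h_{i+1}\subset h_i$ — this gives an infinite descending chain. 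The key point is that each such $h_i$ actually lies in $T_\xi$: once the ray has crossed $\hat h_i$, its tail $g(t,\infty)$ lies entirely in $h_i$, and moreover it must eventually stay \emph{uniformly} inside $h_i$ — here one uses that in a CAT(0) space a geodesic ray that does not converge to a point of the hyperplane $\hat h_i$ in the visual boundary moves away from $\hat h_i$ at linear rate, so some neighborhood $N_\epsilon(g(t_\epsilon,\infty))$ is contained in $h_i$. (One should pass to the essential core or otherwise ensure the ray is not "parallel" to infinitely many hyperplanes; in the generality stated, since only finitely many hyperplanes can be crossed by a geodesic of bounded length and the ray has infinite length, infinitely many hyperplanes are crossed \emph{transversally} in the sense that the ray eventually escapes every bounded neighborhood of them, which is what is needed.) This simultaneously proves $T_\xi\neq\varnothing$ and that it contains the infinite descending chain $h_0\supset h_1\supset h_2\supset\cdots$.

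The main obstacle I anticipate is (2)/(4): making precise the claim that the ray eventually stays \emph{uniformly} (not just eventually) inside the halfspace $h_i$, i.e. producing the $\epsilon$-neighborhood rather than just the ray itself. The subtlety is that the CAT(0) metric and the combinatorial metric differ, and a geodesic ray could in principle "hug" a hyperplane; one needs a convexity/divergence argument showing that the distance from $g(t)$ to the hyperplane $\hat h_i$ is bounded below by a positive constant for $t$ large, which follows from convexity of the distance-to-a-convex-set function $t\mapsto \dist(g(t),\hat h_i)$ together with the fact that this function is not identically bounded (otherwise $\xi$ would lie in $\partial_\sphericalangle \hat h_i$, and one can arrange to discard such hyperplanes, of which the ray crosses none after some point once we restrict attention to the right family). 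Once this linear-divergence estimate is in hand, the $\epsilon$-neighborhood exists and the remaining bookkeeping is routine.
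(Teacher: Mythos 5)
Parts (1) and (3) of your argument are fine and essentially identical to the paper's. The trouble is concentrated in (2)/(4), where you have two genuine gaps. First, the assertion that the halfspaces $h_i$ on the ``far'' side of the successively crossed hyperplanes satisfy $h_{i+1}\subset h_i$ is simply false: consecutive hyperplanes crossed by a ray are very often transverse (a diagonal ray in $\Z^2$ crosses alternately vertical and horizontal hyperplanes, and no two consecutive far halfspaces are nested). To extract an infinite \emph{chain} from the crossed hyperplanes you would need an extra argument -- e.g.\ Ramsey plus the bound $\dim X=D$ on pairwise transverse families, together with the observation that two parallel far halfspaces crossed by the ray must be nested because their complements both contain $g(0)$ -- none of which appears in your sketch. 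The paper sidesteps this entirely: for (4) it argues by contradiction, showing that if $T_\xi$ had no infinite descending chain then the subcomplex $\~X_\xi$ coming from the lifting decomposition $\frakH=\frakH_\xi\sqcup(T_\xi\sqcup\*T_\xi)$ would meet $X$, $\xi$ would lie in $\partial_\sphericalangle X_\xi$, and applying part (2) inside $X_\xi$ would produce a halfspace of $\frakH_\xi$ belonging to $T_\xi$, which is absurd.

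Second, and more seriously, the step you yourself flag as the main obstacle is not closed. Your convexity remark correctly reduces ``$h\in T_\xi$'' to ``$\dist(g(t),h^*)\to\infty$'', but the claim that ``infinitely many hyperplanes are crossed transversally in the sense that the ray eventually escapes every bounded neighborhood of them'' does not follow from ``infinitely many hyperplanes are crossed''; a priori the ray could remain within bounded distance of \emph{every} hyperplane it crosses, and excluding this is exactly the content of the paper's proof of (2). There the argument is: if every halfspace fails the defining condition of $T_\xi$, then for each crossed hyperplane $\hat h_\ell$ the CAT(0) distance from $g(t)$ to $\hat h_\ell$ stays bounded, hence (via the quasi-isometry with the combinatorial metric) only finitely many hyperplanes parallel to $\hat h_\ell$ can separate it from the tail, so all but finitely many later-crossed hyperplanes are transverse to $\hat h_\ell$; iterating produces arbitrarily large families of pairwise transverse hyperplanes, contradicting finite dimensionality. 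This is where the standing hypothesis $\dim X<\infty$ enters, and your sketch never uses it. So while your local tool (convexity of the distance to a halfspace) is sound, the global dichotomy it must be combined with is asserted rather than proved, and the construction of the descending chain for (4) rests on a false nestedness claim; both points need the kind of argument the paper supplies (or a repaired Ramsey-type substitute) before the proof is complete.
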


\begin{proof}[Proof of Proposition~\ref{prop:visual-to-roller}]
Let $\xi\in \partial_\sphericalangle X$ be one of the points in the finite $\Gamma$-orbit, 
and let $\Gamma_\xi<\Gamma$ be the stabilizer of $\xi$, $[\Gamma:\Gamma_\xi]<\infty$.
It follows from the above Lemma~\ref{lem:transverse} and 
from \S~\ref{subsec:isom-emb} that 
$T_\xi$ induces a lifting decomposition
\bq\label{eq:lifting}
\frakH(X)=\frakH_\xi\sqcup(T_\xi\sqcup \*T_\xi)
\eq 
and there is a $\Gamma_\xi$-equivariant projection $\varrho:\~X\to\~X$
whose image is the isometrically embedded $\Gamma_\xi$-invariant
subcomplex $\~X_\xi:=i\big(\~X(\frakH_\xi)\big)$.  Observe that,
because of Lemma~\ref{lem:transverse}(4) and Lemma~\ref{lem:embedding},
$\dim X_\xi<\dim X$.  Moreover, if $\frakH_\xi=\varnothing$, then
$\~X_\xi=X_\xi$ is a $\Gamma_\xi$-fixed point in $\partial X$.

Proceeding inductively, we can conclude the proof.  
In fact, if the $\Gamma_\xi$-action on $X_\xi$ is non-elementary, we are in case
(2) of the proposition.  If on the other hand there is a finite $\Gamma_\xi$-orbit
in $\partial X_\xi$, using the fact that the composition of two projections is a 
projection, we can repeat the argument.  The finite dimensionality 
of $X$ insures that the process terminates.
\end{proof}

\begin{proof}[Proof of Lemma~\ref{lem:transverse}]  (1) Only for this part of the proof
we denote by $T_\xi(g)$ and by $T_\xi(g')$ the sets defined in \eqref{eq:as}
with respect to two asymptotic geodesics $g$ and $g'$.  
Then $g$ and $g'$ are at bounded distance from each other, that is
there exists an $r>0$ such that $g'\big([0,\infty)\big)\subset N_r\big(g[0,\infty)\big)$.
By the triangle inequality, if $\epsilon>0$, then 
$N_\epsilon\big(g'[t,\infty)\big)\subset N_\epsilon\big(g[t,\infty)\big)$, for all $t\geq0$.
But this implies that if $h\in \frakH$ is such that there exists
$t_{\epsilon+r}$ with $N_\epsilon\big(g'(t_{r+\epsilon},\infty)\big)\subset h$
then also $N_\epsilon\big(g(t_{r+\epsilon},\infty)\big)\subset h$. 
Thus $T_\xi(g)=T_\xi(g')$ and therefore
$T_\xi(g)$ is independent of $g$.

Since $\xi$ is $\Gamma_\xi$-invariant, the geodesics $g$ and $\gamma g$ are
asymptotic and hence $T_\xi(g)=T_\xi(\gamma g)$.  The $\Gamma_\xi$-invariance
of $T_\xi$ then follows at once, since 
$\gamma N_\epsilon\big(g(t,\infty)\big)=N_\epsilon\big(\gamma g(t,\infty)\big)$
for all $t\geq0$.

\medskip
\noindent
(2) We will show that if for every $h\in\mathfrak{H}$ there exists $\epsilon_h>0$ such that 
\begin{eqnarray}\label{parallel}
N_{\e_h}(g(t,\8)) \cap h \neq \varnothing \text{ and } N_{\e_h}(g(t,\8)) \cap h^* \neq \varnothing
\end{eqnarray}
for all $t>0$, then there is an infinite family of pairwise transverse hyperplanes.

We may assume that $h\in\mathfrak{H}$ is not compact, otherwise 
\eqref{parallel} is never verified for any $\e$.
Moreover observe that any geodesic $\gamma$ crosses infinitely many hyperplanes.
Order the cubes according to the order in which they are crossed by $\gamma$.
This gives rise, up to choosing an order of hyperplanes on each of these cubes, to 
an order $\^h_1,\^h_2,\dots$ on the hyperplanes 
according to the order in which they are crossed by $\gamma$.
%So, let $\^h_1,\^h_2,\dots$ be the hyperplanes crossed by the geodesic $\gamma$ 
%in the order in which they are crossed. 
If $\e_i>0$ is the smallest $\e$ such that  \eqref{parallel} is verified for $\^h_i$, 
then any $\gamma(t)$ is at CAT(0) distance at most $\e_i$ from $\^h_i$.

We claim that for every $\^h_\ell$ there exists $n_\ell\in\N$ such that 
for every $j\geq n_\ell$ the hyperplane $\^h_j$ is transverse to $\^h_\ell$. 
In fact, the CAT(0) distance is quasi-isometric to the combinatorial distance 
and the combinatorial distance between $\gamma(t)$ and 
$\^h_\ell$ is the number of hyperplanes parallel to $\^h_\ell$ 
that separate $\^h_\ell$ from $\gamma(t)$.  On the other hand 
for every $j>n_\ell$, any hyperplane $\^h_j$ parallel to $\^h_\ell$ 
that intersects $\gamma$ will contribute to the distance from $\gamma(t)$
and $\^h_\ell$ for $t$ large enough.  
By the previous observation this is not possible and hence eventually $\^h_j$ must intersect $\^h_\ell$. 

By setting $m_{\ell+1}:=n_{n_\ell}$, for every $d\in\N$ 
the hyperplanes $\^h_1,\^h_{m_1},\^h_{m_2},\dots, \^h_{m_d}$ 
form a family of $d+1$ pairwise transverse hyperplanes.

\medskip
\noindent
(3) is obvious from the construction.

\medskip
\noindent
(4) If $T_\xi$ does not contain an infinite descending chain, then $\~X_\xi\cap X\neq\varnothing$,
where $\~X_\xi$ is the complex associated to the lifting decomposition in \eqref{eq:lifting}.
Because of Lemma~\ref{lem:embedding}, $\xi\in\partial_\sphericalangle X_\xi$.
We can hence apply the construction in the beginning of the proof of 
Proposition~\ref{prop:visual-to-roller}
to the complex $X_\xi$, whose halfspaces are now $\frakH\smallsetminus(T_\xi\sqcup \*T_\xi)$,
thus contradicting (2).
\end{proof}

\subsection{From Products to Irreducible Essential Factors}

The following lemma identifies the important properties that are passed down from a complex 
to the irreducible factors of the essential core.  The content of the lemma is already in
\cite{Caprace_Sageev}, but we recall it here in the form in which we will need it.
 
\begin{lemma}\label{lem:hereditary properties}  Let $X$ be a finite dimensional CAT(0) 
cube complex and let $\Gamma\to\Aut(X)$ be a non-elementary action. 
Then the $\Gamma_0$-action on the irreducible factors of the essential core is
also non-elementary and essential, where $\G_0$ is the finite index subgroup preserving this decomposition.
\end{lemma}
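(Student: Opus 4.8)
The plan is to deduce Lemma~\ref{lem:hereditary properties} directly from the results of Caprace--Sageev together with the decomposition discussed in \S\ref{subsec:decomposition} and \S\ref{subsec:essential}. Recall that, since the action is non-elementary, there are no $\Gamma$-fixed points in $\partial_\sphericalangle X$ nor in $X$ (after possibly passing to a finite index subgroup, which does not affect the conclusion); under these hypotheses \cite[Proposition~3.5]{Caprace_Sageev} guarantees that the essential core $Y\subset X$ is a non-empty $\Gamma$-invariant convex subcomplex on which the $\Gamma$-action is essential, and moreover $\partial_\sphericalangle Y\subset\partial_\sphericalangle X$ and $\partial Y\subset\partial X$. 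Write $Y=Y_1\times\dots\times Y_m$ for the decomposition into irreducible factors, with $\Gamma_0<\Gamma$ of finite index ($\leq m!$) acting on each $Y_j$ through the projection $\Gamma_0\to\Aut(Y_j)$, as in \S\ref{subsec:decomposition}.

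First I would check that the $\Gamma_0$-action on each $Y_j$ is essential. This is immediate from the decomposition of hyperplanes \eqref{eq:decomposition of the hyperplanes}: every hyperplane of $Y$ belongs to exactly one factor $Y_j$, and every hyperplane of $Y$ is $\Gamma$-essential by construction of the essential core; being $\Gamma$-essential for a hyperplane lying in $Y_j$ is the same as being $\Gamma_0$-essential for the factor action on $Y_j$, since the halfspaces of $Y_j$ embed as halfspaces of $Y$ and orbit-distances in the $\ell^1$-product dominate the distances in each coordinate. Hence $\operatorname{Ess}(Y_j,\Gamma_0)=\hat\frakH(Y_j)$, which is precisely the statement that the $\Gamma_0$-action on $Y_j$ is essential.

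Next I would verify non-elementarity of the $\Gamma_0$-action on each $Y_j$, i.e. the absence of a finite orbit in $Y_j\sqcup\partial_\sphericalangle Y_j$. The key tool is Lemma~\ref{lem:boundary of factors}: a $\Gamma_0$-invariant point in $\partial_\sphericalangle Y_j$ — or more precisely, since we may pass to the finite-index stabilizer, a finite $\Gamma_0$-orbit, whose barycenter or circumcenter is then a genuine fixed point in $\partial_\sphericalangle Y_j$ (using that a finite subset of a CAT(0) space, or a bounded orbit, has a well-defined circumcenter) — produces a $\Gamma_0$-fixed point in $\partial_\sphericalangle Y$, hence in $\partial_\sphericalangle X$, and therefore a finite $\Gamma$-orbit in $\partial_\sphericalangle X$, contradicting non-elementarity of the original action. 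Similarly a finite $\Gamma_0$-orbit of vertices in $Y_j$ would, combined with vertices in the other factors, give a bounded $\Gamma_0$-orbit in $Y$, whose circumcenter is a $\Gamma_0$-fixed point in the CAT(0) completion, again contradicting the essentiality/non-elementarity of the action on the essential core (a non-empty $\Gamma$-invariant bounded convex subcomplex cannot exist since the action on $Y$ is essential). Passing back from $\Gamma_0$ to $\Gamma$ only improves matters, since a finite $\Gamma$-orbit restricts to a finite $\Gamma_0$-orbit.

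The main obstacle, and the only genuinely delicate point, is the bookkeeping around finite index subgroups and the precise passage between ``finite orbit'' and ``fixed point'': Lemma~\ref{lem:boundary of factors} is stated for honest fixed points, so one must first replace $\Gamma_0$ by the finite-index stabilizer of a point in the putative finite orbit in $\partial_\sphericalangle Y_j$, check that this subgroup still acts essentially on $Y_j$ (essentiality is inherited by finite-index subgroups, since the halfspace condition is about orbits going arbitrarily far, and a finite-index subgroup has orbits within bounded Hausdorff distance of the full orbit), and only then invoke Lemma~\ref{lem:boundary of factors}. Everything else is a direct unwinding of the definitions in \S\ref{subsec:decomposition} and \S\ref{subsec:essential} together with \cite[Proposition~3.5 and Proposition~2.6]{Caprace_Sageev}.
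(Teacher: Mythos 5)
Your overall strategy is the same as the paper's: establish essentiality of the factor actions from the hyperplane decomposition (the paper does this via the skewering criterion of \cite[Proposition~3.2]{Caprace_Sageev}, you argue directly from the definition, which is fine since the distance from a point of $Y$ to a hyperplane $Y_1\times\dots\times\hat h_j\times\dots\times Y_m$ equals the distance of its $j$-th coordinate to $\hat h_j$), and rule out finite orbits in $\partial_\sphericalangle Y_j$ by passing to a finite-index subgroup and feeding a fixed point into Lemma~\ref{lem:boundary of factors}. Two points, however, do not survive scrutiny as written. First, your claim that a finite $\Gamma_0$-orbit of vertices in $Y_j$, ``combined with vertices in the other factors,'' yields a bounded $\Gamma_0$-orbit in $Y$ is false: the coordinates in the factors $Y_i$, $i\neq j$, move by the projections of $\Gamma_0$ to $\Aut(Y_i)$ and are in general unbounded, so the orbit of the product point need not be bounded and no circumcenter argument in $Y$ applies. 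The correct (and easier) way to dispose of this case is internal to $Y_j$: a finite $\Gamma_0$-orbit in $Y_j$ forces every $\Gamma_0$-orbit in $Y_j$ to be bounded (orbits of any two points are at Hausdorff distance at most their mutual distance), which contradicts the essentiality of the $\Gamma_0$-action on $Y_j$ that you established in the first step, as soon as $Y_j$ contains a hyperplane. (The paper's own proof is silent on this case and only treats the visual boundary, so your instinct to address it is right, but the step you propose would fail.)

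Second, the aside that a finite $\Gamma_0$-orbit in $\partial_\sphericalangle Y_j$ has a ``barycenter or circumcenter'' which is a genuine fixed point in $\partial_\sphericalangle Y_j$ is unjustified: finite subsets of the visual boundary do not admit canonical circumcenters unless one works in the Tits metric with small circumradius, which you have not arranged. Fortunately this remark is redundant, because the alternative you also mention --- replacing $\Gamma_0$ by the finite-index stabilizer of one point $\xi$ of the finite orbit, applying Lemma~\ref{lem:boundary of factors} to get a fixed point in $\partial_\sphericalangle Y\subset\partial_\sphericalangle X$, and then observing that its $\Gamma$-orbit is finite --- is exactly the paper's argument and is all that is needed; note also that Lemma~\ref{lem:boundary of factors} requires no essentiality hypothesis, so your proposed verification that the stabilizer still acts essentially on $Y_j$ is superfluous for this step. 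With the bounded-orbit step replaced as above and the circumcenter remark dropped, your proof is correct and essentially coincides with the paper's.
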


\begin{proof}
Let $Y\subset X$ be the essential core, $Y=Y_1\times\dots\times Y_m$ 
its decomposition into irreducible factors, and let $\G_0$ be the finite index subgroup preserving this decomposition. 
We need to show that the following hold:
\begin{enumerate}
\item The action of $\Gamma_0$ on the $Y_j$, $j=1,\dots,m$ is essential as well.
\item If $\Gamma_0$ has no finite orbit on the visual boundary
  $\partial_\sphericalangle X$, then the same holds for the action on
  $\partial_\sphericalangle Y_j$, $j=1,\dots, m$. 
\end{enumerate}

\noindent
(1) By \cite[Proposition~3.2]{Caprace_Sageev}, 
the $\Gamma_0$-action on $Y$ (resp. on $Y_i$) is essential if and only if 
every hyperplane $\hat h\in\hat\frakH$ (resp. $\hat h_i\in\hat\frakH_j$)
can be skewered by some element in $\Gamma_0$. % (resp. $\Gamma_i$).  
If $\hat h_j\in\hat\frakH(Y_j)$ is a hyperplane in $Y_j$, 
then $\hat h:=Y_1\times\dots Y_{j-1}\times\hat h_j\times Y_{j+1}\times\dots\times Y_m$ 
is a hyperplane in $Y$.
Since the action on $Y$ is essential, there exists $\gamma\in\Gamma_0$
%$(\pi_1(\rho(\gamma)),\dots,\pi_{i-1}\rho((\gamma)),\pi_i(\rho(\gamma)),
%\pi_{i+1}(\rho(\gamma)),\dots,\pi_k(\rho(\gamma)))\in\rho(\Gamma)$ 
that skewers $\hat h$ and hence it skewers $\hat h_j$.  
Then the $\Gamma_0$-action on $Y_i$ is essential.

\noindent
(2) We prove the contrapositive of the statement.
Let $\Gamma_0<\Gamma$ be the finite subgroup that preserves
each of the factors $Y_j$ and let us assume, by passing if necessary 
to a further subgroup of finite index, that there is a $\Gamma_0$-fixed point in 
$\partial_\sphericalangle Y_j$ for some $1\leq j\leq m$.
Then by Lemma~\ref{lem:boundary of factors} 
there is a if $\Gamma_0$-fixed point in  $\partial_\sphericalangle Y$
and hence a finite $\Gamma$-orbit in $\partial_\sphericalangle Y$. 
Since $Y$ is a convex subset of $X$ and hence
$\partial_\sphericalangle Y\subset \partial_\sphericalangle X$,
there is a finite $\Gamma$-orbit in $\partial_\sphericalangle X$.
\end{proof}

\subsection{Euclidean (Sub)Complexes} 
\begin{definition} Let $X$ be a CAT(0) cube complex. 
We say that $X$ is {\em Euclidean} if the vertex set with the combinatorial metric 
embeds isometrically in $\Re^D$ with the $\ell^1$-metric, for some $D<\infty$.
\end{definition}
In \cite[Theorem~7.2]{Caprace_Sageev}, under some natural conditions on the action
of $\Aut(X)$, the authors relate the existence of an $\Aut(X)$-invariant
{\em Euclidean flat} with the non-existence of a facing triple of halfspaces, in the following sense.
\begin{definition}
Let $n\in\N$. An $n$-tuple of halfspaces is called a \emph{facing $n$-tuple} if they are pairwise disjoint. 
An  $n$-tuple of hyperplanes is called a \emph{facing $n$-tuple} if there is a choice of halfspaces forming a facing $n$-tuple.
\end{definition}
As our setting differs slightly from the one used in \cite{Caprace_Sageev},
we discuss briefly in this section
the notion of Euclidean complexes and subcomplexes.
The following definition is from  \cite{Caprace_Sageev}.
\begin{definition}
A CAT(0) cube complex $X$ is said to be {\em $\Re$-like} 
if there is an $\Aut(X)$-invariant bi-infinite CAT(0) geodesic. 
\end{definition}
\begin{prop}\label{lem:euclidean equivalences}
 Let $Y$ be a CAT(0) cube complex on which $\Aut(Y)$ acts essentially. 
Consider the following statements:
\begin{enumerate} 
\item $Y$ is Euclidean.
\item $Y$ is an interval.
\item  $Y$ is a product of $\R$-like factors.
\end{enumerate}
Then (3)$\Rightarrow$(2)$\Rightarrow$(1).
\end{prop}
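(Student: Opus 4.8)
\emph{The implication $(2)\Rightarrow(1)$ is the easy one.} If $Y$ is an interval then, by Remark~\ref{rem:opposite=>Euclidean}, there is an ultrafilter $u\in\~Y$ whose opposite $\*u$ is again an ultrafilter, and $\frakH(Y)=[u,\*u]\cup[\*u,u]$. Since $u\cap\*u=\varnothing$, the vertex set of $Y$ coincides with the vertex interval $\I(u,\*u)$, which by Lemma~\ref{embedintervals} embeds isometrically into $\~{\Z^D}$, $D=\dim Y$. Any two vertices of $Y$ lie at finite combinatorial distance, so their images in $\~{\Z^D}=(\Z\cup\{\pm\infty\})^D$ are at finite $\ell^1$-distance and therefore agree on which coordinates equal $+\infty$ and which equal $-\infty$; deleting those coordinates yields an isometric embedding of the vertex set of $Y$ into $\Z^{D'}$ (with the $\ell^1$-metric, $D'\leq D$), hence into $\R^D$ with the $\ell^1$-metric. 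Thus $Y$ is Euclidean.

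\emph{For $(3)\Rightarrow(2)$, I would first reduce to a single factor.} Write $Y=Y_1\times\dots\times Y_k$ with each $Y_i$ an $\R$-like factor, and fix an $\Aut(Y_i)$-invariant bi-infinite CAT(0) geodesic $\ell_i\subset Y_i$. Since $\frakH(Y)=\frakH(Y_1)\sqcup\dots\sqcup\frakH(Y_k)$ and halfspaces coming from distinct factors are transverse (so the consistency condition on $\frakH(Y)$ is checked factor by factor), if each $Y_i$ is an interval -- say $u_i\in\~{Y_i}$ with $\*{u_i}$ again an ultrafilter -- then $\*{(u_1,\dots,u_k)}=(\*{u_1},\dots,\*{u_k})$ is an ultrafilter on $\frakH(Y)$, and $Y$ is an interval by Remark~\ref{rem:opposite=>Euclidean}. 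Moreover the induced $\Aut(Y_i)$-action on $Y_i$ is again essential: applying the argument in the proof of Lemma~\ref{lem:hereditary properties}(1) to a finite-index subgroup of $\Aut(Y)$ preserving all factors (\S\ref{subsec:decomposition}), and using that $\delta^N h\subsetneq h$ whenever $\delta h\subsetneq h$, shows via \cite[Proposition~3.2]{Caprace_Sageev} that every hyperplane of $Y_i$ is skewered by an element of $\Aut(Y_i)$.

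\emph{The key claim is that the invariant geodesic crosses every hyperplane.} Fix a factor $Z:=Y_i$ with invariant line $\ell:=\ell_i$; I claim every hyperplane of $Z$ crosses $\ell$. If some $\hat h$ does not, then $\ell\subset h$ for one of the halfspaces. Choose $\gamma\in\Aut(Z)$ skewering $\hat h$; after possibly replacing $\gamma$ by $\gamma^{-1}$, $\gamma h\subsetneq h$. Because $\gamma\ell=\ell$, applying $\gamma^n$ to $\ell\subset h$ gives $\ell\subset\gamma^n h$ for all $n\geq0$, so $\ell\subset\bigcap_{n\geq0}\gamma^n h$. But the hyperplanes $\gamma^n\hat h$ ($n\geq0$) are pairwise distinct, and $\gamma^n h\subseteq h$ forces $\*h\subseteq\gamma^n\*h$, so each $\gamma^n\hat h$ separates any vertex contained in all the $\gamma^n h$ from any vertex of the nonempty halfspace $\*h$; since the number of hyperplanes separating two vertices equals their (finite) combinatorial distance, this forces $\bigcap_{n\geq0}\gamma^n h=\varnothing$ -- a contradiction, as $\ell\neq\varnothing$ (I would run this last step through a vertex nearest to a point of $\ell$, since $h$ is a region in the geometric realization). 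Granting the claim, convexity of halfspaces makes $\ell^{-1}(h)$ a sub-ray of $\R$ for every halfspace $h$, so there is a well-defined ``eventual'' halfspace as $t\to+\infty$; the collection $u^+$ of these is readily seen to satisfy the choice and consistency conditions, hence $u^+\in\~Z$, and similarly $u^-\in\~Z$ as $t\to-\infty$. Each hyperplane separates the two ends of $\ell$, so $u^-=\*{u^+}$; thus $\*{u^+}$ is an ultrafilter and $Z$ is an interval by Remark~\ref{rem:opposite=>Euclidean}. This finishes the reduction and hence the proof.

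\emph{The main obstacle} is precisely this last claim: showing that essentiality -- via the skewering machinery of Caprace--Sageev, which is the only place where the hypothesis on $\Aut(Y)$ is genuinely used -- forces the $\Aut$-invariant geodesic to meet both sides of every hyperplane, together with the mild care needed in passing between the CAT(0) geodesic $\ell$ and the combinatorial halfspace data. Everything else is ultrafilter bookkeeping and an appeal to the interval-embedding Lemma~\ref{embedintervals}.
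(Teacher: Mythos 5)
Your proof is correct and follows essentially the same route as the paper: reduce to a single essential $\R$-like factor, show the invariant geodesic crosses every hyperplane, take the ultrafilter of eventually-containing halfspaces along the geodesic (whose opposite is again an ultrafilter, so the factor is an interval), and deduce (2)$\Rightarrow$(1) from Lemma~\ref{embedintervals}. The only notable divergence is in the crossing claim, which you prove by skewering a putative halfspace containing $\ell$ and contradicting finiteness of the combinatorial distance, while the paper observes directly that an $\Aut(Y)$-invariant geodesic contained in a halfspace would make the corresponding hyperplane inessential; both arguments are fine.
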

\begin{proof}
Observe that conditions (1) and (2) are preserved under taking products. 
Also, the hypothesis of having an essential action is preserved 
by passing to the irreducible factors 
by Lemma~\ref{lem:hereditary properties}. 
Therefore, it is sufficient to consider the case in which $Y$ is irreducible.

(3)$\Rightarrow$(2). Assume that $Y$ is $\R$-like. 
Let $\ell \subset Y$ be the $\Aut(Y)$-invariant CAT(0) geodesic. 
We claim that $\ell$ crosses every hyperplane of $Y$. In fact, otherwise
there would be a halfspace $h_0$ containing $\ell$
and, since $\ell$ is $\Aut(Y)$-invariant, then $\hat h_0$ would not be essential.
 
Let $\ell : \Re \to Y$ be a parametrization of $\ell$. One can check that, 
because of the above claim, the collection of halfspaces 
$$\a:= \{h \in {\mathfrak H} (Y): \text{ there exists } t \in \R \text{ such that } h \supset \ell(t, \8)\}$$
defines a non-terminating ultrafilter.
Then $Y$ is an interval on $\a$ and its opposite ultrafilter $\a^*=\frakH\smallsetminus\alpha$. 

(2)$\Rightarrow$(1) This is Lemma~\ref{embedintervals}.
\end{proof}

We prove next that, under the assumption that there are no fixed points in the visual boundary
and the action is essential, being Euclidean is equivalent 
to the non-existence of facing triples of hyperplanes\footnote{It is possible
that a Euclidean CAT(0) cube complex $Y$ on which 
$\Aut(Y)$ acts essentially and without fixed points in the visual boundary,
is a point (cf. \cite[Theorem~E]{Caprace_Sageev}).}. 
As a byproduct, using \cite[Theorem~7.2]{Caprace_Sageev} we can 
conclude that also (1) implies (3) under the above hypotheses.
We start with the following easy lemma.

\begin{lemma}\label{NtuplesNonEuclidean} If $X$ is a Euclidean CAT(0) cube complex
that isometrically embed into $\R^D$, then any set of pairwise facing halfspaces
has cardinality at most $2D$.
\end{lemma}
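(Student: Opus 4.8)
The plan is to use the isometric embedding $\iota: X \hookrightarrow \R^D$ (with the $\ell^1$-metric) to transport the combinatorial structure into a product of $D$ totally ordered factors. Recall that under such an embedding, the hyperplanes of $X$ map to the ``coordinate hyperplanes'' of the cubulation of $\R^D$, i.e. to sets of the form $\{x \in \R^D : x_j = c\}$ for some coordinate $j \in \{1,\dots,D\}$ and some half-integer $c$; equivalently, each halfspace $h \in \frakH(X)$ is (the preimage of) a set of the form $\{x_j > c\}$ or $\{x_j < c\}$ for a well-defined coordinate $j = j(h)$, and $j(h) = j(\*h)$. This gives a function $j : \hat\frakH(X) \to \{1,\dots,D\}$ assigning to each hyperplane its coordinate direction.

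First I would observe that if two halfspaces $h, k$ are facing, i.e. $h \subset \*k$, then the corresponding coordinate intervals are disjoint nonempty subsets of $\R$, which is only possible if $j(h) = j(k)$: if the directions were distinct, the two halfspaces would be transverse in $\R^D$, contradicting $h \cap k = \varnothing$ (and contradicting, say, $h \cap \*k = h \subsetneq \R^D$ being proper while one still needs $\*h \cap \*k \ne \varnothing$, etc.). So any family $\{h_1, \dots, h_m\}$ of pairwise facing halfspaces all share a common coordinate direction, say direction $1$ after relabeling. Then each $h_i$ is the preimage of a half-line $\{x_1 > c_i\}$ or $\{x_1 < c_i\}$ in that single coordinate. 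Among these $m$ half-lines of $\R$, the pairwise-disjointness (each pair $h_i, h_j$ satisfies $h_i \subset \*{h_j}$, i.e. the half-lines are disjoint) forces: at most one can be of ``rightward'' type $\{x_1 > c_i\}$ and at most one of ``leftward'' type $\{x_1 < c_i\}$ that are mutually disjoint — wait, more carefully, a collection of pairwise disjoint half-lines in $\R$ has size at most $2$ (one of each orientation, separated). Hence a pairwise facing family within a single direction has size at most $2$, and across $D$ directions at most $2D$.

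The one subtlety to handle carefully is the passage from ``$h_i$ and $h_j$ facing in $X$'' to ``the image half-lines are disjoint in $\R$'': the embedding $\iota$ need not be surjective, so $h_i$ as a subset of $X$ is $\iota^{-1}$ of a coordinate halfspace of $\R^D$, and facing in $X$ means $h_i \cap h_j = \varnothing$ as subsets of $X$ together with $h_i \cup h_j \ne X$. I would argue that the coordinate half-lines $H_i, H_j \subset \R$ associated to $h_i, h_j$ (same direction, as established) must themselves be disjoint: if they overlapped, their intersection would be a nonempty coordinate halfspace of $\R^D$, which — since $\iota(X)$ is ``full'' enough that every nonempty intersection of halfspaces among the pocset $\frakH(X)$ is realized, this being exactly the content of $X$ being the cube complex dual to its own pocset of halfspaces — would contain a point of $\iota(X)$, contradicting $h_i \cap h_j = \varnothing$. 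The main obstacle is thus pinning down this ``fullness'' of the image precisely enough; I expect this follows from Lemma~\ref{embedintervals} and the pocset duality recalled in \S\ref{subsec:2.1}, namely that the halfspaces of $X$ and their intersection pattern are faithfully recorded, so that an empty intersection in $X$ corresponds to an empty (or at least point-free-in-$\iota(X)$) intersection in $\R^D$. Once that is in place, the counting argument above is immediate and gives the bound $2D$.
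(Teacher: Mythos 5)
Your overall plan coincides with the paper's (assign to each hyperplane a coordinate direction of the $\ell^1$-embedding, and allow at most one facing pair per direction), but two of the intermediate claims you rely on are false, and they are precisely the point you flagged as ``the main obstacle''. The claim that two facing halfspaces must lie in the same coordinate direction fails: take the tripod $T$ (a tree, hence a CAT(0) cube complex) with centre $o$ and leaves $a,b,c$, embedded isometrically in $(\R^2,\ell^1)$ by $o\mapsto(0,0)$, $a\mapsto(1,0)$, $b\mapsto(-1,0)$, $c\mapsto(0,1)$; this realizes the combinatorial metric, so $T$ is Euclidean with $D=2$, and the halfspaces $\{a\}$ and $\{c\}$ are facing although the edge increments force them into the coordinate directions $1$ and $2$ respectively. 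Your justification breaks down because transversality of the two ambient coordinate halfspaces of $\R^D$ says nothing about transversality in $X$: the four intersections in \eqref{eq:transverse} must contain vertices of $X$, and the image of the embedding need not meet the ambient quadrant $\{x_1\geq 1\}\cap\{x_2\geq 1\}$ (it does not, in the example). For the same reason the ``fullness'' patch you propose is false: an isometric $\ell^1$-embedding of the vertex set is in general neither surjective nor full, and a nonempty intersection of ambient coordinate halfspaces can contain no point of the image; the pocset duality of \S\ref{subsec:2.1} only records intersections inside $X$ and does not yield such a realization statement.

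Fortunately neither claim is needed, and the correct count is the paper's. What one does need is: (i) every halfspace of $X$ is the preimage of a coordinate half-line -- for an edge $\{u,v\}$ dual to $\hat h$ pick any coordinate $j$ with $x_j(u)\neq x_j(v)$; since $d(w,u)-d(w,v)=\pm 1$ while the $\ell^1$-distance is the sum of the coordinate contributions, equality forces every $w\in h$ to satisfy $x_j(w)\geq \max\{x_j(u),x_j(v)\}$ and every $w\in h^*$ to satisfy $x_j(w)\leq\min\{x_j(u),x_j(v)\}$ (up to orientation), so $h=\{w:\,x_j(w)>c\}$ for the midpoint value $c$; this is the content of the paper's ``at most $D$ chains'' (note the direction need not be unique, one simply chooses one); and (ii) within a fixed direction, two halfspaces of the same orientation are nested as preimages, hence both nonempty and intersecting, so they cannot be facing. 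Thus each direction contributes at most two members of a pairwise facing family (one per orientation), giving the bound $2D$. Point (ii) uses only intersections inside $X$, so no fullness of the image is required; as written, your argument for ``at most two per direction'' routes through the disjointness of the half-lines in $\R$, which is exactly what cannot be guaranteed.
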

\begin{proof}
  Indeed any collection of halfspaces can be arranged in at most 
  $D$ chains.  Hence for each dimension there
  can be at most one pair of facing halfspaces and the assertion
  follows from the fact that the $\ell^1$-metric on $\Re^D$ is the sum
  of the $\ell^1$-metrics on its factors.
\end{proof}
More precisely, we have the following dichotomy that is compatible 
with the terminology in \cite{Caprace_Sageev} but holds also in the case
in which the CAT(0) cube complex does not have a cocompact group
of automorphisms.

\begin{cor}\label{cor:dichotomy} Let $Y$ be a finite dimensional irreducible CAT(0) 
cube complex 
and assume that $\Aut(Y)$ acts essentially and without fixed points on 
$\partial _\sphericalangle Y$. 
Then $Y$ is Euclidean if and only if $\frakH(Y)$ does not contain a facing triple 
of halfspaces.
\end{cor}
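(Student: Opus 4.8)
The plan is to prove both implications separately, using Lemma~\ref{NtuplesNonEuclidean} for the easy direction and Proposition~\ref{lem:euclidean equivalences} together with \cite[Theorem~7.2]{Caprace_Sageev} for the harder one.

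First I would dispatch the forward implication. Suppose $Y$ is Euclidean, so that its vertex set embeds isometrically in $\R^D$ with the $\ell^1$-metric for some $D<\infty$. If $\frakH(Y)$ contained a facing triple of halfspaces, this would in particular be a set of three pairwise facing halfspaces. But Lemma~\ref{NtuplesNonEuclidean} only bounds the cardinality of such a set by $2D$, which does not immediately rule out a triple; so a slightly finer argument is needed. The point is that facing triples are more restrictive: for three pairwise facing halfspaces $h_1, h_2, h_3$ one has $h_i \subset h_j^*$ for all $i \ne j$, so the complements $h_i^*$ pairwise cover $Y$ while the $h_i$ are pairwise disjoint. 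In a single $\R$-factor (an interval), at most two halfspaces can be pairwise facing, since a third would have to be disjoint from both of the others, impossible on a line. Since $Y$ irreducible and Euclidean forces $Y$ to be a single $\R$-like factor — this is the content of the chain in Proposition~\ref{lem:euclidean equivalences}, or rather its converse combined with irreducibility — no facing triple can exist. Alternatively, and more cleanly, I would argue directly: a facing triple in an irreducible complex produces, via the Caprace--Sageev machinery, behavior incompatible with being $\R$-like, so Euclidean $\Rightarrow$ no facing triple.

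For the converse, suppose $\frakH(Y)$ contains no facing triple of halfspaces. This is precisely the hypothesis needed to invoke \cite[Theorem~7.2]{Caprace_Sageev}: under the standing assumption that $\Aut(Y)$ acts essentially and without fixed points on $\partial_\sphericalangle Y$, the absence of a facing triple of hyperplanes implies that $Y$ is $\R$-like (in the terminology of Proposition~\ref{lem:euclidean equivalences}, condition (3), here in the irreducible case where the product has a single factor). Then Proposition~\ref{lem:euclidean equivalences} gives the implications (3)$\Rightarrow$(2)$\Rightarrow$(1), so $Y$ is an interval and hence Euclidean by Lemma~\ref{embedintervals}. The one technical point to verify is that our hypotheses line up exactly with those of \cite[Theorem~7.2]{Caprace_Sageev}, which is why the statement is phrased to be ``compatible with the terminology in \cite{Caprace_Sageev}''; here one should note that having no fixed point in $\partial_\sphericalangle Y$ together with essentiality is what replaces the cocompactness hypothesis used there, and the discussion preceding the corollary (in particular the remark that a facing-triple-free complex cannot be reducible once the action is non-elementary) justifies the reduction to the irreducible case.

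The main obstacle I expect is the bookkeeping in the forward direction: ensuring that ``no facing triple'' is genuinely equivalent to (not merely implied by) Euclideanness, rather than being a strictly stronger condition. The resolution is that in the irreducible setting Euclidean forces $Y$ to be a single line-like factor, on which facing triples are manifestly impossible; so the equivalence is tight precisely because of irreducibility, and the corollary would fail without it (a product of two lines is Euclidean but — after passing to the product structure — its halfspaces in different factors are transverse, while within each factor one still cannot find a facing triple, so actually the product case also has no facing triple, confirming that the subtlety is really about matching the precise form of the cited theorem rather than a genuine counterexample).
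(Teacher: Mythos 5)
There is a genuine gap in both directions, and in both cases it is the same kind of gap: you lean on statements that the paper only obtains \emph{as a consequence of} this corollary, or on a citation whose hypotheses are not available. For the converse (no facing triple $\Rightarrow$ Euclidean), you invoke \cite[Theorem~7.2]{Caprace_Sageev} and assert that essentiality plus absence of fixed points in $\partial_\sphericalangle Y$ ``replaces'' the cocompactness hypothesis used there; but that replacement is exactly what has to be proved, and it is the whole point of this corollary -- the paper states explicitly that the dichotomy is only ``compatible with the terminology'' of Caprace--Sageev and holds \emph{also without} a cocompact automorphism group, so their Theorem~7.2 cannot simply be quoted. The paper instead gives a self-contained argument: using irreducibility, take a descending chain $h_{n+1}\subset h_n$ of pairwise strongly separated halfspaces (Proposition~5.1 of \cite{Caprace_Sageev} plus double skewering), show that $\bigcap_n h_n$ is a single ultrafilter $\alpha\in\partial Y$, and then use the no-facing-triple hypothesis to show that $\*\alpha$ is again an ultrafilter; Remark~\ref{rem:opposite=>Euclidean} then makes $Y$ an interval, hence Euclidean by Lemma~\ref{embedintervals}. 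None of this appears in your proposal.

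For the forward direction you correctly notice that Lemma~\ref{NtuplesNonEuclidean} alone does not exclude a facing \emph{triple}, but your repair -- ``$Y$ irreducible and Euclidean forces $Y$ to be a single $\R$-like factor'' -- uses the implication (1)$\Rightarrow$(3) of Proposition~\ref{lem:euclidean equivalences}, which the paper does not prove there (only (3)$\Rightarrow$(2)$\Rightarrow$(1)) and in fact only deduces \emph{after} this corollary, so the argument as written is circular; the vaguer alternative (``Caprace--Sageev machinery'') has the same problem. The missing idea is an amplification via the group action: given one facing triple, essentiality and the absence of fixed points in $\partial_\sphericalangle Y$ allow one to double-skewer two of the hyperplanes repeatedly into the third halfspace, producing pairwise facing families of arbitrarily large cardinality, which contradicts the bound $2D$ of Lemma~\ref{NtuplesNonEuclidean}. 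Note that the action hypotheses are genuinely needed here: a tripod embeds isometrically in $(\R^2,\ell^1)$, so it is Euclidean in the paper's sense yet contains a facing triple -- Euclideanness by itself, even with irreducibility, does not exclude facing triples, so no purely geometric argument of the kind you sketch can close this direction.
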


\begin{proof}  We first prove that if $Y$ is Euclidean then there are no facing triples
of halfspaces.  
Since the action is essential and there are no fixed points in $\partial_\sphericalangle Y$,
if there is a facing triple of halfspaces we can skewer several times two of the halfspaces
into the third one to obtain a set of pairwise facing halfspaces of arbitrarily large
cardinality.  Then Lemma~\ref{NtuplesNonEuclidean}  implies that $Y$ is not Euclidean.

Conversely, we assume that there are no facing triples of hyperplanes and 
prove that $Y$ must be Euclidean.  Since $Y$ is irreducible, 
let $\{h_n\}$ be a descending sequence of strongly separated halfspaces, 
$h_{n+1}\subset h_n$.  The strategy of the proof consists in showing that 
$\bigcap h_n$ consists of a single point $\alpha\in\partial Y$ and in using
the non-existence of facing triples of hyperplanes to show that $\*\alpha$
is also an ultrafilter.  
Then Remark~\ref{rem:opposite=>Euclidean} will complete the proof.

To show that $\bigcap h_n$ is a single point, let us assume by contradiction that
$\bigcap h_n$ contains at least two distinct points, $u,v\in\bigcap h_n$.  
Let $\hat h$ be a hyperplane that separates them.
Observe that for every $n\in\N$
\bq\label{eq:uv}
\ba
u\in h\cap h_n&\neq\varnothing\text{ and}\\
v\in\*h\cap h_n&\neq\varnothing\,.
\ea
\eq
From this and the fact that the $h_n$ are a descending chain,
one can check that, if there exists $N\in\N$ such that $\hat h\pitchfork h_{N}$,
then $\hat h\pitchfork h_{n}$ for all $n\geq N$, which is impossible since 
the $\{h_n\}$ are pairwise strongly separated.  So $\hat h\|\hat h_n$ for
every $n\in\N$.  

Again from \eqref{eq:uv} it follows that $\hat h\subset h_n$ for all $n\in N$.
But this is also not possible since there exist finitely many hyperplanes
between $\hat h$ and $\hat h_n$.  Hence $\alpha:=\bigcap h_n$ is a
single point.

To see that  $\*\alpha$ is an ultrafilter,
we need only to check the consistency condition, namely that if
$h\in\*\alpha$ and $h\subset k$, then $k\in\*\alpha$.  Observe that 
this is equivalent to verifying that if $\alpha\in\*h$ and $h\subset k$,
then $\alpha\in\*k$.  Suppose that this is not the case,
that is that there exists $h,k\in\frakH(Y)$ such that $h\subset k$
and $\alpha\in\*h\cap k$.

We first claim that 
\bq\label{eq:cont}
\text{there exists }n_0\in\N\text{ such that }h_n\subset k
\text{ for all }n>n_0\,.
\eq

In fact, suppose that there exists $n_0'\in\N$ 
such that $\hat h_{n_0'}\pitchfork\hat k$.
Since the $\{h_n\}$ are pairwise strongly separated,
then $\hat h_n\|\hat k$ for all $n>n_0'$.  Using \eqref{eq:transverse},
the fact that the $\{h_n\}$ are a descending chain
and that $\alpha\in h_n$ for all $n\in \N$, it is easy to
verify that $h_n\subset k$ for all $n>n_0'$.

On the other hand, if $\hat h_n\|\hat k$ for all $n\in\N$, using again that 
the $\{h_n\}$ are a descending chain and that there are
only finitely many hyperplanes between any $\hat h_n$ and $\hat k$,
one can easily verify  that there exists $n_0''\in\N$ such that 
$h_n\subset k$ for all $n>n_0''$.  Hence \eqref{eq:cont}
is verified with $n_0=\max{} \{n_0',n_0''\}$.

Since $h\subset k$ and there are only finitely many
hyperplanes between $\hat h$ and $\hat k$,
there exists $n_1\geq n_0$ such that either $\hat h_{n_1}\pitchfork\hat h$
or $h_{n_1}\subset h$.  But $h_{n_1}$ cannot be contained in $h$
since $\alpha\in h_{n_1}\cap\*h$, hence $\hat h_{n_1}\pitchfork\hat h$.

Again because the hyperplanes $\{\hat h_n\}$ are strongly separated, 
if $n>n_1$ then $\hat h_n\|\hat h$.  This, the fact that $\hat h_{n_1}\pitchfork\hat h$
and that $h_n\subset h_{n_1}$ imply that $h_n\cap h=\varnothing$.

It follows that $\*h_n,\*h$ and $k$ is a facing triple of halfspaces, 
contradicting the hypothesis.  Hence $\*\alpha$ is an ultrafilter
and the proof  is complete.
\end{proof}

We conclude the section with the following corollary that will 
be paramount in the sequel.

\begin{cor}\label{no Euclidean factors} Let $X$ be a finite
dimensional CAT(0) cube complex and $\Gamma\to\Aut(X)$ a non-elementary
action.  Then there are no Euclidean factors in the essential core.
\end{cor}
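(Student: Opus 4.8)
The plan is to reduce Corollary~\ref{no Euclidean factors} to the criterion for being Euclidean established in Corollary~\ref{cor:dichotomy}, namely that an irreducible factor on which $\Aut$ acts essentially and without fixed points in the visual boundary is Euclidean precisely when it contains no facing triple of halfspaces. First I would set up the reduction: since the action $\Gamma\to\Aut(X)$ is non-elementary, in particular there is no finite orbit in $\partial_\sphericalangle X$, so by Caprace--Sageev \cite[Proposition~3.5]{Caprace_Sageev} (recalled in \S\ref{subsec:essential}) the essential core $Y\subset X$ is a nonempty $\Gamma$-invariant convex subcomplex, and $\partial_\sphericalangle Y\subset\partial_\sphericalangle X$. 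Write $Y=Y_1\times\dots\times Y_m$ for the decomposition into irreducible factors. By Lemma~\ref{lem:hereditary properties} the induced action of a suitable finite index subgroup $\Gamma_0<\Gamma$ on each $Y_j$ is again essential and non-elementary (in particular without fixed points in $\partial_\sphericalangle Y_j$). So it suffices to show that no irreducible factor $Y_j$ is Euclidean.

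Next I would argue by contradiction: suppose some $Y_j$ is Euclidean. By Corollary~\ref{cor:dichotomy}, applied to $Y_j$ with the essential, fixed-point-free action of $\Gamma_0$ (or of $\Aut(Y_j)\supset\Gamma_0$), this would force $\frakH(Y_j)$ to contain no facing triple of halfspaces. But Lemma~\ref{lem:any hp is part of a facing triple}, applied to the essential action of $\Gamma_0$ on $Y_j$ with no finite orbit on $\partial_\sphericalangle Y_j$, produces, for any halfspace $h\in\frakH(Y_j)$ and any $n$, elements $\gamma,\gamma'\in\Gamma_0$ such that $h,\gamma h,\gamma' h$ form an ($n$-disjoint) facing triple. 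This directly contradicts the absence of facing triples. Hence no $Y_j$ is Euclidean, and therefore the essential core has no Euclidean factor.

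The only mild subtlety — and the step I expect to be the main point to get right — is bookkeeping about the finite index subgroup. Passing from $\Gamma$ to $\Gamma_0$ to ensure that each $Y_j$ is preserved and the restricted action stays essential with no fixed point at infinity is exactly what Lemma~\ref{lem:hereditary properties} and the discussion in \S\ref{subsec:decomposition} handle; one must make sure that "no facing triple in $\frakH(Y_j)$" is a property of the complex $Y_j$ itself (it is, being purely combinatorial), so it does not matter whether we detect facing triples via $\Gamma$, $\Gamma_0$, or the full $\Aut(Y_j)$. With that observed, the two cited results slot together with no computation. An alternative, slightly more self-contained route would be to invoke Lemma~\ref{lem:facing triple} and Lemma~\ref{lem:compact facing triple} directly to produce a facing triple in each irreducible $Y_j$ (splitting into the compact-hyperplane and non-compact-hyperplane cases) and then cite only the easy direction of Corollary~\ref{cor:dichotomy}, i.e. Lemma~\ref{NtuplesNonEuclidean}, to conclude $Y_j$ is not Euclidean; either packaging works, and I would use whichever makes the dependency graph cleanest.
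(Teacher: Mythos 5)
Your argument is correct and follows the paper's own proof essentially verbatim: pass to the irreducible factors of the essential core via Lemma~\ref{lem:hereditary properties}, produce facing triples with Lemma~\ref{lem:any hp is part of a facing triple}, and conclude via Corollary~\ref{cor:dichotomy}. The extra care you take with the finite index subgroup preserving the factors, and the alternative packaging through Lemmas~\ref{lem:facing triple}, \ref{lem:compact facing triple} and \ref{NtuplesNonEuclidean}, are fine but not a different method.
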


\begin{proof} Let $Y\subset X$ be the essential core of the $\Gamma$-action
and let $Y_0$ be an irreducible factor of $Y$.  By Lemma~\ref{lem:hereditary properties}
the $\Gamma$-action on $Y_0$ is also essential and non-elementary.
By Corollary~\ref{cor:dichotomy},
$Y_0$ cannot be Euclidean.
\end{proof}

%%%%%%%%%%%%%%%%%%%%%%%%%%%%%%%%%%%%%%%%%%%%%%%%%%%%%%%%%%%%%%%%%%%%

\subsection{Facing Triples of Halfspaces}\label{subsec:nDisjoint}
In this section we show how the hypotheses of
non-elementarity and essentiality of the action are used to construct
suitable facing triple of hyperplanes. 
\begin{definition}A facing $n$-tuple of halfspaces is a \emph{facing \"uber-separated (or parallel) $n$-tuple} if all the pairs of halfspaces are \"uber-separated (or parallel) pairs.\end{definition}
As usual we extend the above definition to hyperplanes in the obvious way.
We will need the following lemma only in the case $n=3$, but the extension to larger $n$ is very easy.
\begin{lemma}\label{lem:sss facing triple}  Let $X$ be a
CAT(0) cube complex with a non elementary action  $\Gamma\to\Aut(X)$ and $n\in\N$. 
Then any essential halfspace $h\in\frakH(X)$ belongs to a facing \"uber-parallel $n$-tuple all of whose
halfspaces can be taken to be in a single $\Gamma$-orbit.
\end{lemma}

\begin{proof}First we assume that $X$ is irreducible and essential. We show the existence of a facing \"uber-separated $n$-tuple. 
According to Corollary~\ref{cor:dichotomy}, since $X$ is non-Euclidean, 
it contains a facing triple of halfspaces, call it $a,b,c$. 
Using Lemma~\ref{lem:ss} we find $\gamma_1,\gamma_2,\gamma_3,\in\Gamma$ such that 
$\gamma_1a\subset a$, $\gamma_2 b\subset b$ and $\gamma_3 c\subset c$ are \"uber-separated pairs. 
Hence the triple $\gamma_1 a,\gamma_2 b$ and $\gamma_3 c$ is facing and \"uber-separated. 
To get a facing $n$-tuple out of a facing $(n-1)$-tuple $h_1,\dots,h_{n-1}$, 
we flip and skewer two elements of the $(n-1)$-tuple into a third one;
for example we flip and skewer $h_1$ and $h-2$ into $h_{n-1}$ via $\gamma_1\,\gamma_2\in\Aut(X)$,
and now the $n$-tuple $h_1,h_2,\dots,\gamma_1h_1,\gamma_2h_2$ will be \"uber-separated.

\smallskip

To get a facing \"uber-separated $n$-tuple in an orbit, take $h$, and any facing \"uber-separated $(n+1)$-tuple of halfspaces. 
Then $h$ crosses at most one element of this facing $(n+1)$-tuple. 
Skewer and flip $h$ into the $n$ other halfspaces to get a facing \"uber-separated $n$-tuple of halfspaces in the orbit of $h$.

\smallskip
 
Let $\G_0<\G$ be a finite index subgroup preserving each irreducible factor 
of the essential core $Y$ of $X$.
Notice that the hypotheses that the action is non elementary and essential are preserved up to passing to $\G_0$. 
One then deduces the general case where $X$ is not necessarily irreducible and essential
by using that any essential halfspace belongs to an irreducible factor of $Y$. 
We find the \"uber-separated $n$-tuple in that irreducible factor of the essential core, 
and use it to produce an \"uber-parallel $n$-tuple on $X$.
\end{proof}
%%%%%%%%%%%%%%%%%%%%%%%%%%%%%%%%%%%%%
%%%%%%%%%%%%%%%%%%%%%%%%%%%%%%%%%%%%%

\section{Construction and Boundedness of the Median Class}\label{sec:cocycle}
Let $\Gamma$ be a group and $E$ be a coefficient $\Gamma$-module, 
that is the dual of a separable Banach space on which $\Gamma$ acts by linear isometries.
The bounded cohomology of $\Gamma$ with coefficients in $E$
is the cohomology of the subcomplex of $\Gamma$-invariants in $(\cb(\Gamma^{k+1},E),d)$, 
where 
\bq\label{eq:cb}
\cb(\Gamma^k,E):=\{f:\Gamma^k\to E:\,\sup_{g\in \Gamma^k} \|f(g)\|_E<\infty\}\,,
\eq
is endowed with the $\Gamma$-action
\bqn
(g f)(g_1,\dots,g_k):=g\cdot f(g^{-1}g_1,\dots,g^{-1}g_k)\,,
\eqn
and 
\bqn
\xymatrix@1{d:\cb(\Gamma^k,E)\ar[r]&\cb(\Gamma^{k+1},E)}
\eqn
is the usual homogeneous coboundary operator defined by 
\bqn
df(g_0,\dots,g_k):=\sum_{j=0}^k(-1)^jf(g_0,\dots,g_{j-1},g_{j+1},\dots,g_k)\,.
\eqn

%Let $\Gamma$ be a group and $E$ be a coefficient $\Gamma$-module, 
%that is the dual of a separable Banach space on which G acts by linear isometries.
%The bounded cohomology of $\Gamma$ with coefficients in $E$
%is the cohomology of the $\Gamma$-equivariant maps in the 
%complex $(\cb(\Gamma^{n+1},E),d)$, where 
%\bqn
%\cb(\Gamma^n,E):=\{f:\Gamma^{n+1}\to E:\,\sup_{g\in \Gamma^n} \|f(g)\|_E<\infty\}\,.
%\eqn
%Then $\G$-equivariance means that the maps commute with the $\G$-action on $E$ and the diagonal $\G$ action on $\G^{n+1}$. Namely, for each $g\in \G$ we have
%\bqn
%g (f(g_0,\dots,g_n)):=f(gg_0,\dots,gg_n)\,.
%\eqn
%Restricting ourselves to these $\G$-equivariant functions
%$$\cb(\Gamma^n,E)^\G:= \{f\in \cb(\Gamma^n,E) : gf = fg\}$$ we define the \emph{coboundary map}
%\bqn
%\xymatrix@1{d_n:\cb(\Gamma^n,E)^\G\ar[r]&\cb(\Gamma^{n+1},E)^\G}
%\eqn
%as the usual homogeneous coboundary operator defined by 
%\bqn
%d_nf(g_0,\dots,g_{n+1}):=\sum_{j=0}^{n+1}(-1)^jf(g_0,\dots,g_{j-1},g_{j+1},\dots,g_{n+1})\,.
%\eqn
%
%Since $d^2\equiv 0$ we have the standard cohomological definition:
%
%$$ H^n_b(\G,E) := \frac{\mathrm{Ker}(d_n)
%}{\mathrm{Im}(d_{n-1})}$$
%

%%%%%%%%%%%%%%%%%%%%%%%%%%%%%%%%%%%%%%%%%%%%%%%%%%
\subsection{The Median Cocycle}\label{cocycle on boundary X}
Let $X$ be an irreducible finite dimensional CAT(0) cube complex. 
Recall that $\~X$ denotes the Roller compactification of $X$,
that is the set of ultrafilters on $\frakH(X)$ (see \S~\ref{sec:prelim}). 
For $n\geq 2$ we denote by $\frakH(X)^n$ the set of $n$-tuples of halfspaces of $X$.

If $1\leq p<\infty$, then $\ell^p(\frakH(X)^n)$ is the dual of a separable Banach space.
In fact, if $1<p<\infty$, then $\ell^p(\frakH(X)^n)$ is the dual of $\ell^q(\frakH(X)^n)$,
where $1/p+1/q=1$.  On the other hand,  $\ell^1(\frakH(X)^n)$ is the dual of the
Banach space $C_0(\frakH(X)^n)$ of functions on $\frakH(X)^n$ that vanish at infinity, 
which is separable since $\frakH(X)^n$ is countable.  
For further use, we set the notation

\bq\label{eq:e_p}
\mathcal E_p:=\begin{cases}\ell^q(\frakH(X)^n)&1<p<\infty\text{ and }1/p+1/q=1\\
C_0(\frakH(X)^n)&p=1\end{cases}
\eq

For each $1\leq p<\infty$ and each integer $n\geq 1$, we define in this section a one-parameter family of cocycles
\bqn
\xymatrix@1{c_{(n,R)}: \~X\times  \~X\times  \~X\ar[r]&\ell^p(\frakH(X)^n)\,,}
\eqn
that, by evaluation on a basepoint in $\~X$ will give a cocycle on 
$\Gamma\times\Gamma\times\Gamma$.
We define the {\em median cocycle} $c_{(n,R)}$ as the coboundary of an $\Aut(X)$-invariant map
$\omega_{(n,R)}$ on $\~X\times\~X$ whose values are not in general $p$-summable 
and we will show that, on the other hand,
$c_{(n,R)}=d\omega_{(n,R)}$ is bounded in the sense of \eqref{eq:cb} if $n\geq2$.
For $n\geq2$, the {\em median class} ${\tt m}_{(n,R)}$ will be defined as the cohomology class of $c_{(n,R)}$
(which is independent of the basepoint).

If $X$ is irreducible with an essential and non-elementary $\Gamma$-action then
the collection of sequences of length $n$ of \"uber-separated pairs at consecutive distance less than $R$ is nonempty for $R$ sufficiently large. 
Indeed, according to Caprace--Sageev \cite{Caprace_Sageev} since $X$ is irreducible and nonelementary, 
it contains a strongly separated pair, and by essentiality we can repeatedly skewer this pair to get an \"uber-separated and nested $n$-tuple for any $n\in\N$. 
In the general case one can always find \"uber-parallel sequences in the essential core of the action, and extend those to the whole space. 
We hence define $[[u,v]]_n$ for $u, v \in\~X$ to be the collection of pairwise \"uber-parallel $n$-tuples $(h_1,\dots,h_n) \in \frakH(X)^n$ 
such that $h_1 \subset \cdots \subset h_n$ and $h_i \in  v\smallsetminus u$ for each $i$.

For $R>0$ we also define
\begin{equation}
[[u,v]]_{(n,R)}= \{(h_1,\dots,h_n) \in [[u,v]]_n: d(h_i,h_{i+1})\leq R\}\,.
\end{equation}
So, $[[u, v]]_{(n,R)}$ is the collection of sequences of length $n$ 
of nested \"uber-parallel halfspaces containing $v$ and not $u$ and at consecutive distance less than or equal to $R$. 
We hope that the notation suggests that these are in some sense subintervals.

For $u,v\in \~X$, let us define 
\bq\label{eq:omega}
\omega_{(n,R)}(u,v):=\1_{[[u,v]]_{(n,R)}} - \1_{[[v,u]]_{(n,R)}}\,.
\eq

We will simply write $c$, $\omega$ and $[[u,v]]$ for $c_{(n,R)}$,
$\omega_{(n,R)}$ and $[[u,v]]_{(n,R)}$ when the context is clear. 

Fixing $u,v\in \~X$,  the function has finitely many values
\bqn
\omega(u,v):\frakH(X)^n\to\{-1,0,1\}
\eqn
and is finitely supported  when $u,v \in X$. 

Notice that $\omega$ is not necessarily bounded when thought of as a function 
with values in $\ell^p(\frakH(X)^n)$ and, in fact, its norm is proportional to the distance between $u$ and $v$.

Let us now consider the $\Aut(X)$-equivariant cocycle 
taking values in the functions on $\frakH(X)^n$,
defined as
\bq\label{cocycle}
\ba
  c(u_1,u_2,u_3)
:=&\(d\omega\)(u_1, u_2, u_3)\\ 
= &\omega(u_2, u_3) - \omega(u_1, u_3) + \omega(u_1, u_2) 
= \omega(u_2, u_3) +\omega(u_3, u_1) + \omega(u_1, u_2)\\ 
= &\1_{[[u_2 ,u_3 ]]} + \1_{[[u_3 ,u_1 ]]} +\1_{[[u_1 ,u_2 ]]} -
\(\1_{[[u_3 ,u_2 ]]} + \1_{[[u_1 ,u_3 ]]} + \1_{[[u_2 ,u_1]]}\)\,. 
\ea
\eq

We will show that, contrary to $\omega$, the cocycle $c$ on $\~X$ actually takes values 
in $\ell^p(\frakH(X)^n)$ (Proposition~\ref{prop:bounded1}) and is bounded
in the sense of \eqref{eq:cb}.
\begin{remark} Let $Y\subset X$ be the essential core of the $\Gamma$-action on $X$ and 
$Y=Y_1\times\dots\times Y_m$ is the 
decomposition of $Y$ into irreducible CAT(0) cube complexes. 
From the decomposition in \eqref{decomp}, we have a corresponding decomposition
\begin{equation}\label{eq:decomposition ellp}
\ell^p(\frakH(X)^n)\cong\ell^p(\frakH(Y_1)^n)\oplus\dots\oplus
     \ell^p(\frakH(Y_m)^n)\oplus \ell^p(\frakH_{nEss}(X)^n)
\end{equation}
given by $f\mapsto \1_{\frakH(Y_1)^n}f+\dots+\1_{\frakH(Y_m)^n}f+\1_{\frakH_{nEss}(X)^n}f$,
where the direct sum is in the $\ell^p$ sense.
The direct summand $\ell^p(\frakH(Y_j)^n)$ are invariant for the action of a finite index
subgroup $\Gamma'<\Gamma$. \end{remark} 

\begin{prop}\label{prop:decomposition} Let $Y$ be an essential CAT(0) cube complex and consider the cocycle defined in \eqref{cocycle} 
\begin{equation*}
c_{(n,R)}:\~Y\times\~Y\times\~Y\to\ell^p(\frakH(Y)^n)\,,
\end{equation*}
where $R$ is chosen to be large enough so that in each irreducible component $Y_i$ of $Y$,
the set of \"uber-separated $n$-tuples at consecutive distance less than or equal to $R$ is not empty.
Then $c_{(n,R)}$ decomposes as
\begin{equation*}
c_{(n,R)}(\xi,\eta,\zeta)=c^1_{(n,R)}(\pi_1(\xi),\pi_1(\eta),\pi_1(\zeta))\oplus\dots
    \oplus c^m_{(n,R)}(\pi_m(\xi),\pi_m(\eta),\pi_m(\zeta))\,
\end{equation*}
where
\begin{equation*}
c^j_{(n,R)}:\~{Y_j}\times\~{Y_j}\times\~{Y_j}\to\ell^p(\frakH(Y_j)^n)
\end{equation*}
is the cocycle on the irreducible factors and $\pi_j:\~Y\to\~Y_j$ is the projection. 
Moreover $c_{(n,R)}(\xi,\eta,\zeta)\neq0$ if and only if $c^j_{(n,R)}(\pi_j(\xi),\pi_j(\eta),\pi_j(\zeta))\neq0$, for some $1\leq j\leq n$.
\end{prop}
\begin{proof} Let $\omega$ and $\omega_j$, for $j=1,\dots,m$,
be defined as in \eqref{eq:omega} respectively on $\~Y$ and $\~Y_j$.
Since $c^j_{(n,R)}=d\omega^j_{(n,R)}$ for $1\leq j\leq k$ and $c_{(n,R)}=d\omega_{(n,R)}$, 
it is enough to verify that 
$$
\omega_{(n,R)}=\omega^1_{(n,R)}+\dots+\omega^k_{(n,R)}\,.
$$
Let $(\xi,\eta)\in\~Y\times\~Y$ and set $\xi_j:=\pi_j(\xi)$ for $1\leq j\leq m$.
Since $\omega^j_{(n,R)}(\xi_j,\eta_j)=\1_{[[\xi_j,\eta_j]]^j_{(n,R)}}-\1_{[[\eta_j,\xi_j]]^j_{(n,R)}}$ and 
$\omega_{(n,R)}(\xi,\eta)=\1_{[[\xi,\eta]]_{(n,R)}}-\1_{[[\eta,\xi]]_{(n,R)}}$, it is enough to see that
$$
[[\xi,\eta]]_{(n,R)}=[[\xi_1,\eta_1]]^1_{(n,R)}\sqcup\dots\sqcup[[\xi_m,\eta_m]]^m_{(n,R)}\,,
$$
where $[[\xi_j,\eta_j]]^j_{(n,R)}\subset\frakH(Y_j)^{n}$.  
But this follows immediately from the structure of
the hyperplanes and halfspaces in a product.  
\end{proof}

Corollary~\ref{cor:dim} will then be a direct consequence of
Proposition~\ref{prop:decomposition},
once Theorem~\ref{thm_intro:main} will be proven.

Another property of the median class of an action is that it behaves nicely with respect
to subcomplexes in the following sense:

\begin{prop}\label{prop:restriction} Let $X$ be a finite dimensional CAT(0) cube complex,
let $\Gamma\to\Aut(\overline{X})$ an action and $\Gamma_0<\Gamma$ a finite index subgroup. 
Let $W\subset\frakH(X)$ be a consistent and $\G_0$-invariant subset, 
so that $\frakH(X)=\frakH_W\sqcup(W\sqcup\*W)$ a lifting decomposition.
Let $X_W\subset\partial X$ be the corresponding subcomplex.
Then the median class of the $\Gamma$-action on $X$ restricts to the median class
of the $\Gamma_0$-action on $\overline{X}_W$.
\end{prop}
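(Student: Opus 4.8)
\emph{Plan.} The plan is to prove the statement on representatives: I would produce a bounded $\Gamma_0$-equivariant coefficient map carrying the orbit median cocycle of the $\Gamma$-action on $X$ to that of the $\Gamma_0$-action on $X_W$, and then invoke functoriality of bounded cohomology. Fix $n\geq2$ and write $c,\omega$ for the median cocycle $c^{(n)}$ on $\~X$ and its primitive $\omega^{(n)}$, and $c',\omega'$ for the corresponding objects attached to $X_W$ (equivalently to $\~X(\frakH_W)$). I take as a standing hypothesis, implicit in the statement, that $W$ — hence $\frakH_W$ — is $\Gamma_0$-invariant, since by \S\ref{subsec:isom-emb} this is exactly what makes $X_W$ a $\Gamma_0$-complex and provides a $\Gamma_0$-equivariant projection $\varrho=i\circ\pi_{\frakH_W}\colon\~X\to\~X$ with image the $\Gamma_0$-invariant subcomplex $X_W$.

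\textbf{Step 1: the coefficient map.} First I would check that $\frakH_W^{(n)}\subseteq\frakH(X)^{(n)}$. If $h,k\in\frakH_W$ with $h\subseteq\ell\subseteq k$, then $\ell\in W$ would force $k\in W$ by consistency of $W$, while $\ell\in W^*$ would give $\ell^*\subseteq h^*$ and hence $h^*\in W$ again by consistency; both contradict $h,k\in\frakH_W$, so $\ell\in\frakH_W$. Thus tight nesting inside $\frakH_W$ agrees with tight nesting inside $\frakH(X)$, and restriction of functions is a well-defined, bounded (norm-nonincreasing), $\Gamma_0$-equivariant linear map $p_W\colon\ell^p(\frakH(X)^{(n)})\to\ell^p(\frakH(X_W)^{(n)})$, which induces $p_W^*$ on $\hb^2(\Gamma_0,-)$.

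\textbf{Step 2: the cochain identity.} Next I would establish $p_W\circ\omega=\omega'\circ(\varrho\times\varrho)$ on $\~X\times\~X$. The key point is that for $h\in\frakH_W$ and $u\in\~X$ one has $h\in\varrho(u)\iff h\in u$, because $\varrho(u)=(u\cap\frakH_W)\sqcup W$ by Lemma~\ref{lem:embedding}. Hence for $s\in\frakH(X_W)^{(n)}=\frakH_W^{(n)}$ we have $s\subset v\smallsetminus u$ iff $s\subset\varrho(v)\smallsetminus\varrho(u)$, i.e. $[[u,v]]^{(n)}\cap\frakH_W^{(n)}=[[\varrho(u),\varrho(v)]]^{(n)}$, and symmetrically with $u,v$ exchanged, which gives the claimed equality of $\{-1,0,1\}$-valued functions. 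Since the coboundary operator is computed pointwise in the $\frakH_W^{(n)}$-variable and commutes both with the linear map $p_W$ and with precomposition by the coordinatewise $\varrho$, this yields
\begin{equation*}
p_W\big(c(u_1,u_2,u_3)\big)=c'\big(\varrho(u_1),\varrho(u_2),\varrho(u_3)\big)\qquad\text{for all }u_1,u_2,u_3\in\~X,
\end{equation*}
with both sides in $\ell^p$ by Proposition~\ref{prop:bounded1}.

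\textbf{Step 3: passing to orbits.} Finally, let $v_0\in X$ be the basepoint used to define $\rho^*({\tt m}_n(X))$, so $(g_1,g_2,g_3)\mapsto c(g_1v_0,g_2v_0,g_3v_0)$ is the bounded $\Gamma$-invariant representative. I would set $w_0:=\varrho(v_0)$, which is a vertex of $X_W$ because $v_0\cap\frakH_W$ is a principal ultrafilter on $\frakH_W$ (any descending chain in it is a finite chain in $v_0$). Restricting the identity of Step 2 to $\Gamma_0$ and using equivariance of $\varrho$ gives, for $g_i\in\Gamma_0$,
\begin{equation*}
p_W\big(c(g_1v_0,g_2v_0,g_3v_0)\big)=c'\big(g_1w_0,g_2w_0,g_3w_0\big).
\end{equation*}
The left-hand side is a bounded $\Gamma_0$-invariant cocycle and, by Step 1, represents $p_W^*\big(\operatorname{res}^{\Gamma}_{\Gamma_0}\rho^*({\tt m}_n(X))\big)$; the right-hand side is precisely the orbit median cocycle of the $\Gamma_0$-action on $X_W$, so it represents $\rho_W^*({\tt m}_n(X_W))\in\hb^2(\Gamma_0,\ell^p(\frakH(X_W)^{(n)}))$ (bounded for this, or any, basepoint in $\~X_W$ by Proposition~\ref{prop:bounded1}). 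This is the asserted restriction statement. The main obstacle I expect is nothing more than the combinatorial inheritance $\frakH_W^{(n)}\subseteq\frakH(X)^{(n)}$ of Step 1 — the one place where the structure (in particular the consistency-closedness of $W$) of a lifting decomposition is genuinely used; the remainder is a direct unwinding of the definition of the median cocycle.
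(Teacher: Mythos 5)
Your proposal is correct and follows essentially the same route as the paper: the paper's proof consists precisely of noting that consistency of $W$ gives $\frakH_W^{(n)}\subset\frakH(X)^{(n)}$, taking the restriction map $j\colon\ell^p(\frakH(X)^{(n)})\to\ell^p(\frakH_W^{(n)})$ on coefficients, and identifying $j\circ c|_{\Gamma_0^3}$ with the median $\Gamma_0$-equivariant cocycle on $X_W$. Your Steps 2 and 3 simply spell out, via the projection $\varrho$ and the basepoint $w_0=\varrho(v_0)$, the identification that the paper states in one line, and your explicit $\Gamma_0$-invariance assumption on $W$ is exactly the reading the paper intends.
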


\begin{proof}  Since $\frakH_W^n\subset\frakH(X)^n$, there is a map
$j:\ell^p(\frakH(X)^n)\to\ell^p(\frakH_W^n)$ obtained by restriction.
If $c:\Gamma\times\Gamma\times\Gamma\to\ell^p(\frakH(X)^n)$
is the median $\Gamma$-equivariant cocycle on $X$, then
$j\circ c|_{\Gamma_0^3}:\Gamma_0\times\Gamma_0\times\Gamma_0\to\ell^p(\frakH_W^n)$
is the median $\Gamma_0$-equivariant cocycle on $X_W$.
\end{proof}
\subsection{Boundedness of the Median Class}
\begin{prop}\label{cisbounded}\label{prop:bounded1}
Let $X$ be a finite dimensional CAT(0) cube complex and,
for $1\leq p<\infty$,  let $c_{(n,R)}$ be 
the one-parameter family of cocycles defined in \eqref{cocycle}. 
Then
\bq\label{eq:ellp}
\xymatrix@1{c_{(n,R)}:\~X\times\~X\times\~X\ar[r]
&\ell^p(\frakH(X)^n)}
\eq
and 
\begin{equation*}
\sup_{u_1,u_2,u_3\in\~X}\|c_{(n,R)}(u_1,u_2,u_3)\|_p<\infty\,.
\end{equation*} 
More precisely, if $D$ is the dimension of $X$ then for any $u_1, u_2, u_3\in\~X$, 
the support of $c_{(n,R)}(u_1, u_2, u_3)$ has cardinality bounded above by  $$6(2(n-1)R)^{2D+n-2}.$$
\end{prop}
To prove this proposition, we need a few preliminary results. For a set $S\subset X$ define $V_\ell(S)$ to be the $\ell$-neighborhood of $S$, 
i.e. the set of vertices at combinatorial distance less than or equal to $\ell$ from some element of $S$.  

We start with the following key result, where \"uber-parallel is needed:
\begin{lemma}\label{key} Let $h_1\subset h_2$ be an \"uber-parallel pair of halfspaces, $x\in h_1$ and $y\in h_2^*$. 
Let $\ell$ be the length of the corresponding bridge. Then
$$\mathcal{I}(x,y) \subset V_\ell(h_1\cup h_2^*)\,,$$ 
that is, the interval between $x$ and $y$ stays within $\ell$ of $h_1\cup h_2^*$.
\end{lemma}
Before proceeding with the proof we record the following important remark, straightforward from the proof of
Lemma~\ref{lem:distance-point-halfspace} and the concepts used in \S~\ref{bridge}.
\begin{remark}\label{travels}Let $u,v\in X$ and a let $h$ be a halfspace 
so that $u,v\in h^*$ and assume that $\hat\frakH(u,h)\subseteq\hat\frakH(v,h)$. 
Let $u=x_0,\dots,x_n=v$ be a combinatorial geodesic from $u$ to $v$, and let $d_i=d(x_i,h)$. 
Our assumptions on $u$ and $v$ force the sequence $d_i$ to be increasing, 
and  the proof of the above lemma shows that it remains constant as long as the hyperplanes crossed are transverse to $\hat{h}$, 
but increase when they are parallel. In words, crossing a hyperplane parallel to $\hat{h}$ will take the geodesic away from $h$.\end{remark}
\begin{proof}[Proof of Lemma \ref{key}] 
We will show that any geodesic from $x\in h_1$ to $y \in  h_2^*$ stays within $\ell$ of $h_1$ 
then goes to the bridge $b(\hat h_1,\hat h_2)$ and then stays within $\ell$ of $h_2^*$ to reach $y$.  
Since by Lemma~\ref{Bstructure}, $b(\hat h_1,\hat h_2)\subset  V_\ell(h_1\cup h_2^*)$, we will have shown that the geodesic never leaves $V_\ell(h_1\cup h_2^*)$. 

According to Lemma \ref{geod}, a geodesic between $x$ and $y$ corresponds to an enumeration of all the hyperplanes separating $x$ from $y$, 
hence by Lemma~\ref{MedDecomp}(2) it has to cross all hyperplanes separating $x$ from $p_b(x)$, 
where  $p_b(x)\in h_1$ is the projection of $x$ on the bridge $b(\hat h_1,\hat h_2)$, 
all those separating $p_b(x)$ from $p_b(y)$ and all those separating $p_b(y)$ from $y$, not necessarily in this order.  
In fact, when two hyperplanes are parallel the enumeration in the geodesic has to respect the order given by the inclusion of the corresponding halfspaces, 
but when two hyperplanes are transverse the geodesic can cross either one first.

Thus to understand how far away from $h_1\cup h_2^*$ a combinatorial geodesic can possibly go, 
we have to study the possible intersections of elements belonging to the following disjoint sets: 
\begin{equation*} 
\hat\frakH(x,p_b(x))\,,\quad
\hat\frakH(p_b(x),p_b(y))\subset\beta_h\sqcup\beta_v\,,\text{ and }
\hat\frakH(y,p_b(y))\,,
\end{equation*}
where $\beta_h\sqcup\beta_v$ is the decomposition of the halfspaces in the bridge into horizontal and vertical halfspaces according to Lemma~\ref{Bstructure}. 

By Lemmas~\ref{Bstructure} and \ref{MedDecomp}, since $h_1$ and $h_2$ are \"uber-parallel, 
none of the hyperplanes from $\hat\frakH(x,p_b(x))$ can cross a hyperplane from $\hat\frakH(y,p_b(y))$.
Hence a geodesic from $x$ to $y$ must enumerate all the hyperplanes from $\hat\frakH(x,p_b(x))$ 
before enumerating any hyperplane from $\hat\frakH(y,p_b(y))$. 

Now the hyperplanes from $\beta_h$ all cross $\hat{h}_1$ so, according to Remark \ref{travels},
they will not allow the geodesic to travel away from $h_1$. 
The only hyperplanes that can take a geodesic away from $h_1$ are the ones from $\beta_v$ and from $\hat\frakH(y,p_b(y))$. 
There are at most $\ell$ hyperplanes from  hyperplanes from $\beta_v$ and the hyperplanes from $\hat\frakH(y,p_b(y))$ will not matter 
as they will take the geodesic away from $h_1$ when it is already $\ell$-close to $h^*_2$. 
Indeed, since the geodesic has to exhaust all the elements of $\hat\frakH(x,p_b(x))$ before using hyperplane from $\hat\frakH(y,p_b(y))$, 
the same argument for $h_2^*$ shows that it will be $\ell$-close to $h^*_2$.
\end{proof}
The above lemma says that, in case $h_1$ and $h_2$ are \"uber-parallel, in order to go from $h_1$ to $h_2^*$ 
one needs to travel on the bridge. The relevance of the hypothesis of being \"uber-parallel is exemplified in the following:
\begin{example}\label{ex:six quarter planes}
Take six quarter planes glued in a natural way around their boundaries. 
\vskip.05cm
\begin{minipage}{0.5\textwidth}
Let $h_1, h^*_2$ be the halfspaces corresponding to the hyperplanes $\hat h_1$ and $\hat h_2$ in the figure.
In this case $\ell(b(\hat h_1,\hat h_2))=2$, but one can easily find $x\in h_1$ and $y\in h^*_2$ such that 
a geodesic joining $x$ and $y$ is not contained in a $2$-neighborhood of $h_1\cup h^*_2$.
In fact, as $x,y$ move away from the bridge, there are geodesics joining them that are also arbitrarily far away it.
In this case the pair $\hat h_1, \hat h_2$ is strongly separated but not \"uber-separated.
\end{minipage}
\begin{minipage}{0.5\textwidth}
\begin{center}
\begin{tikzpicture}[scale=0.5]
\draw[very thick] (-6,0) -- (6,0);
\draw[very thick] (-5/1.732,-5) -- (5/1.732,5);
\draw[very thick] (-5/1.732,5) -- (5/1.732,-5);
%horizontal lines with positive x then up left
\draw (-2,2*1.732+2) -- (1/1.732,1) -- (1/1.732+5,1);
\draw (-1.5,1.732*1.5+4) -- (2/1.732,2) -- (2/1.732+4,2);
%\draw (0,6) -- (3/1.732,3) -- (3/1.732+3,3);
%\draw (0.5,-1.732*.5+8) -- (4/1.732,4) -- (4/1.732+2,4); replaced by the next two lines
\draw (-1,1.732+6) -- (3/1.732,3) -- (3/1.732+3,3);
\draw (0,8) -- (4/1.732,4) -- (4/1.732+2,4);
\draw (-2,-2*1.732-2) -- (1/1.732,-1) -- (1/1.732+5,-1);
\draw (-1,-1.732-4) --(2/1.732,-2) -- (2/1.732+4,-2);
\draw (0,-6) -- (3/1.732,-3) -- (3/1.732+3,-3);
\draw (1,1.732-8) -- (4/1.732,-4) -- (4/1.732+2,-4);
%horizontal lines with negative x
\draw (-1/1.732-5,1) -- (-1/1.732,1) -- (2,2*1.732+2);
\draw (-2/1.732-4,2) -- (-2/1.732,2) -- (1.5,1.732*1.5+4);
%\draw (-3/1.732-3,3) -- (-3/1.732,3) -- (0,6);
%\draw (-4/1.732-2,4) -- (-4/1.732,4) -- (-0.5,-1.732*.5+8); replaced by the next two lines
\draw (-3/1.732-3,3) -- (-3/1.732,3) -- (1,1.732+6);
\draw (-4/1.732-2,4) -- (-4/1.732,4) -- (0,8);
\draw (-1/1.732-5,-1) -- (-1/1.732,-1) -- (2,-2*1.732-2);
\draw (-2/1.732-4,-2) -- (-2/1.732,-2) -- (1,-1.732-4);
\draw (-3/1.732-3,-3) -- (-3/1.732,-3) -- (0,-6);
\draw (-4/1.732-2,-4) -- (-4/1.732,-4) -- (-1,1.732-8);
%all diagonal stuff
\draw  (4,-1.732*3) -- (1,0) -- (4,1.732*3);
\draw  (4.5,-1.732*2.5) -- (2,0) -- (4.5,1.732*2.5);
\draw  (5,-1.732*2) -- (3,0) -- (5,1.732*2);
\draw  (5.5,-1.732*1.5) -- (4,0) -- (5.5,1.732*1.5);

\draw  (-4,1.732*3) -- (-1,0) -- (-4,-1.732*3);
\draw  (-4.5,1.732*2.5) -- (-2,0) -- (-4.5,-1.732*2.5);
\draw  (-5,1.732*2) -- (-3,0) -- (-5,-1.732*2);
\draw  (-5.5,1.732*1.5) -- (-4,0) -- (-5.5,-1.732*1.5);
%$\hat h_1$ and $\hat h_2$
\draw[red, thick]  (4,-1.732*3.5) -- (0.5,0) -- (4,1.732*3.5);
\draw[red, thick]  (-4,1.732*3.5) -- (-0.5,0) -- (-4,-1.732*3.5);
\draw[red] (4+.4,1.732*3.5+.2) node {\tiny{$\hat h_2$}}	
	             (-4-.4,1.732*3.5+.2) node {\tiny{$\hat h_1$}};
%bridge
\filldraw[blue] (-1,0) circle [radius=.15]  
                  (0,0) circle [radius=.15]
                  (1,0) circle [radius=.15];     
%messy points
\filldraw[blue] (-2.732,3) circle [radius=.1];
\draw[blue] (-2.732-.4,3-.3) node {\tiny{$x$}};     
\filldraw[blue] (2.732,3) circle [radius=.1];
\draw[blue]         (2.732+.5,2.4)  node {\tiny{$y$}}; 
\filldraw[blue] (0,6) circle [radius=.1];
\filldraw[blue] (0,6.3) node {\tiny{$m$}};
%geodesic
\draw[very thick,blue] (-2.732,3) -- (-1.732,3) -- (0,6) -- (1.732,3) -- (2.732,3);
\end{tikzpicture}
\end{center}
\begin{center}
{\sc Figure~1}: $m\not\in V_2(h_1\cup h^*_2)$
\end{center}
\end{minipage}
\end{example}

\begin{cor}\label{MedianClose} Let $x,y\in X$ and $h_1\subset h_2$ be an \"uber-parallel pair of halfspaces 
such that $x\in h_1 \subset h_2$ and $y\in h_2^*$. Take $z\in h_1^*\cap h_2$. 
Then the median $m(x,y,z)\in V_\ell(h_1\cup h_2^*)$, where $\ell$ is the length of the bridge between $h_1$ and $h_2$.
\end{cor} 
\begin{proof}Follows directly from the fact that the median is contained in the interval between $x$ and $y$, 
which in turn is contained in $V_\ell(h_1\cup h_2^*)$.
\end{proof}
\begin{cor}\label{halfspaceNear} Let $x,y\in X$, and  $m\in \mathcal{I}(x,y)$. Let $h_1\in [m,x]$ and $h_2 \in [y,m]$, 
with $h_1\subset h_2$, be an \"uber-parallel pair at distance less than or equal to $R$. Then $B_R(m)\cap (h_1 \cup h_2^*) \neq \varnothing$.
\end{cor} 

\begin{proof} This is just a reformulation of Lemma~\ref{key}.  In fact, if $d(\hat h_1,\hat h_2)\leq R$,
then $\ell(b(\hat h_1,\hat h_2))\leq R$.  Then Lemma~\ref{key} implies that any point in an interval
$\mathcal{I}(x,y)$ with $x\in h_1$and $y\in h^*_2$, is at distance at most $R$ from $h_1$ or $h^*_2$.
\end{proof}

%\begin{minipage}{.5\textwidth}
We remark that Example~\ref{ex:six quarter planes} shows that if $\hat h_1$ and $\hat h_2$ are only strongly separated,
the assertion of Corollary~\ref{halfspaceNear} does not hold, as one can see in Figure~1 with $R=2$.

\begin{example}\label{blow} The following ``infinite staircase'' shows an irreducible CAT(0) cube complex 
\begin{minipage}{0.25\textwidth}
with a pair of hyperplanes $\hat h_1$ and $\hat h_2$ that are parallel  and strongly separated but not \"uber-parallel. 
This example is elementary and there are no \"uber-separated pairs. 
Furthermore what captures the pathology of this example is the fact that the median can be arbitrarily far from the bridge $b(\hat h_1,\hat h_2)$. 
The notion of \"uber-separated precisely excludes this pathology. 
\end{minipage}
\begin{minipage}{0.5\textwidth}
\hskip1cm
\begin{center}
\setlength{\unitlength}{.75cm}
\begin{picture}(10,10)
%\put(0,0){\line(1,0){10}}
\put(1,0){\line(0,1){1}}
\put(2,0){\line(0,1){2}}
\put(3,0){\line(0,1){3}}
\put(4,0){\line(0,1){4}}
\put(5,0){\line(0,1){5}}
\put(6,0){\line(0,1){6}}
\put(7,0){\line(0,1){7}}
\put(8,0){\line(0,1){8}}
\put(9,0){\line(0,1){9}}
\put(1,1){\line(1,0){9}}
\put(2,2){\line(1,0){8}}
\put(3,3){\line(1,0){7}}
\put(4,4){\line(1,0){6}}
\put(5,5){\line(1,0){5}}
\put(6,6){\line(1,0){4}}
\put(7,7){\line(1,0){3}}
\put(8,8){\line(1,0){2}}
\put(1,1){\circle*{0.2}}\put(0.7,1.1){$x$}
\put(8,1){\circle*{0.2}}\put(8.1,1.1){$z=m(x,y,z)$}
\put(8,8){\circle*{0.2}}\put(7.7,8.1){$y$}
{\red
\put(2,2.025){\line(1,0){1}}
\put(2,1.975){\line(1,0){1}}
\put(2.975,2){\line(0,1){1}}
\put(3.025,2){\line(0,1){1}}
}
\put(2,2){\circle*{0.1}}
\put(3,2){\circle*{0.1}}
\put(3,3){\circle*{0.1}}
\put(0.5,2.5){\red $b(\hat h_1,\hat h_2)$ }
\thicklines
\put(2.5,-0.1){\line(0,1){2.3}}
\put(2.9,2.5){\line(1,0){7.3}}
\put(2.15,-.5){$\hat h_1$}
\put(10.2,2.75){$\hat h_2$}
\end{picture}
\end{center}
\end{minipage}
\vskip0.5cm
\end{example}

We also need the following result on the structure of the support of the cocycle.
\begin{lemma}\label{lem:sets} The support of $c(u_1,u_2,u_3)$ is the disjoint union
of the six sets obtained by permuting the indices of 
$[[u_1, u_3]]\smallsetminus\([[u_1, u_2]] \cup [[u_2, u_3]]\)$.
On each of these sets the cocycle is identically equal to $1$ or $-1$.
\end{lemma}
\begin{proof}
Let us first examine the structure of the intersections of the six sets 
appearing in the definition of $c$. 
Observe that for $a,b,i,j \in \{1, 2, 3\}$ and $a\neq b$ and $i\neq j$ we have that 
\bqn
\text{if }a\neq i\text{ and }b\neq j,\text {then }[[u_a, u_b]] \cap [[u_i, u_j]] =\varnothing\,.
\eqn
This is described more clearly by the following diagram:

\begin{minipage}{0.7\textwidth}
\begin{center}
\setlength{\unitlength}{0.75cm}
\begin{picture}(5,5)
\put(1.3,0.2){\circle{3}}\put(0.4,-1.4){$[[u_2,u_1]]$}
\put(0,1){\circle{3}}\put(-3,0.6){$[[u_2,u_3]]$}
\put(0,2.4){\circle{3}}\put(-3,2.4){$[[u_1,u_3]]$}
\put(1.3,3.2){\circle{3}}\put(0.4,4.5){$[[u_1,u_2]]$}
\put(2.6,1){\circle{3}}\put(3.7,0.6){$[[u_3,u_1]]$}
\put(2.6,2.4){\circle{3}}\put(3.7,2.4){$[[u_3,u_2]]$}
\end{picture}
\end{center}
\end{minipage}
%
%\vskip1cm
%
\begin{minipage}{0.3\textwidth}
Indeed, consider $s\in [[u_a, u_b]] \cap [[u_i, u_a]]$. 
Then every $h \in s$ must contain $u_b$ and not $u_a$ 
but must also contain $u_a$ and not $u_i$, which shows that the intersection is empty.
Likewise, $[[u_a,u_b]]\cap[[u_b,u_j]]=\varnothing$.
\end{minipage}

\vskip 1cm
However $c$ vanishes on each of the pairwise non-empty intersections. 
Indeed, if $s\in [[u_1, u_2]]\cap [[u_1, u_3]]$, then
$$c({u_1, u_2, u_3})(s) = 0 + 0 + \1_{[[u_1,u_2]]}(s) -(0 +\1_{[[u_1, u_3]]}(s) + 0) = 0\,.$$
The other cases are computed similarly. \end{proof}
We are now ready to prove Proposition~\ref{cisbounded}.
\begin{proof}[Proof of Proposition~\ref{cisbounded}]
According to Lemma~\ref{lem:sets}, an $n$-tuple $h_1\subset h_2\subset\dots\subset h_n$ 
contributing to the cocycle at a triple $u_1,u_2,u_3$ has to be in a set of the type
$$[[u_i,u_j]]\smallsetminus\([[u_i, u_k]] \cup [[u_k, u_j]]\)\,,$$
where $i,j,k\in\{1,2,3\}$ are all different, so that there are six such sets.
In other words, if $(h_1, h_2,\dots, h_n)\in [[u_i,u_j]]\smallsetminus\([[u_i, u_k]] \cup [[u_k, u_j]]\)$,
there exists  $j$, with $1\leq j<n$ such that 
$m(u_1,u_2,u_3)\in h_{j+1}$ but $m(u_1,u_2,u_3)\not\in h_j$.
We hence need to count the number of such $n$-tuples that ``hug'' the median $m(u_1,u_2,u_3)$. 

We start with the case $n=2$ and look at the contribution from the set
$[[u_1, u_2]]_{(2,R)}\smallsetminus\([[u_1, u_3]]_{(2,R)} \cup [[u_3, u_2]]_{(2,R)}\)$.
According to Lemma~\ref{embedintervals}, the interval $\mathcal{I}(u_1,u_2)$ embeds in Euclidean space. 
Let $m=m(u_1,u_2,u_3)\in\mathcal{I}(u_1,u_2)\subseteq{\mathbb{R}}^D$. 
Let  $h_1\subset h_2$, so that $u_2\in h_1 \subset h_2$, $u_1\in h_2^*$ and $m\in h_1^*\cap h_2$. 
According to Corollary~\ref{halfspaceNear},  $B_R(m) \cap (h_1\cup h_2^*)\neq \varnothing$. 
Without loss of generality, we may assume that $B_R(m) \cap h_1\neq \varnothing$. 
Then, there are at most $(2R)^D$ choices for $h_1$; since $h_2$ is at distance $R$ from $h_1$, 
there are at most $(2R)^D$ choices for $h_2$. 

Hence, since there are 6 terms in the definition of the cocycle and 
each is a characteristic function on a set of at most $(2R)^{2D}$ elements, 
we get $6(2R)^{2D}$ for the bound of the cocycle.

For the general case, 
we count in how many ways we can construct a contributing $n$-tuple from 
$[[u_1, u_2]]_{(n,R)}\smallsetminus\([[u_1, u_3]]_{(n,R)} \cup [[u_3, u_2]]_{(n,R)}\)$, 
call it $h_1\subset\dots\subset h_n$. 
According to the case $n=2$, there are at most $(2(n-1)R)^{2D}$ choices for $h_1$ and $h_n$,
since $h_1$ and $h_n$ are at distance less than or equal to $(n-1)R$ from each other.
Therefore, we must count the possible ways of choosing $h_2, \dots, h_{n-1}$. 
To this end, we note that $h_2, \dots, h_{n-1}$ must belong to the set $\beta_v$ of the bridge $b(\hat h_1,\hat h_n)$ 
between $h_1$ and $h_n$. 
The bridge has length at most $(n-1)R$ and hence there are at most $((n-1)R)^{n-2}$ many choices for $h_2, \dots, h_{n-1}$. 
This means that there are at most $(2(n-1)R)^{2D}((n-1)R)^{n-2}$ contributing $n$-tuples 
from $[[u_1, u_2]]_{(n,R)}\smallsetminus\([[u_1, u_3]]_{(n,R)} \cup [[u_3, u_2]]_{(n,R)}\)$.

Hence, since there are 6 terms in the definition of the cocycle and 
each is a characteristic function on a set of at most $2^{2D}((n-1)R)^{2D+n-2}$ elements, 
we get $6\,(2(n-1)R)^{2D+n-2}$ for the bound of the cocycle.
 \end{proof}
\subsection{Towards the Proof of Theorem~\ref{thm_intro:main}}\label{subsec:towards}
We defined at the beginning of this section the bounded cohomology of
$\Gamma$ with coefficients in $\ell^p(\frakH(X)^n)$ as the
cohomology of the complex of the $\Gamma$-equivariant bounded
functions on the Cartesian product $\Gamma^k$ with values in
$\ell^p(\frakH(X)^n)$.  So far, for any $n\geq 2$ we constructed a 1-parameter family of
$\Gamma$-equivariant cocycles
$c_{(n,R)}:\~X\times\~X\times\~X\to\ell^p(\frakH(X)^n)$ and 
we remarked that a choice of a basepoint
will give a cocycle in $\cb(\Gamma^3,\ell^p(\frakH(X)^n))$.  
We still need to show that the cohomology
class represented by this cocycle does not vanish
if the action is not elementary.
In order to do this, we recall from \cite{Burger_Monod_GAFA,
Monod_book} that, if $(B,\vartheta)$ is a strong $\Gamma$-boundary, 
there is an isometric isomorphism 
\bq\label{eq:bm}
\hb^2(\Gamma,\ell^p(\frakH(X)^n))\cong
\mathcal{Z}\linftyaw(B^3,\ell^p(\frakH(X)^n))^\Gamma\,,
\eq
where the space on the right hand side is the space of $\linfty$
alternating $\Gamma$-equivariant cocycles on $B\times B\times B$, with
the measurability intended with respect to the weak-$*$ topology on
$\ell^p(\frakH(X)^n)$, $1\leq p<\infty$ as a dual of $\mathcal E_p$ 
(see \eqref{eq:e_p}).

We recall from the introduction that a strong $\Gamma$-boundary $(B,\vartheta)$ 
is a Lebesgue  space endowed with a measure class preserving $\Gamma$-action
that is in addition 
\be
\item\label{1} amenable, and
\item\label{2} ``doubly ergodic with coefficients'', namely:
\ee

\begin{definition}\label{defi:doublyrgodic}  Let $\Gamma$ be a group and 
$(B,\vartheta)$ a Lebesque space endowed with a measure class preserving $\Gamma$-action.  
The action of $\Gamma$ on $B$ is {\em doubly ergodic with (Hilbert) coefficients} if 
any weak-$*$ measurable
$\Gamma$-equivariant map $B\times B\to \mathcal E$ into the dual $\mathcal E$ of a separable
Banach (Hilbert) space on which $\Gamma$ acts by isometries is essentially constant.
\end{definition}

One of the advantages of the realization \eqref{eq:bm} is that, 
because of \eqref{2} with $\mathcal E=\ell^p(\frakH(X)^n)$ for $1\leq p<\infty$, 
in degree two there are no coboundaries:
hence showing that a cohomology class does not vanish amounts simply to showing that the
corresponding cocycle is non-zero.  The disadvantage is that realizing the pullback via
$\rho:\Gamma\to\Aut(X)$ of a bounded cohomology class defined on the boundary is possible 
under the condition that there exists a $\Gamma$-equivariant measurable boundary map $\varphi:B\to\~X$ and
that the bounded cohomology class can be represented 
by a Borel measurable alternating cocycle, \cite{Burger_Iozzi_app}.  

It is immediate to verify that the cocycle $c$ defined in this section is
alternating in $(u_1, u_2, u_3)$,
that is to say that if $\sigma$ is a permutation of $\{u_1, u_2, u_3\}$
then 
\begin{equation}\label{c is alternating}
c(\s(u_1, u_2, u_3)) = \mathrm{sign}(\sigma) c(u_1, u_2, u_3)\,.
\end{equation}
The Borel measurability of $c$ is proven in Lemma~\ref{cocycleBorelMeas}.

Furthermore, a strong $\Gamma$-boundary with properties \eqref{1} and \eqref{2} exists for any locally compact and compactly
generated group according to \cite{Burger_Monod_GAFA}, and for
arbitrary locally compact groups with respect to a spread out non-degenerate symmetric measure 
according to \cite{Kaimanovich};
the existence of the boundary map will take up the next section.

%
%%%%%%%%%%%%%%%%%%%%%%%%%%%%%%%%%%%%%
\section{The Boundary Map}\label{sec:bdrymap}
This section is devoted to the proof of the following theorem,
with an eye to the implementation of the isomorphism in \eqref{eq:bm}.
\begin{theorem}\label{thm:bdrymap}
  Let $\G \to \Aut(Y)$ be a group action on an irreducible finite
  dimensional CAT(0) cube complex $Y$.  Assume the action is essential
  and non-elementary.
  If $B$ is a strong $\Gamma$-boundary, there
  exists a $\G$-equivariant measurable map $\f : B \to \partial Y$
  taking values into the non-terminating ultrafilters in $\partial Y$.
\end{theorem}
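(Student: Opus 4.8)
The plan is to construct the boundary map in two stages: first produce \emph{some} $\Gamma$-equivariant measurable map $B \to \~Y$, then push its values into $\partial Y$ and finally into the non-terminating ultrafilters, using the dynamics of the action to rule out the degenerate possibilities at each stage. Since a strong $\Gamma$-boundary is amenable, the $\Gamma$-action on the compact space $\~Y$ (which is metrizable, being a closed subset of $2^{\frakH(Y)}$) admits a $\Gamma$-equivariant measurable map $\nu : B \to \mathrm{Prob}(\~Y)$. The first task is to replace the measure $\nu_b$ by a point. The standard device is to look at the barycenter or the support, but in this combinatorial setting the natural move is to use the \emph{median}: the median extends to a measurable map on triples in $\~Y$, so using double ergodicity one tries to show that $\nu_b$ is supported on a set on which the median behaves rigidly. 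Concretely, I would first argue that for a.e.\ $b$ the measure $\nu_b$ has no atoms in $Y$ itself, nor on any proper $\Gamma$-invariant subcomplex — here one uses non-elementarity: an atom would give a finite-measure orbit, contradicting ergodicity unless the orbit is essentially a single point, which would be a finite orbit in $\~Y$ and, via Proposition~\ref{prop:visual-to-roller} and Corollary~\ref{no Euclidean factors}, contradict non-elementarity and essentiality.

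The second and main stage is to upgrade ``measure'' to ``Dirac mass concentrated on a non-terminating ultrafilter''. The idea I would pursue, in the spirit of \cite[Proposition~3.3]{Monod_Shalom_2}, is the following. Using Lemma~\ref{lem:ss}, in an irreducible complex with essential non-elementary action, every hyperplane $\hat h$ can be pushed to a strongly separated companion $\gamma \hat h$; iterating gives infinite descending chains of pairwise strongly separated halfspaces. For such a chain $\{h_n\}$, the nested intersection $\bigcap_n h_n$ is forced (by the strong separation argument already used in the proof of Corollary~\ref{cor:dichotomy}) to be a \emph{single} point of $\partial Y$. One then considers the push-forward of $\nu_b$ under the quotient maps $\pi_{\frakH'}$ associated with these chains, or more directly shows that the conditional structure of $\nu_b$ with respect to the $\sigma$-algebra generated by the halfspace-indicator functions $\{1_h : h \in \frakH(Y)\}$ collapses: for each hyperplane $\hat h$ the function $b \mapsto \nu_b(h)$ takes values in $\{0,1\}$ a.e., because otherwise one extracts a non-constant equivariant map $B \to \ell^2$-coefficients contradicting double ergodicity with coefficients (applied to a suitable associated cocycle/representation coming from the strongly separated chain). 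Once $\nu_b$ is a Dirac mass $\delta_{\f(b)}$ for a measurable $\f : B \to \~Y$, equivariance is automatic and the earlier atom analysis forces $\f(b) \in \partial Y$.

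The final step is to show $\f(b)$ is a non-terminating ultrafilter. Suppose not: then $\f(b)$ contains a maximal finite descending chain $h_0 \supset \dots \supset h_N$ that cannot be extended. Using Lemma~\ref{lem:embedding} / the lifting-decomposition machinery of \S\ref{subsec:isom-emb}, the set of ultrafilters containing such a terminal chain corresponds to a lower-dimensional subcomplex, which is $\Gamma_0$-invariant for a finite-index $\Gamma_0$ if the configuration were chosen equivariantly; by ergodicity the value of $N$ and the combinatorial type would be essentially constant, producing a $\Gamma$-invariant (after finite index) lower-dimensional subcomplex $Y'$, and then by induction on $\dim Y$ — together with Lemma~\ref{lem:hereditary properties} and the essentiality/non-elementarity passing to irreducible factors — one either lands in a genuine finite orbit (excluded) or contradicts minimality of the dimension. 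A cleaner route: use the Flipping Lemma (\S\ref{subsec:skew-flip}) applied to $h_N$ to find $\gamma$ with $\gamma \*h_N \subset h_N$; then $\gamma \f(b)$ would contain a halfspace strictly inside $h_N$, so equivariance plus ergodicity of the ``deepest terminal halfspace'' statistic gives a contradiction with the terminality assumption holding a.e.

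The step I expect to be the genuine obstacle is the collapse from $\mathrm{Prob}(\~Y)$ to a Dirac mass — i.e.\ the heart of the Monod--Shalom-type argument. In a tree one uses the hyperbolicity (a measure on a tree boundary either is a point mass or has a well-defined ``center'' dividing it) very efficiently; for a CAT(0) cube complex there is no such center and one must instead exploit strongly separated hyperplanes (Lemma~\ref{lem:ss}) carefully, feeding a family of strongly separated descending chains into the double-ergodicity-with-coefficients hypothesis to kill the $\{0,1\}$-valued indicator functions one hyperplane at a time, and then check that the resulting $\{0,1\}$-assignment is genuinely an ultrafilter (consistency) living in $\partial Y$. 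Managing the measurability of all these constructions simultaneously and organizing the induction on dimension so that the non-elementarity hypothesis survives at every stage is where the real work lies; this is presumably why the authors credit Sageev with a special case of Lemma~\ref{lem:embedding} that unlocked the proof.
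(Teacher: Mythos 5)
Your skeleton matches the paper's: amenability of $B$ gives a $\Gamma$-equivariant map $\psi:B\to\Pn(\~Y)$, ergodicity arguments should then produce a point of $\partial Y$, and essentiality should force that point to be non-terminating (your last step is indeed close in spirit to the paper's Proposition~\ref{prop:values in non-terminating}, which runs a terminal-element count through Corollary~\ref{count->not-ess} rather than the Flipping Lemma). But the central step --- passing from $\Pn(\~Y)$ to a point --- is exactly where your proposal has a genuine gap, and the mechanism you sketch would not close it. You aim to show that $\psi(b)$ is a Dirac mass, via the claim that $b\mapsto\psi(b)(h)$ is $\{0,1\}$-valued ``because otherwise one extracts a non-constant equivariant map into $\ell^2$-coefficients contradicting double ergodicity.'' No such map is constructed, and none is readily available: for fixed $h$ the function $b\mapsto\psi(b)(h)$ is not equivariant, and the equivariant package $b\mapsto(\psi(b)(h))_{h\in\frakH(Y)}$ lands in $\ell^\infty(\frakH(Y))$, where double ergodicity only says the map equals a $\Gamma$-invariant element a.e.\ --- which is perfectly consistent with, say, $\psi(b)(h)=1/2$ for many $h$. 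In other words, ergodicity alone cannot rule out \emph{balanced} halfspaces; that exclusion is the real content of the theorem.

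The paper's proof is organized around precisely this weaker (and sufficient) goal: it never shows $\psi(b)$ is a Dirac mass, but only that the balanced set $H_\mu=\{h:\mu(h)=\mu(\*h)\}$ is empty for a.e.\ $\mu=\psi(b)$, after which the heavy halfspaces $H_\mu^+$ automatically form an ultrafilter. Excluding $H_\mu\neq\varnothing$ requires a genuine geometric case analysis on the essentially constant quantities $|H_\mu|$ and $|H_\mu\cap H_\nu|$: finite nonzero values contradict essentiality (Corollary~\ref{count->not-ess}); the case $H_\mu\cap H_\nu=\varnothing$ with both infinite is killed by producing cubes of arbitrarily large dimension (Proposition~\ref{III.C}, which needs the interval structure of $\~X(H_\mu)$ from Lemma~\ref{HmuIsEuclidean} and a combinatorial/graph argument), contradicting finite dimensionality; the case $H_\mu=H_\nu$ a.e.\ forces a product decomposition (Proposition~\ref{prop:product}), contradicting irreducibility; and the remaining case is handled by strong separation giving DCC (Lemma~\ref{lem:SSimpliesDCC}) or by the ``trap'' lemmas (Corollary~\ref{no terminal elements intersection}). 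None of these inputs --- in particular the uses of irreducibility and of $\dim Y<\infty$ at this stage --- appear in your proposal, and your one-hyperplane-at-a-time ergodicity argument cannot substitute for them. (A smaller point: your atom argument only works for atoms on the countable vertex set, and a finite orbit in $\~Y$ contradicts non-elementarity via essentiality and Proposition~\ref{roller-to-visual}, not Proposition~\ref{prop:visual-to-roller}.) So the proposal correctly locates the obstacle but does not overcome it.
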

%
%We recall that, in our specific case, the essential properties of the
%$\Gamma$-action on a Poisson boundary $(B,\vartheta)$ are the
%following: \be
%\item it is amenable, and
%\item any weak-$\ast$ measurable $\Gamma$-equivariant map $S\times
 % S\to E$ is essentially constant, where $S$ is any regular $G$-space.
 % \ee

To realize the isomorphism in \eqref{eq:bm} in our generality,
we will in fact need the following
stronger statement which guarantees the existence of some kind of
boundary map when the action is not assumed to be essential 
and the complex is not necessarily irreducible.

%???Should we put Remark~\ref{nonfixedVisual Implies Nonfixed Roller} here???

\begin{cor}\label{cor:bdrymap}
  Let $\G \to \Aut(X)$ be a group acting on a finite dimensional
  CAT(0) cube complex $X$.  Assume that there is no finite orbit in
  the visual boundary $\partial_\sphericalangle X$
  and denote by $Y$ the essential core of $X$.  Then there exists
  a $\Gamma$-equivariant measurable map $\f : B \to \partial Y\subseteq\partial X$.
\end{cor}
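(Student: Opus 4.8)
The goal is to deduce Corollary~\ref{cor:bdrymap} from Theorem~\ref{thm:bdrymap}, so the plan is essentially a reduction argument. First I would invoke \cite[Proposition~3.5]{Caprace_Sageev}: since there is no finite orbit in $\partial_\sphericalangle X$, after possibly passing to a finite-index subgroup $\Gamma'<\Gamma$ we may assume there are no $\Gamma'$-fixed points in $\partial_\sphericalangle X$ \emph{and} no $\Gamma'$-fixed point in $X$ (if there were a fixed point in $X$ one handles it separately, but under these hypotheses the essential core construction applies), so that the essential core $Y=Y(X,\Gamma')\subset X$ is a non-empty $\Gamma'$-invariant convex subcomplex, with $\partial Y\subset\partial X$. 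Note a strong $\Gamma$-boundary is automatically a strong $\Gamma'$-boundary for a finite-index subgroup, since amenability and double ergodicity with coefficients pass to finite-index subgroups.

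Next I would decompose $Y=Y_1\times\dots\times Y_m$ into its irreducible factors (Caprace--Sageev, \S\ref{subsec:decomposition}). By Lemma~\ref{lem:hereditary properties}, after passing to the further finite-index subgroup $\Gamma_0<\Gamma'$ of index $\le m!$ that preserves each factor, the $\Gamma_0$-action on each $Y_j$ is essential; however it need not be non-elementary on every factor. So I would split the index set: say $Y_j$ for $j\in I$ carry a non-elementary $\Gamma_0$-action, while for $j\notin I$ the $\Gamma_0$-action on $Y_j$ fixes a point in $\partial_\sphericalangle Y_j$ (after a final harmless finite-index passage). The set $I$ is non-empty: if it were empty then by Lemma~\ref{lem:boundary of factors} one would obtain a $\Gamma_0$-fixed point in $\partial_\sphericalangle Y\subset\partial_\sphericalangle X$, contradicting our standing assumption. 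For each $j\in I$, Theorem~\ref{thm:bdrymap} applied to $\Gamma_0\to\Aut(Y_j)$ yields a $\Gamma_0$-equivariant measurable map $\f_j:B\to\partial Y_j$ landing in the non-terminating ultrafilters.

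Now I would assemble the pieces. Fix any base vertex $y_j^0\in Y_j$ for $j\notin I$, and combine: on $\~Y=\prod_j\~Y_j$ define $\f_0:B\to\partial Y$ by $b\mapsto\big(\f_j(b)\big)_{j\in I}\times\big(y_j^0\big)_{j\notin I}$. Using the product decomposition of the Roller boundary recorded in \S\ref{subsec:decomposition}, the image lies in $\partial Y$ (indeed in $\prod_{j\in I}\partial Y_j\times\prod_{j\notin I}\~Y_j$, which is contained in $\partial Y$ precisely because $I\neq\varnothing$). This $\f_0$ is measurable (a finite product of measurable maps) and $\Gamma_0$-equivariant, and composing with the inclusions $\partial Y\subset\partial X$ gives a $\Gamma_0$-equivariant measurable map $B\to\partial X$. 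Finally I would promote equivariance from $\Gamma_0$ to $\Gamma$ by the standard induction/averaging trick: choosing coset representatives $\gamma_1,\dots,\gamma_r$ for $\Gamma_0$ in $\Gamma$, one checks that the $\Gamma$-action permutes the subcomplexes $\gamma_k\cdot Y$ and hence their Roller boundaries inside $\partial X$, and the collection $\{\gamma_k\f_0(\gamma_k^{-1}b)\}$ determines a $\Gamma$-equivariant measurable map into the disjoint union $\bigsqcup_k \gamma_k\partial Y\subset\partial X$ — concretely, one can pick a measurable $\Gamma$-fundamental domain structure or simply note that the space of probability measures argument is not needed here since each $\f_0(\gamma_k^{-1}b)$ is already a Dirac mass.

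The main obstacle is the last step: ensuring that after all the finite-index reductions one genuinely obtains a \emph{$\Gamma$}-equivariant (not merely $\Gamma_0$-equivariant) map into $\partial X$ itself. The subtlety is that distinct finite-index subgroups may have different essential cores, so $Y$ is only $\Gamma_0$-invariant, not $\Gamma$-invariant; one must either carry out the induction carefully (the $\Gamma$-orbit of $Y$ is a finite union of convex subcomplexes whose boundaries sit inside $\partial X$, and an equivariant map into a finite disjoint union is constructed by choosing, measurably and consistently, on each piece), or else first replace $Y$ by the intersection/convex hull of its finitely many $\Gamma$-translates. Once this bookkeeping is in place the rest is routine, since the genuinely hard analytic content — producing Dirac-valued rather than measure-valued boundary maps on each irreducible essential factor — is exactly what Theorem~\ref{thm:bdrymap} supplies.
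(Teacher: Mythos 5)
Your overall reduction — nonempty essential core via \cite[Proposition~3.5]{Caprace_Sageev}, decomposition of $Y$ into irreducible factors, Theorem~\ref{thm:bdrymap} applied factorwise, and the inclusion $\prod_j\partial Y_j\subseteq\partial Y\subseteq\partial X$ — is exactly the paper's route. But two steps as you wrote them do not work. First, your treatment of the factors $j\notin I$ is wrong: sending those coordinates to a fixed base vertex $y_j^0$ is not $\Gamma_0$-equivariant unless $y_j^0$ is a $\Gamma_0$-fixed vertex, and no such vertex exists, since by Lemma~\ref{lem:hereditary properties}(1) the $\Gamma_0$-action on every irreducible factor of the essential core is essential (every hyperplane is skewered, so orbits are unbounded). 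Moreover the case you are patching cannot occur: the very argument you use to show $I\neq\varnothing$ (Lemma~\ref{lem:boundary of factors}) shows $I=\{1,\dots,m\}$ — a $\Gamma_0$-fixed point in $\partial_\sphericalangle Y_j$ for a \emph{single} $j$ already yields a $\Gamma_0$-fixed point in $\partial_\sphericalangle Y\subset\partial_\sphericalangle X$, hence a finite $\Gamma$-orbit there, contradicting the hypothesis. This is precisely the content of Lemma~\ref{lem:hereditary properties}(2), which the paper invokes so that Theorem~\ref{thm:bdrymap} applies to \emph{every} factor and no basepoint patch is needed.

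Second, the passage back from $\Gamma_0$- to $\Gamma$-equivariance is left as a sketch that is not a proof: the assignment $b\mapsto\{\gamma_k\varphi_0(\gamma_k^{-1}b)\}$ produces a finite subset of $\partial X$, not a point, and no measurable selection making it $\Gamma$-equivariant is actually constructed. Note also that part of the difficulty is self-inflicted: the hypothesis ``no finite orbit in $\partial_\sphericalangle X$'' already rules out $\Gamma$-fixed points there, so no initial passage to $\Gamma'$ is needed, and the essential core of the $\Gamma$-action is $\Gamma$-invariant (the set of $\Gamma$-essential hyperplanes is $\Gamma$-invariant), so your worry that $Y$ is only $\Gamma_0$-invariant is unfounded. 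The only residual subtlety is that $\Gamma$ may permute isomorphic factors $Y_j$, so the coordinate maps $\varphi_j$ must be assembled compatibly; the paper's proof treats the product map $\varphi=(\varphi_1,\dots,\varphi_m):B\to\partial Y\subset\partial X$ directly and does not introduce the constant coordinates or the coset averaging that cause the problems above. To repair your write-up, delete the $I$ versus $I^c$ split (citing Lemma~\ref{lem:hereditary properties}), work with $\Gamma$ and its factor-preserving subgroup only where the factorwise theorem is applied, and address equivariance of the assembled product map rather than inventing an averaging step.
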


\begin{proof}%[Proof of Corollary~\ref{cor:bdrymap}] 
  Since the action of $\Gamma$ has no finite orbit in
  $\partial_\sphericalangle X$, in particular it has no fixed point. 
  Therefore, the essential core $Y$ is not empty,
  \cite[Proposition~3.5]{Caprace_Sageev}, and $\Gamma$ also has no
  finite orbit in $\partial_\sphericalangle Y$ as well.  If
  $Y=Y_1\times\dots\times Y_m$ is the decomposition of $Y$ into a
  product of irreducible subcomplexes, by Lemma~\ref{lem:hereditary
    properties}, $\Gamma$ also has no finite orbit in
  $\partial_\sphericalangle Y_i$, for $i=1,\dots,m$
  and moreover the action on each $Y_i$ is essential.

  If $j=1,\dots,q$, let $\varphi_j:B\to\partial Y_j$
  be the $\Gamma$-equivariant measurable boundary map whose existence
  is proven in Theorem~\ref{thm:bdrymap}.  Since
  $\prod_{j=1}^q\partial Y_j\subseteq\partial Y\subseteq\partial X$, the map
  $\varphi:B\to\partial Y$ defined by
  $\varphi(b):=(\varphi_1(b),\dots,\varphi_q(b))$ has the desired
  properties.
\end{proof}

The idea of the proof of Theorem~\ref{thm:bdrymap} is as follows. Since
$\~X$ is a continuous compact metric $G$-space, 
the space $\Pn(\~X)$ of probability measures on $\~X$ endowed
with the weak-$\ast$ topology
is a subset of the (unit ball in the) dual of the continuous functions on $\~X$.  
By amenability of the $\Gamma$-action on $B$, there exists a
$\Gamma$-equivariant measurable map $\psi:B\to\Pn(\~X)$ into the
probability measures on $\~X$ (see \cite[Proposition 4.3.9]{Zimmer}). 
Each probability measure $\mu$ on $\~X$ divides the set of
halfspaces into ``balanced'' (that is halfspaces such that $\mu(h)=\mu(h^*)$)
and ``unbalanced'' ones.  If all halfspaces are unbalanced this
defines an ultrafilter, hence the map $\psi:B\to\Pn(\~X)$ gives a
$\G$-equivariant map $\psi:B\to\~X$. Since the measure $\vartheta$ on $B$ is ergodic 
so is the push-forward measure on $\~X$. Hence up to measure $0$ the image of
$\psi$ is either in $X$ or in $\partial X$. If it is in $X$ then it is
essentially constant so we get a $\G$-fixed point, hence it had to
land in $\partial X$.  The whole work in the proof will be to exclude the presence
of balanced halfspaces using non-elementary actions assumptions as
well as essentiality.

\subsection{General Preliminary Lemmas Using Ergodicity}\label{subsec:prelim-lemmas}
The following lemma can be thought of as a weaker version of the
statement that a strong $\Gamma$-boundary for a lattice is a 
strong $\Gamma$-boundary
for its ambient group and vice versa.

\begin{lemma}\label{lemma:finite-index-ergodicity} Let $\G$ be a group 
acting on a measure space $(M,\vartheta)$.   
If $\G$ acts ergodically on $(M\times M, \vartheta\times \vartheta)$, 
then every finite index subgroup  $\G_0\leq\G$ acts ergodically on $(M,\vartheta)$. 
\end{lemma}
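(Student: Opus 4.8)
The plan is to show directly that every $\Gamma_0$-invariant measurable set of positive measure is conull, by exploiting that $\Gamma$ permutes the finitely many $\Gamma$-translates of such a set together with ergodicity of $\Gamma$ on the square.

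First I would fix $\Gamma_0 \leq \Gamma$ with $k := [\Gamma : \Gamma_0] < \infty$, choose coset representatives $e = g_1, g_2, \dots, g_k$ of $\Gamma/\Gamma_0$, and let $A \subseteq M$ be a $\Gamma_0$-invariant measurable set with $\vartheta(A) > 0$; the goal is $\vartheta(M \smallsetminus A) = 0$. Put $A_i := g_i A$, so $A_1 = A$. The key observation is that $\Gamma$ permutes the classes $[A_1], \dots, [A_k]$ modulo null sets: for $\gamma \in \Gamma$ write $\gamma g_i = g_{\sigma_\gamma(i)} h_i$ with $h_i \in \Gamma_0$ and $\sigma_\gamma$ the standard coset permutation; then $\gamma A_i = g_{\sigma_\gamma(i)} h_i A = A_{\sigma_\gamma(i)}$ up to a null set, using that $A$ is $\Gamma_0$-invariant and that the $\Gamma$-action is measure class preserving.

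Next I would package this into the measurable map $\Phi : M \to \{0,1\}^k$, $\Phi(x) := (\mathbf{1}_{A_1}(x), \dots, \mathbf{1}_{A_k}(x))$, which is $\Gamma$-equivariant once $\{0,1\}^k$ carries the $\Gamma$-action permuting coordinates through $\sigma_\gamma$. Consequently the set $\{(x,y) \in M \times M : \Phi(x) = \Phi(y)\}$ is a $\Gamma$-invariant measurable subset of $M \times M$ (the diagonal $\Gamma$-action fixes the diagonal of $\{0,1\}^k \times \{0,1\}^k$), so by ergodicity of $\Gamma$ on $(M \times M, \vartheta \times \vartheta)$ it is either null or conull. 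Writing $p_v := \vartheta(\Phi^{-1}(v))$ for $v \in \{0,1\}^k$, the $(\vartheta\times\vartheta)$-measure of this set equals $\sum_v p_v^2$; since $\sum_v p_v = 1$ and at least one $p_v > 0$ (because $\vartheta(A) = \vartheta(A_1) > 0$), the null case is impossible. Hence $\sum_v p_v^2 = 1 = \sum_v p_v$, so $\sum_v p_v(1 - p_v) = 0$, which forces every $p_v \in \{0,1\}$ and exactly one, say $p_{v_0}$, to equal $1$. Thus $\Phi$ equals $v_0$ almost everywhere, and therefore $A = A_1 = \Phi^{-1}(\{v : v_1 = 1\})$ is conull or null; as $\vartheta(A) > 0$, it is conull, which is what we wanted.

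The argument is short, so the only point demanding care is the bookkeeping modulo null sets — in particular checking that $\Gamma$ permutes the $A_i$ only up to measure zero and that $\Phi$ is equivariant only up to measure zero, which is nonetheless precisely the form in which ergodicity applies. I would also note that a finite index subgroup need not act ergodically on $M$ merely because $\Gamma$ does; the stronger hypothesis of ergodicity on $M \times M$ enters exactly at the step where one concludes $\sum_v p_v^2 = 1$.
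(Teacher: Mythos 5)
Your argument is correct, but it takes a genuinely different route from the paper's. The paper proves the contrapositive: assuming $\Gamma_0$ does not act ergodically, it passes to a normal finite-index subgroup, invokes Mackey's point realization of the measure algebra of $\Gamma_0$-invariant sets to obtain a finite atomic quotient $(M_0,\vartheta_0)$ carrying a nontrivial action of the finite group $\Gamma/\Gamma_0$, takes $A$ to be the preimage of an atom, and shows that $\bigcup_{[\gamma]\in\Gamma/\Gamma_0}\gamma A\times\gamma A$ is a $\Gamma$-invariant subset of $M\times M$ which is neither null nor conull; the fiber structure is what gives the dichotomy $\vartheta(A\cap\gamma A)\in\{0,\vartheta(A)\}$ used to see that $A\times(M\smallsetminus A)$ misses this set. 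You instead argue directly: starting from an arbitrary $\Gamma_0$-invariant set $A$ of positive measure, you pass to the atoms $\Phi^{-1}(v)$ of the finite algebra generated by the coset translates $g_iA$, observe that $\{(x,y):\Phi(x)=\Phi(y)\}=\bigsqcup_v\Phi^{-1}(v)\times\Phi^{-1}(v)$ is (mod null sets) a $\Gamma$-invariant subset of the square, and the computation $\sum_v p_v^2=1$ forces the partition to be trivial, hence $A$ conull. What your route buys: no Mackey point realization (hence no standardness of $(M,\vartheta)$ beyond making sense of the product measure), no reduction to a normal finite-index subgroup, and no need for the translates of $A$ to be pairwise equal or essentially disjoint, since the atoms of the finite algebra are disjoint by construction; what the paper's route buys is a structural description of the invariant $\sigma$-algebra as a finite $\Gamma/\Gamma_0$-space, which is conceptually transparent. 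Two small points to keep straight in your write-up: the identity $\sum_v p_v=1$ uses that $\vartheta$ is a probability (or finite) measure, which is the setting in which the lemma is applied --- and even for an infinite measure, conullity of your set directly gives $p_vp_w=0$ for $v\neq w$, so at most one atom has positive mass and the conclusion persists; and the mod-null equivariance of $\Phi$ suffices because, for a countable group, an almost invariant subset of $(M\times M,\vartheta\times\vartheta)$ can be replaced by a strictly invariant one before applying ergodicity.
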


\begin{proof}
We prove the contrapositive of the statement.   
Let $\G_0 \leq \G$ be a finite index subgroup that does not act ergodically. 

Let $(M_0,\vartheta_0)$ be the Mackey's point realization of the measure algebra generated by the $\Gamma_0$-invariant sets.  
In other words, $M_0$ is a standard measure space equipped with a measurable map $p:M\to M_0$
such that $p_\ast(\vartheta)=\vartheta_0$.  Since, by passing to a finite index subgroup if necessary (that will still act non ergodically), 
$\Gamma_0$ can be taken to be normal in $\Gamma$, this measure algebra is $\Gamma$-invariant and 
hence it defines a $\Gamma$-action on $M_0$ with respect to which the map $p:M\to M_0$ is $\Gamma$-equivariant.  
Hence there is an ergodic action of the finite group $\Gamma/\Gamma_0$ on $M_0$, 
which is therefore an atomic space, but cannot consist of one point (otherwise the $\Gamma_0$-action would be ergodic).  

Now take any point $m_0\in M_0$ and define $A:=p^{-1}(m_0)\subset M$.  By construction $A$ is neither null nor conull and 
$\Gamma_0$-invariant.  Consider the subset 
  $\Cup{[\g] \in \G/\G_0}{} \g A \times \g A \subset M\times M$ (which is well
  defined by the $\G_0$-invariance of $M_0$). This set is
  $\G$-invariant and not null.  Furthermore, it is not conull. Indeed, let $A^c = M\smallsetminus A$ denote the complement. 
  We claim that $A \times A^c\subset \big(\Cup{[\g] \in \G/\G_0}{} \g A \times \g A\big)^c$. 
  Indeed, if there is a $\g'\in \G$ such that $A\times A^c \cap (\g'A\times \g'A)$ has positive measure,
  then $\vartheta(A\cap\gamma'A)>0$ and $\vartheta(A^c\cap\gamma'A)>0$,
  while, by construction, 
$\vartheta(A\cap\gamma A)=\vartheta(A)$ or $\vartheta(A\cap\gamma A)=0$
for all $\gamma\in\Gamma$.
    
 Therefore, the $\Gamma$-invariant set   $\Cup{[\g] \in \G/\G_0}{} \g A \times \g A$ is neither null nor conull and 
 hence the diagonal action of $\G$ on $M\times M$ is not ergodic. 
\end{proof}

\begin{lemma}\label{BtimesBtoC}  Let $C$ be a countable set with a $\G$ action and 
$(B,\vartheta)$ a Lebesgue space with a measure class preserving $\Gamma$-action
that is in addition doubly ergodic with Hilbert coefficients.
  Let $\B:=B$  or to $\B:=B\times B$.
  If $\psi:\B\to C$ is a $\G$-equivariant measurable map, then $\psi$
  is essentially constant.
\end{lemma}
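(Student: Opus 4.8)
The plan is to deduce the statement directly from the defining property of double ergodicity with Hilbert coefficients (Definition~\ref{defi:doublyrgodic}), after transporting the countable target $C$ into a separable Hilbert space equipped with an isometric $\Gamma$-action. Concretely, I would set $H:=\ell^2(C)$, which is separable since $C$ is countable, and endow it with the $\Gamma$-action $(\g f)(c):=f(\g^{-1}c)$; this action is unitary, hence by isometries, and $H$ is reflexive, so it is the continuous dual of the separable Hilbert space $H$ itself. The point is that the map $\iota\colon C\to H$ sending $c$ to the indicator function $\delta_c$ of $\{c\}$ is $\Gamma$-equivariant, because $\g\cdot\delta_c=\delta_{\g c}$, and injective with image in the unit sphere of $H$; in particular it separates the points of $C$.

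For $\B=B\times B$, I would then consider the composition $\Psi:=\iota\circ\psi\colon B\times B\to H$. Since $\psi$ takes only countably many values, each on a measurable subset of $B\times B$, the map $\Psi$ is countably-valued and measurable; composing with vectors of $H^\ast=H$ gives measurable scalar functions, so $\Psi$ is weak-$\ast$ measurable, and it is plainly $\Gamma$-equivariant. Double ergodicity with Hilbert coefficients then forces $\Psi$ to be essentially constant, equal almost everywhere to some $v\in H$. As the values actually attained by $\Psi$ lie among the pairwise distinct vectors $\{\delta_c:c\in C\}$, necessarily $v=\delta_{c_0}$ for a single $c_0\in C$, and therefore $\psi=c_0$ almost everywhere. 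For $\B=B$, I would simply apply the case just treated to $\psi\circ\mathrm{pr}_1\colon B\times B\to C$ (with $\mathrm{pr}_1$ the projection onto the first factor), which is $\Gamma$-equivariant for the diagonal action; its essential constancy is equivalent, by Fubini, to that of $\psi$.

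I do not anticipate a genuine obstacle here: the content is entirely a matter of packaging. The only two points that deserve a moment's attention are checking that $\ell^2(C)$ legitimately qualifies as ``the dual of a separable Hilbert space'' in Definition~\ref{defi:doublyrgodic} --- which it does, being a separable Hilbert space and hence reflexive, so that weak-$\ast$ and weak measurability coincide --- and the routine reduction from $B\times B$ to $B$ through the first-coordinate projection.
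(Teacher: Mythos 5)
Your proof is correct, and it takes a genuinely different, more economical route than the paper's. You linearize the countable target via the orthonormal embedding $c\mapsto\delta_c$ of $C$ into $\ell^2(C)$ and apply the hypothesis of double ergodicity with Hilbert coefficients exactly once, to $\iota\circ\psi$ on $B\times B$; since the essential value is attained on a set of full (hence nonempty) measure and the embedding is injective and equivariant, the constant must be a single $\delta_{c_0}$, so $\psi=c_0$ almost everywhere, and your reduction of the case $\mathcal{B}=B$ through the first-coordinate projection is the same as the paper's. The paper instead argues on the set $C$ itself: ergodicity of the diagonal action forces the push-forward of $\vartheta\times\vartheta$ to be supported on a single $\Gamma$-orbit, and the two cases are treated separately --- a finite orbit is ruled down to a point using the auxiliary Lemma~\ref{lemma:finite-index-ergodicity} on ergodicity of finite-index subgroups, while an infinite orbit is excluded by coupling with the Bernoulli action on $2^C$ (whose joint ergodicity with $B\times B$ again rests on double ergodicity with coefficients) and a Fubini argument yielding the contradiction $\lambda\bigl(\{\1_S:\1_S(c_0)=0\}\bigr)=\lambda\bigl(\{\1_S:\1_S(c_0)=1\}\bigr)=1/2$. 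What your approach buys is brevity and transparency: one application of the coefficient hypothesis replaces the orbit analysis, the finite-index lemma, and the Bernoulli-shift computation. What the paper's approach buys is a finer accounting of which ingredient is used where (plain ergodicity to land in an orbit, finite-index ergodicity for finite orbits, weak mixing plus coefficients only for infinite orbits); but since the lemma's hypotheses explicitly include double ergodicity with Hilbert coefficients, your direct argument is fully legitimate, with the minor checks you already note (that $\ell^2(C)$ is a separable self-dual coefficient module with a unitary $\Gamma$-action, and that a countably-valued measurable map is weak-$*$ measurable) being indeed routine.
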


\begin{proof} We prove the assertion for $\B:=B\times B$.  
The assertion for $\B:=B$ follows then from the first one 
applied to the precomposition with the projection $\pi_1:B\times B\to B$ on the first component.

As the action of $\G$ on $B \times B$ is ergodic, 
so is the push-forward measure $\psi_*( \b \times \b)$ 
and hence the image of $\psi$ is supported on an orbit. 
We now assume that the $\G$-action on $C$ is transitive.

If $C$ is finite then there is a finite index subgroup $\G_0$ which acts trivially on $C$. 
But as the $\G_0$ action on $B$ is still ergodic, 
by Lemma~\ref{lemma:finite-index-ergodicity}  
we conclude that the action of $\G_0$ on $C$ is still transitive and hence $C$ is a single point.

Next, assume that $C$ is infinite. This means that the corresponding
generalized Bernoulli action of $\G$ on $2^C$ is ergodic (indeed, it
is weakly mixing) and measure preserving with respect to the standard
Bernoulli measure $\lambda$ on $2^C$ generated by taking 
0 and 1 with equal mass. By the double ergodicity with coefficients
of $B$,  \cite{Burger_Monod_GAFA} (see also \cite[Lemma 2.2]{Bader_Furman_Shaker})
we conclude that the
diagonal $\G$-action on $B\times B \times 2^C$ is ergodic. 
Let $(x,y)\in B\times B$ and $S \subset C$. It is clear that the following
evaluation function is essentially constant as it is invariant under
the diagonal $\G$-action

$$(x,y, S) \mapsto \1_S(\psi(x,y)) \in \{0,1\}\,.$$

By Fubini's Theorem, there is a point $(x_0, y_0) \in B\times B$ so
that for $\lambda$-almost every $\1_S \in 2^C$ the value of $\1_S(\psi(x_0,y_0))$ is identically 0, or 1. 
This gives a contradiction. Indeed, for
any $c\in C$ we know that 
$$
\lambda\(\{\1_S \in 2^C : \1_S(c)) = 0\)\}
= \lambda\(\{\1_S \in 2^C : \1_S(c) = 1\}\)= 1/2\,,
$$ 
in particular for $c_0 := \psi(x_0, y_0)$.
\end{proof}

We apply the previous lemma to the countable set $2_f^{\frakH(X)}$ consisting of 
finite subsets of $\frakH(X)$.

\begin{cor}\label{count->not-ess} Let $\Pn$ be equal to either $\Pn(\~X)$ or 
$\Pn(\~X)\times\Pn(\~X)$.  
If there exists a $\Gamma$-equivariant 
measurable map $\Pn\to 2_f^{\frakH(X)}$, then the $\Gamma$-action on $X$ is not essential.
\end{cor}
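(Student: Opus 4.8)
The plan is to pull the hypothetical map back to a strong $\Gamma$-boundary and then invoke Lemma~\ref{BtimesBtoC}. Fix a strong $\Gamma$-boundary $(B,\vartheta)$. Amenability of the $\Gamma$-action on $B$, together with compactness of $\~X$, produces a $\Gamma$-equivariant measurable map $\psi\colon B\to\Pn(\~X)$ (as recalled at the beginning of \S\ref{sec:bdrymap}, via \cite[Proposition~4.3.9]{Zimmer}). Composing the given map $\Pn\to 2_f^{\frakH(X)}$ with $\psi$ when $\Pn=\Pn(\~X)$, and with $\psi\times\psi\colon B\times B\to\Pn(\~X)\times\Pn(\~X)$ when $\Pn=\Pn(\~X)\times\Pn(\~X)$, one obtains a $\Gamma$-equivariant measurable map $\Psi\colon\B\to 2_f^{\frakH(X)}$, where $\B=B$ in the first case and $\B=B\times B$ in the second.

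Next I would apply Lemma~\ref{BtimesBtoC}: since $\frakH(X)$ is countable, $2_f^{\frakH(X)}$ is a countable $\Gamma$-set, so $\Psi$ is essentially constant, with value some $F\in 2_f^{\frakH(X)}$. Equivariance forces $\gamma F=F$ for all $\gamma\in\Gamma$, so $F$ is a finite, $\Gamma$-invariant family of halfspaces; the substance of the assertion lies in the case $F\neq\varnothing$.

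It then remains to see that such an $F$ is incompatible with essentiality. Fix any $h\in F$. Because $F$ is $\Gamma$-invariant and finite, the orbit $\Gamma h$ is contained in $F$, hence finite. But if some $\gamma\in\Gamma$ were to skewer $\hat h$, say $\gamma h\subsetneq h$ (the case $h\subsetneq\gamma h$ reduces to this after replacing $\gamma$ by $\gamma^{-1}$), then the halfspaces $\gamma^{k}h$, $k\geq0$, would be pairwise distinct, contradicting finiteness of $\Gamma h$. Hence no element of $\Gamma$ skewers $\hat h$, and by \cite[Proposition~3.2]{Caprace_Sageev} --- which characterizes essential actions by the property that every hyperplane is skewered by some group element --- the $\Gamma$-action on $X$ is not essential.

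I do not expect a real obstacle here: the measure-theoretic heart of the matter is entirely packaged in Lemma~\ref{BtimesBtoC}, which records the double ergodicity with coefficients of a strong boundary, and in the amenability of $B$. The only step that requires a word of care is the final one --- translating a finite $\Gamma$-invariant family of halfspaces into a hyperplane that cannot be skewered --- which the Caprace--Sageev criterion then turns into the stated conclusion.
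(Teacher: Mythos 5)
Your proposal is correct and follows essentially the same route as the paper: apply Lemma~\ref{BtimesBtoC} (in the paper the measure on $\Pn$ is precisely the push-forward of $\vartheta$ from the strong boundary, so your precomposition with $\psi$, resp.\ $\psi\times\psi$, is exactly the intended reading) to get an essentially constant value which is a finite $\Gamma$-invariant (and, in the situations where the corollary is invoked, nonempty) family of halfspaces, and then conclude inessentiality from the Caprace--Sageev characterization. The only cosmetic difference is the last step, where the paper observes that the cube complex dual to the finite orbit $\Gamma\cdot h$ is finite and cites \cite[Proposition~3.2]{Caprace_Sageev}, whereas you exclude skewering directly from the finiteness of the orbit --- the same idea in different packaging.
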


\begin{proof} By hypothesis there is a finite $\G$-invariant subset of
  $\frakH(X)$ and in particular, there is a finite $\G$-orbit $\G\cdot h$.
  Then, the corresponding CAT(0) cube complex $X(\G\cdot h)$ is
  finite and by \cite[Proposition~3.2, (i)$\Rightarrow$
  (iii)]{Caprace_Sageev}, the action is inessential.
\end{proof}

\subsection{Heavy and Balanced Halfspaces, and Properties of Their 
Associated Complexes}\label{subsec:heavy-balanced}
Let $\Pn(\~X)$ denote the space of probability measures on $\~X$. If $\mu\in\Pn(\~X)$ define

\begin{eqnarray*}\label{eq:h_mu}
H_\mu:=\{h\in\frakH(X):\mu(h)=\mu(\*h)\}\\
H_\mu^+:=\{h\in\frakH(X):\mu(h)>1/2\}\\
H_\mu^-:=\{h\in\frakH(X):\mu(h)<1/2\}\\
H_\mu^\pm:=\{h\in\frakH(X):\mu(h)\neq1/2\}_.
\end{eqnarray*}

We refer to $H_\mu$ as to the {\em balanced} halfspaces and to $H_\mu^+$ to the 
{\em heavy} halfspaces.
The terms {\em unbalanced} and {\em light} halfspaces are also self-explanatory.

We record a few easy consequences of the definition. 

\begin{lemma}\label{Hmufacts} Let $\mu,\nu\in\Pn(\~X)$ be any two measures.
\begin{enumerate}
\item\label{item:involution} The family $H_\mu$ is closed under the involution 
$h\mapsto\*h$ and the involution is a bijection 
between $H_\mu^+$ and $H_\mu^-$.
\item\label{item:partition} There is the following partition of halfspaces: 
$\frakH(X) = H_\mu \sqcup H_\mu^\pm$, where $H_\mu^\pm=H_\mu^+\sqcup H_\mu^-$.
\item\label{item:hnutightlynested} If $h,k$ belong to $H_\mu$ (resp. $H_\mu^+$ 
or $H_\mu^-$),
then either $h\pitchfork k$ or all halfspaces between $h,k$ are in $H_\mu$
(resp. $H_\mu^+$ or $H_\mu^-$).
\item\label{item:nofacingtriplesinhmu} There are no facing triple of halfspaces in $H_\mu$.  
If $X$ is not Euclidean if follows that $H_\mu^+\neq\varnothing$.
\item\label{item:hmuspread} If $X$ is not Euclidean, $H_\mu$ and $H_\nu$ are not empty and $H_\mu\cap H_\nu=\varnothing$, 
then $H_\mu\cap H_\nu^\epsilon\neq\varnothing$ for $\epsilon\in\{+,-\}$.
\item\label{item:measurezero} If $h,k \in H_\mu$ are two parallel halfspaces with $h\subset k$ 
then $\mu(\*h\cap k) =0$.
\item\label{item:equivariance}  The assignments $\mu \mapsto H_\mu$ and 
$\mu \mapsto H_\mu^\epsilon$, 
for $\epsilon\in\{+,-\}$, are $\Aut(X)$-equivariant 
for the natural actions on $\Pn(\~X)$ and $2^{\frakH(X)}$. 
\end{enumerate}
 \end{lemma}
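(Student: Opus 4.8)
The plan is to reduce every item to elementary facts about the probability measure $\mu$ and the clopen subsets of $\~X$ determined by halfspaces. Recall from \S\ref{subsec:2.1} that, under $\~X\subset 2^{\frakH(X)}$, a halfspace $h$ is identified with the clopen set $\{v\in\~X:\,h\in v\}$; an ultrafilter contains exactly one of $h$ and $\*h$, so $\~X=h\sqcup\*h$ and hence $\mu(h)+\mu(\*h)=1$; and consistency of ultrafilters gives $\mu(h)\le\mu(k)$ whenever $h\subseteq k$. I will also use that $\Aut(X)$ acts on $\~X$ by homeomorphisms with $g\*h=\*(gh)$ and on $\Pn(\~X)$ by push-forward, so that $(g_\ast\mu)(h)=\mu(g^{-1}h)$; all the sets in sight are genuinely measurable since halfspaces are clopen.

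With this dictionary, items (1), (2), (7) are immediate: for (1), $\*{\*h}=h$ gives $H_\mu=\*H_\mu$ and $\mu(h)+\mu(\*h)=1$ gives $\mu(h)>1/2\iff\mu(\*h)<1/2$, so $h\mapsto\*h$ is an involution exchanging $H_\mu^{+}$ and $H_\mu^{-}$; for (2), exactly one of $\mu(h)<1/2$, $\mu(h)=1/2$, $\mu(h)>1/2$ holds; for (7), $h\in H_{g_\ast\mu}\iff\mu(g^{-1}h)=\mu(\*(g^{-1}h))\iff g^{-1}h\in H_\mu$, and the same computation with ``$=$'' replaced by ``$>$'' gives $H_{g_\ast\mu}^{+}=gH_\mu^{+}$, hence also $H_{g_\ast\mu}^{-}=gH_\mu^{-}$. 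Items (3) and (6) are monotonicity and additivity: in (3), if $h\pitchfork k$ there is nothing to prove, and otherwise $h,k$ are parallel, so --- after replacing $k$ by $\*k\in H_\mu$ via (1) in the balanced case if needed --- we may take $h\subseteq k$, and then any $\ell$ with $h\subseteq\ell\subseteq k$ satisfies $\mu(h)\le\mu(\ell)\le\mu(k)$, which forces $\mu(\ell)$ into the same class ($=1/2$, $>1/2$, or $<1/2$) as $\mu(h)$ and $\mu(k)$; in (6), $h\subseteq k$ parallel gives the disjoint decomposition $k=h\sqcup(\*h\cap k)$, whence $1/2=\mu(k)=\mu(h)+\mu(\*h\cap k)=1/2+\mu(\*h\cap k)$, so $\mu(\*h\cap k)=0$.

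The substance is in (4) and (5). If $h_1,h_2,h_3\in H_\mu$ were a facing triple, then $h_i\subseteq\*h_j$ for $i\ne j$, so the three corresponding clopen subsets of $\~X$ are pairwise disjoint and $1\ge\mu(h_1)+\mu(h_2)+\mu(h_3)=3/2$, a contradiction; hence there is no facing triple in $H_\mu$. If moreover $H_\mu^{+}=\varnothing$, then by (1) and (2) every halfspace is balanced, so $\frakH(X)$ contains no facing triple at all, and Corollary~\ref{cor:dichotomy} then forces $X$ to be Euclidean, contrary to hypothesis; thus $H_\mu^{+}\ne\varnothing$. For (5), pick $h\in H_\mu$ (possible since $H_\mu\ne\varnothing$); since $H_\mu\cap H_\nu=\varnothing$ we have $h\notin H_\nu$, so $h$ lies in $H_\nu^{+}$ or $H_\nu^{-}$, while $\*h\in H_\mu$ by (1) and $\nu(\*h)=1-\nu(h)$ lies strictly on the opposite side of $1/2$; hence one of $h,\*h$ lies in $H_\mu\cap H_\nu^{+}$ and the other in $H_\mu\cap H_\nu^{-}$.

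I expect the only real care to be needed in the second half of (4): one must check that the hypotheses of Corollary~\ref{cor:dichotomy} --- irreducibility, essentiality, and absence of fixed points in $\partial_\sphericalangle X$ --- are legitimately available where Lemma~\ref{Hmufacts} is invoked (they are in \S\ref{sec:bdrymap}, after restricting to the essential core and its irreducible factors), and one must keep the identification of halfspaces with clopen subsets of $\~X$ consistent, so that additivity and monotonicity of $\mu$ are applied to honest Borel sets. Every other assertion is a one-line consequence of $\mu(h)+\mu(\*h)=1$, monotonicity of $\mu$, and the involution $h\mapsto\*h$.
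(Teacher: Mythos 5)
Your proposal is correct and follows essentially the same route as the paper's proof: items (1)--(3), (6) and (7) by the elementary identities $\mu(h)+\mu(h^*)=1$, monotonicity and additivity of $\mu$ on the clopen sets determined by halfspaces (the paper simply declares (1)--(3) obvious and (6)--(7) immediate), item (4) by the same facing-triple measure count combined with the ``not Euclidean $\Rightarrow$ facing triples exist'' direction of Corollary~\ref{cor:dichotomy}, and item (5) by the same involution argument on $H_\mu$. Your closing caveat about the hypotheses of Corollary~\ref{cor:dichotomy} being available in \S\ref{sec:bdrymap} matches the paper's implicit use of that fact, so nothing further is needed.
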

 
 \begin{proof}  Assertions \eqref{item:involution}, \eqref{item:partition} and 
\eqref{item:hnutightlynested} are obvious.  
 
To see \eqref{item:nofacingtriplesinhmu}, 
assume that $h_1,h_2,h_3$ were a facing triples of halfspaces in $H_\mu$, 
so that $\*h_2\subset h_1$,
$\*h_3\subset h_1$ and $\hat h_2\|\hat h_3$.  This would imply that
 $1/2=\mu(h_1)\geq \mu(\* h_2)+\mu(\*h_3)=1$, a contradiction.  
Since $X$ is not {{Euclidean}}, and hence there are facing
 triple of halfspaces, then $H_\mu^\pm\neq\varnothing$ and also $H_\mu^+\neq\varnothing$.
 
Assertion \eqref{item:hmuspread} follows from the fact that if $H_\mu\cap H_\nu=\varnothing$, 
then  $H_\mu\subset H_\mu^\pm$.  But then, since $H_\mu$ is invariant 
under the involution $h\mapsto\*h$, 
both $H_\mu\cap H_\nu^+$ and $H_\mu\cap H_\nu^-$ must be non-empty.

Assertion \eqref{item:measurezero} is immediate since $\mu(k)=\mu(\*h)+\mu(h\cap k)$ and 
$\*h,k \in H_\mu$
and \eqref{item:equivariance} is immediate from the definitions.
  \end{proof}
 
 It follows from Lemma~\ref{lem:embedding} with $W:=H_\mu^+$ and $\frakH_W:=H_\mu$
 that there is an isometric embedding $\~X(H_\mu)\hookrightarrow\~X$ and,
 to simplify the notation, we denote by $\~X_\mu$ its image in $\~X$ 
 ($\~X_{H_\mu^+}$ in the notation of Lemma~\ref{lem:embedding}).

 We remark again that if $H_\mu=\varnothing$, 
 then $\~X_\mu$ is a single vertex in $\~X$.  Notice moreover that
 the $H_\mu^+$ are not $\Gamma$-invariant and the subcomplex
 $\~X_\mu\subset\~X$ is not $\Gamma$-invariant.

\begin{lemma}\label{HmuIsEuclidean}
The complex $\~X(H_\mu)$ is an interval. %and, hence, if it has dimension $d$ then it embeds into $\~\Re^{d}$.
\end{lemma}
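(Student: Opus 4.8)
The plan is to produce inside the CAT(0) cube complex $X(H_\mu)$ — the complex associated to the $\ast$-invariant sub-pocset $H_\mu\subseteq\frakH(X)$ (see \S\ref{subsec:isom-emb}) — two points separated by \emph{every} hyperplane, and then to conclude via Remark~\ref{rem:opposite=>Euclidean}. The underlying mechanism is that once one passes to $H_\mu$ the measure $\mu$ assigns mass $1/2$ to every halfspace, and this rigidity forces a generic ultrafilter on $H_\mu$ to be closed simultaneously upward and downward.

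Concretely, identifying $h\in\frakH(X)$ with the clopen set $\{\alpha\in\~X:h\in\alpha\}$ one has $\mu(h)+\mu(\*h)=1$, so that $h\in H_\mu$ amounts to $\mu(h)=1/2$. Hence for any nested pair $h\subseteq k$ with $h,k\in H_\mu$ the consistency condition gives $\{h\in\alpha\}\subseteq\{k\in\alpha\}$ and therefore $\mu(k\smallsetminus h)=\mu(k)-\mu(h)=0$. Since $H_\mu$ is countable, the set of $\alpha\in\~X$ lying in some $k\smallsetminus h$ with $h\subseteq k$ in $H_\mu$ is $\mu$-null; so for a $\mu$-conull set of $\alpha$ the ultrafilter $\beta:=\alpha\cap H_\mu$ on $H_\mu$ is \emph{downward closed}, in the sense that $k\in\beta$, $h\subseteq k$, $h\in H_\mu$ force $h\in\beta$.

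Now fix one such $\alpha$, which exists because $\mu$ is a probability measure, and set $u:=\alpha\cap H_\mu$. Since $H_\mu$ is invariant under the involution (Lemma~\ref{Hmufacts}\eqref{item:involution}), $u$ inherits the choice and consistency conditions from $\alpha$ and is thus an ultrafilter on $H_\mu$, and by the previous step it is downward closed. Then $\*u:=H_\mu\smallsetminus u$ is again an ultrafilter on $H_\mu$: the choice condition holds because $u$ is one, and for consistency, if $h\in\*u$ and $h\subseteq k$ then $\*h\in u$ and $\*k\subseteq\*h$, so $\*k\in u$ by downward closedness, i.e. $k\in\*u$. Consequently $u$ and $\*u$ are points of $\~X(H_\mu)$ with $[u,\*u]\cup[\*u,u]=H_\mu$, and the computation in Remark~\ref{rem:opposite=>Euclidean} — which is valid for an arbitrary ultrafilter, not just for a vertex — identifies $X(H_\mu)$ with the interval $\I_{u,\*u}$.

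The point I would be most careful with is the bookkeeping of the restriction map: that $\alpha\cap H_\mu$ is genuinely an ultrafilter on the sub-pocset $H_\mu$ (using that $H_\mu$ is $\ast$-invariant and inherits the finite interval and finite width conditions, so that $X(H_\mu)$ is an honest finite-dimensional CAT(0) cube complex), and that it is the point of $\~X(H_\mu)$ obtained from $\alpha$ through the quotient map $\pi_{H_\mu}$ of \S\ref{subsec:isom-emb}. Note that the argument uses neither a group action nor irreducibility, and it does not actually invoke the absence of facing triples in $H_\mu$ from Lemma~\ref{Hmufacts}\eqref{item:nofacingtriplesinhmu} — consistently, of course, with the fact that an interval contains no facing triple of halfspaces.
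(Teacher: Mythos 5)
Your proof is correct and takes essentially the same route as the paper: both arguments rest on the fact that $\mu(\*h\cap k)=0$ for nested $h\subset k$ in $H_\mu$ (Lemma~\ref{Hmufacts}\eqref{item:measurezero}) and then produce a generic point whose restriction to $H_\mu$ has an opposite that is again an ultrafilter, concluding that $H_\mu=[u,\*u]\cup[\*u,u]$ as in Remark~\ref{rem:opposite=>Euclidean}. The only cosmetic difference is how genericity is achieved: the paper picks a point in the support of the push-forward measure $p_\ast\mu$ on $\~X(H_\mu)$, so that it avoids every null open set $\*h\cap k$, whereas you pick a point of $\~X$ outside the countable union of these null sets, using the (standing) countability of $\frakH(X)$.
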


%The proof relies on the following:

%\begin{theorem}[Dilworth] Let $P$ be a partially ordered set such that every subset of size $k+1$ 
%contains two comparable elements. 
%If $P$ contains a subset of size $k$ such that no two of its elements are comparable, 
%then $P$ is the disjoint union of $k$-many chains. 
 %\end{theorem}

%\begin{theorem}[Caprace-Sageev] A finite dimensional CAT(0) cube complex admits a product decomposition into finitely many irreducible factors. The product of the  automorphism groups of these irreducible factors is a finite index subgroup in the whole automorphism group. 
%\end{theorem}

\begin{proof}%[Proof of Lemma~\ref{HmuIsEuclidean}]
Let us consider the projection 
  $p:\~X\to\~X(H_\mu)$ and let $\a_0 \in  \supp(p_\ast\mu)$.  
%Observe that if $h,k \in \a_0$ then either $h\subset  k$, $k\subset h$, or $h\pitchfork k$. 
%Indeed, $h\subset \*k$ doesn't
 %happen since $\a_0$ is an ultrafilter, and otherwise, $h \subset  \*k$ and 
%$h\cap k$ is an open neighborhood of $\a_0$ which means
  %that $\mu(h\cap k)>0$, contradicting
  %Lemma~\ref{Hmufacts}\eqref{item:measurezero}.
%
  Let $\a_0^*$ be the ``opposite'' of $\a_0$ (in $H_\mu$). Observe that $\a_0^*$ is an
  ultrafilter on $H_\mu$: indeed, the only nontrivial condition we
  must check is that if $h \in \a_0^*$ and $h\subset k$ then $k \in  \a_0^*$. 
If instead $k\notin \a_0^*$, then $k \in \a_0$ which means
  that $\*h \cap k$ is an open neighborhood of $\a_0$, contradicting
  that $\a_0$ is in the support of $\mu$ with
  Lemma~\ref{Hmufacts}\eqref{item:measurezero}.  
  By construction, $H_\mu = [\a_0, \a_0^*]\cup [\a_0^*, \a_0]$, where 
  the intervals are taken in $\~X(H_\mu)$.
\end{proof}

\begin{definition}
Let $\frakH'$ be a subset of $\frakH(X)$. An element $h \in \frakH'$ is called:
 \begin{itemize}
\item  {\em minimal in $\frakH'$} if for every $k\in \frakH'$ either $k \pitchfork h$, $h \subset k$,  
or $h \subset \*k$;
\item  {\em maximal in $\frakH'$} if for every $k\in \frakH'$ either $k \pitchfork h$, $k\subset h$, 
or $\*k \subset h$, 
that is to say, $h$ is maximal if $\*h$ is minimal;
\item {\em terminal in $\frakH'$} if it is either maximal or minimal.
\end{itemize}
\end{definition}
\begin{remark}\label{FinitelyManyTerminalInH-mu}
  The number of terminal elements is bounded above by $2d$ not just
  for $H_\mu$ but for any union of pairwise
  incomparable chains in $H_\mu$.
\end{remark}
\subsection{Proof of Theorem~\ref{thm:bdrymap}}\label{subsec:proof-bdrymap}
\begin{proof}[Proof of Theorem~\ref{thm:bdrymap}]
  Since the $\Gamma$-action on $(B,\vartheta)$ is amenable, there
  exists a $\Gamma$-equivariant measurable map $\psi:B\to\Pn(\~X)$
  into the probability measures on $\~X$.  We consider $\Pn(\~X)$
  endowed with the push-forward of the quasi-invariant, doubly-ergodic
  measure $\vartheta$ on $B$, so that $\Gamma$ acts ergodically on
  $\Pn(\~X)\times\Pn(\~X)$.  We will show that under the hypotheses of the
  theorem, we may associate to every $\mu$ in the image of $\psi$
  a point in $\partial X$ and the composition will be the required
  $\Gamma$-equivariant measurable boundary map $\varphi:B\to\partial X$.

  The map $C_1:\Pn(\~X)\to\N\cup\{\8\}$ defined by 
  $\mu\mapsto|H_\mu|$ is measurable (Corollary~\ref{MeasN}\eqref{N1}) and
  $\Gamma$-equivariant, hence by ergodicity it is essentially
  constant.

\underline{\tt I. $H_\mu=\varnothing$ for almost all $\mu$}

If the essential value of $C_1$ is $0$, then for almost every
$\mu\in\Pn(\~X)$, $H_\mu=\varnothing$.  %According to
%Lemma~\ref{Xmubar}, $H_\mu^+$ is an ultrafilter such that 
%$\~X_\mu:=\Cap{h \in H_\mu^+}{} h$ consists of a single point in $\~X$. 
This means that, up to measure 0, the image of $\psi$ lies in the set $
\mathcal{E} := \{\mu \in \Pn(\~ X) : H_\mu = \varnothing\}$. 
Thus we have a well defined composition
$\f:B\to\mathcal{E}\to\~X$ defined by
$x\mapsto\psi(x)\mapsto\~X_{\psi(x)}$,
whose image is the single point $\~X_{\psi(x)}\in\~X$.
(Lemma~\ref{lem:embedding}). 
Measurability is guaranteed by Lemma~\ref{muassign},
and Lemma~\ref{2hto2X}.  The equivariance under $\Gamma$ follows from
Lemma~\ref{Hmufacts}\eqref{item:equivariance}.
Proposition~\ref{prop:values in non-terminating} will show that, in
fact, $\varphi$ takes values into the non-terminating ultrafilters of
$X$.

The rest of the proof will consist in showing that all other cases cannot occur.

\underline{\tt II. $0<|H_\mu|<\8$ for almost all $\mu$}

If the essential value of $C_1$ were to be finite, then
Corollary~\ref{count->not-ess} with $\Pn=\Pn(\~X)$ would imply that
the action is not essential.

\underline{\tt III. $|H_\mu|=\8$ for almost all $\mu$}

To deal with this case we consider the $\Gamma$-equivariant and
measurable function $C_2:\Pn(\~X)\times \Pn(\~X)\to \N\cup\{\infty\}$, defined by
$(\mu,\nu)\mapsto|H_\mu\cap H_\nu|$  (Corollary~\ref{MeasN}(\eqref{N2}).
Again by ergodicity of the $\Gamma$-action on $\Pn(\~X)\times\Pn(\~X)$, 
the function $C_2$ is essentially constant.

\underline{\tt III.a $0<|H_\mu\cap H_\nu|<\8$ for almost all $\mu,\nu$}

If the essential value of $C_2$ were finite and non-zero,  then
Corollary~\ref{count->not-ess} with $\Pn=\Pn(\~X)\times\Pn(\~X)$ would
again imply that the action is not essential.

\underline{\tt III.b $|H_\mu\cap H_\nu|=0$ for almost all $\mu,\nu$}

Now suppose that the essential value of $C_2$ is $0$, so that for
almost every $\mu,\nu\in\Pn(\~X)$, $H_\mu\cap H_\nu=\varnothing$.  Let
us consider the measurable (Corollary~\ref{MeasN}\eqref{T}) $\Gamma$-equivariant function
$T:\Pn(\~X)\times \Pn(\~X)\to\N\cup\{\infty\}$, defined by
$$
T(\mu,\nu):=\big|\tau((H_\mu \cap H_\nu^+ )\cup (H_\nu \cap H_\mu^+))\big|\,,
$$  
where \bq\label{eq:terminal} 
\tau:2^{\frakH(X)}\to2^{\frakH(X)} 
\eq 
is the map that assigns to a subset of halfspaces its terminal elements.
By double ergodicity $T$ is essentially constant. Using the fact that
both $H_\mu$ and $H_\nu$ are Euclidean, any subset of them must have
finitely many terminal elements and therefore this essential value
must be finite (see Remark~\ref{FinitelyManyTerminalInH-mu}).  Once
more, essentiality of the action, along with
Corollary~\ref{count->not-ess} assures us that the essential value is
0.

This leaves us with the case in which the
essential value is zero, that is $H_\mu\cap H_\nu^+$ has no terminal
elements for almost every $(\mu, \nu)$.  In this case the following
proposition (whose proof we postpone to \S~\ref{subsec:more-lemmas}) allows
us to conclude that this case cannot happen.

\begin{prop}\label{III.C}  Suppose that for almost every 
$\mu,\nu\in\Pn(\~X)$, $|H_\mu|=|H_\nu|=\8$, $H_\mu\cap H_\nu=\varnothing$ 
and $H_\mu\cap H_\nu^+$
  has no minimal elements.  Then $X$ contains cubes of arbitrarily
  large dimension.
\end{prop}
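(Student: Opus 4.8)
The plan is to exhibit, for every $n\geq1$, a family of $n$ pairwise transverse hyperplanes of $X$; since a family of $n$ pairwise transverse hyperplanes has a common intersection which is a cube of dimension $n$, this produces cubes of arbitrarily large dimension.

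I would begin by fixing a sufficiently generic measure. Using the $\Gamma$-equivariance and measurability of the assignments $\mu\mapsto H_\mu$ and $\mu\mapsto H_\mu^+$ (Lemma~\ref{Hmufacts}\eqref{item:equivariance} and the measurability statements invoked in the proof of Theorem~\ref{thm:bdrymap}), together with the countability of $\Gamma$, one finds a conull set of $\mu$ in the image of $\psi$ such that, for $\psi_*\vartheta$-almost every $\nu$ and every $\gamma\in\Gamma$, the pair $(\gamma\mu,\nu)$ satisfies the hypotheses of the proposition. Fix such a $\mu$, and then a $\nu$ that is good against every $\gamma\mu$. By Lemma~\ref{HmuIsEuclidean} and Proposition~\ref{lem:euclidean equivalences}, each $\~X(H_{\gamma\mu})$ is an interval, hence embeds isometrically into $\~{\Z^D}$ (Lemma~\ref{embedintervals}); consequently $H_{\gamma\mu}$ splits as a union of at most $D$ chains of pairwise nested halfspaces, with halfspaces from distinct chains transverse to one another. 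Since $H_{\gamma\mu}\cap H_\nu^+$ is infinite and, by Lemma~\ref{Hmufacts}\eqref{item:hnutightlynested}, meets each such chain in a subset convex for the nesting order, the absence of a minimal element forces $H_{\gamma\mu}\cap H_\nu^+$ to contain an infinite strictly descending chain $h^\gamma_1\supsetneq h^\gamma_2\supsetneq\cdots$ of halfspaces of $X$, each balanced for $\gamma\mu$ and heavy for $\nu$.

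The heart of the proof is a transversality lemma for these chains. Let $\gamma,\gamma'\in\Gamma$ and consider the chains $(h^\gamma_i)_i$ and $(h^{\gamma'}_j)_j$, all of whose members lie in $H_\nu^+$. No member of the first chain can face a member of the second: two heavy halfspaces have $\nu$-measures summing to more than $1$, so their complements cannot be disjoint (Lemma~\ref{Hmufacts}\eqref{item:involution} and the definition of $H_\nu^+$). Hence any non-transverse pair $h^\gamma_i,h^{\gamma'}_j$ is parallel with one of the two halfspaces contained in the other, or else in the ``overlapping'' configuration $(h^\gamma_i)^*\subset h^{\gamma'}_j$. If no pair were transverse, then following the two descending chains downward and using the measure-zero estimate of Lemma~\ref{Hmufacts}\eqref{item:measurezero} would produce infinitely many halfspaces strictly between two fixed nested halfspaces of $X$, contradicting the finite interval condition on $\frakH(X)$. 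Therefore some $h^\gamma_i$ is transverse to some $h^{\gamma'}_j$; moreover, by discarding initial segments, such a transverse witness can be taken arbitrarily deep in both chains.

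Finally I would argue by induction on $n$. Having produced pairwise transverse halfspaces $g_1,\dots,g_k$ with $g_a\in H_{\gamma_a\mu}\cap H_\nu^+$, I would use the non-elementarity and essentiality of the $\Gamma$-action --- via the Flipping and Double Skewering Lemmas of \cite[\S1.2]{Caprace_Sageev} --- to select a further $\gamma_{k+1}\in\Gamma$ whose interval $H_{\gamma_{k+1}\mu}$ is ``independent'' of the previously used ones, then apply the transversality lemma against each of $g_1,\dots,g_k$ in turn, descending far enough in the chain $h^{\gamma_{k+1}}_\bullet$ at each stage so that a single halfspace $g_{k+1}$ of that chain is transverse to all of $g_1,\dots,g_k$ simultaneously. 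The main obstacle is precisely this simultaneity: one application of the transversality lemma controls only a single pair, and making the infinite-descent slack in the chain $h^{\gamma_{k+1}}_\bullet$ absorb all $k$ transversality constraints at once --- in particular ruling out the possibility that this chain remains parallel to some $g_a$ no matter how far down one goes --- is where the genericity of $\mu$ and the finite interval condition have to be used carefully.
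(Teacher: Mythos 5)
Your global strategy is the same as the paper's: extract, for a family of measures, infinite descending chains of halfspaces that are balanced for one measure and heavy for a fixed reference measure (your $h^\gamma_i\in H_{\gamma\mu}\cap H_\nu^+$ mirror the paper's $h^i_n\in H_{\mu_0}^+\cap H_{\mu_i}$), rule out facing pairs by heaviness, use the finite interval condition to force transversality between chains, and conclude via arbitrarily large families of pairwise transverse hyperplanes. The gap is in the simultaneity step, and your proposed route to it would fail. Your pairwise statement is symmetric and merely existential (``some transverse pair, arbitrarily deep in both chains''), and your induction freezes halfspaces $g_1,\dots,g_k$ and hopes that descending far enough in a new chain produces one halfspace transverse to all of them. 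Against a single \emph{fixed} heavy halfspace $g_a$, however, a descending chain of heavy halfspaces may perfectly well be contained in $g_a$ from some point on and never become transverse to it; the finite interval condition only forbids the chain from staying \emph{above} a fixed halfspace (i.e.\ $g_a\subset h^{\gamma_{k+1}}_i$ for all $i$), not from descending inside it forever. So the possibility you flag --- that the new chain ``remains parallel to some $g_a$ no matter how far down one goes'' --- is not a technicality to be absorbed by genericity; nothing in your setup excludes it, and no choice of $\gamma_{k+1}$ via flipping or double skewering is made, or needed, in the correct argument.

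The paper avoids this by never fixing the earlier halfspaces. It proves an \emph{ordered}, eventual form of transversality (property $(*)$ in Lemma~\ref{lem:single dychotomy}): for each ordered pair of chains and every sufficiently deep index $n$ in the first chain, \emph{all} sufficiently deep indices $m$ in the second give $\hat h^i_n\pitchfork\hat h^j_m$. The proof is by the four-case decomposition of $\N\times\N$ (transverse, nested either way, or overlapping --- facing being excluded by heaviness) together with the finite interval condition, essentially the dichotomy you sketch (the measure-zero estimate in Lemma~\ref{Hmufacts} plays no role). Since $(*)$ is directional, the family of chains carries a tournament structure, and the paper linearizes it with the graph-theoretic Lemma~\ref{lem:sutg}: among at least $5^D$ pairwise generic measures one finds $D$ chains on which the orientations are transitive, and then indices $n_1,\dots,n_D$ can be chosen greedily, each deep enough relative to the previously chosen ones, so that the selected halfspaces are pairwise transverse. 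This ordered-plus-tournament mechanism is precisely the missing ingredient in your induction. Two smaller inaccuracies: the decomposition of $H_{\gamma\mu}$ into at most $D$ chains ``with halfspaces from distinct chains transverse'' is not correct as stated (within one coordinate direction of the interval there are two opposite chains whose members are parallel but not nested), and consequently your derivation of an infinite strictly descending chain from the absence of minimal elements needs more care than you give it, since non-minimality may be witnessed by an overlap $\*k\subsetneq h$ rather than by an inclusion $k\subsetneq h$.
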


\underline{\tt III.c $|H_\mu\cap H_\nu|=\8$ for almost all $\mu,\nu$}

Finally, let us suppose that the essential value of $C_2$ is $\8$,
namely $|H_\mu\cap H_\nu|=\8$ for almost every
$(\mu,\nu)\in\Pn(\~X)\times\Pn(\~X)$.

If $H_\mu=H_\nu$ for almost every $\mu,\nu\in\Pn(\~X)$, then applying
Fubini, there is a $\mu_0\in\Pn(\~X)$ such that for every $\nu$ in a
co-null $\G$-invariant subset we have that $H_{\mu_0} =H_{\nu}$.
Hence, $H_{\g_\ast\nu}= \g H_{\nu}= H_{\nu}$.  Since the action is
essential without fixed points on the visual boundary, we may flip any
$h\in H_\nu^+$. This means that $H_\nu^- \cap H_{\g_\ast\nu}^+\neq
\varnothing$ and so $H_\nu^+$ is not $\G$-invariant.  As a result the
corresponding embedded subcomplexes $\overline{X}_{\g_\ast\nu}$ are
not invariant.  We will see in Proposition~\ref{prop:product} that
this implies that $X$ is a product, which is a contradiction.

We are therefore left in the case in which $H_\mu \cap H_\nu$ is
infinite but $H_\mu \neq H_\nu$, for almost all $\mu,\nu\in\Pn(\~X)$.
 
We now consider whether or not $H_\mu$ has strongly separated halfspaces.
Observe that the set 
\bqn
\mathcal{S} = \{(h_1, h_2)\in\frakH(X)\times\frakH(X) : 
h_1, h_2 \text{ are strongly separated}\}\eqn
is $\G$-invariant. Therefore, 
the map $\mu \to |(H_\mu\times H_\mu)\cap \mathcal S|$ is
measurable (Corollary~\ref{MeasN}\eqref{Nnu}) and $\G$-invariant, 
and hence essentially constant.
 
If $H_\mu$ contains pairs of strongly separated halfspaces then $H_\mu^+$
satisfies the Descending Chain Condition (Lemma~\ref{lem:SSimpliesDCC}).
This implies that the action is again inessential by extracting the
finitely many terminal elements of the set 
$(H_\mu^+\cap(H_\nu\smallsetminus H_\mu)) \cup (H_\nu^+\cap(H_\mu\smallsetminus H_\nu))$
(Corollary~\ref{MeasN}\eqref{N3}), 
and we proceed as before to conclude that the action is inessential. 
 
If on the other hand $H_\mu$ does not contain pairs of strongly separated halfspaces,
then by  Corollary~\ref{no terminal elements intersection} the action is inessential. 
\end{proof}

\subsection{Further Properties and Proofs}\label{subsec:more-lemmas}

\begin{prop}\label{prop:values in non-terminating} Let $X$ be a 
finite dimensional CAT(0) cube complex, $\Gamma\to\Aut(X)$ an essential action on $X$,
$(B,\nu)$ a doubly ergodic $\Gamma$-space with quasi-invariant measure $\nu$
and $\varphi:B\to\~X$ a measurable $\Gamma$-equivariant map.
Then $\varphi$ takes values in the non-terminal ultrafilters of $X$. 
\end{prop}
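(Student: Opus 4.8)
The plan is to argue by contradiction: suppose the set $T\subset\~X$ of terminal ultrafilters (those containing a minimal halfspace, i.e.\ a halfspace $h$ such that no $k\in v$ is properly contained in $h$) meets the image of $\varphi$ in a non-null set. Since $\varphi$ is $\Gamma$-equivariant and $(B,\nu)$ is ergodic, the pushforward $\varphi_*\nu$ is an ergodic $\Gamma$-quasi-invariant measure on $\~X$, hence is supported on a single $\Gamma$-orbit closure; in particular, up to a null set the image of $\varphi$ lies in the $\Gamma$-invariant set $T$. So we may assume $\varphi(b)$ is a terminal ultrafilter for $\nu$-a.e.\ $b$.

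Next I would extract from each terminal ultrafilter $v=\varphi(b)$ a canonical finite piece of data. For $v\in\~X$ let $M(v)\subset\frakH(X)$ be the set of halfspaces $h\in v$ that are \emph{minimal} in $v$ in the sense of the Definition preceding Remark~\ref{FinitelyManyTerminalInH-mu}, i.e.\ for every $k\in v$ one has $h\subset k$, $h\pitchfork k$, or $h\subset\*k$ fails only if $k=h$ — concretely, $h\in v$ such that no $k\in v$ with $k\subsetneq h$ exists among those comparable to $h$. The key finiteness input is Remark~\ref{FinitelyManyTerminalInH-mu} (together with the fact that the halfspaces of $v$ comparable to a given one form tightly nested pairwise incomparable chains, as in Lemma~\ref{Hmufacts}\eqref{item:hnutightlynested}): the number of terminal elements of any such configuration is bounded by $2D$, where $D=\dim X$. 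Hence $M(v)$ is a nonempty finite subset of $\frakH(X)$ of cardinality at most $2D$, and $v$ being terminal is exactly the statement $M(v)\neq\varnothing$. The assignment $v\mapsto M(v)$ is $\Aut(X)$-equivariant and Borel.

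Composing, $b\mapsto M(\varphi(b))$ is a $\Gamma$-equivariant measurable map from $B$ to the countable set $2_f^{\frakH(X)}$ of finite subsets of $\frakH(X)$ (here I use that $\frakH(X)$ is countable). By Lemma~\ref{BtimesBtoC} applied with $\B=B$ and $C=2_f^{\frakH(X)}$, this map is essentially constant, equal to some fixed finite nonempty $F\subset\frakH(X)$; equivariance then forces $F$ to be $\Gamma$-invariant, so $\bigcup_{h\in F}\{h,\*h\}$ is a finite $\Gamma$-invariant set of hyperplanes, giving a finite $\Gamma$-orbit in $\hat\frakH(X)$. By \cite[Proposition~3.2]{Caprace_Sageev} (the implication (i)$\Rightarrow$(iii), exactly as used in the proof of Corollary~\ref{count->not-ess}), a finite $\Gamma$-orbit of hyperplanes forces the $\Gamma$-action on $X$ to be inessential, contradicting the hypothesis. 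Therefore $T$ meets the image of $\varphi$ in a null set, i.e.\ $\varphi$ takes values in the non-terminating ultrafilters $\nu$-almost everywhere.

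The main obstacle I anticipate is making the ``canonical finite data'' step clean: one must verify that the minimal-halfspace set $M(v)$ is genuinely finite for an arbitrary terminal ultrafilter (not just for the balanced complexes $\~X(H_\mu)$ where Remark~\ref{FinitelyManyTerminalInH-mu} is stated), and that $v\mapsto M(v)$ is Borel-measurable with respect to the topology on $\~X\subset 2^{\frakH(X)}$. Finiteness follows because any collection of pairwise incomparable halfspaces in $v$ has size at most $D$ (it corresponds to a cube), so the minimal elements sit in boundedly many tightly nested chains and Remark~\ref{FinitelyManyTerminalInH-mu} applies; measurability follows because for each fixed $h\in\frakH(X)$ the condition ``$h$ is minimal in $v$'' is a countable Boolean combination of the clopen conditions $k\in v$, $k\in v\Rightarrow k\not\subsetneq h$ ranging over $k\in\frakH(X)$. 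Once this bookkeeping is in place, the rest is a direct application of Lemma~\ref{BtimesBtoC} and \cite{Caprace_Sageev}.
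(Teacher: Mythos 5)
The decisive step in your argument is the claim that for a terminal ultrafilter $v$ the set $M(v)$ of minimal halfspaces of $v$ is finite (of size at most $2D$), so that $b\mapsto M(\varphi(b))$ lands in the countable set $2_f^{\frakH(X)}$ of finite subsets. This is exactly where the proof breaks: your justification conflates \emph{pairwise incomparable} with \emph{pairwise transverse}. Any two halfspaces $h,k\in v$ intersect, so if they are incomparable they are either transverse or satisfy $h^*\subset k$ and $k^*\subset h$ (their complements are facing); only in the transverse case does the dimension bound $D$ apply. Minimal halfspaces of $v$ can form arbitrarily large, indeed infinite, families of the second kind: for the principal ultrafilter at a vertex of countably infinite valence in a simplicial tree ($D=1$, and the paper explicitly allows locally countable complexes) every halfspace determined by an edge at that vertex and containing it is minimal, so $M(v)$ is infinite; already a valence-three vertex in a locally finite tree violates the bound $2D$. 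Remark~\ref{FinitelyManyTerminalInH-mu} cannot be invoked here: it concerns subsets of $H_\mu$, and $\overline{X}(H_\mu)$ is an interval (Lemma~\ref{HmuIsEuclidean}), hence embeds in $\overline{\Z^D}$; an arbitrary ultrafilter has no such property. Consequently the essential value of $|M(\varphi(b))|$ may well be $\infty$, your map to $2_f^{\frakH(X)}$ need not be defined, and the reduction via Lemma~\ref{BtimesBtoC} and Corollary~\ref{count->not-ess} collapses --- and the infinite case is precisely the hard one.

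The paper's proof is built to avoid this: it applies the finiteness-plus-essentiality argument not to $\tau(\varphi(x))$ but to the two-variable sets $\tau\big(\frakH(\varphi(x),\varphi(y))\big)$, which are finite for \emph{every} pair because $\frakH(\varphi(x),\varphi(y))$ is the halfspace set of an interval and intervals embed in $\overline{\Z^D}$ (Lemma~\ref{embedintervals} and \eqref{eq:finite}). Double ergodicity and Corollary~\ref{count->not-ess} then force $\tau\big(\frakH(\varphi(x),\varphi(y))\big)=\varnothing$ for almost every pair, and Lemma~\ref{lem:tau} transfers this back to one variable: any terminal halfspace $h$ of $\varphi(x_0)$ must contain $\varphi(y)$ for almost every $y$, which contradicts essentiality of the action. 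To rescue your one-variable scheme you would either have to prove finiteness of $M(v)$ under hypotheses not available here, or give a separate argument when $|M(\varphi(b))|\equiv\infty$; the detour through intervals of pairs of boundary points is the mechanism that handles exactly that case.
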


We start with few easy observations.  
Recall that, if $\alpha$ and $\beta$ are two ultrafilters,
\bqn
\frakH(\alpha,\beta):=[\alpha,\beta]\cup[\beta,\alpha]=[\alpha,\beta]\cup[\alpha,\beta]^\ast\,.
\eqn
Then it is easy to check that 
\bq\label{eq:finite}
\tau(\frakH(\alpha,\beta))=\tau([\alpha,\beta])\cup\tau([\alpha,\beta]^\ast)
\eq
and hence $|\tau(\frakH(\alpha,\beta))|$ is finite.

\begin{lemma}\label{lem:tau}  Let $\alpha$ and $\beta$ be two ultrafilters and $h\in\tau(\alpha)$.
Then $\beta\notin h$ if and only if $h\in\tau(\frakH(\alpha,\beta))$.
\end{lemma}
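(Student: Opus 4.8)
The plan is to reduce the statement to unwinding the definitions from Section~\ref{sec:prelim}: both sides of the claimed equivalence turn out to be elementary restatements of the position of $h$ relative to $\alpha$ and $\beta$, together with the elementary monotonicity of the terminal-element operator $\tau$ (recall $\tau$ sends a set of halfspaces to the subset of its terminal, i.e.\ maximal-or-minimal, elements).

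First I would record the observation that $h\in\tau(\alpha)$ forces $h\in\alpha$, and that for such an $h$ one has $\beta\notin h$ if and only if $h\in\frakH(\alpha,\beta)$. Indeed, $\frakH(\alpha,\beta)=[\alpha,\beta]\sqcup[\beta,\alpha]$ with $[\alpha,\beta]=\beta\smallsetminus\alpha$ and $[\beta,\alpha]=\alpha\smallsetminus\beta$; since $h\in\alpha$, the case $h\in[\alpha,\beta]$ is impossible (it would require $h\notin\alpha$), so $h\in\frakH(\alpha,\beta)$ is equivalent to $h\in[\beta,\alpha]=\alpha\smallsetminus\beta$, i.e.\ to $h\notin\beta$, i.e.\ to $\beta\notin h$ by the duality convention $h\in v\iff v\in h$. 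With this in hand the ``if'' direction is immediate: if $h\in\tau(\frakH(\alpha,\beta))$ then in particular $h\in\frakH(\alpha,\beta)$, and since also $h\in\alpha$ the observation gives $\beta\notin h$.

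For the ``only if'' direction, assume $\beta\notin h$; then the observation places $h$ in $[\beta,\alpha]$, which is a subset of $\alpha$. The one point needing a moment's care is that terminality is monotone under passing to subsets: if $h$ is maximal (resp.\ minimal) in a set $\frakH''$ and $h\in\frakH'\subseteq\frakH''$, then $h$ is maximal (resp.\ minimal) in $\frakH'$ as well, simply because the defining condition is quantified universally over the ambient set; hence $h\in\tau(\frakH'')$ with $h\in\frakH'\subseteq\frakH''$ implies $h\in\tau(\frakH')$. Applying this to $h\in\tau(\alpha)$ and $[\beta,\alpha]\subseteq\alpha$ yields $h\in\tau([\beta,\alpha])$. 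Finally $[\beta,\alpha]=[\alpha,\beta]^*$, so by \eqref{eq:finite} we have $\tau([\beta,\alpha])\subseteq\tau([\alpha,\beta])\cup\tau([\alpha,\beta]^*)=\tau(\frakH(\alpha,\beta))$, whence $h\in\tau(\frakH(\alpha,\beta))$.

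I do not expect a genuine obstacle here: the two ingredients beyond pure bookkeeping are the identification of $\frakH(\alpha,\beta)\cap\alpha$ with $[\beta,\alpha]=\alpha\smallsetminus\beta$ and the monotonicity of $\tau$, both of which are routine consequences of the definitions and of \eqref{eq:finite}. If anything, the ``hard part'' is merely being careful that $h\in\tau(\alpha)$ is being used only through $h\in\alpha$ and through its terminality in $\alpha$, and that the latter survives restriction to the subset $[\beta,\alpha]$.
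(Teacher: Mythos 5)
Your proof is correct, and for the direction the paper actually writes out it is the same argument: the paper proves exactly the contrapositive you give (if $\beta\in h$ then, since $h\in\tau(\alpha)\subset\alpha$, the halfspace $h$ does not separate $\alpha$ from $\beta$, so $h\notin\frakH(\alpha,\beta)$ and a fortiori $h\notin\tau(\frakH(\alpha,\beta))$). Where you differ is that the paper stops there, declaring the converse ``equally easy'' and omitting it, whereas you supply it: your two ingredients --- that terminality only gets easier to satisfy on a subset, so $h\in\tau(\alpha)$ together with $h\in[\beta,\alpha]\subseteq\alpha$ gives $h\in\tau([\beta,\alpha])$, and then the identity \eqref{eq:finite} identifying $\tau([\alpha,\beta])\cup\tau([\alpha,\beta]^\ast)$ with $\tau(\frakH(\alpha,\beta))$ --- are both legitimate (the second is asserted in the text just before the lemma) and correctly assembled; note that the detour through $[\beta,\alpha]$ is genuinely needed, since $\frakH(\alpha,\beta)$ is not a subset of $\alpha$ and monotonicity alone would not apply. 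This extra care is not wasted: the implication you prove and the paper skips ($\beta\notin h\Rightarrow h\in\tau(\frakH(\alpha,\beta))$) is, in contrapositive form, precisely the one invoked later in the proof of Proposition~\ref{prop:values in non-terminating}, where $\tau(\frakH(\varphi(x_0),\varphi(y)))=\varnothing$ is used to force $\varphi(y)\in h$ for $h\in\tau(\varphi(x_0))$. So your write-up is, if anything, better aligned with the lemma's actual use than the paper's own proof.
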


\begin{proof} If $\beta\in h$, then $h$ does not separate $\alpha$ and $\beta$,
so that $h\notin \frakH(\alpha,\beta)$ and, even more so, $h\notin\tau(\frakH(\alpha,\beta))$.
The converse is equally easy and will not be needed.
%If on the other hand $\beta\notin h$, then $h\in[\beta,\alpha]$
%and the conditions defining $h\in\tau(\alpha)$ apply in particular
%to deduce that $h\in\tau(\frakH(\alpha,\beta))$.
\end{proof}

\begin{proof}[Proof of Proposition~\ref{prop:values in non-terminating}]
We may assume that $X$ is irreducible.
The general case will follow from this case as in the proof of Corollary~\ref{cor:bdrymap},
since the set of non-terminating ultrafilters in a product is the cartesian product of the
sets of non-terminating ultrafilters of each factor.

The composition of $\varphi$ with the map $\tau$ defined in \eqref{eq:terminal}
that assigns to a set of halfspaces its terminal element, gives a $\Gamma$-equivariant
measurable map $B \to 2^{\frakH(X)}$ defined by $x \mapsto \tau(\phi(x))$.
The function $C_4:B\to\N\cap\{\infty\}$ defined by $x\mapsto|\tau(\phi(x))|$
is hence essentially constant.  

Therefore, we want to show that $|C_4(x)|=0$ for almost every $x$,  
that is that the set $\tau(\phi(x))$ is empty, thus showing that $\varphi(x)$ is
non-terminating.

To this purpose let us consider the map $\theta:B\times B\to 2^{\frakH(X)}$
that to a pair $(x,y)\in B\times B$ associates the set of terminal elements
in $\frakH(\varphi(x),\varphi(y))$.
%defined by $\theta(x,y):=\tau(H(\varphi(x),\varphi(y)))$.
Again by ergodicity the function $C_5:B\times B\to\N\cup\{\infty\}$,
defined by $C_5(x,y):=|\tau(\frakH(\varphi(x),\varphi(y)))|$ is essentially constant
and, by \eqref{eq:finite}, 
$0\leq |\tau(\frakH(\varphi(x),\varphi(y)))|<\infty$.

By Corollary~\ref{count->not-ess} with $\Pn=B$, we deduce that 
for almost every $x,y\in B$, $\tau(\frakH(\varphi(x),\varphi(y)))=\varnothing$.
We show now that this is incompatible with $|\tau(\varphi(x))|>0$
for almost every $x\in B$, thus proving the proposition.

Let $x_0\in B$ be such that $|\tau(\varphi(x_0))|>0$ and let $B_0\subset B$
be a set of full measure such that $\tau(\frakH(\varphi(x_0),\varphi(y)))=\varnothing$
for all $y\in B_0$.  Then by Lemma~\ref{lem:tau}, if $h\in\tau(\varphi(x_0))$,
we must have that $\varphi(y)\in h$ for all $y\in B_0$. But $B_0$ contains
a $\Gamma$-orbit and hence this contradicts the fact that the action 
is essential.
%Assume that $\tau(\phi(x))$ is non-empty.
%
%Now, once again, employing double ergodicity, we see that
%$\tau(H(\phi(x), \phi(y))$ must be empty or infinite, where
%$H(\phi(x), \phi(y)$ is the halfspaces separating $\phi(x)$ and
%$\phi(y)$. But $H(\phi(x), \phi(y))$ is euclidean and hence has
%finitely many terminal elements. Therefore, either the action is
%inessential or  $\tau(H(\phi(x), \phi(y))$ is empty, for almost every
%$(x,y)$.
%
%Let $x_0$ be a generic point for which  $\tau(\phi(x_0))$ is infinite
%and $\tau(H(\phi(x_0), \phi(y))$ is empty for every $y$ in
%$G$-invariant full measure set $B_0$.
%
%Then for every $h \in \tau(\phi(x_0)$ and every $y \in B_0$ we have
%that $\phi(y) \in h$. This means that the action of $G$ is inessential
%because $h$ contains a $G$-orbit.
%
%Therefore, the image is in the non-terminating ultrafilters.
\end{proof}

\subsubsection{Proof of Proposition~\ref{III.C} (in step III.b)}
%\begin{proof}[Proof of Lemma] 
%Recall that if $h \in H_\mu^+$ and $h\subseteq k$ then $k \in H_\mu^+$. 
%Since $H_\mu$ have no terminal elements, saying that $H_\mu\cap H_\nu^+$ 
%has no terminal elements is the same as to say it has no minimal elements. 

We will find arbitrarily a large family of pairwise intersecting halfspaces.
  To this purpose, choose a sequence $\{\mu_i\}_{i\in\N}$ of pairwise generic measures
  satisfying the hypotheses of Proposition~\ref{III.C}. For each
  $i$, choose an infinite descending chain $h_n^i \in H_{\mu_0}^+ \cap H_{\mu_i}$.  

Consider the following property of an ordered pair $(\mu_i,\mu_j)$ of measures:
%\bq\label{eq:intersection property}
%\ba
%&\hphantom{XXX}\text{There exists $C(i,j)\in\N$ such that for every $n\geq C(i,j)$}\\
%&\text{there is an $M_n\geq C(i,j)$ such that if $m>M_n\geq C(i,j)$, 
%then $\hat h_n^i \pitchfork \hat h_m^j$.}
%\ea
%\eq
\begin{itemize}
\item[$(*)$]
There exists $C(i,j)\in\N$ such that for every $n\geq C(i,j)$
there is an $M_n\geq C(i,j)$ such that if $m>M_n\geq C(i,j)$, 
then $\hat h_n^i \pitchfork \hat h_m^j$.
\end{itemize}

\begin{lemma}\label{lem:single dychotomy}
Up to switching $i$ and $j$, any pair of measures $\mu_i$ and $\mu_j$, 
satisfies $(*)$.
\end{lemma}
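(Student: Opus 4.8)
The plan is to argue that if property $(*)$ fails for the ordered pair $(\mu_i,\mu_j)$, then it must hold for $(\mu_j,\mu_i)$; so suppose $(*)$ fails for $(\mu_i,\mu_j)$. Negating $(*)$ literally: for every $C\in\N$ there is some $n\geq C$ such that for every bound $M\geq C$ there is an $m>M$ with $\hat h_n^i$ \emph{not} transverse to $\hat h_m^j$, i.e. $\hat h_n^i\|\hat h_m^j$. I would first unwind this to extract, by a diagonal argument, a fixed index $n_0$ (or a subsequence of such $n$'s) together with arbitrarily large $m$ for which $\hat h_{n_0}^i\|\hat h_m^j$. The key point is then to control the \emph{type} of parallelism: since $\{h_m^j\}_m$ is an infinite descending chain in $H_{\mu_0}^+\cap H_{\mu_j}$ and $h_{n_0}^i\in H_{\mu_0}^+$, all of $h_{n_0}^i, h_m^j$ are heavy halfspaces for $\mu_0$; by Lemma~\ref{Hmufacts}\eqref{item:nofacingtriplesinhmu} there is no facing triple inside $H_{\mu_0}^+$, and in particular $h_{n_0}^i$ and $h_m^j$ cannot be facing (nor can $\*h_{n_0}^i$ and $\*h_m^j$ both be heavy). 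This should force, for the relevant arbitrarily large $m$, one of the two nested relations $h_m^j\subset h_{n_0}^i$ or $h_{n_0}^i\subset h_m^j$.

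Next I would rule out $h_{n_0}^i\subset h_m^j$ for infinitely many large $m$: because $\{h_m^j\}$ is a descending chain, $h_{n_0}^i\subset h_m^j$ for arbitrarily large $m$ would force $h_{n_0}^i\subset\bigcap_m h_m^j$, and since there are only finitely many hyperplanes between $\hat h_{n_0}^i$ and any fixed $\hat h_m^j$ (finite interval condition), this is incompatible with $\{h_m^j\}$ being an infinite strictly descending chain — i.e. $h_{n_0}^i$ would have to coincide with all but finitely many $h_m^j$, absurd. Hence, for arbitrarily large $m$, $h_m^j\subset h_{n_0}^i$. Now I would run the \emph{same} negation analysis but now attempting to verify $(*)$ for the \emph{reversed} pair $(\mu_j,\mu_i)$: given $n$, I want $M_n$ so that $m>M_n$ implies $\hat h_n^j\pitchfork\hat h_m^i$. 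Using what we just established — that the chains $\{h_\bullet^i\}$ and $\{h_\bullet^j\}$ are ``interleaved by inclusion'' in a one-directional way — together with the descending-chain and finite-interval properties, I would show that for each fixed $n$ only finitely many $\hat h_m^i$ can fail to be transverse to $\hat h_n^j$ (a non-transverse one would have to be nested with $\hat h_n^j$, and the nesting direction is pinned down by the interleaving, again contradicting that $\{h_m^i\}$ is an infinite strictly descending chain once $m$ is large). Choosing $M_n$ past that finite set, and then $C(j,i)$ appropriately, yields $(*)$ for $(\mu_j,\mu_i)$.

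The main obstacle I anticipate is bookkeeping the three-way interaction of two descending chains and the transversality/nestedness dichotomy: carefully tracking, for each pair of indices $(n,m)$, which of $h_n^i\pitchfork h_m^j$, $h_m^j\subset h_n^i$, $h_n^i\subset h_m^j$ can occur, and making the ``for all large $n$ there is $M_n$'' quantifier structure of $(*)$ come out cleanly from a statement that is initially only about the \emph{failure} of such a structure for the other order. In particular one must be careful that the finitely-many exceptions may depend on $n$ (which is allowed — that is exactly why $M_n$ is indexed by $n$), and that the constant $C(i,j)$ absorbs the finitely many small indices uniformly. I expect the non-facing-triple input from Lemma~\ref{Hmufacts} and the finite interval condition to be the only structural facts needed beyond elementary chain manipulations.
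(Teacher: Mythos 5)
The first half of your plan is workable, but one reduction is mis-justified and the decisive final step has a genuine gap. On the first point: besides the facing case there is a fourth parallel configuration, namely $\*{h^i_{n_0}}\subset h^j_m$ (equivalently, the light halfspaces $\*{h^i_{n_0}}$ and $\*{h^j_m}$ are disjoint). Your parenthetical remark does not exclude it: two $\mu_0$-heavy halfspaces may well cover $X$, and Lemma~\ref{Hmufacts}\eqref{item:nofacingtriplesinhmu} says nothing about this configuration; it is a genuine case (the set $N_2$ in the paper's proof). It can, however, be disposed of for fixed $n_0$ and all but finitely many $m$ by the same finite-interval argument you use for $h^i_{n_0}\subset h^j_m$, since all such $h^j_m$ lie in the finite interval between $\*{h^i_{n_0}}$ and $h^j_1$; so this flaw is repairable.

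The serious gap is at the end, where you assert that ``the nesting direction is pinned down by the interleaving, again contradicting that $\{h^i_m\}$ is an infinite strictly descending chain,'' and where you expect only Lemma~\ref{Hmufacts}\eqref{item:nofacingtriplesinhmu} and the finite interval condition to be needed. Those inputs cannot suffice: two interleaved cofinal descending chains $h^i_1\supset h^j_1\supset h^i_2\supset h^j_2\supset\cdots$ consist of pairwise non-facing halfspaces, satisfy the finite interval condition, are strictly descending, and fail $(*)$ in both orders, so no chain/finite-interval manipulation alone can rule out the simultaneous failure for $(\mu_i,\mu_j)$ and $(\mu_j,\mu_i)$. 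What rules it out is the standing hypothesis of Proposition~\ref{III.C} that $H_{\mu_i}\cap H_{\mu_j}=\varnothing$, combined with Lemma~\ref{Hmufacts}\eqref{item:hnutightlynested}: a halfspace of the $i$-chain cannot be nested between two halfspaces of the $j$-chain, since it would then be balanced for $\mu_j$ as well. Concretely, if both orders failed then, using the full strength of the negation (the bad indices are arbitrarily large, hence by monotonicity the $j$-chain eventually enters every $h^i_N$, and symmetrically), you would obtain a sandwich $h^j_{m'}\subset h^i_m\subset h^j_n$, and the contradiction must come from the disjointness of the balanced sets, not from the chains being infinite. This is exactly the ingredient the paper uses when it shows that its cases $N_3$ and $N_4$ cannot coexist for large indices; with it your contrapositive argument does go through and amounts to a reorganization of the paper's four-set decomposition, but as written the decisive step is unsupported.
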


We postpone the proof of this lemma and show how to conclude the proof of Proposition~\ref{III.C}.

Let us consider a graph $\Gn:=\Gn(V,E)$, where $V:=\{\mu_i\}$ and
where two measures $\mu_i$ and $\mu_j$ are connected by an edge $e\in E$
with source $\mu_i$ and target $\mu_j$ if the ordered pair $(\mu_i,\mu_j)$
satisfies  $(*)$.
By Lemma~\ref{lem:sutg}, given $D\in\N$, there exist (relabelled) measures 
$\mu_1,\dots,\mu_D\in\{\mu_i\}_{n\in\N}$ such that for $1\leq i<j\leq D$, 
each ordered pair $(\mu_i,\mu_j)$ satisfies $(*)$.

By choosing 
\bqn
C:=\max{} \{C(i,j):\,1\leq i<j\leq D\}
\eqn
and 
\bqn
M:=\max{} \{M_C(i,j):\,1\leq i<j\leq D\}\,.
\eqn
we obtain that for all $n,m\geq C$ and $1\leq i, j\leq D$, 
the corresponding hyperplanes are transverse, $\hat h_n^i\pitchfork \hat h_m^j$.
This concludes the proof of Proposition~\ref{III.C}.

\begin{proof}[Proof of Lemma~\ref{lem:single dychotomy}]
  Fix two measure that we denote for ease of notation, $\mu$ and $\nu$. 
  Let $h_n\in H_{\mu_0}^+ \cap H_{\mu}$ and $k_m\in H_{\mu_0}^+ \cap H_\nu$
  be the corresponding infinite descending sequences.
  Since all the halfspaces in  question belong to $H_{\mu_0}^+$, 
  for each pair $n,m$ we have the following decomposition
$$
\N\times\N=N_1\sqcup N_2\sqcup N_3\sqcup N_4\,,
$$
where
$$
\begin{aligned}
N_1&=\{(n,m):\,h_n\pitchfork k_m\}\\
N_2&=\{(n,m):\,\*{h_n}\subset k_m\}\\
N_3&=\{(n,m):\,h_n\subset k_m\}\\
N_4&=\{(n,m):\,h_n\supset k_m\}
\end{aligned}
$$

We claim that if we allow ourselves to throw away a finite number of pairs $(n,m)$
if necessary, then the decomposition of $\N\times\N$ takes in fact a simpler shape.
Namely:

\begin{claim}  There exists a constant $C\in\N$ depending on $\mu$ and $\nu$, 
such that 
\bqn
\N_C
:=(\N\times\N)\cap([C,\infty)\times[C,\infty))=N_1\sqcup N_j\,,
\eqn
where $j=2,3$ or $4$.
\end{claim}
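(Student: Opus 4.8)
The plan is to establish the dichotomy by ruling out, for large indices, the simultaneous occurrence of two of the three ``nested'' patterns $N_2,N_3,N_4$ alongside the ``transverse'' pattern $N_1$. Throughout I use the exhaustive decomposition $\N\times\N=N_1\sqcup N_2\sqcup N_3\sqcup N_4$ recorded above (which itself relies on $h_n,k_m\in H_{\mu_0}^+$, so that $h_n\cap k_m\neq\varnothing$ and the ``facing'' relation cannot occur), together with the standing assumption $H_\mu\cap H_\nu=\varnothing$. Set $\Pi:=\bigcap_n h_n$ and $K:=\bigcap_m k_m$; since $\{h_n\}$, $\{k_m\}$ are descending chains of clopen subsets of the compact space $\~X$, both $\Pi$ and $K$ are nonempty, closed, and \emph{proper} (each $h_n,k_m$ is a proper halfspace, so $\Pi\subseteq h_1\subsetneq\~X$ and $K\subseteq k_1\subsetneq\~X$). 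Say that $N_j$ ($j\in\{2,3,4\}$) \emph{persists} if $N_j\cap\N_C\neq\varnothing$ for every $C$. The claim follows once I show at most one of $N_2,N_3,N_4$ persists: if $j$ is not persistent, pick $C_j$ with $N_j\cap([C_j,\infty)\times[C_j,\infty))=\varnothing$, and then for $C$ at least the maximum of the $C_j$ over the (at least two) non-persistent indices one gets $\N_C\cap N_j=\varnothing$ for all such $j$, hence $\N_C\subseteq N_1\cup N_{j_0}$ for the (at most one) persistent $j_0$; this is exactly $\N_C=N_1\sqcup N_{j_0}$, and any value of $j$ works if none persists.

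First I would extract from each persistent pattern a limiting inclusion between $\Pi$ and $K$. If $N_3$ persists, choose $(a_l,b_l)\in N_3$ with $a_l,b_l\to\infty$; then $\Pi\subseteq h_{a_l}\subseteq k_{b_l}$ for all $l$, and cofinality of $\{b_l\}$ in $\N$ gives $\Pi\subseteq\bigcap_l k_{b_l}=K$. Symmetrically, if $N_4$ persists then $K\subseteq k_{b_l}\subseteq h_{a_l}$ yields $K\subseteq\bigcap_l h_{a_l}=\Pi$. If $N_2$ persists, choose $(a_l,b_l)\in N_2$ with both coordinates strictly increasing; since $h_{a_l}^*\subseteq k_{b_l}$ and, for $l'\geq l$, $h_{a_l}^*\subseteq h_{a_{l'}}^*\subseteq k_{b_{l'}}$, one gets $h_{a_l}^*\subseteq\bigcap_{l'\geq l}k_{b_{l'}}=K$ for every $l$; taking the union over $l$ and using $\bigcup_l h_{a_l}^*=\~X\smallsetminus\bigcap_l h_{a_l}=\~X\smallsetminus\Pi$ (cofinality of $\{a_l\}$) gives $\~X\smallsetminus\Pi\subseteq K$, equivalently $\~X\smallsetminus K\subseteq\Pi$. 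These are routine manipulations with decreasing clopen sets and their cofinal subchains.

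Next I would rule out any two of these three inclusions holding together. If both $N_2$ and $N_3$ persist, then $K\supseteq\Pi\cup(\~X\smallsetminus\Pi)=\~X$, contradicting properness of $K$. If both $N_2$ and $N_4$ persist, then $\~X\smallsetminus\Pi\subseteq K\subseteq\Pi$, so $\Pi=\~X$, again impossible. The delicate case is both $N_3$ and $N_4$ persisting, which forces $\Pi=K$: here fix some $h_{n_0}$ and note that $h_{n_0}^*\cap k_m$ is a decreasing sequence of clopen subsets of $\~X$ with intersection $h_{n_0}^*\cap K=h_{n_0}^*\cap\Pi=\varnothing$, so by compactness of $\~X$ some $k_{m_0}$ satisfies $h_{n_0}^*\cap k_{m_0}=\varnothing$, i.e. $k_{m_0}\subseteq h_{n_0}$; since also $\Pi=K\subseteq k_{m_0}$, the chain $\Pi\subseteq k_{m_0}\subseteq h_{n_0}$ together with $\mu(\Pi)=\lim_n\mu(h_n)=1/2=\mu(h_{n_0})$ (continuity from above and $h_n\in H_\mu$, using $\mu(h)+\mu(h^*)=1$) squeezes $\mu(k_{m_0})=1/2$, so $k_{m_0}\in H_\mu$; but $k_{m_0}\in H_\nu$ by construction, contradicting $H_\mu\cap H_\nu=\varnothing$. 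With the three inclusions and these contradictions in hand, the reduction in the first paragraph finishes the proof.

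The case $\Pi=K$ is the main obstacle: it is the only place where the hypotheses genuinely enter, since set-theoretically the two ``chains at infinity'' really can coincide (as they do inside a Euclidean factor), and one must convert this coincidence into an actual halfspace that is balanced for both $\mu$ and $\nu$. That conversion is precisely where $\mu_0$-heaviness (so that $k_{m_0}\subseteq h_{n_0}$ is the only way out, the facing relation being excluded), compactness of $\~X$, and the measure-squeezing estimate are used; everything else is soft limiting behaviour of descending chains.
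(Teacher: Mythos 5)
Your proof is correct, but it takes a genuinely different route from the paper's. The paper argues finitarily: assuming two of $N_2,N_3,N_4$ are simultaneously nonempty, it compares one pair from each and shows the coexistence is impossible once one coordinate exceeds an explicit threshold ($A_3$ for $N_3$, $B_4$ for $N_4$), the contradictions being that a proper halfspace would have to contain both $h_n$ and $h_n^*$ (for $N_2$ against $N_3$, and $N_2$ against $N_4$), and that $h_n$ would be nested between two members of the chain $\{k_m\}$ (for $N_3$ against $N_4$, where the disjointness $H_\mu\cap H_\nu=\varnothing$ and the balanced/nested structure of the chains enter); the constant is then taken to be $C=\max\{A_3,B_4\}$. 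You instead pass to the limit sets $\Pi=\bigcap_n \overline{h_n}$ and $K=\bigcap_m \overline{k_m}$ in the Roller compactification and show that at most one of the three nested patterns persists: the exclusions of $N_2$ with $N_3$ and with $N_4$ follow from properness of $\Pi$ and $K$, and the remaining case $\Pi=K$ is ruled out by compactness of $\overline{X}$ (producing $k_{m_0}\subseteq h_{n_0}$) together with continuity from above of $\mu$, which squeezes $\mu(k_{m_0})=1/2$ and contradicts $H_\mu\cap H_\nu=\varnothing$ — structurally parallel to the paper's treatment of the $N_3$ versus $N_4$ case, but implemented with measure-theoretic limits rather than a betweenness argument on halfspaces. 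What the paper's argument buys is effectivity (an explicit $C$ determined by the first occurrences of $N_3$ and $N_4$) and a slightly stronger conclusion (a single element of $N_3$ already forbids $N_2$ and $N_4$ beyond a threshold), with no topology or limits of measures; what yours buys is a clean localization of where the hypotheses genuinely enter (only the case $\Pi=K$ uses them, as you observe), at the price of a non-effective $C$ and of invoking compactness of $\overline{X}$, clopenness of halfspaces, and countable additivity — all available in the paper. Note finally that both arguments read the claimed equality $\mathbb{N}_C=N_1\sqcup N_j$ as the inclusion $\mathbb{N}_C\subseteq N_1\sqcup N_j$, which is the intended meaning and how the claim is used afterwards.
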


In fact, let us suppose that $N_2\neq\varnothing$ and $N_3\neq\varnothing$
and let us take $(n_3,m_3)\in N_3$ and $(n,m)\in N_2$.  Set $m':=\min{}\{m,m_3\}$,
such that 
\bqn
\*h_n\subset k_{m'}\qquad\text{ and }\qquad h_{n_3}\subset k_{m'}\,.
\eqn
If $n\geq n_3$, then $h_n\subset h_{n_3}\subset k_{m'}$, which is impossible
since also $\*h_n\subset k_{m'}$.  Hence there is no pair $(n,m)\in N_2$
such that $n\geq\min{}\{n_3:\,(n_3,m_3)\in N_3\}=:A_3$.  It follows that 
\bq\label{23}
\{(n,m)\in N_2:\,n\geq A_3\}\cap N_3=\varnothing\,.
\eq

Now let us suppose that $N_3\neq\varnothing$ and $N_4\neq\varnothing$
and let us take $(n_3,m_3)\in N_3$ and $(n,m)\in N_4$.  
If $n\geq n_3$, then
\bqn
k_n\subset h_n\subset h_{n_3}\subset k_{m_3}\,,
\eqn
which is impossible by Lemma~\ref{Hmufacts} part (\ref{item:hnutightlynested}).
Hence, analogously to the previous case, we have that 
\bq\label{34}
\{(n,m)\in N_2:\,n\geq A_3\}\cap N_4=\varnothing\,.
\eq

Finally, let us suppose that $N_2\neq\varnothing$ and $N_4\neq\varnothing$
and let us take $(n,m)\in N_2$ and $(n_4,m_4)\in N_4$.
Set $n':=\min{}\{n,n_4\}$, so that 
\bqn
h_{n'}\supset k_m\qquad\text{ and }\qquad h_{n'}\supset k_{m_4}\,.
\eqn
If $m\geq m_4$ then $h_{n'}\supset k_{m_4}\subset k_m$, which is impossible
since also $h_{n'}\supset \*k_m$.  Hence there is no pair $(n,m)\in N_2$
such that $m<\min{}\{m_4:\,(n_4,m_4)\in N_4\}=:B_4$.  It follows that 
\bq\label{24}
\{(n,m)\in N_2:\,m\geq B_4\}\cap N_4=\varnothing\,.
\eq
By setting $C:=\max{}\{A_3, B_4\}$ we have proven the claim.

Let us suppose now that for $n_0,m_0\geq C$, the pair $(n_0,m_0)\in N_1\sqcup N_3$
and, in fact, that $(n_0,m_0)\in N_3$ (otherwise there is nothing to prove).
Choose $m_0=m_0(n_0)$ to be the largest integer such that $(n_0,m_0)\in N_3$.
Then, because $\N_C=N_1\sqcup N_3$, for every $m\geq m_0(n_0)+1$,
the hyperplanes $\hat h_{n_0}$ and $\hat k_m$ are transverse.   Hence the assertion 
of the lemma is proven in the case in which $\N_C=N_1\sqcup N_3$.

Remark that the same identical argument shows the assertion if $\N_C=N_1\sqcup N_2$,
since we only used that there is a minimal element $k_{m_0}$ in the sequence $k_m$
that contains the hyperplane $\hat h_{n_0}$.

The argument if $\N_C=N_1\sqcup N_4$ is analogous, but with the role of $h_n$ and $k_m$
reversed, as now there is a minimal element $h_{n_0}$ in the sequence $h_n$
that contains the hyperplane $\hat k_{m_0}$.  Namely, let $(n_0,m_0)\in N_4$ be such a pair.
Then for all $n\geq n_0(m_0)+1$, the hyperplanes $\hat h_n$ and $\hat k_{m_0}$ are transverse.
\end{proof}

\begin{remark}  The last assertion in the proof relative to the case $\N_C=N_1\sqcup N_4$
holds also for the case $\N_C=N_1\sqcup N_2$, but the symmetry of this case is not useful.
\end{remark}
\subsubsection{Proofs needed in step III.c}\label{subsubsec:step III.c}

%Recall that a set $Y$ is called \emph{strongly convex} is for any $x,y\in Y$ 
%then $\mathcal I(x,y)\subset Y$.

\begin{lemma}\label{lem:AY} Let $X$ be a CAT(0) cube complex and 
$A$ be any cubical subset of $X$ 
(that is, $A$ is a union of cubes, not necessarily connected). 
If $Y$ is the smallest strongly convex subcomplex of $X$ containing $A$, 
then 
\bqn
\hat\frakH(Y)=\hat\frakH(A)\sqcup\{\hat h\in\hat\frakH(Y):\,\hat h\text{ separates }
A\text{ in at least two non-trivial subsets}\}\,.
\eqn
%then the set $\frakH(Y)$ of hyperplanes crossing $Y$
%is the same as the set $\frakH(A)$ of hyperplanes crossing $A$ 
%(that is, the set of hyperplanes separating $A$ in two non trivial pieces).
\end{lemma}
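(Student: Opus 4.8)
The plan is to characterize the hyperplanes of $Y$ as exactly those hyperplanes of $X$ that "matter" for the subcomplex $A$, and to organize these into two kinds: those already crossing $A$, and those that genuinely separate $A$. First I would recall that $\hat\frakH(Y)$ is naturally a subset of $\hat\frakH(X)$: since $Y$ is a strongly convex (in particular convex) subcomplex, every hyperplane $\hat h$ of $Y$ extends uniquely to a hyperplane of $X$, and conversely a hyperplane $\hat k$ of $X$ induces a hyperplane of $Y$ precisely when both halfspaces $k, k^*$ meet $Y$; so $\hat\frakH(Y) = \{\hat k \in \hat\frakH(X) : k \cap Y \neq \varnothing \neq k^* \cap Y\}$. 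The two sets on the right-hand side of the claimed identity are clearly disjoint: the second explicitly excludes hyperplanes of $\hat\frakH(A)$, since a hyperplane crossing $A$ does not separate $A$ into two subsets in the stated sense (one of its halfspaces contains all of $A$ only if it does not cross $A$). So the real content is the equality of sets.

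For the inclusion $\supseteq$: if $\hat h \in \hat\frakH(A)$ then both halfspaces meet $A \subseteq Y$, so $\hat h \in \hat\frakH(Y)$; and if $\hat h \in \hat\frakH(Y)$ separates $A$ into two non-trivial pieces then again both halfspaces meet $A$, hence meet $Y$, giving $\hat h \in \hat\frakH(Y)$ — but this membership is by hypothesis, so this direction is essentially trivial once one knows the right-hand set is contained in $\hat\frakH(Y)$. The substantive direction is $\subseteq$: given $\hat h \in \hat\frakH(Y)$, I must show that either $\hat h$ crosses $A$ (both halfspaces meet $A$) or $\hat h$ separates $A$ into two non-trivial parts. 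Here the key point is the \emph{minimality} of $Y$: $Y$ is the smallest strongly convex subcomplex containing $A$. Suppose $\hat h = \{h, h^*\}$ is a hyperplane of $Y$ and that it does \emph{not} separate $A$ non-trivially — i.e. one of the halfspaces, say $h^*$, satisfies $A \cap h^* = \varnothing$, so $A \subseteq h$. I claim then that $Y \cap h$ is itself a strongly convex subcomplex of $X$ containing $A$: strong convexity follows because $h$ is a halfspace (hence $[x,y] \subseteq h$ whenever $x, y \in h$, as no hyperplane of $[x,y]$ can equal $\hat h$ when both endpoints lie in $h$) and $Y$ is strongly convex, so the intersection of the two is strongly convex; and it is a subcomplex since $h$ is a union of cubes that cross no midcube of $\hat h$. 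By minimality of $Y$ we get $Y \subseteq h$, contradicting the fact that $h^* \cap Y \neq \varnothing$ (which holds because $\hat h$ is a hyperplane of $Y$). Hence $\hat h$ must separate $A$ into two non-trivial subsets, unless it already crossed $A$; this is exactly the claimed dichotomy.

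I expect the main obstacle to be the careful verification that $Y \cap h$ is genuinely a \emph{strongly convex subcomplex} — i.e. checking that intervals $[x,y]$ for $x,y \in Y\cap h$ stay inside $h$ (using that $x,y$ on the same side of a hyperplane forces that hyperplane out of the interval) and that $Y \cap h$ is a union of cubes (one must be slightly careful about cubes straddling the hyperplane $\hat h$, but such cubes have their $h$-side face lying in $Y \cap h$, and for the combinatorial/vertex-set statement only vertices matter, so $Y \cap h$ as a set of vertices is a subcomplex in the relevant sense). Once that lemma-within-the-proof is in place, the minimality argument closes everything immediately, and the disjointness of the union is formal. I would also remark that strong convexity of $Y$ is exactly what makes $Y \cap h$ strongly convex rather than merely convex, which is why the hypothesis is stated with "strongly convex" rather than "convex."
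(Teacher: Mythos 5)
Your proof is correct and follows essentially the same route as the paper: the paper also argues by contradiction that if $A\subseteq h$ then geodesics between points of $A$ stay in $h$ (a halfspace is strongly convex), so by minimality $Y\subseteq h$, contradicting $\hat h\in\hat\frakH(Y)$. Your version merely makes explicit the intermediate step that $Y\cap h$ is a strongly convex subcomplex containing $A$, which the paper leaves implicit.
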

\begin{proof} We only need to check that every 
$\hat h\in\hat\frakH(Y)\smallsetminus\hat\frakH(A)$
separates $A$ in non-trivial subsets. 
Take $\hat h=(h,h^\ast)\in \hat\frakH(Y)$ and assume by contradiction that $A\subseteq h$. 
Then any geodesic between two points of $A$ is also contained in $h$ 
(otherwise this geodesic would cross $h$ twice. Hence $Y\subseteq h$, 
contradicting that $h\in \hat\frakH(Y)$.
\end{proof}
%
%Now $H_\mu=\{\mu-\hbox{balanced hyperplanes}\}$, $X(H_\mu)$ the associated CAT(0) cube complex and 
%$X_\mu\subseteq X$ the subcomplex that we get. 

\begin{prop}\label{prop:product} Let $X$ be a CAT(0) cube complex and $\Gamma\to\Aut(X)$ 
an essential action.
Let $\frakH'\subset\frakH(X)$ be a $\Gamma$-invariant subset of halfspaces and
$X_\alpha\subset X$ a $\Gamma$-invariant family of subcomplexes 
such that $\hat\frakH(X_\alpha)=\hat\frakH'$.
Let $Y$ be the smallest strongly convex subcomplex containing $A:=\cup_\alpha X_\alpha$.
Then $Y=X$ and $\overline{X}=\overline{X(\hat\frakH')}\times Z$.
\end{prop}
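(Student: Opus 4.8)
The plan is to show that $Y = X$ first, and then deduce the product decomposition from the structure of the hyperplanes. The key observation is that $A = \bigcup_\alpha X_\alpha$ is a $\Gamma$-invariant cubical subset whose hyperplane set is exactly $\hat\frakH'$ (by hypothesis each $X_\alpha$ contributes precisely $\hat\frakH'$, so the union does too, noting that the hyperplanes of a union of subcomplexes all sharing the same hyperplane set is again that set — here one must be a touch careful that no \emph{new} hyperplanes of $X$ cross $A$ in a way that produces hyperplanes outside $\hat\frakH'$; but a hyperplane of $X$ either separates some $X_\alpha$, in which case it lies in $\hat\frakH(X_\alpha) = \hat\frakH'$, or it does not meet any $X_\alpha$ at all). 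Then by Lemma~\ref{lem:AY},
\bqn
\hat\frakH(Y) = \hat\frakH' \sqcup \{\hat h \in \hat\frakH(Y) : \hat h \text{ separates } A \text{ into at least two non-trivial subsets}\}\,.
\eqn
The heart of the argument is that this second set is empty, so $\hat\frakH(Y) = \hat\frakH'$, and then that $Y = X$.

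First I would argue that $Y = X$. Since $A$ is $\Gamma$-invariant and $Y$ is the smallest strongly convex (equivalently, convex) subcomplex containing $A$, the subcomplex $Y$ is $\Gamma$-invariant. But the $\Gamma$-action is essential, and an essential action admits no proper non-empty $\Gamma$-invariant convex subcomplex (this is exactly the defining property of the essential core recalled in \S~\ref{subsec:essential}, applied here since $X$ itself plays the role of the essential core). Hence $Y = X$. Consequently $\hat\frakH(X) = \hat\frakH(Y) = \hat\frakH' \sqcup \hat\frakH''$, where $\hat\frakH''$ consists of the hyperplanes of $X$ that separate $A$ into at least two non-trivial pieces. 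In particular $\hat\frakH' = \hat\frakH(X_\alpha)$ for each $\alpha$, and every hyperplane in $\hat\frakH''$ has \emph{all} of $\hat\frakH'$ on both sides, i.e.\ it is disjoint from every hyperplane of $\hat\frakH'$ as a separating wall — more precisely, no hyperplane of $\hat\frakH'$ separates the two halves of $A$ cut by an element of $\hat\frakH''$.

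The key step is then to show that every hyperplane in $\hat\frakH'$ is transverse to every hyperplane in $\hat\frakH''$; this is what yields a product decomposition via \cite[Proposition~2.6]{Caprace_Sageev} (see also \eqref{eq:decomposition of hyperplanes} and the converse statement in \S~\ref{subsec:decomposition} that a partition of the hyperplanes into mutually transverse subsets corresponds to a product decomposition). Take $\hat k \in \hat\frakH''$ with halfspaces $k, \*k$, so $A$ meets both $k$ and $\*k$ non-trivially; say $X_\alpha \cap k \neq \varnothing$ and $X_\beta \cap \*k \neq \varnothing$ (possibly after using $\Gamma$-translates, since the family is $\Gamma$-invariant and $\Gamma$ acts essentially hence with enough flipping elements by the Flipping Lemma of \S~\ref{subsec:skew-flip}). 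Given $\hat h \in \hat\frakH'$, we must see $\hat h \pitchfork \hat k$. Since $\hat h \in \hat\frakH(X_\alpha)$, both $X_\alpha \cap h$ and $X_\alpha \cap \*h$ are non-empty; as $\hat k$ does not separate the two sides of $A$ — equivalently $\hat h$ does not separate the two sides of $A$ cut by $\hat k$, so each of $X_\alpha$ and its $\Gamma$-translates straddles $\hat k$ — one checks that all four intersections $h \cap k$, $h \cap \*k$, $\*h \cap k$, $\*h \cap \*k$ are non-empty, using that $X_\alpha$ (being a subcomplex with $\hat h$ as a genuine wall and lying on both sides of $\hat k$) must occupy all four quadrants. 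This gives $\hat h \pitchfork \hat k$ for all $\hat h \in \hat\frakH'$, $\hat k \in \hat\frakH''$. Therefore $\hat\frakH(X) = \hat\frakH' \sqcup \hat\frakH''$ is a partition into mutually transverse subsets, which by the converse to \eqref{eq:decomposition of hyperplanes} gives a product decomposition $X = X(\hat\frakH') \times Z$ with $\hat\frakH(Z) = \hat\frakH''$, and correspondingly $\overline{X} = \overline{X(\hat\frakH')} \times Z$ (extending to the Roller compactification, using the decomposition of the Roller boundary for products recalled in \S~\ref{subsec:decomposition}). The main obstacle I anticipate is the transversality step: making rigorous that a hyperplane $\hat k$ which "separates $A$ into non-trivial subsets" but is not itself in $\hat\frakH'$ must be transverse to every element of $\hat\frakH'$ — this requires exploiting $\Gamma$-invariance of the family $\{X_\alpha\}$ and essentiality (to move the pieces around and flip), rather than following from the subcomplex structure of a single $X_\alpha$ alone.
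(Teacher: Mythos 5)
Your overall route --- Lemma~\ref{lem:AY}, $Y=X$ from essentiality, then transversality of $\hat\frakH'$ against the remaining hyperplanes $\hat\frakH''$, then the product decomposition --- is the same as the paper's, and your first step is fine. The gap is in the transversality step, where the claim you rely on is in fact false: you assert that, for $\hat k\in\hat\frakH''$, ``each of $X_\alpha$ and its $\Gamma$-translates straddles $\hat k$'' and that a single $X_\alpha$ ``must occupy all four quadrants''. But $\hat k\notin\hat\frakH'=\hat\frakH(X_\alpha)$, so $\hat k$ separates no two points of $X_\alpha$ (otherwise it would be dual to an edge of $X_\alpha$ and hence lie in $\hat\frakH(X_\alpha)$); consequently every $X_\alpha$ lies entirely inside one of the two halfspaces $k$, $\*k$ and can only meet the two quadrants on its own side of $\hat k$. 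Your proposed repair via essentiality and the Flipping Lemma does not work either: the Flipping Lemma requires no fixed points in $\partial_\sphericalangle X$, which is not among the hypotheses of the proposition, and in any case translating the whole family never puts a single subcomplex on both sides of $\hat k$.

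The correct mechanism --- and the one the paper uses --- is to play two members of the family against each other rather than one member plus its translates. Since $\hat k\in\hat\frakH''$ separates $A$ non-trivially but crosses no $X_\alpha$, there are indices $\alpha,\beta$ with $X_\alpha\subset k$ and $X_\beta\subset\*k$. Both $X_\alpha$ and $X_\beta$ have hyperplane set $\hat\frakH'$, so both are crossed by any $\hat h\in\hat\frakH'$: hence $X_\alpha$ meets $h\cap k$ and $\*h\cap k$, while $X_\beta$ meets $h\cap\*k$ and $\*h\cap\*k$, so all four intersections in \eqref{eq:transverse} are non-empty and $\hat h\pitchfork\hat k$. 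Note that this step needs neither essentiality nor $\Gamma$-invariance (those are used only to get $Y=X$). With this replacement the rest of your argument --- the partition of $\hat\frakH(X)$ into mutually transverse subsets, the product decomposition, and its extension to the Roller compactification --- goes through as you state.
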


%\begin{prop}\label{prop:product} Let $A=\cup_{\gamma\in\Gamma}\overline{X}_{\gamma*\mu}$ and 
%let $Y$ be the smallest strongly convex subcomplex containing $A$. 
%Then $Y=\overline{X}(H_\mu)\times Z$.
%\end{prop}
%
\begin{proof} Since $Y$ is $\Gamma$-invariant and the action is essential, then $Y=X$.

Because of Lemma~\ref{lem:AY}, the hyperplanes of $Y$ are of two types: 
either they are in $\hat\frakH'=\hat\frakH(A)$ and they separate one (equivalently, any) 
of the $X_\alpha$ 
or they separate a $X_\alpha$ from a $X_{\alpha'}$. 
Any hyperplane $\hat h$ of this second type will cross any hyperplane $\hat k\in\hat\frakH'$. 
Indeed, if $\hat h=(h,h^\ast)$ and $\hat k=(k,k^\ast)$ 
it is easy to see that the four intersections in \eqref{eq:transverse} are non-empty. 
Hence $X$ is a product.
\end{proof}

\begin{lemma}\label{lem:SSimpliesDCC}
If $|H_\mu|=\8$ and $H_\mu$ contains strongly separated halfspaces, then
$H_\mu^+$ satisfies the Descending Chain Condition. 
\end{lemma}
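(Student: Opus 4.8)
The plan is to argue by contradiction. Suppose $H_\mu^+$ fails the Descending Chain Condition; after passing to a subchain we may fix an infinite \emph{strictly} descending chain $h_1\supsetneq h_2\supsetneq\cdots$ with all $h_n\in H_\mu^+$, and by hypothesis we may fix a strongly separated pair $\hat a,\hat b$ inside $H_\mu$. Since $a\in H_\mu$ we have $\mu(a)=\mu(a^*)$ by definition, and as $a\sqcup a^*=\~X$ this common value is $1/2$; likewise $\mu(b)=\mu(b^*)=1/2$. In particular, by Lemma~\ref{Hmufacts}, none of $h_n,h_n^*$ lies in $H_\mu$, so $\hat h_n\notin\{\hat a,\hat b\}$ for every $n$, and — for measure reasons, since $\mu(h_n)>1/2$ — the halfspace $h_n$ is contained in none of $a,a^*,b,b^*$.

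First I would locate each $\hat h_n$ relative to $\hat a$. If $\hat h_n$ is not transverse to $\hat a$, then exactly one of the four inclusions of \eqref{eq:nested-halfspaces} (with $h=h_n$, $k=a$) holds. The two cases $h_n\subseteq a$ and $h_n\subseteq a^*$ are excluded because they would give $\mu(h_n)\leq\mu(a)=\mu(a^*)=1/2$, contradicting $h_n\in H_\mu^+$; the remaining two cases amount to $a\subseteq h_n$ and to $a^*\subseteq h_n$. I claim each of these holds only for finitely many $n$: if $a\subseteq h_{n_0}$, then $a\subseteq h_{n_0}\subseteq h_m$ for every $m\leq n_0$, so $\{n:a\subseteq h_n\}$ is an initial segment of $\N$; were it infinite, every $h_n$ would satisfy $a\subseteq h_n\subseteq h_1$, yet the $h_n$ are pairwise distinct, contradicting the finite interval condition for the pocset $\frakH(X)$. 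The identical argument applied to $a^*$ shows $\{n:a^*\subseteq h_n\}$ is finite too. Hence there is $N_a$ with $\hat h_n\pitchfork\hat a$ for all $n>N_a$, and symmetrically an $N_b$ with $\hat h_n\pitchfork\hat b$ for all $n>N_b$.

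Taking any $n>\max\{N_a,N_b\}$ now produces a hyperplane $\hat h_n$ transverse to both $\hat a$ and $\hat b$, contradicting the strong separation of $\hat a$ and $\hat b$. Therefore no such infinite strictly descending chain exists, i.e.\ $H_\mu^+$ satisfies the Descending Chain Condition. (The hypothesis $|H_\mu|=\8$ is not actually used in this argument; it simply records the regime in which the lemma is invoked inside the proof of Theorem~\ref{thm:bdrymap}.)

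The only step that is not a direct case-check against the inequality $\mu(h_n)>1/2$ and the definition of strong separation is the assertion that $a\subseteq h_n$ (and $a^*\subseteq h_n$) can occur for only finitely many $n$; this is where the finite interval condition of $\frakH(X)$ is combined with the fact that the chain is descending, and I expect it to be the main — though entirely routine — point of the proof.
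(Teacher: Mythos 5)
Your proof is correct and rests on exactly the same three ingredients as the paper's: a $\mu$-heavy halfspace cannot be contained in a balanced one, the finite interval condition on $\frakH(X)$ (only finitely many halfspaces between two nested ones), and the strong separation of a fixed pair in $H_\mu$. The paper merely deploys them in the opposite order --- it uses strong separation up front to write $H_\mu^+=P(h)\cup P(k)$, the heavy halfspaces parallel to $h$ resp.\ $k$, and passes to a subsequence of the chain inside one of these sets, whereas you invoke strong separation at the end by showing the tail of the chain would be transverse to both hyperplanes --- so the two arguments are essentially the same.
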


\begin{proof}
  Let $h,k \in \frakH(X)$ be a pair of strongly separated halfspaces in
  $H_\mu$ with $h\subset k$.  There is the following decomposition
\begin{equation}\label{eq:decomposition}
H_\mu^+ =P(h)\cup P(k)\,,
\end{equation}
where $P(h)$ and $P(k)$ are the $\mu$-heavy halfspaces that are
parallel respectively to $h$ and $k$.  Notice that, while $P(h)$ and
$P(k)$ are not necessarily disjoint, their union is the whole of
$H_\mu^+$ since $h$ and $k$ are strongly separated.
 
Let $h_n \in H_\mu^+$ be a descending chain, i.e. $h_{n+1} \subset h_n$.  
We must show that the chain terminates.  By passing to a
subsequence, we may assume that $h_n$ belong to the same set for all
$n\in\N$ and it is hence enough to consider for example the case
$h_n\in P(h)$ for all $n\in\N$.
 
Since $h_n\in H_\mu^+$ and $h\in H_\mu$, we cannot have that
$h_n\subset h$ or $h\subset\*h_n$.  Let us suppose that $h\subset h_n$.  
Since between $h$ and $h_n$ there are only finitely many
halfspaces, and since no $\mu$-heavy halfspace can be contained in a
balanced one, the chain must terminate.  Likewise the same argument
applied to $\*h\subset h_n$ shows that the chain must terminate.
\end{proof}

\begin{lemma}\label{lem:StronglySeparatedTrap}
  For every measure $\mu$ either $\hat H_\mu$ contains a pair of
  strongly separated hyperplanes or there exists a pair $h\in H_\mu^-$, $k\in H_\mu^+$ 
of halfspaces, such that the hyperplanes
  $\hat h$ and $\hat k$ are strongly separated and 
for every $x\in  H_\mu$, $\hat x\subset\*h\cap k$.
\end{lemma}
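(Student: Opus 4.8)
The plan is to establish the dichotomy by showing that, whenever $\hat H_\mu$ contains no pair of strongly separated hyperplanes, the trapping pair of the second alternative exists. Throughout I use that $X$ is irreducible, as reduced to in the proof of Theorem~\ref{thm:bdrymap} (the statement is only meaningful in that setting, since a reducible complex has no strongly separated hyperplanes at all). First recall the relevant structure: applying Lemma~\ref{lem:embedding} with $W:=H_\mu^+$ and $\frakH_W:=H_\mu$, the set $\~X_\mu:=\bigcap_{h\in H_\mu^+}h$ is an isometrically embedded convex subcomplex of $\~X$ whose set of crossing hyperplanes is exactly $H_\mu$, and by Lemma~\ref{HmuIsEuclidean} it is an interval, hence a Euclidean CAT(0) cube complex of some dimension $d$. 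Since the $\Gamma$-action is essential and non-elementary, $X$ is not Euclidean by Corollary~\ref{no Euclidean factors}, so $\~X_\mu\subsetneq\~X$; by Lemma~\ref{Hmufacts}\eqref{item:involution} and \eqref{item:nofacingtriplesinhmu} both $H_\mu^+$ and $H_\mu^-$ are non-empty, and for every $h\in H_\mu^-$ and $k\in H_\mu^+$ one has $\~X_\mu\subseteq\*h\cap k$ (because $\*h,k\in H_\mu^+$). If $d=0$ then $H_\mu=\varnothing$, the trapping condition is vacuous, and it suffices to take any strongly separated pair (which exists by Lemma~\ref{lem:ss}) and, using Lemma~\ref{Hmufacts}\eqref{item:involution}, relabel the halfspaces so that one is light and the other heavy; thus I may assume $d\geq1$.

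The next step is to reduce the target to a combinatorial statement about transversality. Suppose $h\in H_\mu^-$, $k\in H_\mu^+$, and suppose in addition that neither $\hat h$ nor $\hat k$ is transverse to any balanced hyperplane. Then the trapping condition holds: any balanced hyperplane $\hat x$ crosses $\~X_\mu$, hence meets both of the open halfspaces $\*h$ and $k$; being parallel to $\hat h$ it is then contained in $\*h$, and being parallel to $\hat k$ it is contained in $k$, so $\hat x\subseteq\*h\cap k$. Consequently it is enough to exhibit strongly separated hyperplanes $\hat h,\hat k$ with $h\in H_\mu^-$, $k\in H_\mu^+$, and with no balanced hyperplane transverse to $\hat h$ or to $\hat k$.

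To produce such a pair I would argue as follows. Write $\~X_\mu=\prod_{i=1}^d\~L_i$ for the decomposition of the interval $\~X_\mu$ into its one-dimensional factors, so $H_\mu=\bigsqcup_i H_\mu^{(i)}$ with hyperplanes in distinct factors mutually transverse. Call a hyperplane of $X$ with both halfspaces disjoint from $\~X_\mu$ \emph{peripheral} if it is transverse to no hyperplane crossing $\~X_\mu$; equivalently, its combinatorial projection to $\~X_\mu$ is a single vertex. Because $\~X_\mu$ is Euclidean and $X$ is finite-dimensional, a halfspace missing $\~X_\mu$ that is nested far enough beyond $\~X_\mu$ — so far that no hyperplane crossing $\~X_\mu$ reaches it — bounds a peripheral hyperplane; and since $X$ is not Euclidean there are facing triples of hyperplanes (Corollary~\ref{cor:dichotomy}), which provides room on both sides of $\~X_\mu$. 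Applying \cite[Proposition~5.1]{Caprace_Sageev} (in the form invoked for Lemma~\ref{lem:ss}) to halfspaces lying well outside $\~X_\mu$, together with the Double Skewering Lemma of \cite{Caprace_Sageev}, one obtains on a chosen side a descending pair of strongly separated peripheral hyperplanes; flipping once more if needed (the action is essential and non-elementary, so the Flipping Lemma applies) and using Lemma~\ref{Hmufacts}\eqref{item:involution} to pass to complements, one extracts a strongly separated pair $\hat h,\hat k$ with $h$ light, $k$ heavy, both peripheral. By the previous paragraph this pair satisfies the trapping condition, which completes the proof.

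The hard part is precisely this last construction: guaranteeing that the strongly separated pair can be chosen peripheral to $\~X_\mu$, i.e.\ transverse to no balanced hyperplane. The delicacy is that a balanced hyperplane may be an unbounded subcomplex of $X$ and a priori transverse to infinitely many hyperplanes, so ``pushing past its reach'' is not automatic; it requires combining the finite-dimensionality of $X$, the product-of-lines structure of the Euclidean complex $\~X_\mu$, and the nesting statements for strongly separated hyperplanes from \cite{Caprace_Sageev}. Once peripherality is secured, the verification in the second paragraph closes the argument, and the remaining bookkeeping (the degenerate case $d=0$ and the light/heavy labelling) is routine.
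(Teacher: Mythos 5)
Your reduction in the second paragraph is sound (a balanced hyperplane crosses $\~X_\mu\subseteq\*h\cap k$, so if it is parallel to both $\hat h$ and $\hat k$ it must lie in $\*h\cap k$), and your treatment of the degenerate case $H_\mu=\varnothing$ is acceptable, but the heart of the lemma is the construction of a strongly separated pair $h\in H_\mu^-$, $k\in H_\mu^+$ to which no balanced hyperplane is transverse, and you do not actually carry this out: your third paragraph is a plan whose decisive step you yourself flag as ``the hard part''. As written the sketch would not work. The assertion that a halfspace ``nested far enough beyond $\~X_\mu$, so far that no hyperplane crossing $\~X_\mu$ reaches it'' bounds a peripheral hyperplane is unfounded, since a balanced hyperplane need not be compact and may be transverse to an infinite descending chain of halfspaces, so there is no a priori ``reach'' to push past; this is exactly the difficulty you acknowledge without resolving. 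Moreover, your construction never uses the standing hypothesis of the second alternative, namely that $H_\mu$ contains no strongly separated pair; any correct argument must invoke it, and yours does not.

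For comparison, the paper turns precisely that hypothesis into the control you are missing. Given $x\in H_\mu$, irreducibility and \cite[Proposition~5.1]{Caprace_Sageev} give strongly separated halfspaces $k_1\subset x\subset k_2$; double skewering extends this to a chain $k_0\subset k_1\subset k_2\subset k_3$ of pairwise strongly separated halfspaces, and since $H_\mu$ contains no strongly separated pair, at most one member of such a chain can be balanced, so the outer ones are unbalanced and one further double skewering yields $h\in H_\mu^-$ and $k\in H_\mu^+$ strongly separated with $h\subset k_0\subset x\subset k_3\subset k$. Trapping an arbitrary $y\in H_\mu$ then follows from measure monotonicity together with strong separation: $k\subset y$ or $k\subset\*y$ is impossible because a heavy halfspace cannot sit inside a balanced one, and $\hat y\pitchfork\hat k$ is impossible because $\hat y$ would then be parallel to $\hat k_3$ and forced to contain it, again excluded by measure considerations; the argument for $\hat h$ is symmetric. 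If you want to salvage your ``peripheral pair'' strategy, this is the mechanism you would have to reproduce: strong separation along a chain, the no-strongly-separated-pair hypothesis to bound how many chain members are balanced, and measure comparisons to exclude transversality, rather than any notion of distance from $\~X_\mu$.
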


\begin{proof} Suppose that $H_\mu$ does not contain strongly separated
  halfspaces.  We first show that for every $x\in H_\mu$, there exist
  $k_0(x), k_3(x)\in H_\mu^\pm$ such that $\hat k_0(x)$ and $\hat  k_3(x)$ 
are strongly separated and $\hat x\subset \*k_0(x)\cap  k_3(x)$.  
For ease of notation we drop the dependence on $x$.

In fact, since $X$ is irreducible, given $x\in H_\mu$, there exist
halfspaces $k_1, k_2$ such that $\hat k_1$ and $\hat k_2$ are
strongly separated hyperplanes and $k_1\subset x\subset k_2$.  Then at
least one between the $k_1$ and $k_2$ must be in $H_\mu^\pm$, but
perhaps not both of them.  Then double skewer $k_2$ into $k_1$ and
$\*k_1$ into $\*k_2$ to obtain
$$
\gamma k_2\subset k_1\subset x\subset k_2\subset \gamma^{-1}k_1\,,
$$
where the pairs $\gamma k_2, k_2$ and $k_1,\gamma^{-1}k_1$ are
strongly separated.  Since all hyperplanes corresponding to pairs of
halfspaces in the sequence $k_0\subset k_1\subset k_2\subset k_3$ are
strongly separated, there can be at most one halfspace that belongs
to $H_\mu$.  By measure considerations, this halfspace can only be
either $k_1$ or $k_2$, so that $k_0,k_3\in H_\mu^\pm$, and the
assertion is proven.  In particular $\gamma k_1\in H_\mu^-$ and $\gamma^{-1}k_2\in H_\mu^+$.

Double skewer once again to get $h\in H_\mu^-$ and $k\in H_\mu^+$ with
$\hat h$, $\hat k$ strongly separated, such that
$$
h\subset k_0\subset x\subset k_3\subset k\,.
$$

We show now that, given any $y\in H_\mu$, we have $\hat y\subset\*h\cap k$.  
In fact, we cannot have $k\subset y$ or $k\subset\*y$,
since $y,\*y\in H_\mu$ and $k\in H_\mu^+$.  Analogously, we cannot
have that $\hat y\pitchfork \hat k$, because otherwise $\hat y$ could
not intersect $\hat k_3$ and hence it would have to contain it, which
is impossible again by measure considerations.  Hence $y\subset k$.
An analogous argument shows that $h\subset y$, thus completing the
proof.
\end{proof}

The above argument can be extended to show the following:

\begin{lemma}\label{lem:StrongSeparatedTrapPair}
  Let $\mu_i \in \Pn(\~X)$ be measures such that $\hat H_{\mu_i}$ does
  not contain strongly separated hyperplanes for all $i$ and 
  $H_{\mu_i} \cap  H_{\mu_j}\neq \varnothing$ for all $i,j$.  Then there exists a pair
  of halfspaces $h\subset k$ such that $\hat h, \hat k$ are strongly
  separated and, for every $x\in H_{\mu_j}$, $\hat x\subset \*h\cap k$.
\end{lemma}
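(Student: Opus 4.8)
The plan is to carry out the construction in the proof of Lemma~\ref{lem:StronglySeparatedTrap} once, for one of the measures, and then to fatten the resulting trap just enough that it works simultaneously for all the others; it suffices to do this for a finite subfamily $\mu_1,\dots,\mu_N$, which is the case needed in the applications. First I would apply Lemma~\ref{lem:StronglySeparatedTrap} to $\mu_1$: since $\hat H_{\mu_1}$ contains no strongly separated pair, it produces strongly separated halfspaces $h_1\subset k_1$ with $h_1\in H_{\mu_1}^-$, $k_1\in H_{\mu_1}^+$ and $\hat x\subset\*{h_1}\cap k_1$ for every $x\in H_{\mu_1}$. For $j=2,\dots,N$ choose $x_j\in H_{\mu_1}\cap H_{\mu_j}$ (nonempty by hypothesis); since $x_j\in H_{\mu_1}$ this gives $h_1\subset x_j\subset k_1$. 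Thus a single balanced halfspace of each $\mu_j$ already sits inside the slab $\*{h_1}\cap k_1$; this is the observation that lets ``the above argument'' extend.

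Next I would build a long chain of strongly separated hyperplanes around $h_1\subset k_1$. The Double Skewering Lemma of \cite{Caprace_Sageev}, applied to $h_1\subset k_1$, yields $\gamma\in\Gamma$ with $\gamma k_1\subset h_1\subset k_1\subset\gamma^{-1}h_1$; iterating $\gamma^{\pm i}$ and invoking the argument in the proof of Lemma~\ref{lem:ss} (strong separation of a pair is inherited by its nested translates) produces a bi-infinite nested chain $\cdots\subset\gamma h_1\subset h_1\subset x_j\subset k_1\subset\gamma^{-1}h_1\subset\cdots$ all of whose hyperplanes are pairwise strongly separated. Since $\hat H_{\mu_j}$ has no strongly separated pair, at most one member of this chain lies in $H_{\mu_j}$, for each of the finitely many $j$. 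Hence I can pick two halfspaces $h\subset h_0$ from the part of the chain contained in $h_1$, and two halfspaces $k_0\subset k$ from the part containing $k_1$, with none of $h,h_0,k_0,k$ lying in any $H_{\mu_j}$, so that $h\subset h_0\subset h_1\subset x_j\subset k_1\subset k_0\subset k$ for all $j$. Comparing masses, from $h,h_0\subset x_j\in H_{\mu_j}$ and $x_j\subset k_0,k$ one gets $h,h_0\in\bigcap_{j=1}^N H_{\mu_j}^-$ and $k_0,k\in\bigcap_{j=1}^N H_{\mu_j}^+$, and $\hat h,\hat h_0,\hat k_0,\hat k$ are pairwise strongly separated as members of the chain.

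It then remains to check that $h\subset k$ traps every $H_{\mu_j}$. Fix $j$ and $y\in H_{\mu_j}$. As in the proof of Lemma~\ref{lem:StronglySeparatedTrap}, $\hat y$ cannot be transverse to $\hat k$: otherwise, by strong separation of $\hat k$ and $\hat k_0$ it would be parallel to $\hat k_0$, the options $y\subset k_0$ and $\*y\subset k_0$ are incompatible with $\hat y\pitchfork\hat k$ since $k_0\subset k$, and the options $k_0\subset y$, $k_0\subset\*y$ contradict $k_0\in H_{\mu_j}^+$ together with $\mu_j(y)=\mu_j(\*y)=1/2$ — so in fact $\hat y\pitchfork\hat k_0$, impossible since $\hat k_0$ and $\hat k$ are strongly separated. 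Symmetrically, using $\hat h_0$, one sees $\hat y$ is not transverse to $\hat h$. Finally $k\subset y$ and $k\subset\*y$ are impossible since $k\in H_{\mu_j}^+$, and $y\subset h$ and $\*y\subset h$ are impossible since $h\in H_{\mu_j}^-$. Being parallel to both $\hat h$ and $\hat k$ while neither contained in $h$ nor containing $k$, the hyperplane $\hat y$ must lie strictly between them, i.e.\ $\hat y\subset\*h\cap k$. As $y\in H_{\mu_j}$ and $j$ were arbitrary, this is the conclusion of Lemma~\ref{lem:StrongSeparatedTrapPair}.

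The step I expect to be the main obstacle is the bookkeeping of orientations and masses in the last two paragraphs: one has to verify that every halfspace extracted from the chain is light, respectively heavy, for all $N$ measures at once — not merely unbalanced — and that the inner flankers $h_0,k_0$, which must sit \emph{between} $h,k$ and the $x_j$'s, faithfully reproduce the role of the auxiliary hyperplane $\hat k_3$ in the proof of Lemma~\ref{lem:StronglySeparatedTrap} for each $\mu_j$ simultaneously. The conceptual point, already isolated in the first paragraph, is that one trap for $\mu_1$ automatically encloses a balanced halfspace of every other measure, so enlarging it costs only finitely much room.
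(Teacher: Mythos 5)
Your construction of the trap itself is sound and in fact runs parallel to the paper's own proof: one application of Lemma~\ref{lem:StronglySeparatedTrap} to a distinguished measure, a chain of pairwise strongly separated halfspaces produced by repeated double skewering, and the key observation that a balanced halfspace $x_j\in H_{\mu_1}\cap H_{\mu_j}$ of every other measure already sits inside the first slab. The genuine gap is your opening reduction: you prove the statement only for a finite subfamily $\mu_1,\dots,\mu_N$ and assert that this ``is the case needed in the applications''. It is not. The lemma is stated for an arbitrary family, and in Corollary~\ref{no terminal elements intersection} it is applied to $\mu_0$ together with \emph{all} $\nu$ in a conull $\Gamma$-invariant set $B_0$, in particular all translates $\gamma_\ast\mu_0$ with $\gamma\in\Gamma$; the whole point there is that a single pair $h\subset k$ traps $\hat H_{\gamma_\ast\mu_0}$ for every $\gamma$ simultaneously, which is what contradicts flippability. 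A pair depending on a finite subfamily yields nothing. Finiteness enters your argument precisely where you choose $h,h_0,k_0,k$ avoiding every $H_{\mu_j}$, so as to make them strictly light, resp.\ heavy, for all $j$ at once: with infinitely many measures, each blocking up to two chain members, no such choice is guaranteed, and your final comparison genuinely uses strictness (if $k_0$ were merely balanced for $\mu_j$, the case $k_0\subset y$ is no longer excluded, cf.\ Lemma~\ref{Hmufacts}\eqref{item:measurezero}).

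The gap is repairable, and the paper's proof shows how: its verification never uses heaviness or lightness of the chain elements. With $x_j\in H_{\mu_0}\cap H_{\mu_j}$ trapped in the middle, any $y\in H_{\mu_j}$ crossing or lying beyond an outer hyperplane of the chain would force two of the intermediate, pairwise strongly separated hyperplanes to lie between $\hat y$ and $\hat x_j$, hence by Lemma~\ref{Hmufacts}\eqref{item:hnutightlynested} to be balanced for $\mu_j$, contradicting the hypothesis that $\hat H_{\mu_j}$ contains no strongly separated pair; since this makes no reference to the number of measures, one chain $h_0\subset\cdots\subset h_5$ works for the whole family at once. Within your own framework the same mechanism shows the avoidance is automatic: a chain member $c\subset h_1$ at chain distance at least two from $h_1$ cannot be balanced for any $\mu_j$, since $c\subset x_j$ would then force $h_1\in H_{\mu_j}$ as well and produce a strongly separated pair inside $H_{\mu_j}$. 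Two minor points: ``at most one member of the chain lies in $H_{\mu_j}$'' should be ``at most two'' (adjacent members of the form $\gamma^{i}k_1\subset\gamma^{i-1}h_1$ need not be strongly separated), and the chain is pairwise strongly separated only once you discard such adjacent pairs; both are harmless for your selection step.
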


\begin{proof}
Fix $\mu_0$ and apply Lemma~\ref{lem:StronglySeparatedTrap} 
to find halfspaces $h_2\subset h_3$ such that 
$\hat h_2, \hat h_3$ are strongly separated and 
\begin{equation}\label{eq:containment}
\hat x\subset \*h_2\cap h_3\,.
\end{equation}
Use the double skewering lemma several times to find a chain $h_0
\subset \cdots \subset h_5$ of halfspaces with corresponding pairwise
strongly separated hyperplanes.  We will use that
\eqref{eq:containment} holds in particular for every $x_j \in H_{\mu_0}\cap H_{\mu_i}$ 
to show that $\hat y\subset \*{h_0}\cap h_5$
for every $y \in H_{\mu_j}$ and every $j$.
 
Consider in fact $y\in H_{\mu_i}$. Observe that $\hat y$ can be
transverse to at most one $\hat h_i$, $0\leq i\leq 5$, since these are
pairwise strongly separated.  If it is transverse to any $\hat h_i$
for $1\leq i\leq 4$, we are done, since then $\hat y\subset\*h_0\cap h_5$.  
Suppose instead that $\hat y$ is transverse to $\hat h_0$.
Then $\hat h_1$ and $h_2$ are nested in between $\hat y$ and $\hat x_j$, 
which is impossible by Lemma~\ref{Hmufacts} part(\ref{item:hnutightlynested}) and because $\hat H_{\mu_j}$ does not contain strongly separated
hyperplanes.  A similar argument shows that $\hat y$ cannot be
transverse to $\hat h_5$.

If instead $\hat y$ is parallel to all $\hat h_i$, for $0\leq i\leq 5$, 
then we have to check that $\hat y\subset h_0$ and $\hat y\subset\*h_5$ cannot happen.  
If fact, if $\hat y\subset h_0$, as
before this would force $\hat h_1,\hat h_2$ to be in $\hat H_{\mu_j}$,
which is not possible because they are a strongly separated pair.  The
case in which $\hat y\subset\*h_5$ can be excluded analogously.
\end{proof}

\begin{cor} \label{no terminal elements intersection} Assume that for
  almost every $\mu\in\Pn(\~X)$, there are no strongly separated pairs in
  $H_\mu$.  If $H_\mu \cap H_\nu\neq \varnothing$ for almost every
  pair $(\mu,\nu)$ then the $\G$-action is non-essential.
\end{cor}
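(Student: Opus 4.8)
Here is how I would attack Corollary~\ref{no terminal elements intersection}.

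\textbf{Setup and reduction.} The plan is to produce, under the stated hypotheses, a $\Gamma$-equivariant measurable map from $\Pn(\~X)$ (or a power of it) into the countable set $2_f^{\frakH(X)}$ of finite subsets of halfspaces, and then invoke Corollary~\ref{count->not-ess}. So I argue by contradiction, assuming the $\Gamma$-action essential; as in the proof of Theorem~\ref{thm:bdrymap} I may also assume $X$ irreducible, and I fix the amenability boundary map $\psi\colon B\to\Pn(\~X)$, so that $\Gamma$ acts ergodically on $\Pn(\~X)$ and on $\Pn(\~X)\times\Pn(\~X)$ via the push-forwards. First, the hypothesis $H_\mu\cap H_\nu\neq\varnothing$ for a.e.\ pair forces $H_\mu\neq\varnothing$ a.e. By Lemma~\ref{HmuIsEuclidean} the complex $\~X(H_\mu)$ is an interval, hence $H_\mu$ is a union of at most $D=\dim X$ tightly nested pairwise incomparable chains, so by Remark~\ref{FinitelyManyTerminalInH-mu} the set $\tau(H_\mu)$ of its terminal elements has at most $2D$ elements; the assignment $\mu\mapsto\tau(H_\mu)$ is $\Gamma$-equivariant and measurable. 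If $\tau(H_\mu)\neq\varnothing$ on a positive measure set, ergodicity makes $\mu\mapsto\tau(H_\mu)$ an a.e.-defined $\Gamma$-equivariant measurable map into $2_f^{\frakH(X)}$ and Corollary~\ref{count->not-ess} gives the contradiction; similarly if $|H_\mu|<\infty$ on a positive measure set, $\mu\mapsto H_\mu$ does the job. Hence I may assume $H_\mu$ is infinite with no terminal elements a.e., i.e.\ $\~X(H_\mu)$ is isometric to $\Z^{j(\mu)}$ with the standard cubulation for some $j(\mu)\geq1$.

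\textbf{Trapping.} Now I choose a countable family $\{\mu_i\}_{i\in\N}$ of measures in the support of $\psi_*\vartheta$ for which $\hat H_{\mu_i}$ has no pair of strongly separated hyperplanes and $H_{\mu_i}\cap H_{\mu_j}\neq\varnothing$ for all $i,j$ --- this is exactly the input required by Lemma~\ref{lem:StrongSeparatedTrapPair}, which then yields a pair of halfspaces $h\subset k$ with $\hat h,\hat k$ strongly separated and $\hat x\subset\*h\cap k$ for every $x\in\bigcup_i H_{\mu_i}$. Thus every balanced halfspace of every $\mu_i$ lives inside the slab $\*h\cap k$ between two strongly separated hyperplanes.

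\textbf{Deriving the contradiction.} It remains to see that an infinite $\hat H_{\mu_i}$ cannot be trapped in such a slab. Fix $i$ and one coordinate factor of $\hat H_{\mu_i}\cong\hat\frakH(\Z^{j(\mu_i)})$: it is a bi-infinite chain $(\hat x_n)_{n\in\Z}$ with $x_n\subsetneq x_{n+1}$. Since $\hat h,\hat k$ are disjoint no hyperplane meets both, and since each $\hat x_n\subset\*h\cap k$ meets neither $h$ nor $\*k$, each $\hat x_n$ is parallel to both $\hat h$ and $\hat k$; the conditions ``$h\subset x_n$'' and ``$\*k\subset x_n$'' are therefore upward-closed in $n$, with thresholds $M^\ast,N^\ast$, and the finite interval condition on $\frakH(X)$ (no infinite chain between two comparable halfspaces) shows both thresholds are finite. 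For $M^\ast\leq n<N^\ast$ one gets $h\subset x_n\subset k$, placing those $x_n$ in the finite interval $[h,k]$; the remaining ``incoming'' and ``outgoing'' segments of the chain lie entirely inside the slab on the two sides. The hard part --- and the crux of the whole argument --- is to rule out these tail segments: I plan to use that $\hat H_{\mu_i}$ contains no strongly separated pair (so that arbitrarily long stretches $\hat x_n,\dots,\hat x_m$ of the chain admit a common transversal $\hat z_{n,m}$) together with the intersection hypothesis $H_{\mu_i}\cap H_{\mu_j}\neq\varnothing$, to show that such common transversals are forced either to cross $\hat h$ or $\hat k$ --- contradicting strong separation of $\hat h,\hat k$ --- or to produce cubes of unbounded dimension, contradicting finite dimensionality. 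Either way the bi-infinite chain cannot occur, so $|H_\mu|=\infty$ is impossible, and the proof is complete. (If a cleaner route through Lemma~\ref{lem:StronglySeparatedTrap} that bypasses this tail analysis is available, it should be substituted here.)
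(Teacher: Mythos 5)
Your setup and trapping step are essentially the paper's: the proof does hinge on Lemma~\ref{lem:StrongSeparatedTrapPair}, applied to measures whose balanced sets pairwise intersect and contain no strongly separated pairs, producing strongly separated halfspaces $h\subset k$ whose slab $h^*\cap k$ contains every hyperplane of every $\hat H_{\mu_i}$ (your preliminary reductions via Corollary~\ref{count->not-ess} are harmless but not needed). The genuine gap is your last step. You try to conclude by showing that an infinite $\hat H_{\mu_i}$ simply cannot sit inside such a slab, and you explicitly leave the crux (``rule out the tail segments'') as a plan. That purely geometric claim is not automatic: a slab between two strongly separated hyperplanes can perfectly well contain infinitely many hyperplanes, and even bi-infinite chains of them (picture an infinite branch of a tree hanging off the bridge between the two hyperplanes), so any proof would have to exploit the absence of strongly separated pairs inside $H_{\mu_i}$ in a delicate way; none of the lemmas you invoke supplies this, and as written the argument does not close.

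The paper sidesteps this entirely by bringing the group action back in at exactly this point. Choose a generic $\mu_0$ and a conull \emph{$\Gamma$-invariant} set $B_0$ with $H_{\mu_0}\cap H_\nu\neq\varnothing$ for all $\nu\in B_0$. Since $\mu\mapsto H_\mu$ is equivariant (Lemma~\ref{Hmufacts}), the trapping conclusion applies in particular to $\hat H_{\gamma_*\mu_0}=\gamma\hat H_{\mu_0}$ for every $\gamma\in\Gamma$, so for a fixed $\hat x\in\hat H_{\mu_0}$ the whole orbit $\gamma\hat x$ stays inside $h^*\cap k$. Then no $\gamma$ can flip $h$: if $\gamma h^*\subset h$, then $\gamma\hat x\subset\gamma h^*\subset h$, while also $\gamma\hat x\subset h^*\cap k$, which is absurd. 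This contradicts the Flipping Lemma of \cite{Caprace_Sageev} for an essential action without fixed points in $\partial_\sphericalangle X$, which is exactly the dichotomy the corollary asserts. So the missing idea in your proposal is to use equivariance of $\mu\mapsto H_\mu$ together with flippability to derive the contradiction, rather than attempting to prove that a trapped balanced set must be finite.
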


\begin{proof}
  Fix a generic measure $\mu_0$ with a generic $\G$-invariant set $B_0$
  such that for every $\nu\in B_0$ we have that $H_{\mu_0} \cap H_\nu  \neq \varnothing$.

  Lemma~~\ref{lem:StrongSeparatedTrapPair} implies the existence of a
  pair of halfspaces $h\subset k$ such that $\*h\cap k$ contains all
  the hyperplanes in $\hat H_{\mu}$ for $\nu \in B_0$, in particular
  those in $\gamma\hat H_{\mu_0}=\hat H_{\gamma_\ast\mu_0}$ for all
  $\gamma\in\Gamma$.  This shows that the two halfspaces $h$ and $k$
  are not $\G$-flippable, which contradicts either that the action is
  essential or that it is without fixed points on the CAT(0) boundary
  \cite[Theorem~4.1]{Caprace_Sageev}.
\end{proof}

%********************************************************************************

%********************************************************************************

%%%%%%%%%%%%%%%%%%%%%%%%%%%%%%%%%%%%%

\section{Proof of Theorem~\ref{thm_intro:main} and Theorem~\ref{thm:converse}}\label{sec:proofs}
Let $X$ be a finite dimensional CAT(0) cube complex,
$\Gamma\to\Aut(X)$ a non-elementary action and 
let $Y\subset X$ be the essential core of $X$.
Let $(B, \vartheta)$ be any strong $\Gamma$-boundary. 
In order to prove our main result, 
we constructed in \S~\ref{sec:bdrymap} a measurable $\Gamma$-equivariant boundary map 
$\f: B\to \partial X$ to the Roller boundary $\partial X$.
The precomposition of the median cocycle $c$ with
$\f: B \to \partial X$ yields a $\G$-equivariant cocycle
defined on $B^3$, which we will show is non-zero on a set of positive measure
(Proposition~\ref{Roller to Poisson} and Lemma~\ref{lem:nonvanishing1}). 
According to \eqref{eq:bm}, this ensures the
existence of a non-trivial cohomology class on $\G$.
Then \cite{Burger_Iozzi_app} ensures that the median class 
of the $\Gamma$-action $\rho^*({\tt m}_{(n,R)})\in\hb^2(\Gamma,\ell^p(\frakH(X)^n))$, 
$n\geq2$, corresponds to the cohomology class $c\circ\f^3$ on $B^3$ 
and hence does not vanish.

%%%%%%%%%%%%%%%%%%%%%%%%%%%%%%%%%%%%%%%%%%%%%%%%%%%%%%%%%%%
\subsection{Passing from a Cocycle on $\partial X$ to a Cocycle on $B$}\label{non-zero} 
We give a condition on a $\G$-equivariant cocycle $d: (\partial X)^3 \to E$ to guarantee 
that $d\circ \f^3 : B^3 \to E$ is non-zero on a set of positive measure,
where $\varphi:B\to\partial X$ is a measurable $\Gamma$-equivariant map.

Let $K$ be compact metrizable $\Gamma$-space.  
A measure $\lambda\in\Pn(K)$ is quasi-invariant
if $\lambda$ and $\gamma_\ast\lambda$ have the same null sets, 
for all $\gamma\in \Gamma$. 

If $h\in\frakH(X)$ is a halfspace, we set
\bqn
\~h:=\{x\in\~X:x\in h\}
\eqn
and
\bqn
\partial h:=\~h\cap\partial X\,.
\eqn

\begin{prop}\label{Roller to Poisson}
Let $\G$ be a group with a non-elementary and essential action $\Gamma\to\Aut(Y)$
on a finite dimensional CAT(0) cube complex $Y$. 
If $(B, \vartheta)$ is a strong $\Gamma$-boundary, 
let $\f : B \to \partial Y$ be a $\G$-equivariant measurable map.
Let $d: (\partial Y)^3 \to E$ be an everywhere defined alternating
bounded $\G$-equivariant Borel cocycle with values in a coefficient 
$\Gamma$-module $E$. 
If there exist halfspaces $h_i \in \frakH(Y)$ 
such that $d(\xi_1, \xi_2, \xi_3)\neq 0$ 
for every $(\xi_1, \xi_2, \xi_3) \in \partial h_1\times \partial h_2 \times \partial h_3$ 
then $d\circ \f^3$ is a non-trivial element of $\mathcal{Z}\linftyaw(B^3,E)^\Gamma$.
\end{prop}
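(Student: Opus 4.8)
The plan is to show that $d\circ\f^3$ is a nonzero element of $\mathcal{Z}\linftyaw(B^3,E)^\Gamma$ by proving that it is nonzero on a set of positive measure in $B^3$. Since $d$ is everywhere defined, alternating, bounded, Borel and $\Gamma$-equivariant, and $\f$ is measurable and $\Gamma$-equivariant, the composition $d\circ\f^3$ automatically lies in $\mathcal{Z}\linftyaw(B^3,E)^\Gamma$; the only thing to check is non-triviality. By the hypothesis on $d$, it suffices to exhibit halfspaces $h_1,h_2,h_3\in\frakH(Y)$ such that the set $\f^{-1}(\partial h_1)\times\f^{-1}(\partial h_2)\times\f^{-1}(\partial h_3)$ has positive measure in $B^3$, i.e.\ such that $\vartheta(\f^{-1}(\partial h_i))>0$ for $i=1,2,3$. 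Indeed, on that product set $d(\f(b_1),\f(b_2),\f(b_3))\neq0$ by assumption.

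First I would reduce to finding, for a \emph{single} suitable halfspace $h$, a positive-measure set of $b\in B$ with $\f(b)\in\partial h$; then I would play off $\Gamma$-equivariance and ergodicity of $B$ to upgrade this to three halfspaces simultaneously, after arranging that one can choose the three halfspaces in a common $\Gamma$-orbit or otherwise compatibly. The key input is that $\f$ takes values in $\partial Y$ (Theorem~\ref{thm:bdrymap}), so $\f(b)$ is always a genuine boundary ultrafilter; hence for each $b$ and each hyperplane $\hat h$, exactly one of $\f(b)\in h$ or $\f(b)\in\*h$ holds, and the measurable function $b\mapsto\1_{\partial h}(\f(b))$ is well-defined. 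By ergodicity of $\vartheta$, the set $\{b:\f(b)\in h\}$ has measure $0$ or $1$ up to adjusting $h$ by the involution, but it cannot have measure $1$ for \emph{every} $h$ in a chain, essentially because $\f(b)$ is a non-terminating ultrafilter and the action is essential: if $\f(b)\in h$ almost surely then, since $B$ contains a $\Gamma$-orbit and $h$ can be flipped (Flipping Lemma, using essentiality plus absence of fixed points in $\partial_\sphericalangle Y$), one finds $\gamma\in\Gamma$ with $\gamma h\subset\*h$, contradicting $\f(\gamma b)=\gamma\f(b)\in\gamma h\subset\*h$ while also $\f(\gamma b)\in h$ a.s. Thus for a flippable halfspace $h$ one gets $\vartheta(\f^{-1}(\partial h))>0$ and $\vartheta(\f^{-1}(\partial \*h))>0$.

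From here the remaining step is to choose $h_1,h_2,h_3$ as in the statement so that all three preimages have positive measure; using Lemma~\ref{lem:any hp is part of a facing triple} one can take the three halfspaces to be an $n$-disjoint facing triple inside a common $\Gamma$-orbit, say $h_2=\gamma h_1$, $h_3=\gamma'h_1$, so that positivity of $\vartheta(\f^{-1}(\partial h_1))$ transfers to $h_2$ and $h_3$ by $\Gamma$-equivariance and quasi-invariance of $\vartheta$. Then $\f^{-1}(\partial h_1)\times\f^{-1}(\partial h_2)\times\f^{-1}(\partial h_3)$ has positive product measure, and on it $d\circ\f^3$ is nonzero, completing the proof. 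I expect the main obstacle to be the a.s.\ dichotomy step — carefully justifying via ergodicity and the Flipping Lemma that $\f$ cannot be trapped on one side of every relevant halfspace — together with checking the measurability bookkeeping for $b\mapsto\1_{\partial h}(\f(b))$; the transfer to a triple via equivariance is then routine.
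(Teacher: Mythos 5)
Your core argument is exactly the paper's: using the Flipping Lemma together with the $\Gamma$-equivariance of $\f$ (equivalently, the quasi-invariance of the push-forward measure $\f_*\vartheta$ on $\partial Y$), one shows that no halfspace can contain $\f(b)$ for almost every $b$, whence $\vartheta\big(\f^{-1}(\partial h)\big)>0$ for \emph{every} $h\in\frakH(Y)$; the paper isolates precisely this as Lemma~\ref{lem:positive measure}, and the proposition then follows because the product set $\f^{-1}(\partial h_1)\times\f^{-1}(\partial h_2)\times\f^{-1}(\partial h_3)$ has positive $\vartheta^3$-measure and $d\circ\f^3$ is nonzero there by hypothesis. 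So the mathematical content of your second paragraph is correct and is the same route as the paper's.

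Two points in the write-up need repair. First, the appeal to ergodicity (``the set $\{b:\f(b)\in h\}$ has measure $0$ or $1$'') is unjustified: that set is not $\Gamma$-invariant, so ergodicity says nothing about it. Fortunately you never need a $0$--$1$ law: if $\vartheta\big(\f^{-1}(\partial h)\big)=0$ then $\f(b)\in\partial \*h$ a.s., and flipping $h$ by some $\gamma$ (so $\gamma\*h\subset h$) gives $\f(\gamma b)=\gamma\f(b)\in h$ for a.e.\ $b$ while also $\f(\gamma b)\in\*h$ for a.e.\ $b$, a contradiction; this alone yields positivity for every halfspace. Second, and more seriously, your last paragraph misreads the statement: the halfspaces $h_1,h_2,h_3$ are \emph{given} by the hypothesis on $d$, so you are not free to ``take'' them to be an $n$-disjoint facing triple in a common $\Gamma$-orbit --- the hypothesis gives no information about $d$ on boundaries of halfspaces you choose yourself, so as written that paragraph does not prove the stated proposition. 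The facing-triple lemma (Lemma~\ref{lem:any hp is part of a facing triple}) belongs to the later application of this proposition to the median cocycle, in Lemma~\ref{lem:nonvanishing1}, where it is used to verify the hypothesis on $d=c$. No ``transfer by equivariance'' is needed either: since the action is essential and non-elementary, every halfspace is flippable, so your own positivity claim applies directly to each of the three given $h_i$, and the product of three positive-measure subsets of $B$ has positive $\vartheta^3$-measure. With the final paragraph replaced by that one observation, your argument coincides with the paper's proof.
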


The proof is almost an immediate consequence of the following:

\begin{lemma}\label{lem:positive measure} Let $Y$ be a finite dimensional CAT(0) cube complex 
  and $\Gamma\to\Aut(Y)$ a non-elementary essential action. If
  $\lambda\in\Pn(\partial Y)$ is any quasi-invariant probability measure
  then $\lambda(\partial h)>0$ for any halfspace $h\in \frakH(Y)$.
\end{lemma}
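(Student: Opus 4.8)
The plan is to argue by contradiction, exploiting that a halfspace and its complement split the Roller boundary into two clopen pieces, combined with the Flipping Lemma. Fix a halfspace $h\in\frakH(Y)$. Since every ultrafilter on $\frakH(Y)$ contains exactly one of $h$ and $h^*$, there is a partition $\partial Y=\partial h\sqcup\partial h^*$ into clopen, hence Borel, subsets, so that $\lambda(\partial h)+\lambda(\partial h^*)=1$. I will assume that $\lambda(\partial h)=0$ and derive a contradiction.

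The key input is the Flipping Lemma (see \S~\ref{subsec:skew-flip}): since the $\Gamma$-action on $Y$ is essential and, being non-elementary, fixes no point of the visual boundary $\partial_\sphericalangle Y$, there exists $\gamma\in\Gamma$ with $\gamma h^*\subset h$. Because ultrafilters satisfy the consistency condition, any ultrafilter containing $\gamma h^*$ also contains $h$; hence $\partial(\gamma h^*)\subseteq\partial h$, and therefore $\lambda\big(\partial(\gamma h^*)\big)\leq\lambda(\partial h)=0$.

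Finally I would invoke quasi-invariance. From the definition of the $\Gamma$-action on $\~Y\subseteq 2^{\frakH(Y)}$ one has $\partial(\gamma h^*)=\gamma\cdot\partial h^*$, so $\lambda(\gamma\cdot\partial h^*)=0$; since $\lambda$ is quasi-invariant, a Borel set is $\lambda$-null if and only if its $\gamma$-translate is, whence $\lambda(\partial h^*)=0$ as well. But then $\lambda(\partial Y)=\lambda(\partial h)+\lambda(\partial h^*)=0$, contradicting $\lambda\in\Pn(\partial Y)$. Hence $\lambda(\partial h)>0$, as claimed.

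This argument is short, and I do not expect a genuine obstacle. The only points requiring (minor) care are: checking that the hypotheses of the Flipping Lemma are met — this is precisely where non-elementarity (in the form: no fixed point in $\partial_\sphericalangle Y$) and essentiality enter; and the routine bookkeeping that turns the set-theoretic inclusion $\gamma h^*\subset h$ of halfspaces into the inclusion $\partial(\gamma h^*)\subseteq\partial h$ of Borel subsets of $\partial Y$, together with the final quasi-invariance step. Everything else is formal.
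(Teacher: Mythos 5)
Your proposal is correct and follows essentially the paper's own argument: assume $\lambda(\partial h)=0$, apply the Flipping Lemma (valid since non-elementarity rules out fixed points in $\partial_\sphericalangle Y$ and the action is essential), translate the inclusion of halfspaces into an inclusion of boundary sets, and contradict quasi-invariance. The only cosmetic difference is that the paper flips $h^*$ and contradicts $\lambda(\partial h^*)=1$ directly, whereas you flip $h$ and conclude both $\partial h$ and $\partial h^*$ are null; the substance is the same.
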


\begin{proof}  If $\lambda(\partial h)=0$ then $\lambda(\partial\*h)=1$. 
By the Flipping Lemma ~\cite[Theorem~4.1]{Caprace_Sageev}, 
there exists $\gamma\in\Gamma$ such that $\*h\subset\gamma h$.  
But this is a contradiction
because ${\partial\*h}\subset\partial(\gamma h)=\gamma\partial h$, 
while, by quasi-invariance, $\lambda(\gamma\partial h)=0$.
\end{proof}

\begin{proof}[Proof of Proposition~\ref{Roller to Poisson}]
  If there exist halfspaces $h_i \in \frakH(Y)$ such that
  $d(\xi_1, \xi_2, \xi_3)\neq 0$ for every 
$(\xi_1, \xi_2, \xi_3)  \in \partial h_1\times \partial h_2 \times \partial h_3$ then
  $d\circ \f^3(x_1, x_2, x_3)\neq 0$ for almost every 
$(x_1, x_2, x_3)\in 
\f^{-1}( \partial h_1)\times \f^{-1}(\partial h_2) \times\f^{-1}(\partial h_3)=:S\subset B^3$.

By Lemma~\ref{lem:positive measure} applied to the quasi-invariant probability measure
$\varphi_*\vartheta\in\Pn(\partial Y)$, the set $S$ has positive $\vartheta^3$-measure
and hence $d\circ\varphi^3$ it is a non-trivial element of
  $\mathcal{Z}\linftyaw(B^3,E)^\Gamma$.
\end{proof}
 \subsection{Proof of Theorems~\ref{thm_intro:main} and~\ref{thm:converse}}\label{ProofMainThm} 
We start with a lemma ensuring that the cocycle is non-vanishing on a set of positive measure.
\begin{lemma}\label{lem:nonvanishing1}
Let $X$ be a finite dimensional CAT(0) cube complex, with a non-elementary action $\Gamma\to\Aut(X)$. 
Then for every essential $h\in\frakH(X)$ and for every $n\geq 2$, 
there is a positive measure set $A_{(h,n)}\subset\partial X^3$ and $R_h>0$ so that 
for every $R>R_h$, the restriction $c_{(n,R)}|_{A_{(h,n)}}$ does not vanish.
\end{lemma}
\begin{proof} Let $\gamma$ and $\gamma'\in\G$ be 
such that the triple $h,\gamma h, \gamma' h$ is an \"uber-parallel  facing triple in an orbit as in Lemma~\ref{lem:sss facing triple}. 
By the Flipping Lemma there exists $\mu\in\Gamma$ be such $\mu h^*\subset h$ and let
$\eta\in\Gamma$ be an element that skewers $\mu h$ into $\gamma h$, so as to obtain
\bqn
\mu h\supset h^*\supset\gamma h\supset\eta\mu h\,.
\eqn
The pair $\mu h $ and $\mu\eta h$ is \"uber-parallel and hence so is any consecutive pair in the sequence
\bqn
\mu h\supset\eta\mu h\supset\eta^2\mu h\supset\dots\supset\eta^{n-1}\mu h\,.
\eqn
Because of Lemma~\ref{lem:positive measure}, the set
\begin{equation*}
A_{(h,n)}:=\partial(\mu h^*) \times\partial(\eta^{(n-1)}\mu h)\times\partial(\gamma' h)\
\end{equation*}
has positive measure.  Since for every $(\xi_1,\xi_2,\xi_3)\in A_{(h,n)}$, the set
$[[\xi_3,\xi_2]]_{(n,R)}\smallsetminus([[\xi_3,\xi_1]]_{(n,R)}\cup[[\xi_1,\xi_2]]_{(n,R)})$ is not empty,
provided $R$ is larger that the translation length $R_h$ of $\eta$, 
Lemma~\ref{lem:sets} insures that $c_{(n,R)}|_{A_{(h,n)}}$ does not vanish.
\end{proof}

%%%%%%%%%%%%%%%%%%%%%%%%%%%%%%%%%%%%%%%%%
%
\begin{proof}[Proof of Theorem~\ref{thm_intro:main}] Let $Y$ be the essential core of the $\G$-action on $X$.
According to \cite{Burger_Iozzi_app}, 
the class of $c_{(n,R)}\circ\f^3$ is the isometric image 
of the median class ${\tt m}_{(n,R)}$ under the isomorphism \eqref{eq:bm},
where $\varphi:B\to\partial Y$ is the boundary map constructed in 
Theorem~\ref{thm:bdrymap}. Let $R_\G=\min\{R_h\,|\,h\hbox{ essential}\}$ and where the $R_h$'s are as defined in Lemma~\ref{lem:nonvanishing1}. Then, 
Proposition~\ref{Roller to Poisson} and Lemma~\ref{lem:nonvanishing1}
ensure that $c_{(n,R)}\circ\f^3$ is non-trivial if $n\geq2$ and $R\geq R_{\G}$.
\end{proof}
\begin{proof}[Proof of Theorem~\ref{thm:converse}] 
If the $\Gamma$-action is elementary, by definition there exists 
a finite orbit in $X\cup\partial_\sphericalangle X$.  
If the finite $\Gamma$-orbit is in $X$, then there is a subgroup 
of finite index $\Gamma_0<\Gamma$ that fixes a point $x\in X$.
Hence the median class of the $\Gamma_0$-action on $X$ vanishes.
Since the map $\hb^2(\Gamma,\ell^p(\frakH(X)^n))\to\hb^2(\Gamma_0,\ell^p(\frakH(X)^n))$
is injective \cite{Monod_book}, the median class of the $\Gamma$-action vanishes.

If on the other hand there exists a finite $\Gamma$-orbit in $\partial_\sphericalangle X$,
then we can apply Proposition~\ref{prop:visual-to-roller} and deduce that
either there is a finite orbit in $\partial X$ -- in which case we conclude as in the first part
and the median class of the $\Gamma$-action vanishes -- or there exists a
finite index subgroup $\Gamma'<\Gamma$ and 
a $\Gamma'$-invariant subcomplex $X'\subset\partial X$
in which the $\Gamma'$-action is non-elementary.  
By Theorem~\ref{thm_intro:main} the median class of the
$\Gamma'$-action on $X'$ does not vanish, and,
since $X'$ corresponds to a lifting decomposition of halfspaces, 
by Propositon~\ref{prop:restriction} 
it is the restriction of the median class of the $\Gamma$-action on $X$. 
\end{proof}
%%%%%%%%%%%%%%%%%%%%%%%%%%%%%%%%%%%%%

%%%%%%%%%%%%%%%%%%%%%%%%%%%%%%%%%%%%%

%%%%%%%%%%%%%%%%%%%%%%%%%%%%%%%%%%%%%

\section{Applications}\label{sec:appl}
\subsection{Rigidity of Actions}
\begin{proof}[Proof of Theorem~\ref{thm:appl_intro}]  
We will need our action to satisfy the property that
the barycenter of every face has trivial stabilizer.  This is a
natural generalization to CAT(0) cube complexes of the notion of no
edge inversions in the context of actions on trees.  For this reason,
we start with an arbitrary action and then pass to its
cubical subdivision.

Let $Y'$ be the cubical subdivision of $Y$. 
Observe that $\Aut(Y) \hookrightarrow \Aut(Y')$ and 
the image acts with the property that the barycenter of every face in $Y'$
has trivial stabilizer.  Moreover $\Gamma$ acts essentially on $Y'$
and $Y'$ is irreducible.

The proof follows very closely the strategy of the proof
in \cite{Shalom}.  Namely, if we denote by $e_i$ the identity in $G_i$, 
we aim to show that there is an $i\in\{1,\dots,\ell\}$ for which the set
\begin{equation*}
\begin{aligned}
Y_i:=\{x\in Y':\,&\text{ if }\gamma_m\in\Gamma\text{ such that }\pri(\gamma_m)\to e_i,\\
&\text{ then there exists }N>0\text{ such that }\gamma_mx=x\text{ for all }m\geq N\}
\end{aligned}
\end{equation*}
is not empty, where $\pri: G\to G_i$ is the $i$-th projection.  It is
easy to see that the set $Y_i$ is $\Gamma$-invariant.  Indeed, if
$\gamma_m\in\Gamma$ is a sequence such that $\pri(\gamma_m)\to e_i$,
then for every $\gamma\in \Gamma$ we have that
$\pri(\gamma^{-1}\gamma_m\gamma)\to e_i$.  Moreover $Y_i$ is convex
with respect to the CAT(0) metric:  in fact, let $x_1,x_2\in Y_i$ and let
$\gamma_m\in\Gamma$ a sequence such that $\pri(\gamma_m)\to e_i$.
Then, by definition of $Y_i$ there exists $N$ sufficiently large such
that $\gamma_mx_j=x_j$ for all $m\geq N$ and $j=1,2$. Since $\G$ acts
by isometries, if $m\geq N$ then $\g_m$ will also fix pointwise the
unique CAT(0) geodesic between $x_1$ and $x_2$.

We claim now that, if $Y_i$ is not empty, then it is in fact a subcomplex of $Y'$.  
To see this, let us write $Y'$ as the disjoint union of {\em $k$-dimensional faces}, 
where a $k$-dimensional face is the interior of a
$k$-dimensional cube if $1\leq k\leq\dim(Y)$ and is the boundary of a
$1$-dimensional cube if $k=0$.  Let $F_k$ be a $k$-dimensional face
which has non-empty intersection with $Y_i$. Then 
$F_k\subset Y_i$.  Since we are acting on the cubical subdivision
$Y'$ we have also that if $\gamma_m$ eventually fixes a face $F_k$,
then it fixes all lower dimensional faces that are contained in its
closure $\overline{F_k}$, thus showing that $\overline{F_k}\subset
Y_i$.  Thus $Y_i$ is a CAT(0) cube subcomplex of $Y'$.

We are then left to show that there exists $i\in\{1,\dots,\ell\}$ such
that $Y_i\neq\varnothing$.

Let $i\in\{1,\dots,\ell\}$ and let
$\H_i\subset\ell^2(\frakH(Y)^n)$ be the (possibly trivial)
subspace on which the isometric action of $\Gamma$ extends
continuously to $G$ via the projection $\pri:G\twoheadrightarrow G_i$.
By \cite[Theorem 16]{Burger_Monod_GAFA}, $\hb^2(\Gamma,\ell^2(\frakH(Y)^n)=\bigoplus_{i=1}^\ell\hb^2(G_i,\H_i)$, 
so that by Theorem~\ref{thm_intro:main} there exists an  $i\in\{1,\dots,\ell\}$ (not necessarily unique) such that $\H_i\neq \{0\}$.

  The space $Y_i$ will be constructed from this data as follows.

  Define on $\frakH(Y)^n$ an equivalence relation, namely if
  $s,s'\in\frakH(Y)^n$, we say that $s\sim s'$ if $f(s)=f(s')$ for
  all $f\in \H_i$.  Since these functions are square summable, all
  of the equivalence classes are finite, with the possible exception of the
  class where all functions in $\H_i$ vanish.  Moreover $\Gamma$
  permutes all the finite equivalence classes and leaves invariant the
  only infinite one (if it exists). Therefore, the complement $\frakH(Y)^n_0$
  of the infinite class in $\frakH(Y)^n$ is $\Gamma$-invariant.

\begin{claim}\label{claim:stab} Let $[s]\in\frakH(Y)^n_0/\sim$ be 
a finite equivalence class and 
  $\stab_\Gamma([s])$ its stabilizer.  If $\gamma_m\in\Gamma$ is a
  sequence such that $\pri(\gamma_m)\to e_i$, then there exists $N>0$
  such that for all $m\geq N$, $\gamma_m\in\stab_\Gamma([s])$.
\end{claim}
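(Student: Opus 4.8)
The plan is to show that membership of $\gamma$ in $\stab_\Gamma([s])$ is detected by a single square-summable function. Recall that $\Gamma$ acts on $\ell^2(\frakH(Y)^{(n)})$ by permuting the argument, $(\gamma f)(t)=f(\gamma^{-1}t)$, so that $\Gamma$ permutes the $\sim$-classes and $\gamma\in\stab_\Gamma([s])$ precisely when $\gamma^{-1}s\sim s$. I would first dispose of the trivial case: if $[s]$ is the class $Z$ on which every element of $\H_i$ vanishes, then $Z$ is $\Gamma$-invariant (directly from the $\Gamma$-invariance of $\H_i$), so $\stab_\Gamma([s])=\Gamma$ and there is nothing to prove. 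Hence assume $[s]\neq Z$ and fix, once and for all, an $f\in\H_i$ with $f(s)\neq 0$; such an $f$ exists by the definition of $Z$.

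Next I would extract the two inputs coming from the hypotheses. Since the isometric $\Gamma$-action on $\H_i$ extends to a continuous isometric action of $G$ through $\pri$ and $\pri(\gamma_m)\to e_i$, we have $\gamma_m g\to g$ in $\H_i$ for every $g\in\H_i$; as evaluation at $s$ is a bounded linear functional on $\ell^2(\frakH(Y)^{(n)})$, this gives
\begin{equation*}
g(\gamma_m^{-1}s)=(\gamma_m g)(s)\ \xrightarrow[m\to\infty]{}\ g(s)\qquad\text{for every }g\in\H_i.
\end{equation*}
Taking $g=f$: for all sufficiently large $m$ one has $|f(\gamma_m^{-1}s)|\geq |f(s)|/2$, whereas the set $T:=\{t\in\frakH(Y)^{(n)}:|f(t)|\geq |f(s)|/2\}$ is finite because $f\in\ell^2(\frakH(Y)^{(n)})$. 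Hence $\gamma_m^{-1}s\in T$ for all large $m$.

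To conclude: if some $t\in T$ equals $\gamma_m^{-1}s$ for infinitely many $m$, then evaluating the displayed limit along that subsequence, on which $g(\gamma_m^{-1}s)=g(t)$ is constant, forces $g(t)=g(s)$ for every $g\in\H_i$, that is $t\sim s$, so $t\in[s]$. Since $T$ is finite, there is an $N$ past which $\gamma_m^{-1}s$ assumes only such values, so that $\gamma_m^{-1}s\in T\cap[s]\subseteq[s]$ for all $m\geq N$; equivalently $\gamma_m\in\stab_\Gamma([s])$ for all $m\geq N$, as required.

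The argument is soft, and the finiteness hypothesis on $[s]$ is used only through the alternative that either $[s]=Z$ or $[s]\neq Z$ (an infinite class other than $Z$ cannot occur, since a nonzero $\ell^2$-function is not constant on an infinite set). The step I would present most carefully is the confinement of the orbit points $\gamma_m^{-1}s$ to the finite set $T$: this is where square-summability of $f$ is combined with the continuity of the $\H_i$-action through $\pri$. Everything afterwards uses only that $\sim$ is defined by $\H_i$.
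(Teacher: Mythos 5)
Your proof is correct and follows essentially the same route as the paper's: fix $f\in\H_i$ with $f(s)\neq0$, use continuity of the action extended through $\pri$ to get the values of $f$ on the orbit points converging to $f(s)$, confine those orbit points to a finite set via square-summability, and conclude that they eventually lie in $[s]$. The only (harmless) differences are that you work with $\gamma_m^{-1}s$ instead of $\gamma_m s$ and that you explicitly dispose of the class on which every element of $\H_i$ vanishes, a case the paper's argument leaves implicit.
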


We assume the claim for the moment and show
that $Y_i$ is not empty.  Fix $[s]\in \frakH(Y)^n_0$ and let $\hat
h$ be a hyperplane corresponding to one of the halfspaces appearing
in an element of $[s]$.  By Lemma~\ref{lem:ss}, there exists
$\gamma\in\Gamma$ such that $\hat h$ and $\gamma\hat h$ are strongly
separated.  By \cite[Lemma~2.2]{Behrstock_Charney}, 
the bridge $b(\hat h,\gamma\hat h)$ consists of a single
geodesic (of finite length).  Observe that, since $\frakH(X)^n_0$
is invariant, the class $[\gamma s]$ is finite.  
Hence there are finitely many halfspaces in the set
$\{h:\,h\in s',\text{ for } s'\in[s']\}$, both if $s'=s$ and if $s'=\gamma s$.
It follows that if we define $L:= \stab_\Gamma([s])\cap \stab_\Gamma([\gamma s])$,
then the $L$-orbit of $b(\hat h,\gamma\hat h)$ is finite, therefore bounded,
and its circumcenter is an $L$-fixed point.

If $\gamma_m\in\Gamma$ is a sequence such that
$\pri(\gamma_m)\to e_i$, Claim~\ref{claim:stab} implies that,
for $m$ large enough, the sequence $\gamma_m$ is in $L$
and hence fix the circumcenter, thus showing that $Y_i\neq\varnothing$.
Then Proposition~4.3 in \cite{Shalom} shows that the action of $\Gamma$ 
on $Y_i$ extends to $G$ by factoring through $G_i$.

Since the action of $\Gamma$ on $Y'$ is essential, 
then $Y_i=Y'$.  Observe however that, since $\Aut(Y)$ is closed in $\Aut(Y')$
in the topology of the pointwise convergence, 
then the extension of the action to $G$ is in $\Aut(Y)$.
\end{proof}

\begin{proof}[Proof of Claim~\ref{claim:stab}]  
Let $s\in\frakH(Y)^n_0$ and let $f\in\H_i$ so that $f(s)\neq0$.
Since $\lim_{m\to\infty}\|\gamma_mf-f\|_2=0$, 
then $\lim_{m\to\infty}f(\gamma_ms)=f(s)$.
Because $f$ is square summable, 
it takes finitely many values in a $|f(s)|/2$-neighborhood of $f(s)$, 
so that we conclude that there exists $N(f,s)$ such that 
$f(\gamma_ms)=f(s)$ for all $m\geq N(f,s)$.
In particular $\{\gamma_ms:n\geq1\}$ is finite.  

If $\gamma_{m_k}s\not\sim s$ for some subsequence $m_k$, then, 
by passing to a further subsequence, we may assume that $s_0:=\gamma_{m_k}s\not\sim s$.
But then there is $g\in\H_i$ such that $g(s)\neq0$ and $g(s_0)\neq g(s)$,
which, together with 
\bqn
g(s_0)=%g(\gamma_{m_k}s)=
\lim_kg(\gamma_{m_k}s)=g(s)\,,
\eqn
is a contradiction.
\end{proof}

The proof of the above theorem does rely on the assumption that $Y$ is irreducible 
and essential.  In general, we can pass to the essential core $Y$
of the $\Gamma$-action on $X$
and to its cubical subdivision $Y'$.
Let $\Gamma'<\Gamma$ be the finite index subgroup that acts on
each of the irreducible factors in $Y'$ and let $G'_i:=\~{\pri(\Gamma')}$.
By applying Theorem~\ref{thm:appl_intro}
to each of the irreducible factors we obtain that the action of $\Gamma'$
on $Y$ extends continuously to an action of $G'$,
where $G'=G'_1\times\dots\times G'_\ell$, by factoring via one of the factors.\qed

We have hence proven the following:
%
%The proof is identical to the one in 
%\cite{Shalom} and it is included here for completeness.
%Let us assume by contradiction that there is no integer $N>0$ such that 
%for all $n\geq N$, $\gamma_n\in \stab_\Gamma([s])$.
%Passing if necessary to a subsequence, we may assume that eventually either 
%$\gamma_n\not\equiv\gamma_m\mod{\stab_\Gamma([s])}$ with $\gamma_n,\gamma_m\notin\stab_\Gamma([s])$,
%or that the sequence stabilizes outside of $\stab_\Gamma([s])$, 
%namely $\gamma_n\equiv\gamma\mod{\stab_\Gamma([s])}$ for some $\gamma\notin\stab_\Gamma([s])$.
%In the first case, since $[s]\in\frakH(X)^{(n)}_0$, we can choose a function $f\in\H_i$ 
%such that $f(s)\neq0$.  
%Since $\frakH(X)^{(n)}_0$ is $\Gamma$-invariant, both $f(\gamma_ns)\neq0$ and $f(\gamma_ms)\neq0$.
%But since $[\gamma_ns]\neq[\gamma_ms]$ and $f$ is square integrable, then $f(\gamma_ns)\to0$
%as $n\to\8$ and this implies that $\liminf_{n\to\8}\|\gamma_n^{-1}f-f\|_2\to0$; on the other hand,
%since $\pri(\gamma_n)\to e_i$, then $\|\gamma_n^{-1}f-f\|_2\to0$, which is a contradiction.
%
%In the second case, we have that for all $f\in\H_i$, $f(\gamma_ns)=f(\gamma s)$,
%or $f(\gamma_n^{-1}\gamma s)=f(s)$.  By continuity of the $G_i$-action on $\H_i$,
%$f(s)=\lim_{n\to\8}f(\gamma_n^{-1}\gamma s)=f(\gamma s)$ for all $f\in\H_i$.  
%This is a contradiction, as $\gamma\notin\stab_\Gamma([s])$ and hence there are functions
%in $\H_i$ that separates $s$ and $\gamma s$.

\begin{cor}\label{thm:appl}
Let $X$ be a finite dimensional CAT(0) cube complex
and $\Gamma$ be an irreducible lattice in the 
product of locally compact groups $G_1\times\dots\times G_\ell=:G$.
Let $\Gamma\to\Aut(X)$  be a non-elementary action on $X$.
Then the action of $\Gamma$ on the essential core of $X$
virtually extends to a continuous action of an open finite index subgroup in $G$, 
by factoring via one of the factors.
\end{cor}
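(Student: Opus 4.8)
The plan is to deduce this from the irreducible, essential case, i.e.\ from Theorem~\ref{thm:appl_intro}, by a sequence of standard reductions. First I would replace $X$ by the $\Gamma$-essential core $Y\subset X$. Since the $\Gamma$-action on $X$ is non-elementary, in particular it has no finite orbit --- and hence no fixed point --- in $\partial_\sphericalangle X$, so by \cite[Proposition~3.5]{Caprace_Sageev} the set $Y$ is a non-empty $\Gamma$-invariant convex subcomplex on which $\Gamma$ acts essentially, and $\partial_\sphericalangle Y\subset\partial_\sphericalangle X$. Because $Y$ is convex and $\Gamma$-invariant it contains no finite $\Gamma$-orbit either, so the $\Gamma$-action on $Y$ remains non-elementary; by Corollary~\ref{no Euclidean factors} it has no Euclidean factors.

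Next I would pass to the cubical subdivision $Y'$ of $Y$, recording that $\Aut(Y)$ embeds in $\Aut(Y')$ as a subgroup that is closed for the topology of pointwise convergence on vertices, that the barycenter of every face of $Y'$ has trivial stabilizer, and that the $\Gamma$-action on $Y'$ remains essential and non-elementary. Let $Y'=Y'_1\times\dots\times Y'_m$ be the decomposition into irreducible factors (\cite[Proposition~2.6]{Caprace_Sageev}), and let $\Gamma'<\Gamma$ be the finite index subgroup, of index at most $m!$, that preserves each factor $Y'_j$. Then $\Gamma'$ has finite index in $\Gamma$, hence is a lattice in $G$; setting $G'_i:=\overline{\pri(\Gamma')}$ and using that $\overline{\pri(\Gamma)}=G_i$ by irreducibility of $\Gamma$, each $G'_i$ is a closed finite index --- hence open --- subgroup of $G_i$, so $\Gamma'$ is an irreducible lattice in the open finite index subgroup $G':=G'_1\times\dots\times G'_\ell$ of $G$. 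By Lemma~\ref{lem:hereditary properties}, the $\Gamma'$-action on each irreducible factor $Y'_j$ is again essential and non-elementary.

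I would then apply Theorem~\ref{thm:appl_intro} to each factor $Y'_j$ in turn: the $\Gamma'$-action on $Y'_j$ extends continuously to $G'$ by factoring through one of the projections $G'\twoheadrightarrow G'_{i(j)}$. Assembling these extensions coordinatewise produces a continuous extension to $G'$ of the $\Gamma'$-action on $Y'=\prod_j Y'_j$; restricting to the invariant subcomplex $Y\subset Y'$ and using that $\Aut(Y)$ is closed in $\Aut(Y')$ shows that the extended action takes values in $\Aut(Y)$. This is exactly the asserted virtual continuous extension of the action on the essential core to the open finite index subgroup $G'<G$, by factoring via one of the factors.

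The step I expect to need the most care is verifying that the reductions genuinely land inside the hypotheses of Theorem~\ref{thm:appl_intro}: that passing to the cubical subdivision preserves essentiality and non-elementarity (and realizes $\Aut(Y)$ as a closed subgroup of $\Aut(Y')$), and that $\Gamma'$ together with the closures $G'_i$ of its coordinate projections is really an irreducible lattice in the product $G'$, so that the superrigidity theorem applies verbatim to each irreducible factor $Y'_j$. The assembly of the factor-wise extensions into a single continuous homomorphism is then routine.
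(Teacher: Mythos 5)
Your proposal is correct and follows essentially the same route as the paper: pass to the $\Gamma$-essential core and its cubical subdivision, take the finite index subgroup $\Gamma'$ preserving each irreducible factor, set $G'_i:=\overline{\pri(\Gamma')}$ so that $\Gamma'$ is an irreducible lattice in the open finite index subgroup $G'=G'_1\times\dots\times G'_\ell$, and apply Theorem~\ref{thm:appl_intro} factor by factor, using that $\Aut(Y)$ is closed in $\Aut(Y')$ to land back in $\Aut(Y)$. You in fact spell out several verifications (density and finite index of the $G'_i$, persistence of essentiality and non-elementarity) that the paper leaves implicit.
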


\subsection{The Class $\mathcal{C}_{reg}$}

We now prove Corollary~\ref{cor:creg_intro} 
concerning the class of groups $\mathcal{C}_{reg}$.  
The idea of the proof  is as follows.  If the action
is proper, then in particular the vertex stabilizers are finite.  We
can then find, for each $n>1$ an $n$-tuple  $s\in \frakH(X)^n$
with finite stabilizers  such that 
$\ell^p(\G\cdot s) \hookrightarrow \ell^p(\Gamma)$. 
We then prove that $s$ can be chosen 
in such a way that this map does not vanish on the image of the cocycle.

\begin{proof}[Proof of Corollary~\ref{cor:creg_intro}]
Let $h\subset k$ be two strongly separated halfspaces in $\frakH$.  
Since the action of $\G$ is also by
CAT(0)-isometries, the stabilizer of $\{h,k\}$ must also stabilize
their CAT(0) bridge. Since the action is proper, the stabilizer of the
CAT(0) bridge $b(\hat h,\hat k)$ is finite.  It follows that if $s= (h_1, \dots, h_n)$ 
is a \"uber-separated sequence of halfspaces of consecutive distance less than or equal to $R$ then $\stab_\G(s)$, 
the stabilizer of $s$ in $\Gamma$ is finite. 

Fix $s\in \frakH(Y)^n$, and consider the   $\Gamma$-equivariant map
\bq\label{eq:sigma}
\sigma_s:\ell^p(\Gamma\cdot s)\to\ell^p(\Gamma)
\eq
defined by $\sigma_sf(\gamma):=f(\gamma s)$.  
Since $\|\sigma_sf\|_p=|\stab_\G(s)|\,\|f\|_p$, 
the map is injective.

%Since the $\G$ action on $Y$ is essential, by Lemma~\ref{lem:sss facing triple} for every $a\in \frakH$ there exists $b, c\in \G\cdot\{a\}$ so that $a,b,c$ are a facing \"uber triple.  

Now, observe that $\ell^p(\frakH(Y)^n) = \underset{s\in S}{\oplus} \ell^p(\G\cdot s)$ where $S$ is a choice of $\G$-orbit representatives. 

Since the cocycle $c: \~Y\times \~Y\times \~Y\to \ell^p(\frakH(Y)^n)$ is $\G$-equivariant, for every $s$, 
we may post-compose $c$ with $\sigma_s$ and precompose with the boundary map $\f: B \to \partial Y$ and obtain:

$$c_s = \sigma_s \circ c\circ\f^3: B\times B\times B\to \ell^p(\G\cdot s).$$

We now choose $s$ so as to guarantee that $c_s$ is non vanishing on a set of positive measure:

As in the proof of Lemma~\ref{lem:nonvanishing1}, we find fix $h\in \frakH(Y)$ and find $\g, \g'\in \G$ so that $h^*, \g h,\g'h$ form a  facing \"uber-parallel triple. 
Let $s = (h, \g h, \dots, \g^{n-1}h)$. Then, $c_s$ restricted to the set 

$$A_{(h,n)} = \partial h\times \partial(\g^{(n-1)}h^*) \times \partial(\g' h^*)$$

is nonzero, as is demonstrated in the proof of Lemma~\ref{lem:nonvanishing1}.

\end{proof}

%%% Local Variables: 
%%% mode: latex
%%% TeX-master: "final"
%%% End: 

%%%%%%%%%%%%%%%%%%%%%%%%%%%%%%%%%%%%%

%%%%%%%%%%%%%%%%%%%%%%%%%%%%%%%%%%%%%

%%%%%%%%%%%%%%%%%%%%%%%%%%%%%%%%%%%%%

\appendix

%!TEX root =  final.tex
\section{Some More Proofs}
\subsection{The Measurability of Certain Key Maps}\label{subsec:measurable maps}
The notation used in this section refers to \S~\ref{sec:bdrymap}.

\begin{lemma}\label{muassign}
  Let $I\subseteq [0,1]$ be a subinterval, possibly open, half open,
  or closed. Let $H^I_\mu = \{h \in \frakH(X): \mu(h) \in I\}$ The map $\Pn(\~X)\to 2^{\frakH(X)}$,
  defined by $\mu\mapsto H^I_\mu$ is measurable with
  respect to the weak-$*$ topology on $\Pn(\~X)$.  
  
  As a consequence,  the map $N :\Pn(\~X) \to \N\cup \{\8\}$  
defined by $N(\mu) = |H^I_\mu|$ is measurable.
\end{lemma}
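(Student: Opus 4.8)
\textbf{Proof plan for Lemma~\ref{muassign}.}

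The plan is to fix the halfspace $h\in\frakH(X)$ first and understand the scalar-valued map $\mu\mapsto\mu(h)$, then assemble the set-valued statement coordinatewise using the countability of $\frakH(X)$. The key point is that $\tilde h=\{x\in\~X:x\in h\}$ is both open and closed in $\~X$: recall $\~X$ sits inside the Bernoulli space $2^{\frakH(X)}$ with the product topology, and membership in $h$ is exactly the condition that the coordinate indexed by $h$ takes the value $1$, which is a clopen condition. Hence $\1_{\tilde h}$ is a continuous function on the compact metric space $\~X$, and by definition of the weak-$*$ topology on $\Pn(\~X)\subset C(\~X)^*$ the evaluation map $e_h:\Pn(\~X)\to[0,1]$, $e_h(\mu):=\mu(h)=\int_{\~X}\1_{\tilde h}\,d\mu$, is continuous, hence Borel measurable.

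Next I would express $\{\mu:h\in H^I_\mu\}=e_h^{-1}(I)$. Since $I$ is an interval (open, half-open, or closed), $e_h^{-1}(I)$ is Borel in $\Pn(\~X)$ — it is open if $I$ is open, closed if $I$ is closed, and an intersection of an open and a closed set otherwise. Now I want to conclude measurability of $\mu\mapsto H^I_\mu$ as a map into $2^{\frakH(X)}$. The natural Borel structure on $2^{\frakH(X)}$ (product topology, $\frakH(X)$ countable) is generated by the ``cylinder'' sets specifying the value of finitely many coordinates; equivalently, it suffices to check that for each fixed $h\in\frakH(X)$ the preimage of $\{A\subseteq\frakH(X):h\in A\}$ is Borel. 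But that preimage is precisely $e_h^{-1}(I)$, which we just showed is Borel. So the map is measurable.

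Finally, for the cardinality statement: $N(\mu)=|H^I_\mu|=\sum_{h\in\frakH(X)}\1_{e_h^{-1}(I)}(\mu)$, a countable sum of Borel indicator functions, hence Borel measurable as a map to $\N\cup\{\8\}$ (with its usual Borel structure). Equivalently, for each $k\in\N$, $\{\mu:N(\mu)\geq k\}=\bigcup\bigcap_{h\in F}e_h^{-1}(I)$, the union over all $k$-element subsets $F\subseteq\frakH(X)$ of the finite intersections, which is a countable union of Borel sets since $\frakH(X)$ is countable. I do not anticipate a genuine obstacle here; the only thing to be slightly careful about is recording explicitly that $\tilde h$ is clopen in $\~X$ so that $\1_{\tilde h}\in C(\~X)$ and weak-$*$ measurability reduces to continuity of finitely many coordinate evaluations — everything else is a routine unwinding of the product Borel structure on $2^{\frakH(X)}$.
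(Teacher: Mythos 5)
Your proposal is correct and follows essentially the same route as the paper: the clopen-ness of each halfspace in $\~X$ gives weak-$*$ continuity of $\mu\mapsto\mu(h)$, hence measurability of $E_I(h)=\{\mu:\mu(h)\in I\}$, and the set-valued map is then assembled through the product Borel structure on $2^{\frakH(X)}$ (you check the one-coordinate sub-basic sets, the paper the cylinder sets — the same argument). Your explicit treatment of $N(\mu)=|H^I_\mu|$ as a countable sum of indicators is a fine way to finish the step the paper leaves as an immediate consequence.
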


\begin{proof}
  Recall the definition of cylinder sets: Let $F_1, F_2 \in 2^{\frakH(X)}$ be
  two finite sets. The cylinder set associated to them is $$C(F_1,
  F_2) = \{H \in 2^{\h}: F_1 \subseteq H \text{ and } F_2 \subseteq
  H^c\}.$$

  Cylinder sets form a basis for the topology on $2^{\frakH(X)}$. Therefore it
  is sufficient to show that $$K(F_1,F_2) = \{\mu: H^I_\mu \in C(F_1,  F_2)\}$$ is measurable.

  To this end, observe that $h$ is open and closed as a subset of $\~X$ so that
  its characteristic function $\1_h$ is continuous. Therefore, the
  set $E_I(h) = \{\mu: \mu(h) \in I\}$, for $h\in \frakH(X)$ is weak-$*$
  open, half open, or closed according to $I$ and therefore is
  measurable. Then the following finishes the proof 
$$ K(F_1,F_2) = \Cap{h\in F_1}{} E_I(h) \cap \Cap{h\in F_2}{} (E_I(h))^c\,.
$$
\end{proof}

\begin{cor}\label{MeasN} The following maps are measurable:
\begin{enumerate}
\item\label{N1} $C_1:\Pn(\~X)\to\N\cup\{\infty\}$, defined by $C_1(\mu):=|H_\mu| ;$\\
\item\label{N2} $C_2:\Pn(\~X)\times\Pn(\~X)\to\N\cup\{\infty\}$, 
defined by $C_2(\mu,\nu):=|H_\mu\cap H_\nu| ;$\\
\item\label{T} $T:\Pn(\~X)\times\Pn(\~X)\to\N\cup\{\infty\}$, 
defined by $$T(\mu,\nu):=\big|\tau((H_\mu\cap H_\nu^+)\cup(H_\nu\cap H_\mu^+))\big| ;$$
\\
\item\label{Nnu} $N_\nu:\Pn(\~X)\to\N\cup\{\infty\}$, 
defined by $N_{\nu}(\mu):=|(H_\mu\times H_\nu)\cap\mathcal{S}|$, 
where 
$$\mathcal{S}:=\{(h_1, h_2)\in\frakH(X)\times\frakH(X) : h_1, h_2 \text{ are strongly separated}\}.
$$\\
\item\label{N3} $C_3:\Pn(\~X)\times\Pn(\~X)\to\N\cup\{\infty\}$, 
defined by 
$$C_3(\mu,\nu):=\big|[H_\mu^+\cap(H_\nu\smallsetminus H_\mu)] \cup [H_\nu^+\cap(H_\mu\smallsetminus H_\nu)]\big|.
$$\\
\end{enumerate}
\end{cor}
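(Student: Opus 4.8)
The plan is to write each of the five maps in Corollary~\ref{MeasN} as a composition of a weak-$\ast$ measurable map into a countable power of $\{0,1\}$, followed by manifestly Borel maps between such spaces; measurability then follows from the facts that $2^{\frakH(X)}$ and $2^{\frakH(X)\times\frakH(X)}$ are standard Borel spaces and that on a countable index set the relevant set-theoretic operations are continuous. Throughout, $\Pn(\~X)$ carries the Borel $\sigma$-algebra of its weak-$\ast$ topology.

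First I would record the building blocks. By Lemma~\ref{muassign} applied to the subintervals $I=\{1/2\}$, $I=(1/2,1]$ and $I=[0,1/2)$, the assignments $\mu\mapsto H_\mu$, $\mu\mapsto H_\mu^+$ and $\mu\mapsto H_\mu^-$ are weak-$\ast$ measurable into $2^{\frakH(X)}$ (recall $\mu(h)+\mu(\*h)=1$, so $H_\mu=H^{\{1/2\}}_\mu$); hence so is $\mu\mapsto H_\mu^\pm=H_\mu^+\cup H_\mu^-$, and item \eqref{N1} is exactly the final assertion of Lemma~\ref{muassign} with $I=\{1/2\}$. Next, because $\frakH(X)$ is countable, the product topology on $2^{\frakH(X)}$ and on $2^{\frakH(X)\times\frakH(X)}$ is generated by the coordinate functions, each of which depends on a single coordinate; consequently the Boolean operations $\cap$, $\cup$, $\smallsetminus$, the map $(H,K)\mapsto(H\times K)\cap\mathcal S$ for a fixed $\mathcal S\subseteq\frakH(X)\times\frakH(X)$, and the diagonal map $\mu\mapsto(\mu,\mu)$ are all continuous. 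Finally the cardinality map $|\cdot|:2^{\frakH(X)}\to\N\cup\{\8\}$ is Borel, since $\{H:|H|\geq k\}=\bigcup_{|F|=k}C(F,\varnothing)$ is open.

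The one point that is not quite immediate is that the terminal-elements map $\tau:2^{\frakH(X)}\to 2^{\frakH(X)}$ of \eqref{eq:terminal} is Borel. To see this, fix $h\in\frakH(X)$. Unwinding the definition of terminality, $h\in\tau(\frakH')$ iff $h\in\frakH'$ and $h$ is minimal or maximal in $\frakH'$; and $h$ fails to be minimal in $\frakH'$ precisely when there is some $k\in\frakH'$ with $k\subset h$ or $\*k\subset h$. Since the set $K_h:=\{k\in\frakH(X):k\subset h\text{ or }\*k\subset h\}$ is determined by $X$ alone and does not involve $\frakH'$, one has
\[
\{\frakH'\in 2^{\frakH(X)}: h\text{ is minimal in }\frakH'\}=\{\frakH':h\in\frakH'\}\cap\bigcap_{k\in K_h}\{\frakH':k\notin\frakH'\},
\]
a countable intersection of cylinder sets, hence Borel; replacing $h$ by $\*h$ gives the analogous statement for maximality. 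Therefore $\{\frakH':h\in\tau(\frakH')\}$ is Borel for every $h$, which is precisely the Borel measurability of $\tau$.

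With these facts in place the five maps assemble as compositions: item \eqref{N2} is $(\mu,\nu)\mapsto(H_\mu,H_\nu)\mapsto H_\mu\cap H_\nu\mapsto|H_\mu\cap H_\nu|$; item \eqref{N3} is the analogous composition starting from $(\mu,\nu)\mapsto(H_\mu^+,H_\mu,H_\nu^+,H_\nu)$ and using $\cap$, $\cup$, $\smallsetminus$; item \eqref{T} is the same but inserting the Borel map $\tau$ between the set operations and $|\cdot|$; and for item \eqref{Nnu} the two-variable map $(\mu,\nu)\mapsto|(H_\mu\times H_\nu)\cap\mathcal S|$ is the composition of $(\mu,\nu)\mapsto(H_\mu,H_\nu)$ with the continuous map $(H,K)\mapsto(H\times K)\cap\mathcal S$ and with $|\cdot|$, from which its slices for fixed $\nu$ — and, precomposing with the continuous diagonal, also $\mu\mapsto|(H_\mu\times H_\mu)\cap\mathcal S|$ — are measurable. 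I do not anticipate any real obstacle; the only step requiring care is the description of $\tau$ above, together with the routine but worth-stating observation that on a countable index set all the set operations in play are continuous, not merely Borel.
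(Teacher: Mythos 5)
Your argument is correct and, for items (1), (2), (4) and (5), essentially coincides with the paper's: everything reduces to Lemma~\ref{muassign} (applied with $I=\{1/2\}$, $I=(1/2,1]$, $I=[0,1/2)$, using $\mu(h)+\mu(\*h)=1$), together with the observation that on the countable index set $\frakH(X)$ the Boolean operations, the map $(H,K)\mapsto(H\times K)\cap\mathcal S$ for the fixed set $\mathcal S$, and the cardinality map $|\cdot|:2^{\frakH(X)}\to\N\cup\{\8\}$ are Borel; the paper compresses this into ``products and compositions of measurable maps are measurable'' and, for (4), the remark that $\mathcal S$ is a fixed ($\Gamma$-invariant) subset of $\frakH(X)\times\frakH(X)$. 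The genuine divergence is item (3): the paper handles the terminal-elements map by invoking Lemma~\ref{2hto2X} (measurability of $2^{\frakH(X)}\to 2^X$, $\1_S\mapsto\1_{\bigcap_{h\in S}h}$), leaving the link with $\tau$ implicit, whereas you prove directly that $\tau:2^{\frakH(X)}\to2^{\frakH(X)}$ is Borel by exhibiting $\{\frakH':h\in\tau(\frakH')\}$ as a countable Boolean combination of cylinder sets. Your route is more self-contained and makes explicit exactly which coordinates of $\frakH'$ the terminality of $h$ depends on, which is arguably cleaner than the paper's citation. One small correction: in your obstruction set $K_h$ the inclusions must be proper (equivalently, exclude $k\in\{h,\*h\}$); with non-strict inclusions one would have $h,\*h\in K_h$ and your displayed intersection would be empty. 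With $K_h=\{k\in\frakH(X):k\subsetneq h\ \text{or}\ \*k\subsetneq h\}$ (and the mirrored set for maximality, which by \eqref{eq:nested-halfspaces} is $K_{\*h}$), the identity
\[
\{\frakH'\in2^{\frakH(X)}:h\ \text{is minimal in}\ \frakH'\}=\{\frakH':h\in\frakH'\}\cap\bigcap_{k\in K_h}\{\frakH':k\notin\frakH'\}
\]
holds and the rest of your argument goes through verbatim.
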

%!TEX root =  

\begin{proof}
  The proofs of statements (1), (2), and (5) are consequences of Lemma
 ~\ref{muassign} and the observation that the product and composition
  of $\G$-equivariant measurable maps is again $\G$-equivariant and
  measurable. Statement (4) follows by considering the fact that
  $\mathcal{S}$ is a $\G$-invariant set. 
Statement (3) follows from the following result. 
\end{proof}

\begin{lemma}\label{2hto2X}
 The map $p:2^{\frakH(X)} \to 2^X$ defined by $\1_S \mapsto \1_{\Cap{h\in S}{} h}$ is measurable.
\end{lemma}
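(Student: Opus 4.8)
The plan is to verify measurability of $p$ against a generating family for the Borel $\sigma$-algebra of $2^X$. Since the halfspace set $\frakH(X)$ is countable, so is the vertex set $X$, and both $2^X$ and $2^{\frakH(X)}$ are the usual Bernoulli spaces; in particular the Borel $\sigma$-algebra of $2^X$ is generated by the coordinate sets $E_v:=\{T\in 2^X:v\in T\}$, $v\in X$ (their complements and finite intersections producing all cylinder sets). Hence it is enough to prove that $p^{-1}(E_v)$ is Borel in $2^{\frakH(X)}$ for each vertex $v$.

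Next I would compute this preimage explicitly. By the duality $v\in h\iff h\in v$ between vertices and principal ultrafilters, a vertex $v$ lies in $\Cap{h\in S}{}h$ if and only if $v\in h$ for every $h\in S$, that is, if and only if $S\subseteq v$, where $v$ is regarded as the principal ultrafilter $\{h\in\frakH(X):v\in h\}$. Consequently
\begin{equation*}
p^{-1}(E_v)=\{S\in 2^{\frakH(X)}:S\subseteq v\}=\bigcap_{k\in\frakH(X)\smallsetminus v}\{S\in 2^{\frakH(X)}:k\notin S\}\,.
\end{equation*}
Each set $\{S\in 2^{\frakH(X)}:k\notin S\}$ is a basic clopen cylinder, so $p^{-1}(E_v)$ is a countable intersection of clopen sets, hence closed, and in particular Borel. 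Since this holds for every $v\in X$ and the $E_v$ generate the Borel structure of $2^X$, the map $p$ is measurable.

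I do not expect a genuine obstacle here; the only points that require a little care are the reduction to a generating family (so as to avoid handling arbitrary Borel subsets of $2^X$ directly) and the use of countability of $\frakH(X)$, which is what keeps the displayed intersection inside the Borel $\sigma$-algebra rather than merely in a larger $\sigma$-algebra.
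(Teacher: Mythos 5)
Your proof is correct, but it takes a genuinely different route from the paper's. The paper fixes an enumeration $\frakH(X)=\{h_n:n\in\N\}$, observes that the truncated maps $p_N\circ\pi_N:2^{\frakH(X)}\to 2^X$ (intersecting only over $S\cap\{h_1,\dots,h_N\}$) are continuous because they factor through the finite discrete space $2^{\{h_1,\dots,h_N\}}$, and then obtains $p$ as the pointwise (monotone) limit of these continuous maps, hence Borel. You instead test measurability on the subbasic coordinate sets $E_v=\{T\in 2^X: v\in T\}$ and use the vertex--ultrafilter duality $v\in h\iff h\in v$ to identify $p^{-1}(E_v)=\{S: S\subseteq v\}$ as an intersection of clopen cylinders, hence closed. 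Both arguments are sound; yours avoids choosing an enumeration and a limiting procedure, and it gives slightly finer information (preimages of subbasic open sets are closed, so $p$ is of low Borel complexity), while the paper's approximation argument is the one reused in spirit in Lemma~\ref{cocycleBorelMeas}, where the same truncation scheme establishes Borel measurability of the cocycle. Two small points to tighten: your reduction needs the vertex set to be countable so that the $E_v$ generate the Borel $\sigma$-algebra of the metrizable space $2^X$; this does follow from countability of $\frakH(X)$, but via connectedness of $X$ (any two vertices are separated by only finitely many hyperplanes), which deserves a word. Also, countability of $\frakH(X)\smallsetminus v$ is not what makes the displayed intersection Borel --- an arbitrary intersection of closed sets is already closed; countability matters only for the generating-family reduction just mentioned.
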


\begin{proof}
  Choose an enumeration $\frakH(X)= \{h_n: n \in \N\}$. Recall that the
  standard projection $\pi_N : 2^{\frakH(X)} \to 2^{\{h_1, \dots, h_N\}}$ is
  continuous. Next define $p_N : 2^{\{h_1, \dots, h_N\}}\to 2^X$ as
  $\1_S \mapsto \1_{(\Cap{h\in S}{} h)}$ which is also continuous as
  $2^{\{h_1, \dots, h_N\}}$ is endowed with the discrete topology.

  Observe that $p(\1_S) = \sup\{p_N\circ \pi_N(\1_S) :n\in \N\}$ and
  is hence measurable as the supremum of continuous functions.
\end{proof}

Recall that in \eqref{eq:e_p} we set  the notation 
$\mathcal E_p:=\ell^q(\frakH(X)^{n})$ if $1/p+1/q=1$ and $1<p<\infty$,
and $\mathcal E_1$ to be the Banach space of functions on $\frakH(X)^{n}$
that vanish at infinity.

\begin{lemma}\label{cocycleBorelMeas}
for all $1\leq p<\infty$, the cocycle $c: \~X^3 \to \ell^p(\mathfrak{H}^{n})$ is a Borel map,
where $\~X^3\subset 2^{\frakH(X)}$ has the induced product topology 
and $\ell^p(\frakH(X)^{n})$ has the weak-$*$ topology as the dual of $\mathcal E_p$. 
\end{lemma}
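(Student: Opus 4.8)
The plan is to reduce the weak-$*$ Borel measurability of $c$ to the much easier statement that, for each fixed $s\in\frakH(X)^{(n)}$, the \emph{coordinate function} $(u_1,u_2,u_3)\mapsto c(u_1,u_2,u_3)(s)$ is Borel on $\~X^3$, together with a uniform bound on the support of $c$. Indeed, recall from Proposition~\ref{prop:bounded1} that $\|c(u_1,u_2,u_3)\|_p$ is uniformly bounded, so $c$ takes values in a fixed bounded ball $B_R\subset\ell^p(\frakH(X)^{(n)})$; since $\mathcal E_p$ is separable, the weak-$*$ topology on $B_R$ is metrizable, and its Borel $\sigma$-algebra is generated by the evaluation maps $\langle\,\cdot\,,\varphi\rangle$ for $\varphi$ ranging over a countable dense subset of $\mathcal E_p$. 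Because $\frakH(X)^{(n)}$ is countable, it suffices to check that each evaluation against a finitely supported $\varphi$ is Borel, and this in turn reduces to the Borel measurability of each coordinate function $(u_1,u_2,u_3)\mapsto c(u_1,u_2,u_3)(s)$, $s\in\frakH(X)^{(n)}$.

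So the key step is: for fixed $s=(h_1,\dots,h_n)\in\frakH(X)^{(n)}$, the function $(u_1,u_2,u_3)\mapsto c(u_1,u_2,u_3)(s)$ is Borel. By the formula \eqref{cocycle}, this function is an integer combination of the six indicator functions $\1_{[[u_a,u_b]]}(s)$. Now $s\in[[u_a,u_b]]$ if and only if every halfspace $h_i$ appearing in $s$ satisfies $h_i\in u_b$ and $h_i\notin u_a$; thinking of $\~X\subset 2^{\frakH(X)}$ with the product topology, the condition ``$h_i\in u$'' defines a \emph{clopen} subset of $\~X$ (it is a single cylinder coordinate). Hence $\{(u_a,u_b):s\in[[u_a,u_b]]\}=\bigcap_{i=1}^n\big(\{u_b:h_i\in u_b\}\times\{u_a:h_i\notin u_a\}\big)$ is clopen in $\~X\times\~X$, so $\1_{[[u_a,u_b]]}(s)$ is a continuous function of $(u_a,u_b)$. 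Pulling back along the continuous coordinate projections $\~X^3\to\~X\times\~X$, each of the six terms in \eqref{cocycle} is continuous, and therefore $(u_1,u_2,u_3)\mapsto c(u_1,u_2,u_3)(s)$ is continuous — in particular Borel.

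Putting the pieces together: fix a countable dense $\{\varphi_k\}\subset\mathcal E_p$. For each $k$, the map $(u_1,u_2,u_3)\mapsto\langle c(u_1,u_2,u_3),\varphi_k\rangle=\sum_{s}c(u_1,u_2,u_3)(s)\,\varphi_k(s)$ is, by the previous paragraph and dominated convergence (the summands are uniformly dominated using the uniform support bound of Proposition~\ref{prop:bounded1} and $|c(\cdot)(s)|\le 1$), a countable sum of continuous functions converging uniformly, hence continuous, hence Borel. Since these maps generate the weak-$*$ Borel structure of $B_R$, the map $c:\~X^3\to\ell^p(\frakH(X)^{(n)})$ is Borel, as claimed. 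The only mild subtlety — and the one point worth spelling out carefully — is the reduction of weak-$*$ Borel measurability to the countable family of evaluations, which relies on $c$ having image in a norm-bounded set (so that the weak-$*$ topology there is metrizable and second countable) and on separability of the predual $\mathcal E_p$; both are already established in \S\ref{cocycle on boundary X} and Proposition~\ref{prop:bounded1}. I do not anticipate a genuine obstacle here, only the bookkeeping of these standard measure-theoretic reductions.
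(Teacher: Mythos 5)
Your proof is correct and follows essentially the same route as the paper: both establish Borel measurability by pairing $c$ with elements of the predual $\mathcal E_p$ and exploiting that, for a fixed $s\in\frakH(X)^{(n)}$, membership of $s$ in the sets $[[u_a,u_b]]$ depends only on finitely many cylinder coordinates of the ultrafilters. The only difference is cosmetic: the paper approximates $c$ by truncations $c_N$ factoring through the finite projections $2^{\frakH_N}$ and realizes each evaluation $\langle c(\cdot),f\rangle$ as a pointwise limit of continuous functions, whereas you show each coordinate $c(\cdot)(s)$ is itself continuous and then invoke the uniform support bound of Proposition~\ref{prop:bounded1} together with the metrizability of the weak-$*$ topology on bounded balls -- a reduction the paper leaves implicit.
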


\begin{proof}
%For any subset $E\subset\mathfrak{H}$, we define $E^{n}:\{s\in\mathfrak{H}^{n}: s\subset E\}$.
  Choose an enumeration of $\mathfrak{H}$ and let 
$\mathfrak{H}_N :=  \{ h_1, \dots, h_N\}$.  Let us define the finite space 
\bqn
  \mathfrak{H}^{n}_N:= \{s\in \mathfrak{H}^{n}: \,s\subset
  \mathfrak{H}_N\}\,, 
\eqn
 and, for any subsets $E, F \subset\mathfrak{H}$, the set 
\bqn (E\smallsetminus
  F)^{n}_N:=\{s\in\mathfrak{H}^{n}_N:\,s\subset E\text{ and
  }s\not\subset F\}\,.
  \eqn
 %the elements of $\mathfrak{H}^{n}_N$ that are in $E$ but not in $F$.
  The map $c^+_N: 2^\mathfrak{H}\times 2^\mathfrak{H} \times
  2^\mathfrak{H} \to C_0(\mathfrak{H}^{n})$ defined as
$$c^+_N(F_1,F_2,F_3) := \1_{(F_3\smallsetminus F_2)^{n}_N}
                     + \1_{(F_1\smallsetminus F_3)^{n}_N}
                     + \1_{(F_2\smallsetminus F_1)^{n}_N}$$
factors through the canonical projection $2^\mathfrak{H} \to 2^{\mathfrak{H}_N}$ on triples 
and hence is continuous. 
Then the map $c_N(F_1,F_2,F_3) := c_N^+(F_1,F_2,F_3) - c_N^+(F_1,F_3,F_2)$ is also continuous and 
in particular is continuous 
when restricted to the subset $\~X^3 \subset \(2^\mathfrak{H}\)^3$. 
(Here we use the identification of a vertex $v\in X$ with
the principal ultrafilter containing $v$.)

For any $f \in \mathcal E_p$, the function on $\~X^3$ defined by 
$$(x,y,z)\mapsto\<c_N(x,y,z), f\>$$
is continuous.  Its pointwise limit 
$$(x,y,z)\mapsto\Lim{N\to \8} \<c_N(x,y,z), f\>$$
is measurable and, in fact, 
$$\<c(x,y,z), f\> = \Lim{N\to \8} \<c_N(x,y,z), f\>\,,$$
thus showing that the cocycle $c$ restricted to $\~X^3$ is Borel. 
\end{proof}

\subsection{A Lemma in Graph Theory}\label{subsec:graph theory}
Let $\Gn(V,E)$ a complete directed finite graph\footnote{In graph theory a graph with 
these properties is called {\em turnament}.} with vertices $V$ and edges $E$.
We denote by $s,t:E\to V$ respectively the {\em source} and the {\em target} 
of an edge.
We allow the possibility that there are two edges between two vertices, one
in each direction.
Given a vertex $v\in V$, we denote by $o(v)$ (respectively $i(v)$) 
the number of outgoing (respectively incoming) edges at $v$.  Since the graph 
is complete, 
\bq\label{eq:complete graph}
o(v)+i(v)\geq|V|-1\,,
\eq
for every $v\in V$.  

The next lemma shows that if the graph is complete, there is at least 
one vertex that has ``many" outgoing edges.

\begin{lemma}\label{lem:graph}  If $\Gn:=\Gn(V,E)$ is a complete directed 
finite graph and $|V|=D$,
then there exists $v\in V$ such that $o(v)\geq\frac{D-1}{2}$.
\end{lemma}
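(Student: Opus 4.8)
The statement is a classical averaging fact about tournaments, and the plan is to prove it by counting edges. First I would observe that since $\Gn$ is a complete directed graph on $D=|V|$ vertices, between any ordered pair of distinct vertices there is at least one edge, so the total number of edges satisfies $|E|\geq\binom{D}{2}$ (in fact equality holds unless the graph has some pair joined in both directions, but the inequality is all we need). Next, note that every edge has exactly one source, so $\sum_{v\in V}o(v)=|E|\geq\binom{D}{2}=\frac{D(D-1)}{2}$.

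The heart of the argument is then a pigeonhole / averaging step: since the sum of the $D$ nonnegative integers $o(v)$ is at least $\frac{D(D-1)}{2}$, their average is at least $\frac{D-1}{2}$, and hence at least one of them must be at least as large as the average. Concretely, if we had $o(v)<\frac{D-1}{2}$ for every $v\in V$, then summing over all $D$ vertices would give $\sum_{v\in V}o(v)<D\cdot\frac{D-1}{2}=\frac{D(D-1)}{2}$, contradicting the lower bound just established. Therefore there exists $v\in V$ with $o(v)\geq\frac{D-1}{2}$, which is exactly the assertion.

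I do not expect any real obstacle here; the only thing to be slightly careful about is the treatment of double edges (two edges between the same pair, one in each direction), which only \emph{increases} $|E|$ and hence only helps, so one may as well use the clean bound $|E|\geq\binom{D}{2}$ rather than an equality. One could alternatively invoke \eqref{eq:complete graph} directly: summing $o(v)+i(v)\geq D-1$ over all $v$ and using $\sum_v o(v)=\sum_v i(v)=|E|$ gives $2|E|\geq D(D-1)$, i.e. the same bound; then the averaging step proceeds identically. Either route is a one-line computation once the counting is set up, so the write-up is short.
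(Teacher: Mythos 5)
Your proof is correct and is essentially the paper's argument: the paper sums the completeness inequality $o(v)+i(v)\geq D-1$ over all vertices, uses $\sum_v o(v)=\sum_v i(v)$ to get $\sum_v o(v)\geq\frac{D(D-1)}{2}$, and concludes by the same averaging step you describe. Your primary edge-counting formulation and your noted alternative via \eqref{eq:complete graph} are both just restatements of this, so there is nothing to add.
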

\begin{proof} From \eqref{eq:complete graph} we have that
\bqn
\sum_{v\in V}o(v)+i(v)\geq D(D-1)\,.
\eqn
We have also that
\bqn
\sum_{v\in V}o(v)=\sum_{v\in V}i(v)\,,
\eqn
so that 
\bqn
\sum_{v\in V}o(v)\geq\frac{D(D-1)}{2}\,.
\eqn
Since $|V|=D$, the assertion follows readily.
\end{proof}

\begin{definition}
We say that a complete directed finite graph $\Gn(V,E)$ with $|V|=D$
is {\em strictly upper triangular}\footnote{Or {\em transitive turnament}.} 
if there exists a numbering
$v_1,\dots,v_{D}$ of its vertices, such that for all $j=1,\dots,D$,
\bqn
\ba
o(v_j)=&D-j\\
i(v_j)=&j-1
\ea
\eqn
\end{definition}
The terminology is inspired from the fact that 
the corresponding $D\times  D$ adjacency matrix $M$ with coefficients
\bqn
M_{ij}:=\begin{cases}
1&\qquad\text{if there exists }e\in E\text{ with }s(e)=v_i\text{ and }t(e)=v_j\\
0&\qquad\text{otherwise}\,,
\end{cases}
\eqn
is strictly upper triangular, namely $v_1$ is connected by an outgoing vertex to 
all of the remaining $v_2,\dots, v_{D}$, $v_2$ is connected to $v_3,\dots, v_{D}$ 
and so on.

\begin{example} A strictly upper triangular graph with $d=2$ corresponds to the matrix
$M=\begin{pmatrix}0&1&1\\0&0&1\\0&0&0\end{pmatrix}$
and is of the form
%\hskip1cm
%\hbox{
%{\psfrag{v1}{$v_1$}
%\psfrag{v2}{$v_2$}
%\psfrag{v3}{$v_3$}
%\includegraphics[width=0.2\linewidth]{graph.eps}
%}}
\vskip-2cm
\hskip7cm
%\begin{center}
{\begin{tikzpicture}[very thick,decoration={
    markings,
    mark=at position 0.5 with {\arrow{>}}},
scale=.5]
\begin{scope}
    \draw[postaction={decorate}] (0,0) -- (2,1);
    \draw[postaction={decorate}] (0,0) -- (2,-1);
    \draw[postaction={decorate}] (2,1) -- (2,-1);
\draw (-0.3,0) node {\tiny{$v_1$}};
\draw (2,1.3) node {\tiny{$v_2$}};
\draw (2,-1.3) node {\tiny{$v_3$}\,.};
  \filldraw[ultra thick] 
  	(0,0) circle [radius=.05] 
        (2,1) circle [radius=.05] 
       (2,-1) circle [radius=.05];
       \end{scope}
\end{tikzpicture}}
%\end{center}
\end{example}

\begin{lemma}\label{lem:sutg} 
Let $\Gn=\Gn(V,E)$ be a complete directed graph (not necessarily finite) and $D\in\N$.
If $|V|\geq5^{D}$, there exist $D$ vertices $v_1,\dots,v_{D}$
such that the induced complete directed subgraph on $v_1,\dots, v_{D}$
is strictly upper triangular.
\end{lemma}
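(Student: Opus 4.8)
The statement is a purely combinatorial (Ramsey-type) fact about tournaments: any tournament on sufficiently many vertices contains a transitive sub-tournament of prescribed size $D$. I would prove this by induction on $D$, using Lemma~\ref{lem:graph} as the key pigeonhole step at each stage. The base case $D=1$ is trivial (any single vertex). The plan is to show that if the statement holds for $D-1$ with threshold $5^{D-1}$, then it holds for $D$ with threshold $5^D$.

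\begin{proof}[Proof of Lemma~\ref{lem:sutg}]
We argue by induction on $D$.  For $D=1$ the assertion is trivial, since any vertex forms a strictly upper triangular subgraph on one vertex, and $|V|\geq 5$.

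Assume the statement holds for $D-1$ with the bound $5^{D-1}$, and let $\Gn=\Gn(V,E)$ be a complete directed graph with $|V|\geq5^{D}$.  By Lemma~\ref{lem:graph} applied to any finite subgraph on $5^{D}$ vertices (or directly, since the same counting argument works for the outgoing edges restricted to such a finite subset), there exists a vertex $v_1\in V$ and a subset $V_1\subset V$ with $|V_1|\geq\frac{5^{D}-1}{2}$ such that $v_1$ is connected by an outgoing edge to every vertex of $V_1$.  Since $\frac{5^{D}-1}{2}=\frac{5\cdot5^{D-1}-1}{2}\geq5^{D-1}$ (as $5\cdot5^{D-1}-1\geq2\cdot5^{D-1}$ holds for all $D\geq1$), the induced complete directed subgraph on $V_1$ has at least $5^{D-1}$ vertices.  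By the inductive hypothesis, there exist vertices $v_2,\dots,v_{D}\in V_1$ such that the induced subgraph on them is strictly upper triangular, say with the numbering $v_2,\dots,v_D$ witnessing this.  Now consider the induced subgraph on $v_1,v_2,\dots,v_D$.  Since $v_1$ has an outgoing edge to each of $v_2,\dots,v_D$, we have $o(v_1)=D-1$ and $i(v_1)=0$ within this subgraph; and for $j=2,\dots,D$ the out- and in-degrees of $v_j$ each increase by exactly the contribution from the edge $v_1\to v_j$, namely $i(v_j)$ increases by $1$ while $o(v_j)$ is unchanged, so that relative to the numbering $v_1,v_2,\dots,v_D$ one has $o(v_j)=D-j$ and $i(v_j)=j-1$ for all $j=1,\dots,D$.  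Hence the induced subgraph on $v_1,\dots,v_D$ is strictly upper triangular, completing the induction.
\end{proof}

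The main (and essentially only) obstacle is bookkeeping with the constant: one must check that the pigeonhole step loses at most a factor of $5$ at each level, i.e. that halving $5^D$ and subtracting $1$ still leaves at least $5^{D-1}$.  This is the reason the base $5$ appears rather than, say, $2$; the cruder bound $o(v)\geq\frac{D-1}{2}$ from Lemma~\ref{lem:graph} forces a geometric blow-up, and $5$ is a convenient value making $\frac{5^D-1}{2}\geq5^{D-1}$ hold for every $D\geq1$.  Everything else is routine: the only care needed is to verify that adjoining the new source vertex $v_1$ at the front of the existing transitive order preserves the strictly-upper-triangular structure, which is immediate from the definition since $v_1$ dominates all of $v_2,\dots,v_D$ and the relative order among $v_2,\dots,v_D$ is untouched.
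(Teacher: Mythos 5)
Your proof is correct and is essentially the paper's own argument: the paper performs the same greedy construction iteratively (pick a vertex of large out-degree via Lemma~\ref{lem:graph}, pass to the induced subgraph on its out-neighbours, losing at most a factor of $5$ each time), and explicitly says it could be phrased as the induction you carried out. Your only deviation is cosmetic but welcome: restricting first to a finite subgraph on exactly $5^{D}$ vertices makes the application of Lemma~\ref{lem:graph} to a possibly infinite $\Gn$ cleaner than in the paper, and the degree bookkeeping when prepending $v_1$ matches the paper's tacit convention that no backward edges occur among the chosen vertices.
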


\begin{proof} The idea is to construct the strictly upper triangular
  graph inductively.  By Lemma~\ref{lem:graph} there exists $v_1\in V$
  with $o(v_1)\geq\frac{|V|-1}{2}\geq\frac{|V|}{5}$ outgoing edges.
  Denote by \bqn O(v_1):=\{e\in E:\,s(e)=v_1\} \eqn the set of
  outgoing edges (so that $|O(v_1)|=o(v_1)$).  We consider now the
  induced complete directed subgraph $\Gn(v_1)$ on $v_1$ and on the
  vertices at the end of the edges in $O(v_1)$, namely
  $\Gn_1:=\Gn(V(v_1), E(v_1))\subset\Gn$, where \bqn
  V(v_1):=\{v_1\}\sqcup\{t(e):\,e\in O(v_1)\} \qquad\text{ and }\qquad
  O(v_1)\subsetneq E(v_1)\subseteq E\,, \eqn and the edges $E(v_1)$
  are exactly the edges in $E$ needed to make complete the graph on
  the vertices $V(v_1)$.  Remark that, by construction,
  $|V(v_1)|=o(v_1)+1\geq\frac{|V|}{5}\geq 5^{D-1}$ and the induced
  complete directed subgraph on any ordered pair of vertices $v_1,v$,
  with $v\in V(v_1)$, is (trivially) a strictly upper triangular
  graph.

  We could now proceed to formulate a rigorous proof by induction, but
  we prefer showing how to move to the next step, as we believe that
  the very simple idea of the proof will be more transparent.

  We repeat now exactly the same construction as before, applied to
  the graph $\Gn_1(V(v_1),E(v_1))$ instead of $\Gn(V,E)$.  Namely ,
  let $v_2\in V(v_1)$ be the vertex, whose existence is asserted by
  Lemma~\ref{lem:graph}, such that if \bqn O(v_2):=\{e\in
  E(v_1):\,s(e)=v_2\}\,, \eqn then
  $o(v_2)=|O(v_2)|\geq\frac{|V(v_1)|-1}{2}\geq\frac{|V(v_1)|}{5}$.  By
  construction there is an outgoing edge from $v_1$ to $v_2$ and from
  $v_1$ to any other vertex in $\Gn_1$.  From the graph $\Gn_1$ we
  retain now only those vertices $w\in V(v_1)$ that are at the end of
  an outgoing edge from $v_2$, so that $v_1,v_2,w$ is strictly upper
  triangular, and eliminate all of the other vertices.  Namely, let
  $\Gn_2$ be the induced complete directed graph on $v_2$ and on the
  vertices that are the targets of the $o(v_2)$ edges in $E(v_1)$
  outgoing from $v_2$.  That is $\Gn_2:=\Gn(V(v_2),
  E(v_2))\subset\Gn_1$, where \bqn V(v_2):=\{v_2\}\sqcup\{t(e):\,e\in
  O(v_2)\} \qquad\text{ and }\qquad O(v_2)\subsetneq E(v_2)\subseteq
  E(v_1)\subseteq E\,.  \eqn Now we have a complete directed graph on
  $|V(v_2)|\geq o(v_2)+1\geq\frac{|V(v_1)|}{5}\geq 5^{D-2}$ vertices
  from which we can continue choosing vertices $v_3,\dots, v_{D}$ such
  that at each step we increase by one our strictly upper triangular
  graph and we reduce by a factor of 5 the cardinality of the vertex
  set.
\end{proof}

\vfill\eject

%\appendix
\section{Boundary stabilisers in  {\upshape CAT(0)}  cube complexes\\
{\small by Pierre-Emmanuel Caprace\footnote{UCLouvain, 1348 Louvain-la-Neuve, Belgium. P-E.C. is an F.R.S.-FNRS Research Associate, supported in part by the European Research Council (grant \#278469)}}
}
\label{sec:AppB}

%\address{Universit\'e catholique de Louvain, IRMP, Chemin du Cyclotron 2, 1348 Louvain-la-Neuve, Belgium}
%\email{pe.caprace@uclouvain.be}
%\thanks{P-E.C. is an F.R.S.-FNRS Research Associate, supported in part by the European Research Council (grant \#278469)}

Let $X$ be a (not necessarily proper) {\upshape CAT(0)} cube complex.
A group $\Gamma \leq \Aut(X)$   is called \textbf{locally $X$-elliptic} if every finitely generated subgroup of $\Gamma$ fixes a point of $X$. The goal of this appendix is to establish the following.

\begin{theorem}\label{thm:RollerStab}
Let $X$ be a finite-dimensional  {\upshape CAT(0)}  cube complex and $\alpha$ be a point in the Roller compactification.

Then the stabiliser $\Stab_{\Aut(X)}(\alpha)$ has a  locally $X$-elliptic normal subgroup $N$ such that the quotient $\Stab_{\Aut(X)}(\alpha)/N$ is finitely generated and virtually abelian of rank $\leq \dim(X)$. 
\end{theorem}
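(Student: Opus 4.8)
The plan is to analyze the stabiliser $S := \Stab_{\Aut(X)}(\alpha)$ through the combinatorial structure that $\alpha$, viewed as an ultrafilter on $\frakH(X)$, imposes. First I would consider the set $D_\alpha$ of halfspaces $h \in \alpha$ that lie on an infinite descending chain inside $\alpha$ (equivalently, $h$ is not "terminating" in $\alpha$); this is the union of all infinite descending chains through points of $\alpha$, and it is $S$-invariant since $S$ fixes $\alpha$. The complement $\frakH(X) \setminus (D_\alpha \cup D_\alpha^*)$, together with $\alpha$ restricted to it, should correspond (via the interval-embedding machinery of \S\ref{subsec:isom-emb}, Lemma~\ref{lem:embedding}) to a finite-dimensional, \emph{essentially bounded} piece, while the "directions going to infinity" inside $\alpha$ are organized by finitely many pairwise classes: the key point is that a descending chain in $\alpha$ determines an element of a visual-type boundary, and by finite-dimensionality of $X$ there are at most $\dim(X)$ pairwise "independent" (non-transverse, non-nested) such chains. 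This is where Lemma~\ref{embedintervals} and the bound that a family of pairwise transverse hyperplanes has size $\leq D$ enter.

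Concretely, I would proceed as follows. (1) Define $N$ to be the kernel of the $S$-action on a suitable combinatorial object attached to $\alpha$ at infinity — namely the action of $S$ on the finite set of "terminal" elements in intervals $[\alpha,\beta]$ is too crude, so instead one uses the action of $S$ on the set of $S$-invariant classes of infinite descending chains, or more robustly on the "Roller boundary of the interval $\I(\beta,\alpha)$" for a fixed basepoint $\beta$, which by Lemma~\ref{embedintervals} embeds in $\~{\Z^D}$. The group $S$ acts on this $\Z^D$-model preserving $\alpha$, giving a homomorphism $S \to \Aut(\~{\Z^D}, \alpha)$; the image is a subgroup of a group commensurable with $\Z^D$ (since automorphisms of $\Z^D$ fixing a boundary point act by translations in the relevant coordinates up to permutation), hence finitely generated and virtually abelian of rank $\leq D = \dim(X)$. (2) Let $N$ be the kernel. (3) Show $N$ is locally $X$-elliptic: if $F \leq N$ is finitely generated, then $F$ acts trivially on the $\Z^D$-model of every interval $\I(\beta,\alpha)$, so for $\beta$ in an $F$-orbit the median $m(\beta, g\beta, g'\beta)$ computations stabilize; more directly, $F$ fixes $\alpha$ and acts "trivially at infinity," so using the Flipping/Double-Skewering framework (or rather its negation — no element of $F$ can skewer a halfspace "pointing toward $\alpha$") one shows the $F$-orbit of any vertex $v$ together with $\alpha$ spans a bounded subcomplex, and $F$ fixes its combinatorial circumcenter. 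The finite-generation of $F$ is used to ensure the orbit is "coarsely bounded" in the finitely many relevant directions.

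The main obstacle I expect is step (3), making precise the passage from "$N$ acts trivially on the boundary data of $\alpha$" to "every finitely generated subgroup of $N$ fixes a vertex." The subtlety is that $N$ might still move points by unbounded amounts \emph{transversally} to the directions detected by the quotient — i.e. along hyperplanes transverse to all the descending chains through $\alpha$ — so one needs to argue that such transverse directions are themselves bounded, which forces an honest use of finite-dimensionality: an unbounded transverse escape would produce, together with a descending chain in $\alpha$, an unbounded family of pairwise transverse hyperplanes, contradicting $\dim(X) = D < \infty$. Threading this argument correctly, and checking that the quotient really does land in a virtually abelian group of the claimed rank (as opposed to merely finitely generated), is the technical heart; the rest is bookkeeping with ultrafilters, intervals, and the embedding $\I(\beta,\alpha) \hookrightarrow \~{\Z^D}$ from Lemma~\ref{embedintervals}.
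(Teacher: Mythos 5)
There is a genuine gap, and it sits exactly where your construction starts. The stabiliser $S=\Stab_{\Aut(X)}(\alpha)$ does \emph{not} act on the interval $\I(\beta,\alpha)$ (or on its $\overline{\Z^D}$-model from Lemma~\ref{embedintervals}) for a fixed basepoint $\beta$: an element $g\in S$ fixes $\alpha$ but moves $\beta$, so it maps $\I(\beta,\alpha)$ to $\I(g\beta,\alpha)$, a different subcomplex. Hence the proposed homomorphism $S\to\Aut(\overline{\Z^D},\alpha)$ is not defined, and with it the definition of $N$ as its kernel collapses. What is true is only that the halfspace-interval $[\beta,\alpha]$ changes by a \emph{finite} symmetric difference when $\beta$ is replaced by $g\beta$, i.e.\ $S$ \emph{commensurates} this set of hyperplanes. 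Turning that commensurating action into a finitely generated virtually abelian quotient of rank $\leq\dim(X)$ is precisely the content the paper imports from two nontrivial sources: Hagen's theorem that the commensuration class decomposes into at most $\dim(X)$ minimal, pairwise almost transverse pieces (the vertices of a simplex of the simplicial boundary, which $S$ permutes), and Cornulier's transfer character, which assigns to each commensurated piece a homomorphism to $\Z$ that is insensitive to finite modifications. Your appeal to ``at most $D$ independent chains'' gestures at the first ingredient but does not supply it, and nothing in your plan replaces the second; a finite-index subgroup permuting finitely many ``directions'' only yields a finite quotient, not the $\Z^{k}$-quotient the theorem needs.

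The second gap is step (3). Even granting a well-defined kernel $N$, your route to local ellipticity (bounded orbits plus a circumcenter) aims at the wrong target and is not carried out: boundedness of orbits of finitely generated subgroups is neither established by the sketch nor needed. The efficient reduction, used in the appendix, is Sageev's criterion: if a finitely generated subgroup has no fixed point, some element $g$ satisfies $g^{n}h\subsetneq h$ for a halfspace $h$; so it suffices to show each element of $N$ is elliptic. The real work is then to show that a skewering element of $N$ is impossible, and here your worry about ``transverse escape'' misses the actual difficulty: a priori the skewered halfspace need not ``point toward $\alpha$'' at all. The paper resolves this by observing that, since $g$ fixes $\alpha$, the $\langle g^{n}\rangle$-orbit $\{\alpha(g^{ni}\hat h)\}$ is a chain, so (after possibly replacing $g$ by $g^{-1}$) the descending chain $g^{ni}\hat h$ eventually lies in the commensurated set $\mathcal U(x,\alpha)=\{\hat k: x(\hat k)\neq\alpha(\hat k)\}$, indeed in a single minimal piece of Hagen's decomposition, whence the corresponding transfer character is nonzero on $g$ --- contradicting $g\in N$. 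Without the commensuration/character formalism you have no analogue of this last step, which is the heart of the proof; the rest of your outline (the $S$-invariance of the set of non-terminating halfspaces in $\alpha$, the embedding of intervals into $\overline{\Z^D}$) is correct but peripheral.
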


The following consequence of  Theorem~\ref{thm:RollerStab} is immediate.

\begin{cor}\label{cor:Rigidity}
Let $\Gamma$ be a finitely generated group satisfying the following two conditions:
\begin{enumerate}%[label=(\alph*)]
\item Every finite index subgroup of $\Gamma$ has finite abelianization. 

\item For every $\Gamma$-action on a finite-dimensional  {\upshape CAT(0)}  cube complex, there is a finite $\Gamma$-orbit in the Roller compactification.
\end{enumerate}

Then every $\Gamma$-action on a finite-dimensional  {\upshape CAT(0)}  cube complex has a fixed point.
\end{cor}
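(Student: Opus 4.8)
The plan is to deduce the fixed point property from Theorem~\ref{thm:RollerStab} by a standard reduction argument. Suppose $\Gamma$ acts on a finite-dimensional \cat cube complex $X$; I want to produce a fixed point. First I would invoke hypothesis (2) to obtain a finite $\Gamma$-orbit in the Roller compactification $\~X$, say of cardinality $k$, and let $\Gamma_0 \leq \Gamma$ be the stabiliser of one of its points $\alpha$, so $[\Gamma : \Gamma_0] = k < \infty$. By hypothesis (1) applied to the finite index subgroup $\Gamma_0$, the abelianisation of $\Gamma_0$ is finite; moreover, since finite generation of $\Gamma$ passes to finite index subgroups, $\Gamma_0$ is finitely generated, and the same holds for every finite index subgroup of $\Gamma_0$ (indeed each such is finite index in $\Gamma$). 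So $\Gamma_0$ again satisfies condition (1) — in fact every finite index subgroup of $\Gamma_0$ has finite abelianisation.

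Next I would apply Theorem~\ref{thm:RollerStab} to $\alpha$: the stabiliser $\Stab_{\Aut(X)}(\alpha)$, and hence its subgroup $\Gamma_0$, maps onto a finitely generated virtually abelian group of rank $\leq \dim(X)$, with kernel a locally $X$-elliptic subgroup. Write $N_0 := \Gamma_0 \cap N$, so $\Gamma_0/N_0$ is virtually abelian and finitely generated. I claim $\Gamma_0/N_0$ is finite. If not, it has a finite index subgroup that is free abelian of positive rank, hence admits a surjection onto $\Z$; pulling back, $\Gamma_0$ has a finite index subgroup $\Gamma_1$ surjecting onto $\Z$, so $\Gamma_1$ has infinite abelianisation — contradicting condition (1) for $\Gamma_0$. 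Therefore $\Gamma_0/N_0$ is finite, which means $N_0$ is a finite index subgroup of $\Gamma_0$, hence finite index in $\Gamma$, hence finitely generated.

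Now $N_0$ is a \emph{finitely generated} locally $X$-elliptic group: by definition of locally $X$-elliptic applied to the subgroup $N_0$ itself (which is finitely generated), $N_0$ fixes a point of $X$. Thus $\Fix_X(N_0) \neq \varnothing$. Since $N_0$ is normal in $\Gamma$ — or at least, I can arrange normality by replacing $N_0$ with the normal core, which is still finite index in $\Gamma$, finitely generated, and still locally $X$-elliptic hence fixes a point — the fixed point set $\Fix_X(N_0)$ is a nonempty $\Gamma$-invariant convex subcomplex of $X$ (fixed point sets of groups of automorphisms are convex subcomplexes). Restricting the $\Gamma$-action to $\Fix_X(N_0)$, the finite group $\Gamma/N_0$ acts on a nonempty \cat cube complex, and a finite group acting on a \cat space fixes the circumcentre of any orbit; pulling back, $\Gamma$ fixes that point. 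The main obstacle is the bookkeeping around normality and finite generation — ensuring the subgroup to which one applies local ellipticity is genuinely finitely generated and that its fixed set is $\Gamma$-invariant — but this is handled by passing to the normal core, which preserves all the relevant finiteness properties.
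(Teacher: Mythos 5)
Your argument is correct and is exactly the deduction the paper intends: the paper states the corollary as an immediate consequence of Theorem~\ref{thm:RollerStab}, and your writeup (finite orbit $\Rightarrow$ finite-index stabiliser, condition (1) forcing the virtually abelian quotient to be finite, local $X$-ellipticity plus finite generation giving a fixed point of a finite-index subgroup, then a circumcentre/convexity argument to promote it to a $\Gamma$-fixed point) fills in precisely that reduction. Minor quibbles only: the fixed-point set need not be a subcomplex (just a closed convex subset, which suffices), and one could skip the normal-core step by taking the circumcentre of the finite $\Gamma$-orbit of the fixed point directly.
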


\begin{remark}
As pointed out to me by Talia Fernos, the converse statement to Corollary B.2 holds as well, namely: A finitely generated group $\Gamma$ all of whose actions on finite-dimensional CAT(0) cube complexes have fixed points, automatically satisfies (1) and (2). Indeed, Property (2) is straightforward, while the existence of a finite index subgroup with infinite abelianization can be used to produce an unbounded action on the standard cubulation of the Euclidean $n$-space, for $n$ large enough.
\end{remark}

The proof of Theorem~\ref{thm:RollerStab} uses a relation between the Roller boundary  and the \textbf{simplicial boundary}  of $X$. The latter boundary was constructed by Mark Hagen in \cite{Hagen}, under the hypothesis that $X$ is finite-dimensional. Before reviewing its construction, we start with an abstract tool that will be used to produce the abelian quotient appearing in Theorem~\ref{thm:RollerStab}. 

Following Yves de Cornulier  \cite{CorComm}, we say that  two subsets $M, N$ of a set $X$ are \textbf{commensurate} if their symmetric difference $M \triangle N$ is finite. We say that $M$ is \textbf{commensurated} by the action of a group $G$ acting on $X$ if for all $g \in G$, the sets $M$ and $gM$ are commensurate. 

\begin{prop}[Proposition~4.H.1 in \cite{CorComm}]\label{prop:ModularChar}
Let $G$ be a group, $X$ be a  discrete $G$-set, and $M \subset X$ be a {commensurated subset}.

Then  the map
$$\tr_M \colon G \to \ZZ : g \mapsto \# (M \setminus g^{-1} M) - \# (g^{-1} M \setminus   M)$$ 
is a homormorphism, called the \textbf{transfer character}. Moreover, for any $N \subset X$ commensurate to $M$ and any $g \in G$, we have $\tr_M(g) = \tr_N(g)$. \qed
\end{prop}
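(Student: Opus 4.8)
The plan is to reduce everything to a basepoint‑free way of measuring the ``signed size difference'' between two commensurate subsets of $X$, and then verify the cocycle identities mechanically. For commensurate $A,B\subseteq X$ I would set $\delta(A,B):=\#(A\setminus B)-\#(B\setminus A)\in\ZZ$, so that by definition $\tr_M(g)=\delta(M,g^{-1}M)$; note that the commensuration hypothesis (together with the fact that applying the bijection $g^{-1}$ preserves finiteness of symmetric differences) makes all these quantities finite, hence $\tr_M$ well defined. The one genuine computation is the elementary observation that if $F\subseteq X$ is any finite set with $A\triangle B\subseteq F$, then $\delta(A,B)=\#(A\cap F)-\#(B\cap F)$: outside $F$ the two sets agree, and inside $F$ one subtracts the common part $A\cap B\cap F$ from both counts.

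From this identity two properties fall out for free. \emph{Additivity}: if $A,B,C$ are pairwise commensurate, pick a finite $F$ containing $(A\triangle B)\cup(B\triangle C)$ — it then automatically contains $A\triangle C$, since $A\triangle C\subseteq(A\triangle B)\cup(B\triangle C)$ — and telescope $\#(A\cap F)-\#(C\cap F)$ to get $\delta(A,C)=\delta(A,B)+\delta(B,C)$. \emph{$G$‑invariance}: the map $x\mapsto hx$ is a bijection of $X$ carrying $A\setminus B$ onto $hA\setminus hB$, so $\delta(hA,hB)=\delta(A,B)$. I would record the transitivity of ``having finite symmetric difference'' at this point, because it is exactly what guarantees that the finitely many sets occurring at each later step — typically $M$, $h^{-1}M$, $(gh)^{-1}M$, or later $N$, $M$, $g^{-1}M$, $g^{-1}N$ — are pairwise commensurate, so that $\delta$ and its additivity legitimately apply to them.

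With these tools the homomorphism property is three lines: using $(gh)^{-1}=h^{-1}g^{-1}$ and additivity through the intermediate set $h^{-1}M$, one gets $\tr_M(gh)=\delta(M,h^{-1}M)+\delta(h^{-1}M,h^{-1}g^{-1}M)$; the first term is $\tr_M(h)$, and applying $G$‑invariance with the bijection $h^{-1}$ the second term equals $\delta(M,g^{-1}M)=\tr_M(g)$, so $\tr_M(gh)=\tr_M(g)+\tr_M(h)$ since $\ZZ$ is abelian. Independence of the representative is equally short: for $N$ commensurate to $M$ (hence also commensurated by $G$), expand $\delta(N,g^{-1}N)$ through the chain $N,M,g^{-1}M,g^{-1}N$ as $\delta(N,M)+\delta(M,g^{-1}M)+\delta(g^{-1}M,g^{-1}N)$, observe that $\delta(g^{-1}M,g^{-1}N)=\delta(M,N)=-\delta(N,M)$ by invariance and antisymmetry, and the outer terms cancel, leaving $\tr_N(g)=\tr_M(g)$.

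I do not anticipate a real obstacle: the statement is finite combinatorics dressed up as a cocycle identity. If there is a ``main step'' it is simply the careful bookkeeping of which pairs of sets are commensurate at each stage so that $\delta$ and its additivity are applicable, and this is dispatched once and for all by the transitivity remark plus the standing hypothesis that $M$ (and hence anything commensurate to it) is commensurated by $G$.
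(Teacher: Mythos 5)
Your argument is correct and complete: the reduction to the basepoint-free quantity $\delta(A,B)=\#(A\setminus B)-\#(B\setminus A)$, the finite-window identity $\delta(A,B)=\#(A\cap F)-\#(B\cap F)$ for any finite $F\supseteq A\triangle B$, and the resulting additivity and $G$-invariance of $\delta$ are exactly the bookkeeping needed, and your telescoping through the chains $M,\,h^{-1}M,\,h^{-1}g^{-1}M$ and $N,\,M,\,g^{-1}M,\,g^{-1}N$ is sound (including the tacit check that $N$, being commensurate to $M$, is itself commensurated). Note, however, that the paper itself gives no proof of this statement: it is quoted verbatim from Cornulier (Proposition~4.H.1 of \cite{CorComm}) with a \qed, so there is nothing internal to compare against; your write-up is essentially the standard proof one finds in that reference, and it would stand on its own as a self-contained verification.
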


We now turn back to our geometric setting: in the rest of this note we let $X$ be a {\upshape  {\upshape CAT(0)} } cube complex. Its set of halfspaces (resp. hyperplanes) is denoted by $\mathfrak H(X)$ (resp. $\mathfrak W(X)$). The map $\mathfrak H(X) \to \mathfrak W(X) : h \mapsto \hat h$ associates to each halfspace its boundary hyperplane.  In order to define the simplicial boundary, we recall some of Mark Hagen's terminology from \cite{Hagen}. A set of hyperplanes $\mathcal U \subset \mathfrak W(X)$ is called:
\begin{itemize}
\item \textbf{inseparable} if each hyperplane separating two elements of $\mathcal U$ belongs to $\mathcal U$. 

\item the \textbf{inseparable closure} of a set of hyperplanes $\mathcal V$ if $\mathcal U$ is the smallest inseparable set containing $\mathcal V$. 
 
\item \textbf{unidirectional} if for each $\hat h \in \mathcal U$, at least one halfspace bounded by $\hat h$ contains only finitely many elements of $\mathcal U$. 

\item a \textbf{facing triple} if $\mathcal U$ consists of the three boundary hyperplanes of three pairwise disjoint halfspaces.

\item a \textbf{UBS} if $\mathcal U$ is infinite, inseparable, unidirectional and contains no facing triple (UBS stands for \emph{unidirectional boundary set}).  

\item a \textbf{minimal UBS} if every UBS $\mathcal U'$ contained in $\mathcal U$ is commensurate to $\mathcal U$. 

\item \textbf{almost transverse} to a set of hyperplanes $\mathcal V$ if each $\hat h \in \mathcal U$ crosses all but finitely many hyperplanes in $\mathcal V$, and each $\hat k \in \mathcal V$ crosses all but finitely many hyperplanes in $\mathcal U$.
\end{itemize}

The  following result is due to Mark Hagen.

\begin{prop}[Theorem 3.10 in \cite{Hagen}]\label{prop:UBS}
Assume that $X$ is finite-dimensional. 

Given a UBS $\mathcal V$, there exists a UBS $\mathcal V'$ commensurate to $\mathcal V$ such that $\mathcal V'$ is partitioned into a finite union of minimal UBS, say $\mathcal U_1 \cup \dots \cup \mathcal U_k$, where $k \leq \dim(X)$, such that for $ i \neq j \in \{1, \dots,  k\}$ the set $ \mathcal U_i$ is almost transverse to  $\mathcal U_j$. 

Furthermore, if $\mathcal V''$ is a UBS which is commensurate to $\mathcal V$ and is partitioned into a finite union of minimal UBS, say $\mathcal U'_1 \cup \dots \cup \mathcal U'_{k'}$, which are pairwise almost transverse, then $k = k'$ and, up to reordering, the set $\mathcal U'_i $ is commensurate to $\mathcal U_i$ for all $i$.\qed
\end{prop}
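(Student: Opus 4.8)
This is Theorem~3.10 in \cite{Hagen}, and I would follow Hagen's strategy; here is the plan. The first step is to extract the order-theoretic skeleton of a UBS. Given a UBS $\mathcal V \subset \mathfrak W(X)$, unidirectionality attaches to each $\hat h \in \mathcal V$ the unique half-space $h^-$ bounded by $\hat h$ that contains all but finitely many elements of $\mathcal V$; I would verify that declaring $\hat h \preceq \hat k$ when $h^- \subseteq k^-$ after discarding finitely many exceptions turns $\mathcal V$, modulo commensuration, into something behaving like a finite-width poset, in which incomparable pairs are exactly the transverse ones and the absence of facing triples makes the orientations coherent. The quantitative input is the dimension bound: from any family $\mathcal U_1,\dots,\mathcal U_k$ of pairwise almost transverse infinite inseparable sets one successively picks $\hat h_i \in \mathcal U_i$ crossing $\hat h_1,\dots,\hat h_{i-1}$ (possible since each $\hat h_j$ fails to cross only finitely many members of $\mathcal U_i$), obtaining $k$ pairwise crossing hyperplanes and hence a $k$-cube, so $k \le \dim(X)$.

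For existence, I would first show every UBS contains a minimal UBS by a minimalization argument: repeatedly replace a UBS by a properly contained, non-commensurate sub-UBS as long as one exists, noting that such a chain must terminate by finite-dimensionality (a long chain of pairwise non-commensurate nested UBS produces, via inseparability witnesses for the proper containments, a transverse family, which is bounded by $\dim(X)$). The same bound shows the set of commensurability classes of minimal sub-UBS of $\mathcal V$ is finite, with representatives $\mathcal U_1,\dots,\mathcal U_k$, $k \le \dim(X)$; I then set $\mathcal V' := \mathcal U_1 \cup \dots \cup \mathcal U_k$, which is again inseparable, unidirectional and facing-triple-free. The crux is to prove $\mathcal V'$ is commensurate to $\mathcal V$: for $\hat h \in \mathcal V$ the inseparable closure in $\mathcal V$ of the $\preceq$-downward set of $\hat h$ is a sub-UBS, hence meets some minimal class, and a finite-width counting argument then forces all but finitely many elements of $\mathcal V$ into $\bigcup_i \mathcal U_i$. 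Finally, distinct $\mathcal U_i,\mathcal U_j$ are almost transverse: otherwise infinitely many hyperplanes of $\mathcal U_i$ would be nested on a fixed side of a fixed element of $\mathcal U_j$, and inseparability would absorb them into a single minimal UBS, contradicting non-commensurability.

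For uniqueness, given a second decomposition $\mathcal U'_1 \cup \dots \cup \mathcal U'_{k'}$ of a UBS commensurate to $\mathcal V$, each $\mathcal U'_i$ is a minimal sub-UBS of something commensurate to $\mathcal V$, hence commensurate to some $\mathcal U_j$ by the covering statement, and symmetrically each $\mathcal U_j$ is commensurate to some $\mathcal U'_i$; since two minimal sub-UBS of a common UBS are either commensurate or almost transverse and a minimal UBS cannot be commensurate to two of the pairwise almost transverse pieces, the induced correspondence is a bijection, yielding $k = k'$ and the asserted matching. I expect the main obstacle to be the covering statement — that a UBS is commensurate to a union of finitely many of its minimal sub-UBS — since this is where finite-dimensionality enters essentially and where one must control how the hyperplanes of $\mathcal V$ distribute among the finitely many \emph{directions}; the rest is bookkeeping around the transverse/nested dichotomy established in the first step.
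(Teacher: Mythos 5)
The paper itself gives no proof of this statement: it is quoted as Theorem~3.10 of \cite{Hagen}, and the terminal symbol marks it as an external result, so there is no internal argument to compare yours with. Judged on its own, your text is an outline of Hagen's strategy rather than a proof, and the load-bearing steps are not established. The most concrete problem is the termination of your minimalization procedure. You justify it by claiming that witnesses of a nested chain of pairwise non-commensurate UBS form a transverse family, but this does not follow: if $\mathcal V_1 \supsetneq \mathcal V_2 \supsetneq \cdots$ are nested UBS and $\hat h_i \in \mathcal V_i \setminus \mathcal V_{i+1}$, inseparability of $\mathcal V_{i+1}$ only tells you that the elements of $\mathcal V_{i+1}$ disjoint from $\hat h_i$ all lie on one side of $\hat h_i$; it gives no crossing relation among the $\hat h_i$ themselves, so the cube bound cannot be invoked there. (The bound on chain length is true, but it is essentially a consequence of the theorem being proved.) The standard route is instead to extract an infinite descending chain of half-spaces inside the UBS (Ramsey plus finite dimension plus unidirectionality and absence of facing triples) and to prove directly that its inseparable closure is a minimal UBS; note that the appendix's Lemma~\ref{lem:InsepClosure} deduces that minimality from Proposition~\ref{prop:UBS} itself, so you cannot borrow it here without circularity — in \cite{Hagen} it is proved independently.

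Beyond that, the two statements that actually carry the theorem are left as assertions: the covering claim that all but finitely many hyperplanes of $\mathcal V$ fall into the union of the finitely many minimal sub-UBS (you write that ``a finite-width counting argument then forces'' this, and you yourself flag it as the main obstacle), and the dichotomy that two non-commensurate minimal sub-UBS of a common UBS are almost transverse, for which ``inseparability would absorb them into a single minimal UBS'' is not an argument: an infinite nested family of $\mathcal U_i$ on one side of a fixed $\hat k \in \mathcal U_j$ yields, after taking inseparable closures, a minimal UBS inside $\mathcal U_i$, and one still has to explain why this contradicts non-commensurability of $\mathcal U_i$ and $\mathcal U_j$; this is exactly the domination analysis Hagen carries out. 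Also note that your $\mathcal V' := \mathcal U_1 \cup \dots \cup \mathcal U_k$ need not be inseparable (a hyperplane separating an element of $\mathcal U_1$ from one of $\mathcal U_2$ lies in $\mathcal V$ but perhaps in none of the $\mathcal U_i$), so even the claim that $\mathcal V'$ is a UBS needs care. The parts that are fine are the dimension bound $k \leq \dim(X)$ via pairwise crossing hyperplanes spanning a cube, and the derivation of the uniqueness statement once the covering claim and the dichotomy are available; as it stands, though, the proposal restates the unproved content of the cited theorem rather than proving it.
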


Following Mark Hagen  \cite{Hagen}, the \textbf{simplicial boundary} of $X$, denoted by $\partial_\triangle X$, is defined (when $X$ is finite-dimensional) as the abstract simplicial complex whose underlying poset is the set of commensuration classes of UBS, with the natural order relation induced by inclusion. Its vertices thus correspond to the commensuration classes of minimal UBS, and two vertices are adjacent if they are represented by two UBS which are almost transverse to one another.   The set of simplices of $\partial_\triangle X$ is denoted by $\mathfrak S X$. 

The following observation, which is implicit in \cite{Hagen} (see  the proof of Lemma~3.7 in loc. cit.), characterizes the minimal UBS. 

\begin{lemma}\label{lem:InsepClosure}
Assume that $X$ is finite-dimensional. 
Let $h_0 \supsetneq h_1 \supsetneq \dots$ be an infinite chain of halfspaces. Then the inseparable closure of $\{\hat h_i \; | \; i \geq 0\}$ is a minimal UBS. Moreover, every minimal UBS contains a (necessarily cofinite) UBS arising in this way. 
\end{lemma}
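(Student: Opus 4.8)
\textbf{Proof proposal for Lemma~\ref{lem:InsepClosure}.}

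The plan is to prove the two assertions separately, using Proposition~\ref{prop:UBS} as the main engine for the first and a direct construction for the second.

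First I would verify that $\mathcal U := $ the inseparable closure of $\{\hat h_i \mid i \geq 0\}$ is a UBS. It is infinite since the $\hat h_i$ are pairwise distinct, and inseparable by construction. For unidirectionality: if $\hat k \in \mathcal U$, then $\hat k$ separates two of the hyperplanes $\hat h_i$ (or is one of them), hence lies between some $\hat h_a$ and $\hat h_b$ with $a < b$; since $h_a \supsetneq h_b$ form a descending chain, the halfspace bounded by $\hat k$ that contains $h_b$ can meet only hyperplanes of $\mathcal U$ that separate $\hat k$ from the ``tail'' of the chain — and because each such hyperplane must sit between two $\hat h_i$'s while being on the $h_b$-side of $\hat k$, one checks there are only finitely many. (The cleanest way is: the $h_i^*$-side of each $\hat h_i$ contains only $\hat h_0, \dots, \hat h_{i-1}$ and hyperplanes separating them, a finite set by the finite-interval property of the pocset $\frakH(X)$; this gives unidirectionality directly.) Finally $\mathcal U$ contains no facing triple, because a facing triple would consist of three pairwise disjoint halfspaces, and any hyperplane of $\mathcal U$ lies between two terms of a single descending chain; three such hyperplanes, together with the nesting coming from the chain, cannot be pairwise facing — two of the three chain-intervals overlap. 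So $\mathcal U$ is a UBS.

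Next I would show $\mathcal U$ is minimal. Suppose $\mathcal U' \subseteq \mathcal U$ is a UBS. By Proposition~\ref{prop:UBS}, replace $\mathcal U'$ by a commensurate UBS partitioned into minimal UBS $\mathcal W_1 \cup \dots \cup \mathcal W_k$ which are pairwise almost transverse. The point is that $\mathcal U$ cannot contain two almost transverse infinite subsets: almost transversality forces a cube of dimension $2$ to be crossed by cofinitely many pairs, but every element of $\mathcal U$ lies on the chain $h_0 \supsetneq h_1 \supsetneq \dots$ in the sense of being between two of its terms, and two hyperplanes that both separate (overlapping) segments of one descending chain are nested, not transverse — one gets that all but finitely many elements of $\mathcal U$ are pairwise nested, so $\mathcal U$ admits no two infinite pairwise-almost-transverse pieces, forcing $k = 1$. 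Thus $\mathcal U'$ is commensurate to a single minimal UBS $\mathcal W_1 \subseteq \mathcal U$, and by the uniqueness clause of Proposition~\ref{prop:UBS} applied inside $\mathcal U$ (viewing $\mathcal U$ itself, decomposed trivially as one minimal piece, and $\mathcal W_1$) we get $\mathcal W_1$, hence $\mathcal U'$, commensurate to $\mathcal U$. Therefore $\mathcal U$ is minimal.

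For the converse, let $\mathcal M$ be any minimal UBS. Since $\mathcal M$ is infinite and unidirectional, I would extract an infinite descending chain of halfspaces whose hyperplanes lie in $\mathcal M$: pick $\hat h_0 \in \mathcal M$ with halfspace $h_0$ containing infinitely many elements of $\mathcal M$ (possible by unidirectionality), then inside $h_0$ pick $\hat h_1 \in \mathcal M$ with $h_1 \subsetneq h_0$ still containing infinitely many elements of $\mathcal M$ — the absence of a facing triple together with inseparability guarantees that the infinitely many elements of $\mathcal M$ inside $h_0$ cannot all be pairwise transverse (finite dimension) nor pairwise facing (no facing triple), so a nesting $h_1 \subsetneq h_0$ occurs, and one iterates, at each stage keeping the side with infinitely many elements of $\mathcal M$. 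This yields $h_0 \supsetneq h_1 \supsetneq \dots$ with all $\hat h_i \in \mathcal M$. Its inseparable closure $\mathcal U$ is a UBS by the first part, it is contained in $\mathcal M$ by inseparability of $\mathcal M$, and by minimality of $\mathcal M$ it is commensurate to $\mathcal M$; being a subset, it is cofinite in $\mathcal M$. This completes the proof.

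\medskip

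I expect the main obstacle to be the bookkeeping in the minimality argument — precisely, pinning down rigorously that a single descending chain's inseparable closure cannot split into two almost-transverse infinite UBS, which is where the geometry of nested versus transverse hyperplanes and the finite-interval condition must be combined carefully with Hagen's uniqueness statement in Proposition~\ref{prop:UBS}. The extraction of the chain from an arbitrary minimal UBS is also delicate but is essentially a repeated application of the pigeonhole principle constrained by finite dimension and the no-facing-triple condition.
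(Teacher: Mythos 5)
The verification that the inseparable closure $\mathcal U$ is a UBS, and the extraction of a chain inside an arbitrary minimal UBS followed by taking its inseparable closure, are essentially the paper's argument and are fine (the paper's whole proof of the converse direction is the one\-/line observation that any UBS contains an infinite set of pairwise disjoint hyperplanes by finite-dimensionality). The genuine gap is in your minimality step. The assertion that two hyperplanes which both separate overlapping segments of the chain must be nested — hence that all but finitely many elements of $\mathcal U$ are pairwise nested — is false. For a counterexample, let $X$ be a $3$-cube with an edge attached at one corner and an infinite ray attached at the opposite corner, and let the chain consist of the halfspace bounded by the pendant-edge hyperplane (containing the cube and the ray) followed by the ray halfspaces: each of the three midplanes of the cube separates $\hat h_0$ from every ray hyperplane, so all three belong to $\mathcal U$, and they pairwise cross; gluing $3$-cubes corner-to-corner along an infinite diagonal produces infinitely many such crossing pairs inside $\mathcal U$. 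So pairwise nestedness cannot be the reason why $\mathcal U$ admits no splitting into two almost transverse infinite pieces. The correct mechanism (the one the paper relies on, via Proposition~\ref{prop:UBS} and Hagen's Lemma~3.7) is different: the chain hyperplanes $\hat h_i$ are pairwise non-crossing, and every element of $\mathcal U$, lying between $\hat h_a$ and $\hat h_b$ for some $a<b$, is parallel to all but finitely many $\hat h_i$; hence no element can lie in a piece almost transverse to the piece containing (cofinitely many of) the $\hat h_i$, and inseparability of that piece together with the finite interval condition forces it to contain the inseparable closure of a tail of the chain, which is cofinite in $\mathcal U$.

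There is a second, structural problem: your concluding appeal to the uniqueness clause of Proposition~\ref{prop:UBS}, ``viewing $\mathcal U$ itself, decomposed trivially as one minimal piece,'' is circular, since declaring $\mathcal U$ to be a single minimal piece presupposes exactly the minimality you are proving; moreover the uniqueness clause only compares decompositions of two UBS that are already known to be commensurate, whereas the commensurability of the sub-UBS $\mathcal U'$ with $\mathcal U$ is precisely the desired conclusion. The paper avoids this by applying Proposition~\ref{prop:UBS} to $\mathcal U$ itself and deducing, by the argument sketched above, that its decomposition has a single piece commensurate to $\mathcal U$ (the remaining commensurability bookkeeping being delegated to Hagen). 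As written, your argument does not establish that every UBS contained in $\mathcal U$ is commensurate to $\mathcal U$, which you yourself flag as the main obstacle; the obstacle is real and is not overcome by the nestedness claim.
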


\begin{proof}
Let   $\mathcal V = \{\hat h_i \; | \; i \geq 0\}$ and $\mathcal U$ be the inseparable closure of $\mathcal V$. It is clear that $\mathcal U$ is infinite and inseparable. Observe moreover that each hyperplane in $\mathcal U$ separates $\hat h_0$ from $\hat h_i$ for some sufficiently large $i$. This implies that  $\mathcal U$  is unidirectional and does not contain any facing triple. Thus $\mathcal U$ is a UBS. 
Proposition~\ref{prop:UBS} implies that $\mathcal U$ must be minimal. 

That every minimal UBS arises in this way follows since  any UBS contains an infinite set of pairwise disjoint hyperplanes by the finite-dimensionality of $X$. 
\end{proof}

We now briefly recall that definition of the Roller compactification, following Martin Roller \cite{Roller}. 
A section $\alpha \colon \mathfrak W(X) \to \mathfrak H(X)$ of the map $h \mapsto \hat{h}$ is called an \textbf{ultrafilter} 
if for every finite set $\mathcal F \subset \mathfrak W(X)$, the intersection $\bigcap_{\hat h \in \mathcal F} \alpha(\hat h)$ is non-empty. 
An ultrafilter is \textbf{principal} if $\bigcap_{\hat h \in \mathfrak W(X)} \alpha(\hat h)$ is non-empty, 
in which case that intersection contains a unique vertex. 
Conversely,  every vertex $v$ of $X$ gives rise to a unique ultrafilter, also denoted by $v$, 
which maps each hyperplane $\hat h$ to the unique halfspace bounded by $\hat h$ and containing $v$. 
We identify henceforth each vertex with the corresponding principal ultrafilter. 
The collection of all ultrafilters is denoted by $\overline X$. 
The subset of non-principal ultrafilters is  denoted by $\partial X$. 
With respect to the topology of pointwise convergence, the set $\overline X = X^{(0)} \cup \partial X$  is compact; 
it is a compactification of the (discrete) set of principal ultrafilters $X^{(0)}$. 
The set  $\overline X$ (resp. $ \partial X$) is called the \textbf{Roller compactification} (resp. \textbf{Roller boundary}) of $X$. 
The following observation provides a link between the Roller boundary and the simplicial boundary.

\begin{lemma}\label{lem:RollerSimplicial}
Let $\alpha \in \partial X$. The following hold for all $x, y \in X^{(0)}$:
\begin{enumerate}
\item The set $\mathcal U(x, \alpha) = \{ \hat  h \in \mathfrak W(X) \; | \; x(\hat h) \neq \alpha(\hat h)\}$ is a UBS. 
\item The sets $\mathcal U(x, \alpha)$ and  $\mathcal U(y, \alpha)$ are commensurate.
\end{enumerate}
In particular, the map  $\Sigma \colon  \partial X \to \mathfrak S X$, 
associating to $\alpha$ the commensuration class of the UBS $\mathcal U(x, \alpha)$, where $x$ is a fixed vertex, is well-defined and $\Aut(X)$-equivariant. 
\end{lemma}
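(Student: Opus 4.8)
The plan is to verify, in order, the three assertions of Lemma~\ref{lem:RollerSimplicial}: that $\mathcal U(x,\alpha)$ is a UBS, that $\mathcal U(x,\alpha)$ and $\mathcal U(y,\alpha)$ differ by a finite set, and finally that the induced map $\Sigma$ is well-defined and $\Aut(X)$-equivariant. The first assertion is the substantive one; once it is established, the rest is essentially bookkeeping.

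For part (1), fix a vertex $x$ and a non-principal ultrafilter $\alpha\in\partial X$, and write $\mathcal U=\mathcal U(x,\alpha)=\{\hat h: x(\hat h)\neq\alpha(\hat h)\}$. The set of half-spaces $\{\alpha(\hat h): \hat h\in\mathcal U\}$ is precisely the oriented interval $[x,\alpha]$ in the language of \S\ref{subsec:intervals-median}, i.e. the half-spaces containing $\alpha$ but not $x$. I first check \textbf{inseparability}: if $\hat k$ separates two hyperplanes $\hat h_1,\hat h_2\in\mathcal U$, then, orienting consistently so that $x\in k$ and $\alpha(\hat h_1),\alpha(\hat h_2)\subset k$, the consistency (admissibility) condition for the ultrafilter $\alpha$ forces $k\in\alpha$ as well, while $x\notin k$; hence $\hat k\in\mathcal U$. (One has to be slightly careful about the meaning of ``separates'' for hyperplanes and translate it into a nesting statement among half-spaces, but this is routine using \eqref{eq:nested-halfspaces}.) Next, \textbf{unidirectionality}: for $\hat h\in\mathcal U$ with $\alpha(\hat h)=h$, any $\hat k\in\mathcal U$ with $\hat k\parallel\hat h$ and $\alpha(\hat k)\supset h^*$ (the ``wrong side'') would, together with consistency of $\alpha$ and of the principal ultrafilter $x$, force $\hat k$ into a configuration producing a contradiction with $x$ being a single vertex on a bounded interval; more precisely only finitely many elements of $\mathcal U$ can lie on the $x$-side of $\hat h$ because such hyperplanes all lie in the finite interval from $x$ to $h$ — here we use that $\hat h$ has finitely many hyperplanes between it and $\hat x$ precisely when... no: the cleaner route is to note that every hyperplane on the ``$h^*$-side'' that is in $\mathcal U$ must separate $x$ from $h$, and there are only finitely many such since $|[x,v]|<\infty$ for $v\in X$; for $\alpha\in\partial X$ one argues that the offending hyperplanes all lie in $\I_{x,w}$ for a vertex $w$ close to $\hat h$, which is finite. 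Finally, \textbf{no facing triple}: a facing triple of hyperplanes in $\mathcal U$ would give three half-spaces $\alpha(\hat h_i)$, pairwise disjoint, all belonging to $\alpha$; but any two disjoint half-spaces $h_1\subset h_2^*$ cannot both be in the ultrafilter $\alpha$ unless... wait, they can — facing half-spaces are not a problem for a single ultrafilter in general. Instead I orient each $\hat h_i$ so that $x$ is on the side of the common complement; then $x\notin\alpha(\hat h_i)$ for all $i$, so all three $\alpha(\hat h_i)\in[x,\alpha]$, and pairwise disjointness of $\alpha(\hat h_1),\alpha(\hat h_2),\alpha(\hat h_3)$ together with the fact that $\alpha$ contains all three contradicts that $\alpha$ is a consistent choice — actually the contradiction is that if $h_1,h_2$ are facing, i.e. $h_1\subset h_2^*$, then we cannot have both in $\alpha$ since $h_1\subset h_2^*$ forces $h_2^*\in\alpha$ by consistency, i.e. $h_2\notin\alpha$. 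That is the needed contradiction and it only uses two of the three hyperplanes, so in fact $\mathcal U$ contains no facing \emph{pair} of the relevant orientation — which suffices to exclude facing triples. That $\mathcal U$ is infinite follows because $\alpha$ is non-principal, so $[x,\alpha]$ is an infinite set of half-spaces. Hence $\mathcal U$ is a UBS.

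For part (2), $\mathcal U(x,\alpha)\,\triangle\,\mathcal U(y,\alpha)\subset\{\hat h: x(\hat h)\neq y(\hat h)\}=\mathcal U(x,y)$, which has cardinality $d(x,y)<\infty$ since $x,y$ are vertices; this is immediate from the definitions and Remark/Lemma on $|[u,v]|=d(u,v)$. Therefore the two UBS are commensurate, and by Proposition~\ref{prop:UBS} (or just by definition of $\partial_\triangle X$ as commensuration classes) they determine the same simplex in $\mathfrak S X$. This makes $\Sigma(\alpha)$ independent of the choice of basepoint $x$. For $\Aut(X)$-equivariance, observe that for $g\in\Aut(X)$ one has $g\cdot\mathcal U(x,\alpha)=\mathcal U(gx,g\alpha)$ directly from the definition (the action on hyperplanes and on ultrafilters is compatible), and $\mathcal U(gx,g\alpha)$ is commensurate to $\mathcal U(x,g\alpha)$ by part (2); since the $\Aut(X)$-action on $\mathfrak S X$ is by definition the one induced on commensuration classes, $\Sigma(g\alpha)=g\Sigma(\alpha)$.

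The main obstacle is the proof of unidirectionality in part (1): one must show that for each $\hat h\in\mathcal U$, only finitely many hyperplanes of $\mathcal U$ lie on one of the two sides of $\hat h$. The clean way to see this is to note that the hyperplanes of $\mathcal U$ on the ``$x$-side'' of $\hat h$ (i.e.\ those $\hat k\in\mathcal U$ with $\alpha(\hat k)\supsetneq\alpha(\hat h)$ or $\hat k\pitchfork\hat h$ is excluded — really the ones separating $x$ from $\alpha(\hat h)$) are exactly the hyperplanes separating $x$ from some vertex on $\hat h$, of which there are finitely many by the finite interval condition; then inseparability of $\mathcal U$ does the rest. Making this precise requires a careful bookkeeping of the orientation conventions and of what ``side'' means, using \eqref{eq:transverse} and \eqref{eq:nested-halfspaces}, but no new idea beyond the pocset axioms and finite-dimensionality is needed.
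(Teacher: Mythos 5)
Your overall route is the same as the paper's, which proves exactly these points: infiniteness because otherwise $\alpha$ would be principal like $x$, inseparability and unidirectionality from consistency of the two ultrafilters together with finiteness of vertex intervals, exclusion of facing triples by a pigeonhole, and part (2) by noting that the symmetric difference consists of hyperplanes separating $x$ from $y$. Your treatments of infiniteness, inseparability, unidirectionality, part (2), well-definedness and equivariance are correct in substance, modulo the orientation bookkeeping that you yourself flag as needing care.

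The one step that does not close as written is the exclusion of facing triples. If $\hat h_1,\hat h_2,\hat h_3\in\mathcal U(x,\alpha)$ bound pairwise disjoint half-spaces $h_1,h_2,h_3$, you cannot ``orient each $\hat h_i$ so that $x$ is on the side of the common complement'': the disjoint half-spaces are part of the data of the facing triple, and if $x\in h_1$ no reorientation of $\hat h_1$ keeps the triple facing. Consequently your intermediate claim that $\alpha(\hat h_1),\alpha(\hat h_2),\alpha(\hat h_3)$ are pairwise disjoint is false in general: if $x\in h_1$, then $\alpha(\hat h_1)=h_1^*$, which contains $h_2$. Your fallback observation --- that $\alpha$ cannot contain two disjoint half-spaces, so $\mathcal U$ has no facing pair ``of the relevant orientation'' --- is correct, but to conclude you must still exhibit, inside the given triple, a pair on which $\alpha$ selects the two disjoint sides. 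That is supplied by the pigeonhole the paper uses: since $h_1,h_2,h_3$ are pairwise disjoint, the vertex $x$ lies in at most one of them, so $x(\hat h_i)=h_i^*$ for at least two indices, and membership in $\mathcal U(x,\alpha)$ then forces $\alpha(\hat h_i)=h_i$ for those two indices, giving a facing pair inside the ultrafilter $\alpha$ and hence the contradiction; equivalently, as the paper phrases it, $x$ and $\alpha$ each select the disjoint side on at most one of the three hyperplanes, so they must coincide on at least one of them, contradicting membership in $\mathcal U(x,\alpha)$. With that one-line addition your argument is complete and coincides with the paper's.
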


\begin{proof}
(1) The set $\mathcal U(x, \alpha)$ is infinite since otherwise $\alpha$ would be principal, because $x$ is so. That $\mathcal U(x, \alpha)$ is inseparable is clear. That $\mathcal U(x, \alpha)$ is unidirectional follows from the fact that $x$ is principal. Finally, given any facing triple of hyperplanes, the maps $x$ and $\alpha$ must coincide on at least one of them. Hence $\mathcal U(x, \alpha)$ is a UBS. 

\medskip \noindent
(2) Any hyperplane in the symmetric difference $\mathcal U(x, \alpha) \triangle \mathcal U(y, \alpha)$ separates $x$ from $y$. Therefore $\mathcal U(x, \alpha) \triangle \mathcal U(y, \alpha)$  is finite. 
\end{proof}

The final ingredient needed for the proof of Theorem~\ref{thm:RollerStab} is the following result, due to Michah Sageev. 

\begin{prop}\label{prop:Sageev}
Assume that $X$ is finite-dimensional. Let $\Gamma \leq \Aut(X)$ be a finitely generated group acting without any fixed point on $X$. Then there exists $\gamma \in \Gamma$ and $h \in \mathfrak H(X)$ such that $\gamma h \subsetneq h$. In particular a (possibly infinitely generated) group $\Lambda$ is locally $X$-elliptic if and only if every element of $\Lambda$ has a fixed point in $X$.
\end{prop}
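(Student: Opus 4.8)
The plan is to deduce this from the standard machinery around the Sageev construction and the structure of the Roller boundary, essentially reproving a folklore fact. First I would recall the trichotomy for an isometry of a CAT(0) cube complex: an element $\gamma \in \Aut(X)$ either has a fixed vertex (elliptic), or skewers some halfspace (hyperbolic, $\gamma h \subsetneq h$ for some $h$), or fixes a point of the Roller boundary without skewering any halfspace. The key step is to show that if $\Gamma$ is finitely generated and no $\gamma\in\Gamma$ skewers a halfspace, then $\Gamma$ fixes a vertex, which is the contrapositive of the first assertion.

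The main argument I would run is the following. Suppose $\Gamma = \langle \gamma_1, \dots, \gamma_n\rangle$ and that no element of $\Gamma$ skewers any halfspace, i.e. for every $\gamma\in\Gamma$ and every $h\in\mathfrak H(X)$ we have neither $\gamma h \subsetneq h$ nor $h\subsetneq\gamma h$; thus $\gamma h$ and $h$ are always either equal or transverse or facing. Consider the collection of halfspaces $h$ such that $h$ is not $\Gamma$-invariant. If this collection were empty, $\Gamma$ would preserve every hyperplane and hence fix a vertex (all hyperplanes would be compact and a standard averaging/intersection argument on the finitely many orbits produces a fixed cube, whose barycenter — or whose vertex after passing to the cubical subdivision — is fixed). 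So assume there is a non-invariant halfspace. Using finite generation, pick a vertex $v_0$; the ``median-convex hull'' type argument of Sageev shows that if $\Gamma$ does not fix a vertex, there is a halfspace $h$ with $v_0 \in h$ but $\gamma_i^{-1} v_0 \notin h$ for some $i$, equivalently $\gamma_i h \subsetneq h$ once one iterates: more precisely, one considers the function $v \mapsto \sum_i d(v, \gamma_i v)$, shows it is convex and attains no minimum (else the minimum set contains a $\Gamma$-fixed vertex by a standard argument), and extracts from the descent a halfspace that is properly moved into itself. The finite-dimensionality of $X$ is what prevents an infinite strictly descending sequence of such halfspaces from escaping to the boundary while still being moved properly — one would argue that an infinite descending chain $h \supsetneq \gamma h \supsetneq \gamma^2 h \supsetneq \cdots$ is exactly the skewering conclusion we want, so the descent must already produce a single skewered halfspace.

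For the ``in particular'' clause: if every element of a group $\Lambda\leq\Aut(X)$ has a fixed vertex, then in particular no element skewers a halfspace, so by the first part no finitely generated subgroup can act without a fixed point, i.e. $\Lambda$ is locally $X$-elliptic; the converse is immediate since a locally $X$-elliptic group has all cyclic subgroups elliptic.

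The hard part will be making the ``descent produces a skewered halfspace'' step precise without circularity — one must carefully use that, under the no-skewering hypothesis, the only way a finitely generated group fails to stabilize a vertex is through a genuinely unbounded orbit, and then invoke the combinatorial Bruhat-Tits style fixed point argument (the vertex set is median, hence a space with walls; a bounded orbit has a canonical center) together with finite-dimensionality to rule out the boundary case. Since this is Sageev's result, I would simply cite it, but the sketch above indicates how one would reconstruct it: reduce to the statement that a finitely generated group with no fixed vertex either skewers a halfspace or fixes a point at infinity with unbounded but ``parabolic'' dynamics, and observe that the latter still forces, by finite dimension, a properly nested $\gamma$-orbit of some halfspace.
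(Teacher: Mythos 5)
Your proposal ultimately rests on citing the proof of Theorem~5.1 of Sageev \cite{Sageev_95}, and that is exactly what the appendix does: the paper's entire proof of this proposition is that one-line citation, with the ``in particular'' clause deduced just as you indicate (an element satisfying $\gamma h\subsetneq h$ has unbounded orbits and hence no fixed point, so the first assertion applied to finitely generated subgroups gives local $X$-ellipticity). The heuristic reconstruction you sketch is not used by the paper and, as you yourself note, its key descent step is incomplete, but since the operative move is the citation, your approach coincides with the paper's.
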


\begin{proof}
This follows from the proof of Theorem 5.1 in \cite{Sageev_95}. 
\end{proof}

\begin{proof}[Proof of Theorem~\ref{thm:RollerStab}]
If $\alpha \in X^{(0)}$, the desired conclusion is clear. We suppose henceforth that $\alpha$ belongs to $\partial X$. Lemma~\ref{lem:RollerSimplicial} then provides a $k$-simplex  $\sigma = \Sigma(\alpha)$ fixed by $\Gamma = 	\Stab_{\Aut(X)}(\alpha)$. Note that $k+1 \leq \dim(X)$ by Proposition~\ref{prop:UBS}. We denote the vertices of $\sigma$ by $v_0, \dots, v_k$. For each $j$, the stabiliser $\Stab_\Gamma(v_j)$ is of finite index in $\Gamma$, and commensurates any UBS representing $v_j$. We denote by $\chi_j$ the transfer character associated to this commensurating action by means of Proposition~\ref{prop:ModularChar}. Thus the sum $\bigoplus_{j=0}^k \chi_j$ is a homormorphism to $\ZZ^{k+1}$ which is defined on a finite index subgroup of $\Gamma$. We denote its kernel by $\Gamma^0$, and claim that $\Gamma^0  $ is locally $X$-elliptic. The desired conclusion follows from that claim. 

By Proposition~\ref{prop:Sageev}, it suffices to show that every element of  $\Gamma^0$ has a fixed point. Suppose for a contradiction that an element $g \in  \Gamma^0$ has none. Applying Proposition~\ref{prop:Sageev} to the cyclic group generated by $g$, we find   a halfspace $h$ and a positive integer $n$ such that $g^n h \subsetneq h$. Set $\hat h_i =  g^{ni} \hat h$ for all $i \in \ZZ$. Since $g$ fixes $\alpha$, the collection $\{\alpha(\hat h_i) \; | \; i \in \ZZ\}$ is $\langle g^n \rangle$-invariant, and must therefore be a chain. Upon replacing $g$ by $g^{-1}$, we may assume that $\alpha(\hat h_0) \supsetneq \alpha(\hat h_1)$. Let $x \in \alpha(\hat h_0) \setminus \alpha(\hat h_1)$ be a vertex. In particular $x(\hat h_i) \neq \alpha(\hat h_i)$ for all $i >0$. It follows that the UBS $\mathcal U(x, \alpha)$, which represents $\sigma = \Sigma(\alpha)$, contains  $\hat h_i$ for all $i>0$. We now apply Proposition~\ref{prop:UBS} to $\mathcal U(x, \alpha)$. This yields a finite set of pairwise almost transverse minimal UBS $\mathcal U_0, \dots, \mathcal U_k$ contained in $\mathcal U(x, \alpha)$, each representing a vertex $v_j$ of $\sigma$, and such that the union $\bigcup_{j=0}^k \mathcal U_j$ is cofinite in $\mathcal U(x, \alpha)$. Since the hyperplanes $\hat h_i$ have pairwise empty intersection, we deduce moreover from Proposition~\ref{prop:UBS} that there is some $j \in \{0, \dots, k\}$ such that $\hat h_i \in \mathcal U_j$ for all  $i$ larger than some fixed $I$. By Lemma~\ref{lem:InsepClosure}, we may assume that $\mathcal U_j$ is the inseparable closure of $\{\hat h_i \; | \; i > I\}$.  Since $g^n(\hat h_i) = \hat h_{i+1}$ for all $i$, we infer that $g^n$ maps $\mathcal U_j$ properly inside itself, thereby implying that $0 \neq \chi_j(g^n) = n \chi_j(g)$.  This contradicts the fact that $g \in \Gamma^0 \leq \Ker(\chi_j)$. 
\end{proof}

\subsection*{Acknowledgement} 
P-E.C. enthusiastically thanks Yves de Cornulier for pleasant discussions and useful comments on this appendix.

%Now let us suppose that we have found an ordered $k$-tuple of vertices, $v_1,\dots, v_k$
%with the following properties:
%\be
%\item the induced complete directed subgraph on the $v_1,\dots, v_k$ is strictly upper triangular;
%\item there are $o(v_k)\geq\frac{|V|}{5^{D-k-1}}$ outgoing vertices from $v_k$;
%\item the 
%\ee

\newcommand{\etalchar}[1]{$^{#1}$}
\def\cprime{$'$}

\end{document}